\newtheorem{theorem}{Theorem}[section] % last argument: theorems are numbered within each section
\newtheorem{corollary}[theorem]{Corollary}
\newtheorem{proposition}[theorem]{Proposition} % second argument: propositions share theorem's counter
\newtheorem{lemma}[theorem]{Lemma}
\newtheorem{linpro}{Linear Program}
\newtheorem*{theoremLambda1-Betti}{Theorem 1.1}
\newtheorem*{theoremLambda1}{Theorem 1.2}
\newtheorem*{theoremAs}{Theorem 1.3}
\newtheorem*{theoremSys}{Theorem 1.4}
\theoremstyle{definition}
\newtheorem{definition}[theorem]{Definition}
\newtheorem{problem}[theorem]{Open Problem}
\theoremstyle{remark}
\newtheorem{remark}[theorem]{Remark}
\numberwithin{equation}{section}
\renewcommand{\leq}{\leqslant}
\renewcommand{\geq}{\geqslant}
\renewcommand{\hat}{\widehat}
\newcommand{\hh}{{\mathbb{H}^3}}
\newcommand{\lap}{{\Delta_{M}}}
\newcommand{\xb}{{\bar{x}}}
\newcommand{\hb}{\widetilde{h}}
\newcommand{\be}{\begin{equation}}
\newcommand{\ee}{\end{equation}}
\newcommand{\ba}{\begin{equation}\begin{aligned}}
\newcommand{\ea}{\end{aligned}\end{equation}}
\newcommand{\nn}{\nonumber}
\newcommand*{\pFq}[5]{{}_{#1}F_{#2}\left(\genfrac..{0pt}{}{#3}{#4};#5\right)}% hypergeometric pFq
\newcommand{\thickhline}{%
    \noalign {\ifnum 0=`}\fi \hrule height 1pt
    \futurelet \reserved@a \@xhline
}
\begin{document}

\title[Spectral Bounds on Hyperbolic 3-Manifolds]{Spectral Bounds on Hyperbolic 3-Manifolds: Associativity and the Trace Formula}

\author{James Bonifacio\textsuperscript{1}}
\email{bonifacio@phy.olemiss.edu}

\author{Dalimil Maz\'{a}\v{c}\textsuperscript{2}}
\email{dmazac@ias.edu}

\author{Sridip Pal\textsuperscript{2,3}}
\email{sridip@caltech.edu}

\address[1]{Department of Physics and Astronomy, University of Mississippi, University, MS 38677, USA}

\address[2]{Institute for Advanced Study, Princeton, NJ 08540, USA}

\address[3]{Walter Burke Institute for Theoretical Physics, California Institute of Technology, Pasadena, CA,
USA}

\begin{abstract}
We constrain the low-energy spectra of Laplace operators on closed hyperbolic manifolds and orbifolds in three dimensions, including the standard Laplace--Beltrami operator on functions and the Laplacian on powers of the cotangent bundle. Our approach employs linear programming techniques to derive rigorous bounds by leveraging two types of spectral identities. The first type, inspired by the conformal bootstrap, arises from the consistency of the spectral decomposition of the product of Laplace eigensections, and involves the Laplacian spectra as well as integrals of triple products of eigensections. We formulate these conditions in the language of representation theory of $\mathrm{PSL}_2(\mathbb{C})$ and use them to prove upper bounds on the first and second Laplacian eigenvalues. The second type of spectral identities follows from the Selberg trace formula. We use them to find upper bounds on the spectral gap of the Laplace--Beltrami operator on hyperbolic 3-orbifolds, as well as on the systole length of hyperbolic 3-manifolds, as a function of the volume.  Further, we prove that the spectral gap $\lambda_1$ of the Laplace--Beltrami operator on all closed hyperbolic 3-manifolds satisfies $\lambda_1 < 47.32$. Along the way, we use the trace formula to estimate the low-energy spectra of a large set of example  orbifolds and compare them with our general bounds, finding that the bounds are nearly sharp in several cases.
\end{abstract}

\maketitle

%%%%%%%%%%
% table of content
%%%%%%%%%%
\newpage
\setcounter{tocdepth}{1}% 1 = sections only, 2 = sections+subsections, ...
\tableofcontents

%%%%%%%%%%
% start of the body
%%%%%%%%%%
\section{Introduction}\label{sec:introduction}

\subsection{Hyperbolic 3-manifolds}
In this paper, we study spectra of Laplace operators on closed hyperbolic 3-manifolds and 3-orbifolds. A hyperbolic 3-manifold is a Riemannian 3-manifold with constant sectional curvature $-1$. By Thurston's geometrization conjecture~\cite{Thurston82}, proved by Perelman~\cite{Perelman2002}, every closed 3-manifold can be canonically decomposed into pieces admitting one of eight possible geometries. Of these eight geometries, the hyperbolic manifolds correspond, in a sense, to the most abundant one and are the most difficult to classify. This means that hyperbolic manifolds are crucial for understanding geometry and topology in three dimensions.

Every complete, connected, simply-connected hyperbolic 3-manifold is isometric to the hyperbolic space $\hh$. This manifold can be realized as the following locus:
$$
\hh = \{x\in\mathbb{R}^{1,3}\colon\,x_0^2-x_1^2-x_2^2-x_3^2 = 1,\,x_0>0\},
$$
with Riemannian metric induced from the standard Lorentzian metric on $\mathbb{R}^{1,3}$. The group of orientation-preserving isometries of $\hh$ is $G = \mathrm{SO}_{0}(1,3)\cong\mathrm{PSL}_2(\mathbb{C})$, the connected component of $\mathrm{SO}(1,3)$ containing the identity.
The stabilizer in $G$ of a point in $\hh$ is a copy of the maximal compact subgroup $K = \mathrm{SO}(3)$, so we have $\hh = G/K$. It follows that every closed, connected, orientable hyperbolic 3-manifold can be realized as a quotient $M = \Gamma\backslash\hh$, where $\Gamma$ is a torsion-free discrete subgroup of $G$. More generally, we will be interested in hyperbolic 3-orbifolds, which amounts to allowing $\Gamma$ to contain elements of finite order. Thus, in the rest of this paper, we consider $M = \Gamma\backslash\hh$, where $\Gamma$ is a co-compact discrete subgroup of $G$. We will refer to any such quotient as a hyperbolic 3-orbifold, dropping the adjectives closed, connected and orientable for brevity.

\subsection{Laplace eigenvalues}

Our main object of interest will be the spectrum of the Laplacian $\lap  := -\mathrm{div}\circ \nabla$ acting on smooth functions on $M$, as well as on smooth sections of powers of the cotangent bundle of $M$.

The Laplace equation on functions reads
\be\label{eq:laplaceScalar}
\lap \varphi^{(0)}_k(x) = \lambda^{(0)}_k\,\varphi^{(0)}_k(x)\,,
\ee
where $\varphi^{(0)}_k(x)$ are smooth functions on $\hh$ satisfying $\varphi^{(0)}_k(\gamma x) = \varphi^{(0)}_k(x)$ for all $\gamma\in\Gamma$ and all $x\in\hh$. 
The eigenvalues $\lambda^{(0)}_k$ form a non-decreasing discrete subset of $\mathbb{R}_{\geq 0}$,
$$
0 = \lambda^{(0)}_0<\lambda^{(0)}_1\leq\lambda^{(0)}_2\leq\ldots\rightarrow\infty\,.
$$
It is useful to write the eigenvalues as $\lambda^{(0)}_k = (t^{(0)}_k)^2+1$, where $t^{(0)}_k \in \mathbb{R} \cup i(0,1]$. The superscript $(0)$ reminds us that this is the spectrum on functions.

More generally, it is natural to consider the spectral problem for the Laplacian on powers of the cotangent bundle $(T^{*}M)^{\otimes J}$, where $J\in \mathbb{Z}_{\geq 0}$. The case of functions corresponds to $J=0$. A smooth section of $(T^{*}M)^{\otimes J}$ means a smooth section of $(T^{*}\hh)^{\otimes J}$ invariant under the pull-back by any element of $\Gamma$. A section $\varphi^{(J)}$ of $(T^{*}M)^{\otimes J}$ is called \emph{symmetric} if $\varphi^{(J)}(X_1,\ldots,X_J)$ is a symmetric function of the vector fields $X_1,\,\ldots X_J$, and \emph{traceless} if it vanishes upon contraction with the inverse metric in any pair of entries. Finally, we call $\varphi^{(J)}$ \emph{divergence-free} if ${\mathrm{div}(\varphi^{(J)}) = 0}$.

The action of the Laplacian on smooth sections of $(T^{*}M)^{\otimes J}$ preserves the properties of symmetry, tracelessness and vanishing divergence. Thus, we will consider the family of spectral problems

\be\label{eq:spinJequation}
\lap \varphi^{(J)}_{k}(x) = \lambda^{(J)}_{k}\,\varphi^{(J)}_{k}(x)\,,
\ee
where the eigensections $\varphi_{k}^{(J)}$ are smooth, symmetric, traceless and divergence-free sections of $(T^{*}M)^{\otimes J}$. We will see in Section~\ref{sec:rep-theory} how these conditions follow naturally from the representation theory of $G$. The eigenvalues $\lambda^{(J)}_k$ for fixed $J>0$ form a non-decreasing discrete subset of $\mathbb{R}$
$$
\lambda_1^{(J)}\leqslant \lambda_2^{(J)}\leqslant\lambda_3^{(J)}\leqslant\ldots\rightarrow\infty\,.
$$
For $J>0$, we will also assume $\varphi^{(J)}_{k}$ are eigensections of the curl operator
\be\label{eq:curl}
\mathrm{curl}\, \varphi^{(J)}_{k}(x) = t_k^{(J)} \varphi^{(J)}_{k}(x)\,.
\ee
Here $\mathrm{curl}$ is a certain linear differential operator generalizing the standard curl acting on vector fields to any $J>0$. It is described precisely in Definition~\ref{def:curl}. Since curl is Hermitian with respect to the standard inner product on sections of $(T^{*}M)^{\otimes J}$, we have $t_k^{(J)}\in\mathbb{R}$. Furthermore, $ \lap\varphi^{(J)} = (\mathrm{curl}\circ\mathrm{curl} + J + 1)\varphi^{(J)}$ for any symmetric, traceless, divergence-free $\varphi^{(J)}$. It follows that any solution of~\eqref{eq:curl} is also a solution of~\eqref{eq:spinJequation}. The curl eigenvalues are related to the Laplace eigenvalues by ${\lambda^{(J)}_k = (t_k^{(J)})^2+J+1}$ and hence $\lambda_1^{(J)}\geqslant J+1$ for all $J>0$. We refer to the sign of $t_k^{(J)}$ as the chirality of the corresponding eigensection. Following physics terminology, we will often refer to $J$ as the spin and $\lambda_k^{(J)}$ or $t_k^{(J)}$ as the spin-$J$ eigenvalues, including the case $J=0$.

Eigensections with $J=1$ and $t_k^{(1)}=0$ are precisely the harmonic 1-forms. The number of linearly independent harmonic 1-forms is the first Betti number of $M$. More generally, the degeneracy of eigenvalues with $t_k^{(J)}=0$ for $J> 0$ can be computed as the dimension of a certain cohomology space~\cite{Cohomology}. 

\subsection{Bounds on the spectrum}
Our main results are new bounds on the Laplace spectra of hyperbolic 3-orbifolds. Previous works studying the Laplace spectra of hyperbolic 3-manifolds and 3-orbifolds include~\cite{Cheng1975, Schoen1982, Sarnak1983, Callahan-thesis, Burger-Sarnak1991, Aurich-Marklof, Inoue:1998nz, Cornish1999, Inoue2001, White2013, Lipnowski2016, Lin2017, Lin-Lipnowski2018, Hamenstadt2019, Baik2020, Lin-Lipnowski2020, Lin-Lipnowski2021, rudd2023}. We will employ two distinct approaches.

The first approach uses the associativity of the product of eigenfunctions to derive spectral identities satisfied by the eigenvalues and triple products of eigenfunctions, as in~\cite{Bonifacio:2019ioc, Bonifacio:2020xoc, Bonifacio:2021msa, Kravchuk:2021akc, Bonifacio:2021aqf}. We will systematize this approach in the language of unitary representation theory of ${G=\mathrm{PSL}_2(\mathbb{C})}$, following the case of hyperbolic 2-orbifolds and $\mathrm{PSL}_2(\mathbb{R})$ studied in~\cite{Kravchuk:2021akc}.

From these spectral identities, we derive computer-assisted bounds on the spectrum using linear and semidefinite optimization. The resulting bounds are called bootstrap bounds, in analogy with the conformal bootstrap program from conformal field theory, which inspires this approach~\cite{Belavin:1984vu, Rattazzi:2008pe, Rychkov:2009ij}, see~\cite{Poland:2018epd, Hartman:2022zik} for recent reviews. In that context, $G$ plays the role of the group of conformal isomorphisms of $S^2$ and the space of $L^2$ functions on $\Gamma\backslash G$ is replaced by the Hilbert space of physical states of a conformal field theory.

As a special case of the bootstrap bounds, we will prove the following result:
 \begin{theorem} \label{thm:lambda1-betti}
Let $M$ be a closed oriented hyperbolic 3-orbifold with positive first Betti number. The first positive eigenvalue of the Laplace--Beltrami operator on $M$ satisfies
$$
\lambda^{(0)}_1 < \frac{252549}{8000} \approx 31.569.
$$
Similarly, the first eigenvalue $\lambda_1^{(2)}$ of the $J=2$ spectral problem \eqref{eq:spinJequation} satisfies
$$
\lambda_1^{(2)}<\frac{46685}{8000}\approx 5.8356\,.
$$
\end{theorem}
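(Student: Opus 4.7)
The plan is to apply the representation-theoretic bootstrap outlined in the introduction, taking a harmonic $1$-form as the external eigensection.

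\textbf{Step 1 (input).} The hypothesis $b_1(M)>0$ supplies a nonzero harmonic $1$-form $\omega$ on $M$, i.e.\ a spin-$J{=}1$ eigensection with $t^{(1)}=0$, hence with Laplace eigenvalue $\lambda^{(1)}=2$. Representation-theoretically, $\omega$ generates a specific irreducible $G$-constituent of $L^2(\Gamma\backslash G)$ at a distinguished point of the principal/complementary series of $\mathrm{PSL}_2(\mathbb C)$, which plays the role of the external operator in the bootstrap.

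\textbf{Step 2 (crossing equation).} Form the pointwise tensor square $\omega\otimes\omega$ and expand it spectrally with respect to $\lap$ and $\mathrm{curl}$. The Clebsch--Gordan rules for the $G$-tensor product $\pi^{(\omega)}\otimes\pi^{(\omega)}$, combined with the symmetry of $\omega\otimes\omega$ and the orientability of $M$, single out the trivial representation together with eigensections of spins $J\in\{0,2\}$ as the only contributions. Associativity of the pointwise product, evaluated by pairing with two further copies of $\omega$ and integrating over $M$, yields a crossing sum rule
\be
\mathcal F_{\mathrm{id}} + \sum_{k\geq 1}\bigl|c^{(0)}_k\bigr|^2\,\mathcal F^{(0)}\bigl(\lambda_k^{(0)}\bigr) + \sum_{k\geq 1}\bigl|c^{(2)}_k\bigr|^2\,\mathcal F^{(2)}\bigl(\lambda_k^{(2)}\bigr) = 0,
\ee
where the $c^{(J)}_k$ are triple integrals of $\omega,\omega,\varphi_k^{(J)}$ over $M$ and the kernels $\mathcal F^{(J)}(\lambda)$ are explicit universal functions built from the $\mathrm{PSL}_2(\mathbb C)$ intertwiners (essentially $6j$-symbols) relating the two crossed channels.

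\textbf{Step 3 (linear program).} To prove the scalar bound, I would exhibit a linear functional $\alpha_0$, acting on functions of $\lambda$, such that $\alpha_0[\mathcal F_{\mathrm{id}}]>0$, $\alpha_0[\mathcal F^{(0)}(\lambda)]\geq 0$ for all $\lambda\geq 252549/8000$, and $\alpha_0[\mathcal F^{(2)}(\lambda)]\geq 0$ for all $\lambda\geq 3$ (the latter using $\lambda_1^{(2)}\geq J+1=3$). Any such $\alpha_0$ is incompatible with the crossing equation under the assumption $\lambda_1^{(0)}\geq 252549/8000$, proving the scalar bound. The spin-$2$ bound is obtained identically by seeking $\alpha_2$ with $\alpha_2[\mathcal F^{(2)}(\lambda)]\geq 0$ for $\lambda\geq 46685/8000$ and $\alpha_2[\mathcal F^{(0)}(\lambda)]\geq 0$ for $\lambda>0$. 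In both cases $\alpha_J$ is parameterized as a finite linear combination of derivatives of the kernels at a carefully chosen reference point, reducing the search to a finite-dimensional linear (or semidefinite) program whose optimum at the stated rational gap completes the proof.

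\textbf{Main obstacle.} The decisive difficulty is not formulating the LP but rigorously certifying its optimum. One must upgrade the numerical solution to an explicit, preferably rational, functional whose positivity on the claimed $\lambda$-range is provable, typically via a sum-of-squares or manifest-sign factorization of the rational functions $\alpha_J[\mathcal F^{(J')}(\lambda)]$ on the relevant intervals. The appearance of the specific rational bounds $252549/8000$ and $46685/8000$ with a modest common denominator is a strong signal that such a low-complexity certificate is attainable, in direct analogy with the $\mathrm{PSL}_2(\mathbb R)$ bootstrap of~\cite{Kravchuk:2021akc}.
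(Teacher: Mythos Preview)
Your strategy is the paper's: take the harmonic $1$-form as the external state ($J_\ell=1$, $t_\ell=0$), derive crossing identities from its four-point correlation, and search for a functional whose positivity contradicts a large gap. The paper then certifies the SDPB output by rationalising the functional and verifying polynomial positivity via the algorithm of~\cite{Kravchuk:2021akc}, exactly as you anticipate.

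There is, however, a genuine gap in Step~2. Your Clebsch--Gordan claim that only $J\in\{0,2\}$ occur in the exchange is correct \emph{only} for the product of the lowest $K$-types, i.e.\ the literal pointwise square of the $1$-form. The paper does not restrict to this: it works with the full family of $K$-finite four-point correlations $\langle \phi_\ell^{(n_1)}(w_1)\cdots\phi_\ell^{(n_4)}(w_4)\rangle$ for all $n_i\leq N$ (these are built from covariant derivatives of $\omega$), and must impose positivity for exchanged spins $J_m\in\{0,2,4,\ldots,2N\}$. The scalar bound $252549/8000$ is obtained at $N=8$ (101 independent identities); the paper records that at $N=3$ the bound is already visibly weaker, so the handful of identities available from the bare $1$-form product will not reach the stated threshold. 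If your ``derivatives of the kernels'' are meant to access these higher $K$-types, then higher even exchanged spins \emph{do} contribute and your linear program in Step~3 must also demand $\alpha[\mathcal F^{(J)}(\lambda)]\geq0$ for every even $J\geq4$ and all $\lambda\geq J+1$.
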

These upper bounds are quite close to the values for the orbifold $M=\mathrm{m203}(0,3)(2,0)$, of approximate volume $0.4866$, for which $\lambda^{(0)}_1\approx 30.84$ and $\lambda^{(2)}_1\approx 5.8242$. The first Betti number of $\mathrm{m203}(0,3)(2,0)$ equals one.\footnote{The notation $\mathrm{m203}(0,3)(2,0)$ means that this orbifold is obtained from the cusped manifold m203 by filling its two cusps using Dehn fillings with coefficients $(0,3)$ and $(2,0)$. Here $\mathrm{m203}$ is the name assigned to the manifold in the Hodgson--Weeks census (it is the complement of the link $6_2^2$ in $S^3$).}

More generally, the bootstrap bounds we derive take the following form:
\begin{enumerate}
\item an upper bound on $|t_1^{(J)}|$ as a function of $|t_1^{(J')}|$ with $J \neq J'$, \label{list:bounds-1}
\item an upper bound on $|t_2^{(J)}|$ as a function of $|t_1^{(J)}|$. \label{list:bounds-2}
\end{enumerate}
The first and second part of Theorem~\ref{thm:lambda1-betti} will follow as special cases of the bounds of type~(\ref{list:bounds-1}) for $J=0,2$ and $J'=1$ by setting $|t_1^{(1)}| = 0$, which is equivalent to the positivity of the first Betti number.

\begin{figure}[t!]
\begin{center}
\epsfig{file=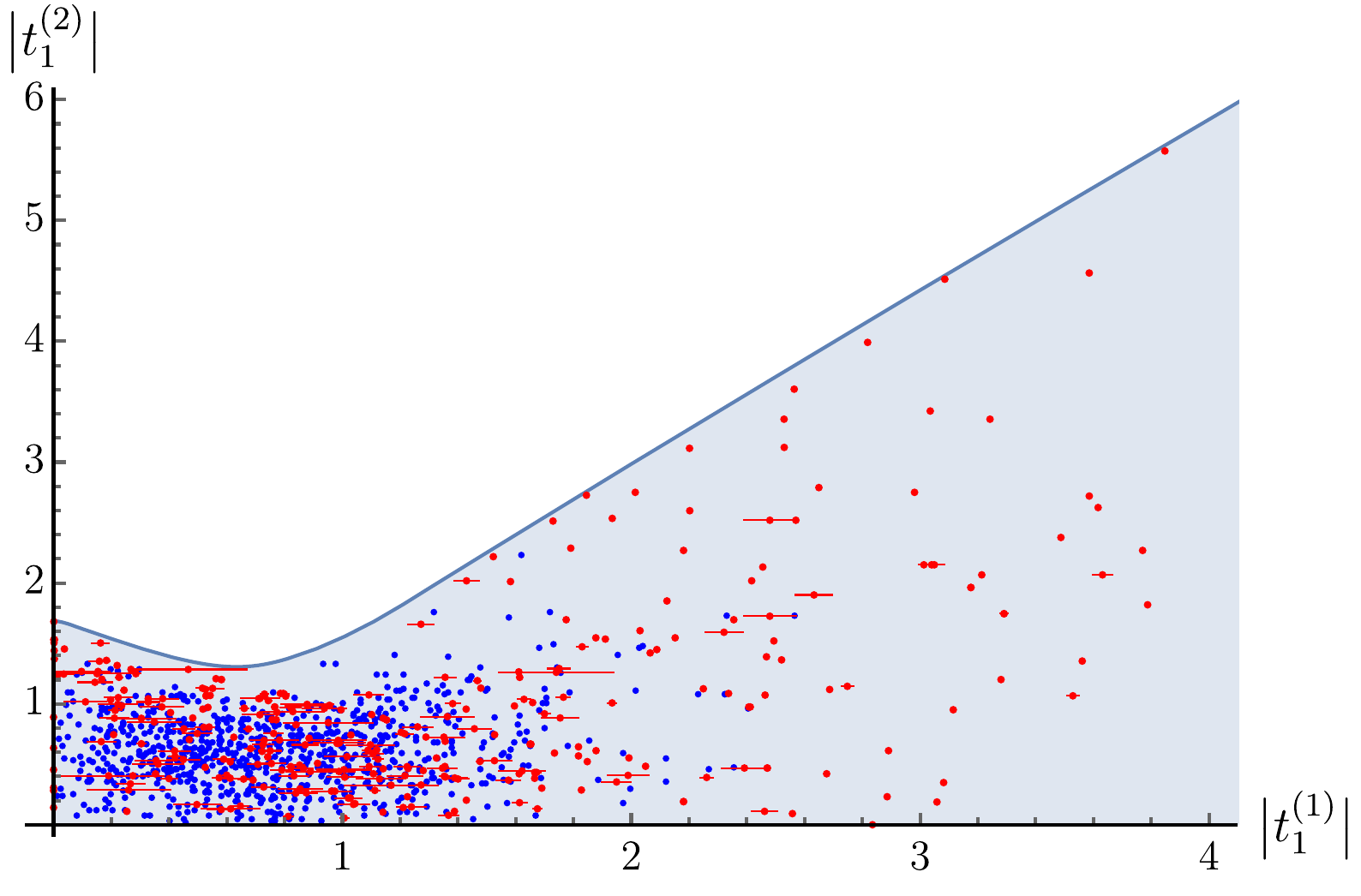,scale=.4}
\end{center}
\caption{The bootstrap upper bound on the first spin-2 eigenvalue $|t_1^{(2)}|$ for varying value of the first spin-1 eigenvalue $|t_1^{(1)}|$ for closed orientable hyperbolic 3-orbifolds. The region above the blue curve is excluded by the bootstrap method. The dots represent estimates of the values $|t_1^{(1)}|,\,|t_1^{(2)}|$ for various known 3-orbifolds, derived from their length spectra using the Selberg trace formula.}
\label{fig:bound_1_2}
\end{figure}

\begin{table}
\centering
  \begin{tabular}{  | c| c |c | c| c|c|}
  \thickhline
   Name & Approximate volume& $ \big| t^{(1)}_1 \big| $ & $\big| t^{(2)}_1 \big|$ & bootstrap bound  \\ \hline
 m$203(0,3)(2,0)$ & 0.4866 & 0 & 1.6805256(5) & 1.68386 \\ 
  m$032(3,0)$ & 0.6542 & 1.84475(2) & 2.72630(2) & 2.75673 \\ 
   m$015(3,0)$ & 0.3142 & 3.08538(2) & 4.5139(5) & 4.53661 \\ 
   orb$(0.1324, 1_1^2.20)$ &0.1342 & 3.847360(3) & 5.5744(2) & 5.60879 \\ \hline
\thickhline
\end{tabular}
\caption{Example orbifolds that are close to saturating the bound of Figure~\ref{fig:bound_1_2}. The parentheses denote the uncertainty in the last digit of the eigenvalue such that the corresponding interval contains the eigenvalue. The last column gives the bootstrap upper bound on the spin-2 eigenvalue for each orbifold, fixing $\big| t^{(1)}_1 \big|$ as the maximum of its possible values, computed using 101 spectral identities. orb$(0.1324, 1_1^2.20)$ is the small volume orbifold from Table 4.1 of Heard's thesis \cite{Heard-thesis} with volume $V \approx 0.1324$ and singular set described by the trivalent graph denoted $1_1^2.20$ in Table~4.2 of \cite{Heard-thesis}.
}
\label{tab:bound_1_2-examples}
\end{table} 

As a further example of bounds of type~(\ref{list:bounds-1}), we show in Figure~\ref{fig:bound_1_2} the bootstrap upper bound on the first spin-2 eigenvalue $|t_1^{(2)}|$ in terms of the first spin-1 eigenvalue $|t_1^{(1)}|$. The plot also shows many example orbifolds, displayed as dots, some of which are quite close to the bound.\footnote{The vertical position of the dots in Figure~\ref{fig:bound_1_2} is a lower bound on $|t_1^{(2)}|$. The red dots have horizontal error bars showing the value and uncertainty of $|t_1^{(1)}|$ (some of which are too small to be visible). The horizontal position of the blue dots is a lower bound on the value of $|t_1^{(1)}|$.} In Table~\ref{tab:bound_1_2-examples}, we list the eigenvalues of some of these examples and the corresponding bounds.
 Additional bounds of type (\ref{list:bounds-1}), with $(J, J')=(0, 2), (1, 0), (2, 0)$ and $(2,4)$, can be found in Figures~\ref{fig:bound_0_2},~\ref{fig:bound_1_0},~\ref{fig:bound_2_0},~and~\ref{fig:bound_2_4}. Bounds of type (\ref{list:bounds-2}) with $J=0$ and $J=2$ can be found in Figures~\ref{fig:bound_0_0}~and~\ref{fig:bound_2_2}.

Our second approach makes use of the Selberg trace formula. This formula relates a sum over the length spectrum of an orbifold, weighted by a test function $H$, to a sum over the eigenvalue spectrum, weighted by the Fourier transform of $H$; see Theorems~\ref{thm:trace-even} and~\ref{thm:trace-odd} in Section~\ref{subsec:STF} for precise statements. By optimizing over certain spaces of test functions, we can derive bounds on the spectra of specific orbifolds, as well as general bounds.

To use the trace formula to compute the spectra of specific orbifolds, we follow and extend the procedure of Booker--Str\"ombergsson and Lin--Lipnowski from~\cite{Booker-Strombergsson2007, Lin-Lipnowski2018, Lin-Lipnowski2020, Lin-Lipnowski2021}. We have used this approach to compute numerical bounds on the low-lying eigenvalues for many small-volume orbifolds. In particular, we used it to produce the dots of Figure~\ref{fig:bound_1_2} and the third and fourth column of Table~\ref{tab:bound_1_2-examples}. This procedure takes as input the complex length spectrum of an orbifold. The complex length spectrum can be computed numerically using the program \texttt{SnapPy}~\cite{SnapPy}, which employs the algorithm from~\cite{Hodgson-Weeks1994}. A summary of orbifolds with large $|t_1^{(J)}|$ for $J=0,\dots, 4$ can be found in Table~\ref{tab:large-eigenvalues}. The wealth of spectral data we obtain may also prove useful for other applications. 

To use the trace formula to derive general bounds, we apply linear programming to it. This follows an approach employed recently by Fortier Bourque and Petri to bound kissing numbers on hyperbolic manifolds~\cite{bourque2019kissing}, as well as the spectral gap (and a wealth of other invariants) of hyperbolic surfaces~\cite{bourque2023linear}. This idea goes back to the work of Cohn and Elkies~\cite{MR1973059}, who applied linear programming to the Poisson summation formula to prove bounds on density of sphere packings in $\mathbb{R}^n$.

We first find a numerical upper bound on $\lambda^{(0)}_1$ for manifolds or orbifolds of a given volume, shown in Figure~\ref{fig:selberg-upper-bound}. 
We then prove the following rigorous upper bound on the spectral gap of all hyperbolic 3-manifolds, using the fact that the smallest hyperbolic 3-manifold is the Weeks manifold~\cite{Gabai2009}, which has volume $V \approx 0.9427$:
\begin{theorem} \label{thm:gap-bound}
Let $\lambda^{(0)}_1$ be the first nonzero eigenvalue of the Laplace--Beltrami operator on an oriented closed hyperbolic 3-manifold. This satisfies the following upper bound:
$$
\lambda^{(0)}_1 < \frac{193785}{4096} \approx 47.3108.
$$
\end{theorem}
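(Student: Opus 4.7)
The plan is to combine the Selberg trace formula for the scalar Laplacian (Theorem~\ref{thm:trace-even}) with a Cohn--Elkies style linear programming argument, in the spirit of \cite{bourque2023linear}, together with Gabai's theorem \cite{Gabai2009} asserting that every closed oriented hyperbolic 3-manifold has volume at least $V_W \approx 0.9427$, realized by the Weeks manifold.

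Set $\Lambda := 193785/4096$ and $t_\Lambda := \sqrt{\Lambda - 1}$. The goal is to exhibit an explicit test function $H$ such that (i) the function appearing on the geometric side of the trace formula is non-negative on the positive real axis, so that the sum over non-trivial conjugacy classes is $\geq 0$; (ii) its Selberg transform $\hat H$ satisfies $\hat H(t) \leq 0$ for all real $t$ with $|t| \geq t_\Lambda$; (iii) $\hat H(i) > 0$, where $t = i$ is the spectral parameter of the zero eigenvalue; and (iv) $\hat H(i) / I(H) < V_W$, where $I(H)$ is the identity/volume coefficient on the geometric side. Given such an $H$, the bound follows immediately: if $\lambda_1^{(0)} \geq \Lambda$ then the spectral side of the trace formula is bounded above by $\hat H(i)$, since the zero eigenvalue contributes $\hat H(i)$ and every other eigenvalue contributes $\hat H(t_k)\leq 0$; meanwhile the geometric side is bounded below by $\mathrm{vol}(M)\cdot I(H) \geq V_W \cdot I(H) > \hat H(i)$, a contradiction.

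To produce such an $H$, I would parametrize it inside a finite-dimensional family of even, compactly supported functions whose Selberg transforms are known explicitly (for example, convolutions of indicator functions, or smooth bumps built from a polynomial basis times a fixed weight). The constraints (i)--(iv) are then linear in the coefficients of $H$, so the task of minimizing the ratio $\hat H(i)/I(H)$ subject to these constraints becomes a linear program. The specific rational value $193785/4096$ is presumably what the LP returns at the optimum once the near-optimal numerical solution is rounded to rational coefficients.

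The main obstacle is converting a numerical LP certificate into a rigorous proof, since constraints (i) and (ii) are statements about functions on continuous domains rather than on finitely many test points. I expect this to be handled either by choosing the basis so that $H$ and $\hat H$ are polynomials or trigonometric polynomials times a fixed non-negative weight, in which case the required non-positivity reduces to a sum-of-squares / Positivstellensatz certificate that can be verified exactly, or else by supplementing a sufficiently fine grid check with explicit uniform bounds on the derivatives of $H$ and $\hat H$. Once the sign conditions are established rigorously, the final scalar inequality $\hat H(i) < V_W \, I(H)$ reduces to an exact arithmetic computation with the chosen rational coefficients.
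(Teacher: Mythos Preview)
Your plan is correct and matches the paper's overall strategy: apply a Cohn--Elkies linear program to the scalar Selberg trace formula and invoke the Gabai--Meyerhoff--Milley result that the Weeks volume is the minimum. The logical skeleton you describe---positivity on the geometric side, non-positivity of $\hat H$ beyond $t_\Lambda$, and a strict scalar inequality at $V_W$---is exactly how the argument runs.

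The execution differs in a way worth noting. You propose compactly supported test functions (convolutions of indicators or polynomial bumps), whereas the paper works with linear combinations of $\beta$-derivatives of the Gaussian $\hat H_\beta(t)=e^{-\beta(t^2+1)}$ evaluated at a fixed $\beta=\omega$; this family is admissible by \cite{Lin-Lipnowski2021} even though it is not compactly supported. The payoff is that after factoring out the Gaussian weight, both the spectral-side constraint and the geometric-side constraint become \emph{polynomial} positivity conditions in $\lambda$ and in $p=\ell^2$ respectively, so the continuous constraints you worry about in (i) and (ii) are handled by a direct polynomial-positivity check (via the algorithm of \cite[Appendix~B]{Kravchuk:2021akc}) rather than by SOS certificates or grid-plus-derivative arguments. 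The paper also splits the volume range into $[47/50,25]$ and $[25,\infty)$, using two separate functionals (one with $\Lambda=25$ derivatives at $\omega=1/2$, one with only $\Lambda=3$ at $\omega=1/3$), and makes the functional's normalization $V$-dependent so that the residual $V$-dependence of the constraints reduces to a simple rational-function check. Your single-$H$ approach at $V=V_W$ would also work in principle, but the paper's parametrization is what makes the rationalized certificate short enough to display and verify.
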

\noindent We similarly show that $\lambda^{(0)}_1 < 400$ for closed hyperbolic 3-orbifolds. In addition, we prove the following upper bound on the spectral gap for orbifolds of large volume:

\begin{theorem}\label{thm:lambda1As}
For any $\varepsilon>0,$  there exists $V_*>0$ such that for all oriented closed hyperbolic 3-orbifolds with volume $V>V_{*}$, we have
\begin{equation*}\label{eq:1}
\lambda^{(0)}_1< 1+ \frac{54\left(1+\varepsilon\right)}{\left[ \log V\right]^2}\,.
\end{equation*}
\end{theorem}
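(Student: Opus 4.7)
The plan is to apply the Selberg trace formula (Theorem~\ref{thm:trace-even}) with a test function whose geometric-side Fourier transform has compact support inside $[-T,T]$ for some $T$ just below $\log V$, tailored so that the spectral-side function $h$ is sign-definite outside a small interval around the origin. The bound is trivial when $\lambda_1^{(0)}<1$, so I assume $\lambda_1^{(0)}\geqslant 1$, i.e.\ $t_1^{(0)}\in\mathbb{R}$. The strategy is to design $h$ so that $h(t)\leqslant 0$ outside $[-r,r]$ with $r<\sqrt{54(1+\varepsilon)}/\log V$, while $h(i\sigma)\geqslant 0$ for $\sigma\in(0,1]$ (so exceptional-spectrum terms cannot help). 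If the identity contribution in the trace formula strictly dominates the $\lambda_0=0$ contribution $h(i)$ together with the length-spectrum sum $\mathcal{L}_\Gamma(g)$, the trace formula forces at least one spectral parameter $t_k^{(0)}$ with $k\geqslant 1$ to lie in $(-r,r)$, yielding the claim. This is the three-dimensional analog of the treatment of surfaces by Fortier Bourque and Petri~\cite{bourque2023linear}.

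\emph{Construction.} Take a reference even $h_0$ of product form $h_0(u)=(a_0^2-u^2)K(u)^2$, where $K$ is real and even with $\widehat K$ supported in $[-1/2,1/2]$ and smooth enough that the moments $\int u^2 K^2$ and $\int u^4 K^2$ are finite. Then $\widehat{h_0}$ is supported in $[-1,1]$; $h_0\geqslant 0$ on $[-a_0,a_0]$ and $h_0\leqslant 0$ outside; and $h_0(i\sigma)=(a_0^2+\sigma^2)K(i\sigma)^2\geqslant 0$ for real $\sigma$ since $K(i\sigma)\in\mathbb{R}$. The positivity $\int u^2 h_0(u)\,du>0$ needed for the identity contribution is equivalent to $a_0^2>\int u^4 K^2/\int u^2 K^2$; the value $a_0^2=54$ records a particular admissible (not necessarily extremal) choice of $K$ for which this moment ratio equals $54$. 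Given $\varepsilon$ and large $V$, set $T=T(V):=\log V-(3+\delta)\log\log V$ for a small $\delta>0$ chosen in terms of $\varepsilon$, and rescale $h(t):=h_0(Tt)$, $g(\ell):=T^{-1}\widehat{h_0}(\ell/T)$; then $g$ is supported in $[-T,T]$.

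\emph{Estimates.} Scaling gives $\int t^2 h(t)\,dt=T^{-3}\int u^2 h_0(u)\,du=\Theta(T^{-3})$, so the identity term equals $V\cdot\frac{1}{4\pi^2}\int t^2 h(t)\,dt=\Theta(V/(\log V)^3)$. By Paley--Wiener $|h(i)|=|h_0(iT)|=O(e^T)$. For the length sum, use the uniform weight bound $|2\sinh(L(\gamma)/2)|^2\geqslant 4\sinh^2(\ell(\gamma)/2)$ and the universal prime geodesic estimate $\#\{\gamma:\ell(\gamma)\leqslant L\}=O(e^{2L}/L)$; combined with the pointwise bound $|g(\ell)|\leqslant T^{-1}\|\widehat{h_0}\|_\infty$ and the vanishing of $g$ outside $[-T,T]$, one obtains $|\mathcal{L}_\Gamma(g)|=O(e^T/T^2)$ with absolute implicit constants. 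With the above choice of $T$, $e^T=V/(\log V)^{3+\delta}=o(V/(\log V)^3)$, so the identity term strictly dominates $|h(i)|+|\mathcal{L}_\Gamma(g)|$ for $V>V_*(\varepsilon)$.

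\emph{Conclusion and main obstacle.} Suppose for contradiction $t_1^{(0)}\geqslant\sqrt{54(1+\varepsilon)}/\log V$. For large $V$, $a_0/T=\sqrt{54}/T<\sqrt{54(1+\varepsilon)}/\log V\leqslant t_1^{(0)}\leqslant |t_k^{(0)}|$ for every $k\geqslant 1$, so $h(t_k^{(0)})\leqslant 0$. The trace formula then gives $V\cdot\frac{1}{4\pi^2}\int t^2 h(t)\,dt+\mathcal{L}_\Gamma(g)\leqslant h(i)$, contradicting the estimates of the previous paragraph. The technical heart is producing the moment inequality $a_0^2\int u^2 K^2>\int u^4 K^2$ with $\widehat K$ supported in $[-1/2,1/2]$ and $a_0^2=54$; an explicit polynomial choice for $\widehat K$ (for example a high-order de la Vallée Poussin type kernel) reduces this to a routine moment computation, from which the specific constant $54$ emerges as a direct byproduct of the resulting moment ratio.
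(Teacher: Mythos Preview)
Your overall architecture matches the paper's: a test function of the form $(a_0^2-u^2)K(u)^2$ on the spectral side, rescaled by a parameter comparable to $\log V$, with $a_0^2=54$ emerging as the relevant threshold. However, there is a genuine gap in your treatment of the geodesic sum.

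You assert a ``universal prime geodesic estimate $\#\{\gamma:\ell(\gamma)\le L\}=O(e^{2L}/L)$ with absolute implicit constants'' and use it to bound $|\mathcal L_\Gamma(g)|$. No such uniform bound exists. If the orbifold has a primitive hyperbolic element $\gamma_0$ with length $\epsilon$ and holonomy $0$, the iterates $\gamma_0^n$ contribute
\[
\sum_{n\ge 1}\frac{\epsilon}{2(\cosh(n\epsilon)-1)}
=\sum_{n\ge 1}\frac{\epsilon}{4\sinh^2(n\epsilon/2)}\sim\frac{\pi^2}{6\epsilon}
\]
to the weighted sum $\sum_{\ell_\gamma\le T}c_\gamma^2$, which is unbounded as $\epsilon\to 0$. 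Since large volume places no lower bound on the systole, your estimate $|\mathcal L_\Gamma(g)|=O(e^T/T^2)$ with absolute constants fails, and the contradiction step collapses.

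The paper sidesteps this completely. In the paper's formulation (Lemma~\ref{lemma:1}), the geometric-side function $f$ is required to satisfy $f(x)\ge 0$ for all $x$, so the entire geodesic sum is non-negative and can simply be discarded; no counting is needed. For the specific choice $\widehat f(t)=(1-t^2/v^2)\bigl(\tfrac{\sin(\pi t/3)}{\pi t/3}\bigr)^6$, $f$ is a piecewise quintic supported on $[-2\pi,2\pi]$, and the paper verifies $f\ge 0$ directly on each piece. In your language this is the requirement $\widehat{h_0}\ge 0$, which you never impose. Your moment condition $a_0^2>\int u^4K^2/\int u^2K^2$ only guarantees that the identity contribution is positive; it does not make $g\ge 0$. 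The constant $54$ is indeed the moment ratio for $K=\mathrm{sinc}^3(\cdot/6)$, but what makes that particular kernel work is that it simultaneously gives $\widehat{h_0}\ge 0$ once $a_0^2$ exceeds $180/11$, so the geodesic sum has the helpful sign. To repair your argument, you must add and verify this positivity; with it, the absolute-value bound on $\mathcal L_\Gamma(g)$ becomes unnecessary.
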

\noindent Finally, we also find upper bounds on the systole length (the length of the shortest closed geodesic) of closed hyperbolic 3-manifolds of a given volume, following~\cite{bourque2023linear}. Previous works studying bounds on the systole length of hyperbolic 3-manifolds include~\cite{Gromov83, adams_reid_2000, White2002, Katz-Schaps-Vishne2007, Gendulphe2011, lakeland2014, Murillo-thesis}. Let $\mathrm{sys}(M)$ denote the systole length of $M$. A plot of our numerical bound on $\mathrm{sys}(M)$ for small-volume manifolds can be found in Figure~\ref{fig:selberg-systole-bound}, while the bound for large volume is given by the following: 
\begin{theorem}\label{thm:systole}
There exists $V_*>0$ such that for all closed oriented hyperbolic 3-manifolds $M$ of volume $V>V_*$, we have
\begin{equation*}\label{eq:2}
\mathrm{sys}(M)< \log(V)- 0.8482\,.
\end{equation*}
\end{theorem}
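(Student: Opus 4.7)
The plan is to apply the Selberg trace formula for closed oriented hyperbolic 3-manifolds (Theorem~\ref{thm:trace-even}) together with a Cohn--Elkies style linear programming argument, adapting the approach of Fortier Bourque and Petri~\cite{bourque2023linear} from the 2-dimensional setting. The driving mechanism is that when $\mathrm{sys}(M)\geq L$, every closed geodesic has length at least $L$, so any even test function $H$ on $\mathbb{R}$ with $\mathrm{supp}\,H \subseteq [-L,L]$ kills the entire hyperbolic/loxodromic side of the trace formula and leaves only the identity contribution:
\[
\sum_k h(t_k) \;=\; \frac{V}{4\pi^2}\int_{-\infty}^{\infty} t^2\,h(t)\,dt,
\]
where $h$ is the spectral-side transform of $H$ and the sum runs over the spectrum $t_k \in \mathbb{R}\cup i(0,1]$, with $t_0 = i$ at the trivial eigenvalue.

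Assume for contradiction that $\mathrm{sys}(M)\geq L := \log V - 0.8482$. If $h$ is nonnegative on $\mathbb{R}\cup i(0,1]$, then every term on the left-hand side is nonnegative, and retaining only the $t_0=i$ contribution yields
\[
V \;\geq\; \frac{4\pi^2\, h(i)}{\int_{-\infty}^{\infty} t^2\, h(t)\,dt}.
\]
The task is to exhibit an admissible $h$ depending on $L$ that makes the right-hand side strictly larger than $V=e^{L+0.8482}$, producing the contradiction. A natural first attempt sets $h=|\hat G|^2$ for $G$ even, real and supported in $[-L/2,L/2]$; positivity on $\mathbb{R}$ is automatic and positivity on $i(0,1]$ follows from the evenness and reality of $G$. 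After Plancherel this reduces to the Dirichlet variational problem
\[
\sup_{G \in H^1_0([-L/2,L/2])} \frac{\left(\int G(\ell)\cosh\ell\,d\ell\right)^2}{\int G'(\ell)^2\,d\ell},
\]
whose Euler--Lagrange equation $-G''=\cosh\ell$ gives $G(\ell)=\cosh(L/2)-\cosh\ell$ and maximum value $(\sinh L - L)/2$. This naive choice merely reproduces the injectivity-radius bound $V \geq \pi(\sinh L - L)$, which asymptotically yields only $\mathrm{sys}(M) \leq \log V - \log(\pi/2) \approx \log V - 0.4516$.

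To sharpen the additive constant to $-0.8482$, the class of admissible test functions must be enlarged by exploiting the fact that positivity of $h$ need only hold on the physical spectrum $\mathbb{R}\cup i(0,1]$, and not on all of $i\mathbb{R}$. Concretely, we consider test functions of the form $h(t) = q(t)\,|\hat G(t)|^2$ where $q$ is a polynomial (or, more generally, a Fej\'er--Riesz-type factor) nonnegative on $\mathbb{R}\cup i(0,1]$ but allowed to change sign on $i\mathbb{R}\setminus i(0,1]$---for instance a nonnegative combination of $1$ and $1-t^2$---and jointly optimize over $G$ and the free coefficients of $q$. The resulting family of linear programs admits an explicit $L\to\infty$ limit whose optimum determines the sharp asymptotic ratio $h(i)/\int t^2 h\,dt \sim e^{L+0.8482}/(4\pi^2)$.

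The main obstacle is solving this limiting LP in closed form to pin down the constant $0.8482$; once the asymptotic optimum is established, the theorem follows by choosing $V_*$ large enough that the sub-leading $O(L e^{-L})$ corrections to the bound, together with the discrepancy between the finite-$L$ LP and its $L\to\infty$ limit, do not destroy the strict inequality, producing the required contradiction $\mathrm{sys}(M) < L$.
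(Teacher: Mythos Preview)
Your overall strategy---Selberg trace formula plus a Cohn--Elkies-type positivity argument, improving on the naive $h=|\hat G|^2$ ansatz by inserting a polynomial prefactor that only needs to be nonnegative on the physical spectrum $\mathbb{R}\cup i(0,1]$---is exactly right and matches the paper's approach in spirit. But your proposal stops precisely where the actual work begins: you write that ``the main obstacle is solving this limiting LP in closed form to pin down the constant $0.8482$'' and then assume it has been done. It has not, and the constant $0.8482$ does not arise from any closed-form optimum. In the paper it comes from a \emph{specific explicit test function} borrowed from Fortier Bourque--Petri,
\[
\hat f(t)=\left(\frac{\eta_\alpha(Rt/2)}{4^\alpha\Gamma(\alpha+1)}\right)^{\!2} e^{-b^2t^2/2}\,(c-1+b^2t^2),
\]
with $\eta_\alpha$ the entire Bessel-type function, and the numerical parameter values $\alpha=0.559$, $c=2.3726$, $\kappa_0=0.1814$, $b=0.733$. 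The constant is then obtained by (i) citing a result of Bourque--Petri guaranteeing $f(R+b\kappa)\leq 0$ for $\kappa\geq\kappa_0$ and large $R$, (ii) checking $\hat f\geq 0$ on $\mathbb{R}\cup[-i,i]$ via $b^2<c-1$, and (iii) computing the large-$R$ asymptotics of $\hat f(i)$ and $\int t^2\hat f(t)\,dt$ explicitly. None of this is in your proposal.

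Two further points. First, your setup requires $H$ to be compactly supported on $[-L,L]$ so that the geodesic side vanishes, whereas the paper uses the weaker condition $f(x)\leq 0$ for $x\geq R_*$ (so the geodesic side is merely $\leq 0$); this is what accommodates the Gaussian factor, giving $f$ unbounded support. Second, your suggested prefactor ``a nonnegative combination of $1$ and $1-t^2$'' has a sign error: $1-t^2$ is negative on $\mathbb{R}$ for $|t|>1$, so no nontrivial nonnegative combination stays nonnegative on all of $\mathbb{R}$. The correct shape is $q(t)=(c-1)+b^2t^2$ with $c>1$ and $b^2<c-1$, which is positive on $\mathbb{R}$ and on $i[-1,1]$.
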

\begin{figure}[H]
\begin{center}
\epsfig{file=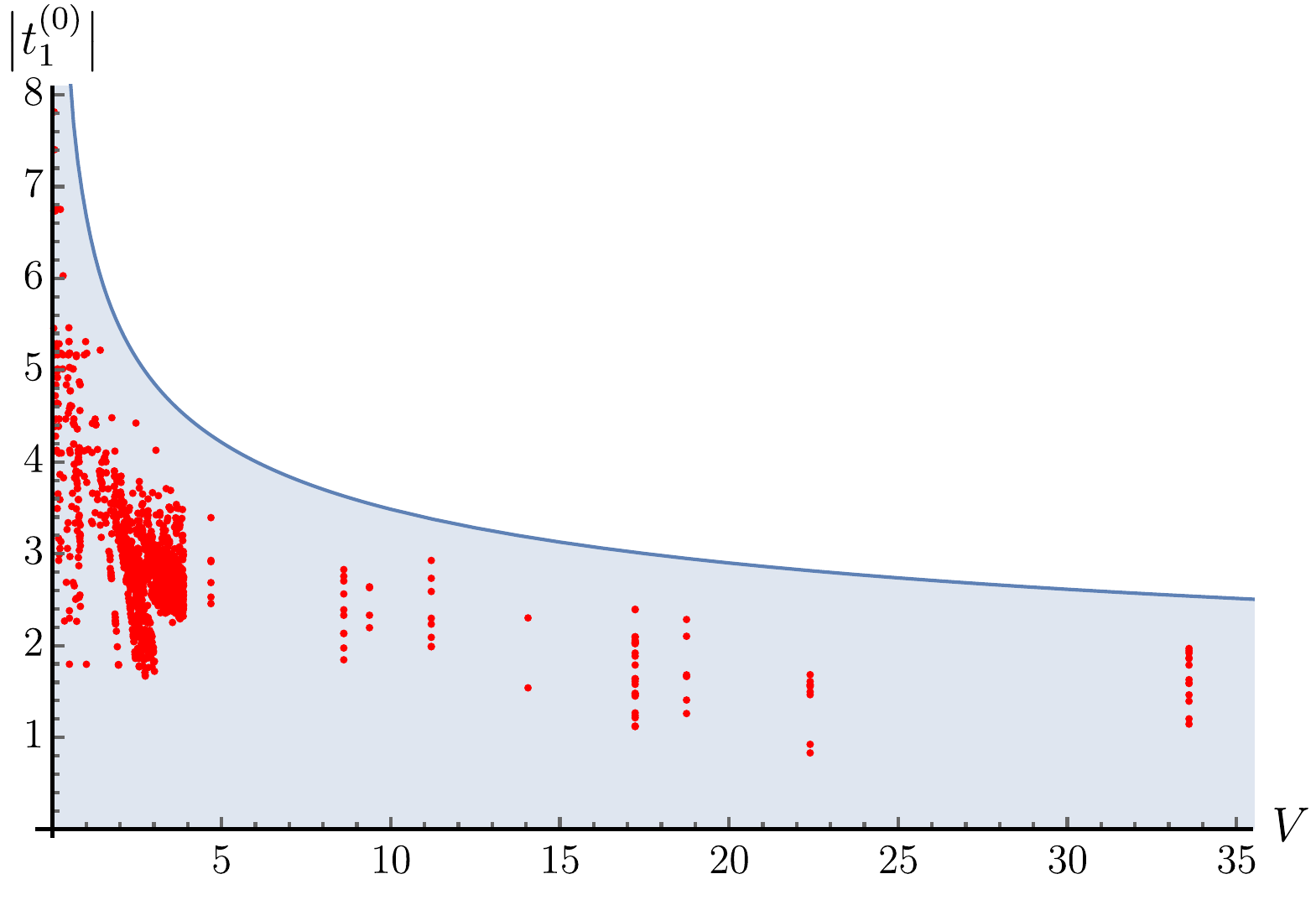,scale=.4}
\end{center}
\caption{An upper bound on the spectral gap of the Laplace--Beltrami operator from the Selberg trace formula. Here an upper bound $\big|t^{(0)}_1 \big|< c$ means that $\lambda^{(0)}_1 < c^2+1$. The red dots are lower bounds on the spectral gaps of various example orbifolds.}
\label{fig:selberg-upper-bound}
\end{figure}

\subsection{Summary of the bootstrap method}
Since the bootstrap method may be unfamiliar, we will now briefly summarize it. Fix a cocompact lattice $\Gamma$ in $G=\mathrm{PSL}_2(\mathbb{C})$. The Hilbert space $L^2(\Gamma\backslash G)$ is a unitary representation of $G$, the latter acting by right translation. $L^2(\Gamma\backslash G)$ decomposes as a discrete direct sum of unitary irreducible representations of $G$:
\be\label{eq:spectrumIntroOld}
L^2(\Gamma\backslash G) = \mathbb{C} \oplus \bigoplus\limits_{J=0}^{\infty}\bigoplus\limits_{k=1}^{\infty} R_{1+i t^{(J)}_k,J}\,.
\ee
Here $t_k^{(J)}$ with $J>0$ are the eigenvalues of the curl operator~\eqref{eq:curl} and $(t_k^{(0)})^2+1$ are the eigenvalues of the Laplace-Beltrami operator. $R_{1+ i t,J}$ are unitary irreducible representations of $G$, reviewed in Section~\ref{ssec:irrepsG}. In this way, the Laplacian spectrum of $\Gamma\backslash\hh$ is encoded representation-theoretically.

\begin{remark}\label{rmk:spectrumNotation}
Since we will often use the spectral decomposition~\eqref{eq:spectrumIntroOld}, it will be convenient to simplify our notation for the spectrum. Let $((t_k,J_k))_{k\in\mathbb{N}_{>0}}$ be a sequence of ordered pairs $(t_k,J_k)$ such that for all $J\geq0$, the subsequence with $J_k=J$ coincides with the spin-$J$ spectrum $((t^{(J)}_{\ell},J))_{\ell\in\mathbb{N}_{>0}}$. In other words, the spectral decomposition~\eqref{eq:spectrumIntroOld} can be equivalently written as
\be\label{eq:spectrumIntro}
L^2(\Gamma\backslash G) = \mathbb{C} \oplus \bigoplus\limits_{k=1}^{\infty} R_{1+ i t_{k},J_k}\,.
\ee
\end{remark}

The main ingredient for deriving bootstrap bounds is an infinite collection of spectral identities which follow from considering integrals of quadruple products
\be\label{eq:quadrupleIntro}
\int\limits\limits_{\Gamma\backslash G}F_1(g) F_2(g) F_3(g) F_4(g)dg\,.
\ee
Here $F_{1,2,3,4}$ are smooth functions on $\Gamma\backslash G$, each belonging to an irreducible summand on the right-hand side of~\eqref{eq:spectrumIntro}. In practice, we take $F_{1,2,3,4}$ to be $K$-finite, i.e.,~they are automorphic forms. A key fact is that such four-point correlations (as well as higher-point correlations) are uniquely determined by the spectrum $((t_k,J_k))_{k\in\mathbb{N}_{>0}}$ and integrals of triple products
$$
\hat{c}_{ijk} =\frac{1}{\mathrm{vol}(\Gamma\backslash \mathbb{H}^3)} \int\limits\limits_{\Gamma\backslash \hh}\varphi_i(x) \varphi_j(x) \varphi_k(x)d\mu(x)\,.
$$
See Section~\ref{ssec:triple} for the definition and discussion of the triple products for general spins $J_i,\,J_j,\,J_k$. Suppose $F_1 \in R_{1+i t_k,J_k}\subset L^2(\Gamma\backslash G)$ and $F_2\in R_{1+i t_\ell,J_\ell}\subset L^2(\Gamma\backslash G)$. The spectral decomposition~\eqref{eq:spectrumIntro} of the product $F_1(g)F_2(g)$ then takes the form
\be\label{eq:productIntro}
F_1(g)F_2(g) = \sum\limits_{m=0}^{\infty} \hat{c}_{k\ell m} \widetilde{F}_m(g)\,,
\ee
where $\widetilde{F}_m \in R_{1+i t_m,J_m}\subset L^2(\Gamma\backslash G)$ and the $m=0$ term represents the contribution of the trivial representation, where we set $\hat{c}_{k\ell 0} := \delta_{k\ell} $. Crucially, the dependence of $\widetilde{F}_m$ on $F_1$ and $F_2$ as these vary over vectors in $R_{1+i t_k,J_k}$ and $R_{1+i t_\ell,J_\ell}$ is completely determined by representation theory. This follows from the uniqueness of invariant trilinear forms for $G$ \cite{Oksak1973,Loke2001}.

Suppose $F_{1,2,3,4}$ in~\eqref{eq:quadrupleIntro} all belong to the same irreducible summand of $L^2(\Gamma\backslash G)$, $F_{1,2,3,4}\in R_{1+i t_\ell,J_\ell}\subset L^2(\Gamma\backslash G)$. Using the product expansion~\eqref{eq:productIntro} inside~\eqref{eq:quadrupleIntro} leads to the expansion
\be\label{eq:cbIntro}
\frac{1}{\mathrm{vol}(\Gamma\backslash \mathbb{H}^3)}  \int\limits\limits_{\Gamma\backslash G}F_1(g) F_2(g) F_3(g) F_4(g)dg =
\sum\limits_{m=0}^{\infty}(\hat{c}_{\ell\ell m})^2 \hat{\Psi}^{\ell\ell\ell\ell}_m(F_1,F_2,F_3,F_4)\,.
\ee
Thanks to the uniqueness of trilinear functionals, $\hat{\Psi}^{\ell\ell\ell\ell}_m(F_1,F_2,F_3,F_4)$ is explicitly computable in terms of the spectral data $(t_\ell,J_\ell)$, $(t_m,J_m)$, given a realization of $R_{1+i t_\ell,J_\ell}$ used to parametrize $F_{1,2,3,4}$, see Section~\ref{ssec:fourPoints} and in particular Proposition~\ref{prop:productExpG} for details.

Finally, the spectral identities follow from associativity of the product expansion~\eqref{eq:productIntro}. This can be implemented by imposing symmetry of~\eqref{eq:cbIntro} under the permutation $F_2\leftrightarrow F_4$, which leads to the identity
$$
\sum\limits_{m=0}^{\infty}(\hat{c}_{\ell\ell m})^2\left[ \hat{\Psi}^{\ell\ell\ell\ell}_m(F_1,F_2,F_3,F_4)
-\hat{\Psi}^{\ell\ell\ell\ell}_m(F_1,F_4,F_3,F_2) \right]= 0\,.
$$
As $F_{1,2,3,4}$ each ranges over a basis for the space of $K$-finite vectors in $R_{1+i t_\ell,J_\ell}$, this equation becomes an infinite set of spectral identities constraining the spectrum and $\hat{c}_{\ell\ell m}$. The full set of spectral identities is stated explicitly in Theorem~\ref{thm:crossingKFinite}. As an example, here are several spectral identities with $J_{\ell}=1$:
$$
\begin{aligned}
&\sum^{\infty}_{m=0} (\hat{c}_{\ell\ell m})^2\left[  \frac{(5 t_m^2+21)}{8(t_m^2+4)} \delta_{J_m 0} - \delta_{J_m 2}\right]=0, \\
&\sum^{\infty}_{m=0} (\hat{c}_{\ell\ell m})^2 \left[ t_{\ell} \delta_{J_m 0} - t_{m} \delta_{J_m 2} \right] =0, \\
&\sum^{\infty}_{m=0}(\hat{c}_{\ell\ell m})^2 \left[ \frac{(16t_{\ell}^2(t_m^2+4)-7 t_m^4-619t_m^2-2484 )}{8(t_m^2+4)}\delta_{J_m 0} - (t_{m}^2-120) \delta_{J_m 2} \right] =0, \\
&\sum^{\infty}_{m=0} (\hat{c}_{\ell\ell m})^2 \left[ t_{\ell} (4 t_{\ell}^2-3 t_m^2 -116)\delta_{J_m 0} - t_{m}(t_m^2-120) \delta_{J_m 2} \right] =0,
\end{aligned}
$$
where $\delta_{J J'}$ is the Kronecker delta.

To turn such identities into spectral bounds, we use the fact that the triple products $\hat{c}_{\ell\ell m}$ are real and hence $(\hat{c}_{\ell\ell m})^2\geqslant 0$. By judiciously choosing $F_{1,2,3,4}$, and taking linear combinations of such identities, we can arrange for the contribution of any element of the spectrum $(t_m,J_m)$ in some chosen set $U'$ to be non-negative. The vanishing of the sum over $m$ then guarantees that there must exist a point in the spectrum $(t_m,J_m)$ outside $U'$, yielding the spectral bound, which is the content of Proposition~\ref{prop:spectralBounds}.

The task of deciding whether or not there is a linear combination of spectral identities which is non-negative in $U'$ is an infinite-dimensional linear program which can be turned into a sequence of finite-dimensional semi-definite programs, thus allowing for an efficient implementation.

\subsection{Outline}
The rest of this paper is organized as follows: in Sections~\ref{sec:rep-theory},~\ref{sec:correlators}, and~\ref{sec:OPE-and-associativity}, we set up the machinery for deriving bounds from associativity---Section~\ref{sec:rep-theory} discusses the connection between the Laplace spectra of $\Gamma \backslash \hh$ and the unitary irreducible representations of $G$ occurring in the Hilbert space $L^2(\Gamma \backslash G)$, Section~\ref{sec:correlators} introduces correlations and triple products on $\Gamma \backslash G$, and Section~\ref{sec:OPE-and-associativity} explains how to derive the spectral identities. In Section~\ref{sec:STF}, we present the relevant trace formulas and discuss a procedure for deriving bounds for specific orbifolds. In Section~\ref{sec:bounds}, we present the bootstrap bounds. We use the trace formula to derive general bounds on the spectral gap and systole length in Section~\ref{sec:selberg-general-bounds}. Finally, we offer concluding remarks in Section~\ref{sec:conclusion}. Several technical discussions are presented in the appendices.

\subsection*{Acknowledgments}
We would like to thank Peter Sarnak and Akshay Venkatesh for useful discussions and Petr Kravchuk for useful discussions and collaboration on related ideas. DM acknowledges funding provided by Edward
and Kiyomi Baird as well as the grant DE-SC0009988 from the U.S. Department of Energy. SP acknowledges the support by the U.S. Department of Energy, Office of Science, Office of High Energy Physics, under Award Number DE-SC0011632 and by the Walter Burke Institute for Theoretical Physics.  

\section{Representation theory and the spectrum}
\label{sec:rep-theory}
The spectral problem for the Laplacian on hyperbolic 3-orbifolds can be naturally formulated using the representation theory of $G=\mathrm{SO}_0(1,3)$. In this section, we review this connection and set notation, starting with representations of the maximal compact subgroup $K = \mathrm{SO}(3)$.
\subsection{Irreducible representations of $\mathrm{SO}(3)$}
Let $V_1 = \mathbb{C}^3$ be the fundamental representation of $K=\mathrm{SO}(3)$ and let $V_n$ be the space of traceless elements of $\mathrm{Sym}^n(V_1)$. In other words, $V_n$ consists of tensors $w$ with components $w^{a_1\ldots a_n}$, where $a_1,\ldots,a_n\in\{1,2,3\}$ such that $w^{a_1\ldots a_n} = w^{a_{\sigma(1)}\ldots a_{\sigma(n)}}$ for any $\sigma\in S_n$, and $\delta_{a_1 a_2}w^{a_1\ldots a_n} = 0$. Here $\delta_{ab}$ is the Kronecker delta and we are using the Einstein summation convention, where a pair of repeated indices, one upper and one lower, is summed over.

$V_n$ is irreducible of dimension $2n+1$. The action of $k\in K$ on $w$ takes the form
$$
(k w)^{a_1\ldots a_n} = k^{a_1}_{\phantom{a_1}b_1}\ldots k^{a_n}_{\phantom{a_1}b_n} w^{b_1\ldots b_n}\,.
$$

We will use the following notation for the standard $K$-invariant maps $V_n\otimes V_n\rightarrow \mathbb{C}$ and $V_1\otimes V_1\otimes V_1 \rightarrow \mathbb{C}$:
\be\label{eq:dotAndSquare}
w_1\cdot w_2 = \delta_{a_1b_1}\ldots\delta_{a_n b_n}w_1^{a_1\ldots a_n} w_2^{b_1\ldots b_n}\,,\qquad
[v_1,v_2,v_3] = \epsilon_{abc} v_1^{a}v_2^{b}v_3^{c}\,,
\ee
$\epsilon_{abc}$ is the Levi--Civita tensor. We will normalize the $K$-invariant norm on $V_n$ as follows:
$$
\| w \|^2_n =\frac{w\cdot \overline{w}}{2n+1}\,.
$$
We say $v\in V_1$ is null if  $v\cdot v = 0$. For any null $v\in V_1$, let $w = v^n$ be the element of $V_n$ with components $w^{a_1\ldots a_n} = v^{a_1}\ldots v^{a_n}$.\footnote{We are aware of the notational clash between the components $v^a$ of $v\in V_1$ and the $n^{\mathrm{th}}$ tensor power $v^n\in V_n$. To avoid confusion, the component indices are denoted by letters from the start of the alphabet $a,\,b,\,c,\ldots$, and tensor powers by the letter $n$.}

Let us discuss the $K$-invariant trilinear forms $V_{n_1}\otimes V_{n_2} \otimes V_{n_3}\rightarrow \mathbb{C}$. Recall the decomposition of the tensor product into irreducibles
$$
V_{n_1}\otimes V_{n_2} = \bigoplus\limits_{n_3 = |n_1-n_2|}^{n_1+n_2} V_{n_3}\,.
$$
Thus, when the triplet $(n_1,n_2,n_3)$ satisfies the triangle inequalities, the space of $K$-linear forms on $V_{n_1}\otimes V_{n_2} \otimes V_{n_3}$ is one-dimensional, otherwise it is zero-dimensional. We will normalize these forms as follows:
\begin{definition}\label{def:Trilinear}
Suppose $n_1,n_2,n_3$ satisfy the triangle inequalities and let
$$
T_{n_1,n_2,n_3}:V_{n_1}\otimes V_{n_2} \otimes V_{n_3}\rightarrow \mathbb{C}
$$
be the unique $K$-invariant trilinear form normalized so that for any null $v_1,v_2,v_3\in V_1$, we have
$$
T_{n_1,n_2,n_3}(v^{n_1}_1,v^{n_2}_2,v^{n_3}_3) = 
(v_1\cdot v_2)^{\frac{n_1+n_2-n_3}{2}}(v_1\cdot v_3)^{\frac{n_1+n_3-n_2}{2}}(v_2\cdot v_3)^{\frac{n_2+n_3-n_1}{2}}
$$
for $n_1+n_2+n_3$ even and
$$
T_{n_1,n_2,n_3}(v^{n_1}_1,v^{n_2}_2,v^{n_3}_3) = \frac{[v_1,v_2,v_3]}{\sqrt{2}}
(v_1\cdot v_2)^{\frac{n_1+n_2-n_3-1}{2}}(v_1\cdot v_3)^{\frac{n_1+n_3-n_2-1}{2}}(v_2\cdot v_3)^{\frac{n_2+n_3-n_1-1}{2}}
$$
for $n_1+n_2+n_3$ odd.
\end{definition}
Consider the dual map $T^*_{n_1,n_2,n_3}: \mathbb{C}\rightarrow V^{*}_{n_1}\otimes V^{*}_{n_2} \otimes V^{*}_{n_3}$. The bilinear form $(w_1,w_2)\mapsto w_1\cdot w_2$ gives rise to an isomorphism $b_{n}:V_{n} \rightarrow V_{n}^{*}$. Then we have
\begin{lemma}\label{lem:qDefinition}
$$
T_{n_1,n_2,n_3}\circ (b_{n_1}\otimes b_{n_2}\otimes b_{n_3})^{-1} \circ T^*_{n_1,n_2,n_3} = q(n_1,n_2,n_3)\,,
$$
where\
\be\label{eq:qDefinition}
q(n_1,n_2,n_3) = \tfrac{\Gamma(n_1+n_2-n_3+1)\, \Gamma(n_1+n_3-n_2+1) \,\Gamma(n_2+n_3-n_1+1) \,\Gamma(n_1+n_2+n_3+2)}{\Gamma(2 n_1+1) \Gamma(2 n_2+1) \Gamma(2 n_3+1)}\,.
\ee
\end{lemma}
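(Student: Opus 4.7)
My plan is to compute $q$ by realizing $V_n$ as degree-$n$ harmonic polynomials on $\mathbb{R}^3$ and expressing the trilinear form $T_{n_1,n_2,n_3}$ as a Gaussian integral. Identify $w \in V_n$ with $p_w(x) = w_{a_1\cdots a_n}x^{a_1}\cdots x^{a_n}$; under this identification the $K$-invariant pairing $w\cdot w'$ coincides with the Fischer pairing $\frac{1}{n!}p_w(\partial)p_{w'}(0)$. Letting $X$ denote a standard Gaussian on $\mathbb{R}^3$, the moment generating function $\mathbb{E}[\exp(\sum_i t_i v_i\cdot X)] = \exp(\frac{1}{2}\sum_{ij} t_i t_j v_i\cdot v_j)$ together with $v_i\cdot v_i = 0$ for null $v_i$ gives, in the even case,
$$
\mathbb{E}\!\left[\prod_{i=1}^{3}(v_i\cdot X)^{n_i}\right] = \frac{n_1!\,n_2!\,n_3!}{a_1!\,a_2!\,a_3!}\prod_{i<j}(v_i\cdot v_j)^{a_{k(i,j)}},\qquad a_k = \frac{n_i+n_j-n_k}{2},
$$
where $k(i,j)$ is the index not in $\{i,j\}$. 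Since tensors $v^n$ for null $v \in V_1$ span $V_n$, linearity and Definition~\ref{def:Trilinear} yield
$$
T_{n_1,n_2,n_3}(w_1,w_2,w_3) = \frac{a_1!\,a_2!\,a_3!}{n_1!\,n_2!\,n_3!}\,\mathbb{E}[p_{w_1}(X)p_{w_2}(X)p_{w_3}(X)].
$$

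The composition $q = T_{n_1,n_2,n_3}\circ(b_{n_1}\otimes b_{n_2}\otimes b_{n_3})^{-1}\circ T^*_{n_1,n_2,n_3}$ is the squared $K$-invariant norm of the element $\tilde T \in V_{n_1}\otimes V_{n_2}\otimes V_{n_3}$ dual to $T_{n_1,n_2,n_3}$. The polynomial associated to $\tilde T$ in three variables $y_1,y_2,y_3$ is the componentwise harmonic projection of $\prod_{i<j}(y_i\cdot y_j)^{a_{k(i,j)}}$. Using Fischer orthogonality of harmonic polynomials against multiples of $y_i\cdot y_i$, the squared norm $\|\tilde T\|^2$ equals the Fischer pairing of $\tilde T$ with $\prod_{i<j}(y_i\cdot y_j)^{a_{k(i,j)}}$, a constant that can be evaluated either by direct combinatorial counting of contractions or via the classical sphere integral
$$
\int_{S^2}\prod_i(v_i\cdot x)^{n_i}\,d\sigma(x) = \frac{4\pi\,n_1!\,n_2!\,n_3!}{(n_1+n_2+n_3+1)!!\,a_1!\,a_2!\,a_3!}\prod_{i<j}(v_i\cdot v_j)^{a_{k(i,j)}}
$$
(which itself follows from the Gaussian-to-sphere relation for homogeneous polynomials). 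After simplification using $(2m+1)!! = (2m+1)!/(2^m\,m!)$, one recovers the Gamma-function formula in the statement.

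For the odd case, where odd-order Gaussian moments vanish, I would either pass to the $\mathrm{SU}(2)$-cover of $\mathrm{SO}(3)$ (in which $V_n$ is the spin-$n$ irrep and the invariant trilinear form is the classical transvectant, uniformly in parity), or introduce an auxiliary $\epsilon_{abc}$-contraction in the Gaussian integral. The normalization $1/\sqrt{2}$ in Definition~\ref{def:Trilinear} is arranged, together with $\epsilon_{abc}\epsilon^{abc} = 6$, so that $q$ has the same closed form. The main obstacle I expect is the combinatorial bookkeeping in the squared-norm evaluation, particularly tracking the $(n_1+n_2+n_3+1)!$ factor that emerges from summing over pairings of two copies of the cross-contraction pattern after the traceless projection; the sphere-integral route provides a convenient shortcut that sidesteps the most intricate part of the counting.
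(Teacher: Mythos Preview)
Your setup via harmonic polynomials and Gaussian moments is correct and genuinely different from the paper's route: the Gaussian representation of $T$, the Fischer identification $w\cdot w' = \tfrac{1}{n!}p_w(\partial)p_{w'}(0)$, and the reduction $q = \|\tilde T\|_F^2 = \langle\tilde T,P\rangle_F$ with $P(y_1,y_2,y_3) = \prod_{i<j}(y_i\cdot y_j)^{a_{k(i,j)}}$ are all valid. The gap is that you never actually evaluate this pairing. The single-variable sphere integral you display is your Gaussian moment rewritten via $\mathbb{E}[Q(X)] = (N{+}1)!!\,(4\pi)^{-1}\int_{S^2}Q$; it computes $T(v_1^{n_1},v_2^{n_2},v_3^{n_3})$, which was your starting point, not the triple Fischer pairing $\langle\tilde T,P\rangle_F$. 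That pairing lives on three independent vectors $y_1,y_2,y_3$, and extracting it still requires either the triply-traceless projection of $P$ in closed form or the full contraction sum --- precisely the ``combinatorial bookkeeping'' you flag as the obstacle. So ``after simplification\ldots one recovers the Gamma-function formula'' is exactly where the proof was supposed to happen, and the sphere integral is not the shortcut you claim.

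The paper's argument is shorter and handles both parities at once. It first proves (Lemma~\ref{lem:KHaar3}) that $\int_K\prod_i(w_i\cdot k\tilde w_i)\,dk = q^{-1}\,T(w_1,w_2,w_3)\,T(\tilde w_1,\tilde w_2,\tilde w_3)$; Lemma~\ref{lem:qDefinition} then follows in one line by applying $T$ to the tensor-valued integral $\int_K(kw_1)\otimes(kw_2)\otimes(kw_3)\,dk$. The constant $q$ itself is fixed inside Lemma~\ref{lem:KHaar3} by passing to $\mathrm{SU}(2)$, choosing explicit spinors, and doing an elementary trigonometric integral in Euler angles. This is exactly the $\mathrm{SU}(2)$ route you reserve for the odd case; used from the outset it replaces all of the Wick-contraction bookkeeping by a single closed-form integral. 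If you insist on completing the harmonic-polynomial approach, the honest endpoint is the Racah formula for $\begin{psmallmatrix}n_1&n_2&n_3\\0&0&0\end{psmallmatrix}^2$ (equivalently the Gaunt coefficient), which is where the factor $\Gamma(n_1{+}n_2{+}n_3{+}2)$ actually enters --- but deriving that is no shorter than the paper's computation.
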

The proof of the lemma is deferred to Appendix~\ref{app:KHaar}.

We will also make use of the dual of $T_{n_1,n_2,n_3}$ with respect to the last entry:
\begin{definition}\label{def:Tcheck}
Suppose $n_1,\,n_2,\,n_3$ satisfy the triangle inequalities. We define $T^{\vee}_{n_1,n_2,n_3}: V_{n_1}\otimes V_{n_2}\rightarrow V_{n_3}$ to be the unique $K$-linear map such that for all $w_1\in V_{n_1}$, $w_2\in V_{n_2}$, $w_3\in V_{n_3}$, we have
$$
T^{\vee}_{n_1,n_2,n_3}(w_1,w_2)\cdot w_3 = T_{n_1,n_2,n_3}(w_1,w_2,w_3)\,.
$$
\end{definition}

\subsection{Invariant quadrilinear forms for $\mathrm{SO}(3)$}\label{ssec:invariant4K}
Next, let us discuss $K$-invariant quadrilinear forms $V_{n_1}\otimes V_{n_2}\otimes V_{n_3}\otimes V_{n_4}\rightarrow\mathbb{C}$. From the decompositions of $V_{n_1}\otimes V_{n_2}$ and $V_{n_3}\otimes V_{n_4}$ into irreducibles, we see that a basis for the space of all invariant forms consists of
\ba\label{eq:HDef}
&H^{n_1,n_2,n_3,n_4}_{n_5}:V_{n_1}\otimes V_{n_2}\otimes V_{n_3}\otimes V_{n_4}\rightarrow \mathbb{C}\\
&H^{n_1,n_2,n_3,n_4}_{n_5}(w_1,w_2,w_3,w_4)\coloneqq T^{\vee}_{n_1,n_2,n_5}(w_1,w_2)\cdot T^{\vee}_{n_3,n_4,n_5}(w_3,w_4)\,,
\ea
with $\mathrm{max}(|n_1-n_2|,|n_3-n_4|)\leq n_5\leq \mathrm{min}(n_1+n_2,n_3+n_4)$. The main goal of this subsection will be to compute $H^{n_1,n_2,n_3,n_4}_{n_5}(w_1,w_2,w_3,w_4)$ explicitly.

Note that any linear form $\alpha\in V_n^{*}$ can be fully recovered from its values on $w\in V_n$ of the form $w^{a_1\ldots a_n} = v^{a_1}\ldots v^{a_n}$, where $v\in V_1$ ranges over all null vectors.\footnote{To see this, note that $\alpha(w) = w^{a_1\ldots a_n}\alpha_{a_1\ldots a_n}$ for some symmetric and traceless $\alpha_{a_1\ldots a_n}$. Any symmetric tensor $\alpha_{a_1\ldots a_n}$ can be recovered from the values of the polynomial $v^{a_1}\ldots v^{a_n}\alpha_{a_1\ldots a_n}$ as $v$ ranges over all of $V_1$. If $v$ only ranges over null vectors, we can recover $\alpha_{a_1\ldots a_n}$ up to terms proportional to $\delta_{a_i a_j}$, but these terms are uniquely fixed by demanding $\alpha_{a_1\ldots a_n}$ to be traceless. For an explicit reconstruction of $\alpha_{a_1\ldots a_n}$ from $\alpha(v^n)$ with $v$ null using a differential operator, see Section 3.1 of~\cite{Costa:2011dw}.} Hence, without loss of generality, we will focus on computing $H^{n_1,n_2,n_3,n_4}_{n_5}(v^{n_1}_1,v^{n_2}_2,v^{n_3}_3,v^{n_4}_4)$ where $v_{1},\,v_2,\,v_3,\,v_4\in V_1$ are four null vectors.

Note that $H^{n_1,n_2,n_3,n_4}_{n_5}(v^{n_1}_1,v^{n_2}_2,v^{n_3}_3,v^{n_4}_4)$ is a homogeneous polynomial in the components of $v_i$ of degree $n_i$ for each $i\in\{1,2,3,4\}$. Since it is also $K$-invariant, it must in fact be a polynomial in the six variables $v_i\cdot v_j$ with $i< j$ and the four variables $[v_i,v_j,v_k]$ with $i< j< k$. To proceed, we will first find all the polynomial relations between these variables. Firstly, note that the product $[v_i,v_j,v_k][v_{i'},v_{j'},v_{k'}]$ can always be expressed as a polynomial in the six $v_{i}\cdot v_{j}$. Secondly, the four vectors $v_1,\,\ldots,v_4\in\mathbb{C}^3$ must be linearly dependent, which can be exploited as follows. Define $r$ to be the following invariant ratio:
\be\label{eq:rDefinition1}
r(v_1,v_2,v_3,v_4) \coloneqq \frac{(v_3\cdot v_4)[v_1,v_2,v_3]}{(v_1\cdot v_3)[v_2,v_3,v_4]}\,.
\ee
$r(v_1,v_2,v_3,v_4)$ has homogeneity degree 0 in all $v_i$. As a result of the linear dependence of $v_1,\,\ldots,v_4$, any homogeneity degree 0 invariant function of $v_1,\,\ldots,v_4$ is in fact a function of $r$ only. Indeed, by expressing $v_4$ as a linear combination of $v_1$, $v_2$, $v_3$ and using $v_i\cdot v_i=0$, it is not hard to show that
\be\label{eq:rDefinition2}
\frac{(v_1\cdot v_2)(v_3\cdot v_4)}{(v_1\cdot v_3)(v_2\cdot v_4)} = r^2 \,,\qquad
\frac{(v_1\cdot v_4)(v_2\cdot v_3)}{(v_1\cdot v_3)(v_2\cdot v_4)} = (1-r)^2,
\ee
and
\be\label{eq:rDefinition3}
\frac{(v_2\cdot v_3)[v_1,v_3,v_4]}{(v_1\cdot v_3)[v_2,v_3,v_4]} = 1-r\,,\qquad
\frac{(v_3\cdot v_4)[v_1,v_2,v_4]}{(v_1\cdot v_4)[v_2,v_3,v_4]} = \frac{r}{1-r}\,.
\ee

With this preparation, we can state the main result of this subsection:
\begin{lemma}\label{lem:quadrilinear}
Let $v_1,\,v_2,\,v_3,\,v_4\in V_1$ be null vectors. Then, if $n_1+n_2+n_3+n_4$ is even, we have
\ba\label{eq:Heven}
H^{n_1,n_2,n_3,n_4}_{n_5}(v^{n_1}_1,v^{n_2}_2,v^{n_3}_3,v^{n_4}_4) =
 &(v_1\cdot v_2)^{\frac{n_1+n_2-n_3-n_4}{2}} (v_1\cdot v_4)^{\frac{n_1-n_2-n_3+n_4}{2}}\\
 &\times(v_1\cdot v_3)^{n_3}(v_2\cdot v_4)^{\frac{-n_1+n_2+n_3+n_4}{2}}
\mathcal{H}^{n_1,n_2,n_3,n_4}_{n_5}(r)\,.
\ea
On the other hand, if $n_1+n_2+n_3+n_4$ is odd, we have
\ba\label{eq:Hodd}
H^{n_1,n_2,n_3,n_4}_{n_5}(v^{n_1}_1,v^{n_2}_2,v^{n_3}_3,v^{n_4}_4) =
&\frac{[v_1,v_2,v_4]}{\sqrt{2}}
 (v_1\cdot v_2)^{\frac{n_1+n_2-n_3-n_4-1}{2}} (v_1\cdot v_4)^{\frac{n_1-n_2-n_3+n_4-1}{2}}
 \\
 &\times(v_1\cdot v_3)^{n_3}(v_2\cdot v_4)^{\frac{-n_1+n_2+n_3+n_4-1}{2}}
\mathcal{H}^{n_1,n_2,n_3,n_4}_{n_5}(r)\,.
\ea
Here $r=r(v_1,v_2,v_3,v_4)$ is the homogeneity degree zero ratio defined by any of the equations~\eqref{eq:rDefinition1},~\eqref{eq:rDefinition2}, or~\eqref{eq:rDefinition3}, and $\mathcal{H}^{n_1,n_2,n_3,n_4}_{n_5}(r)$ is the following polynomial
\be\label{eq:HcalDef}
\mathcal{H}^{n_1,n_2,n_3,n_4}_{n_5}(r) \coloneqq r^{n_3+n_4-n_5}{}_2F_1(n_1-n_2-n_5,-n_3+n_4-n_5;-2n_5;r)\,.
\ee
\end{lemma}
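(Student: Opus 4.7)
The plan is to first reduce $H^{n_1,n_2,n_3,n_4}_{n_5}$ to the stated kinematic form using classical invariant theory, and then identify the reduced polynomial $\mathcal{H}^{n_1,n_2,n_3,n_4}_{n_5}(r)$ via a Casimir eigenvalue argument.

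For the kinematic prefactor: the form $H^{n_1,n_2,n_3,n_4}_{n_5}(v_1^{n_1},v_2^{n_2},v_3^{n_3},v_4^{n_4})$ is a polynomial in the components of $v_1,\ldots,v_4$, homogeneous of degree $n_i$ in $v_i$, and $\mathrm{SO}(3,\mathbb{C})$-invariant. By the first fundamental theorem of $\mathrm{SO}(3,\mathbb{C})$-invariant theory, it is a polynomial in the scalar invariants $v_i\cdot v_j$ and the pseudo-scalar invariants $[v_i,v_j,v_k]$. The identity $[v_1,v_2,v_3][v_{1'},v_{2'},v_{3'}] = \det((v_i\cdot v_{j'}))$ rewrites any product of two triple products as a polynomial in inner products, and parity considerations with respect to $n_1+n_2+n_3+n_4$ then force $H$ to be a polynomial in the $v_i\cdot v_j$'s alone in the even case, or to contain exactly one triple product (which we take to be $[v_1,v_2,v_4]$) in the odd case. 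Dividing out the monomial kinematic prefactor in the statement yields a function of net degree zero in each $v_i$ which, by~\eqref{eq:rDefinition2}--\eqref{eq:rDefinition3}, is a polynomial in the single cross-ratio $r$ alone; this defines $\mathcal{H}^{n_1,n_2,n_3,n_4}_{n_5}(r)$.

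To identify $\mathcal{H}^{n_1,n_2,n_3,n_4}_{n_5}(r)$: by construction, $T^{\vee}_{n_1,n_2,n_5}(v_1^{n_1},v_2^{n_2})$ lies in the $V_{n_5}$-isotypic piece of $V_{n_1}\otimes V_{n_2}$ and is therefore an eigenvector of the diagonal Casimir $C_{12}$ (the quadratic Casimir of diagonal $\mathrm{SO}(3,\mathbb{C})$ acting on the first two tensor factors) with eigenvalue $n_5(n_5+1)$, and similarly for $T^{\vee}_{n_3,n_4,n_5}$ under $C_{34}$. Applying $C_{12}$ to both sides of~\eqref{eq:HDef}, and translating the resulting action on polynomials in $v_1,v_2$ into a second-order differential operator on $\mathcal{H}(r)$ via the chain rule and the kinematic prefactor, one obtains an ODE. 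After removing the leading OPE behavior $r^{n_3+n_4-n_5}$, this ODE is precisely the Gauss hypergeometric equation with parameters $a = n_1-n_2-n_5$, $b = -n_3+n_4-n_5$, $c = -2n_5$, whose unique polynomial solution analytic at $r=0$ is ${}_2F_1(a,b;c;r)$. The leading coefficient of $r^{n_3+n_4-n_5}$ is fixed to $1$ by a direct evaluation in the limit $r\to 0$, in which the two $T^{\vee}$'s factor along the $V_{n_5}$-channel and can be computed from Definition~\ref{def:Trilinear} together with Lemma~\ref{lem:qDefinition}.

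The hardest step is converting the Casimir eigenvalue equation into an ODE in $r$: one must carry the multi-factor kinematic prefactor through the chain rule and, in the odd-parity case, track the extra $[v_1,v_2,v_4]$ factor carefully. A cleaner but more computational alternative is to use the isomorphism $V_n \cong \mathrm{Sym}^{2n}\mathbb{C}^2$ induced by $\mathrm{SO}(3,\mathbb{C})\cong \mathrm{PSL}_2(\mathbb{C})$, under which each null vector $v$ becomes a squared spinor $\xi\otimes \xi$ and $v_i\cdot v_j$ becomes $\langle\xi_i,\xi_j\rangle^2$. The quadrilinear form then becomes a classical $\mathrm{SL}_2(\mathbb{C})$-invariant in four spinor variables, and direct expansion of the intermediate-channel contraction produces the hypergeometric series by a routine binomial sum, fixing both parameters and normalization simultaneously.
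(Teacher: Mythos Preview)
Your proposal is correct and follows essentially the same route as the paper: reduce to a function of the single cross-ratio $r$ by invariant theory and homogeneity, then use the quadratic Casimir eigenvalue on the $(12)$-pair to obtain a hypergeometric ODE in $r$, and fix the normalization via the $r\to 0$ limit using Definition~\ref{def:Trilinear}. The only minor deviation is that the paper's normalization step does not require Lemma~\ref{lem:qDefinition}; the leading coefficient is read off directly from the limiting form $T^{\vee}_{n_1,n_2,n_5}(v_1^{n_1},v_2^{n_2})\sim (v_1\cdot v_2)^{(n_1+n_2-n_5)/2}v_1^{n_5}$ as $v_1\to v_2$.
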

\begin{proof}
The prefactors of $\mathcal{H}^{n_1,n_2,n_3,n_4}_{n_5}$ on the right-hand sides of equations~\eqref{eq:Heven} and~\eqref{eq:Hodd} are $K$-invariant and have the correct degree of homogeneity in all four $v_i$. It follows that $\mathcal{H}^{n_1,n_2,n_3,n_4}_{n_5}$ must be $K$-invariant of homogeneity degree zero. By the preceding discussion, $\mathcal{H}^{n_1,n_2,n_3,n_4}_{n_5}$ depends on $v_i$ only through the ratio $r$.

It remains to find $\mathcal{H}^{n_1,n_2,n_3,n_4}_{n_5}(r)$. Let $A_a$ with $a=1,2,3$ be the standard basis for the Lie algebra $\mathfrak{so}(3)$, so that $[A_a,A_b] = \sum_{c=1}^{3}\epsilon_{abc}A_c$. Consider any $K$-linear map $F:V_{n_1}\otimes V_{n_2}\rightarrow W$, where $W$ is any representation of $\mathrm{SO}(3)$. Then $A_a$ acts on $F(v_1^{n_1},v_2^{n_2})$ as the following linear differential operator:
$$
A_a F(v_1^{n_1},v_2^{n_2}) = -\sum\limits_{b,c=1}^{3}\epsilon_{abc}\left(v^b_1\frac{\partial}{\partial v^c_1}+v^b_2\frac{\partial}{\partial v^c_2}\right)F(v_1^{n_1},v_2^{n_2})\,.
$$
Let us define the quadratic Casimir of $\mathrm{SO}(3)$ as $c_2 = \sum_{a=1}^{3}A_a A_a$, so that
$$
c_2 F(v_1^{n_1},v_2^{n_2})= \sum\limits_{a,b,c,d,e=1}^{3}\epsilon_{abc}\epsilon_{ade}\left(v^b_1\frac{\partial}{\partial v^c_1}+v^b_2\frac{\partial}{\partial v^c_2}\right)
\left(v^d_1\frac{\partial}{\partial v^e_1}+v^d_2\frac{\partial}{\partial v^e_2}\right)F(v_1^{n_1},v_2^{n_2})\,.
$$
Note that $c_2$ acts on $V_{n}$ as identity times the constant $-n(n+1)$. Hence, if $W = V_{n_5}$ is irreducible, $F(v_1^{n_1},v_2^{n_2})$ satisfies the second-order differential equation
$$
c_2 F(v_1^{n_1},v_2^{n_2})= -n_5(n_5+1)F(v_1^{n_1},v_2^{n_2})\,.
$$
In particular, since $H^{n_1,n_2,n_3,n_4}_{n_5}$ is defined by~\eqref{eq:HDef}, it satisfies the same equation
$$
c_2 H^{n_1,n_2,n_3,n_4}_{n_5}(v^{n_1}_1,v^{n_2}_2,v^{n_3}_3,v^{n_4}_4) = 
 -n_5(n_5+1)
H^{n_1,n_2,n_3,n_4}_{n_5}(v^{n_1}_1,v^{n_2}_2,v^{n_3}_3,v^{n_4}_4)\,.
$$
Plugging in the ansatz~\eqref{eq:Heven},~\eqref{eq:Hodd} leads to an ordinary second-order differential equation for $\mathcal{H}^{n_1,n_2,n_3,n_4}_{n_5}(r)$. The only solutions of this equation which are rational in $r$ are constant multiples of~\eqref{eq:HcalDef}.

To fix the normalization, we consider the limit $v_1\rightarrow v_2$, so that $v_1\cdot v_2\rightarrow 0$ and hence $r\rightarrow 0$. Assume for simplicity that $n_1+n_2+n_5$ is even. Then
$$
T_{n_1,n_2,n_5}^{\vee}(v_1^{n_1},v_2^{n_2})\sim (v_1\cdot v_2)^{\frac{n_1+n_2-n_5}{2}}v_1^{n_5} \quad\text{as}\quad v_1\rightarrow v_2.
$$
When $n_3+n_4+n_5$ is also even, it follows that
$$
H^{n_1,n_2,n_3,n_4}_{n_5}(v^{n_1}_1,v^{n_2}_2,v^{n_3}_3,v^{n_4}_4) \sim 
(v_1\cdot v_2)^{\frac{n_1+n_2-n_5}{2}}
(v_1\cdot v_3)^{\frac{n_3-n_4+n_5}{2}}(v_1\cdot v_4)^{\frac{-n_3+n_4+n_5}{2}}
(v_3\cdot v_4)^{\frac{n_3+n_4-n_5}{2}}
$$
as $v_1\rightarrow v_2$. On the other hand, in the same limit
$$
\mathcal{H}^{n_1,n_2,n_3,n_4}_{n_5}(r) \sim r^{n_3+n_4-n_5} \sim
\left[\frac{(v_1\cdot v_2)(v_3\cdot v_4)}{(v_1\cdot v_3)(v_1\cdot v_4)}\right]^{\frac{n_3+n_4-n_5}{2}}\,.
$$
Which fixes the overall constant in~\eqref{eq:HcalDef}. The cases when $n_1+n_2+n_5$ or $n_3+n_4+n_5$ are odd lead to the same answer.
\end{proof}
\begin{remark}\label{rmk:Hdegree}
The polynomial $\mathcal{H}^{n_1,n_2,n_3,n_4}_{n_5}(r)$ has degree $\min(2n_3,n_2+n_3+n_4-n_1)$. Indeed, when $a,b,c$ are non-negative integers such that $c\geq\min(a,b)$, ${}_2F_1(-a,-b;-c,r)$ is a polynomial in $r$ of degree $\min(a,b)$.
\end{remark}

\subsection{Unitary irreducible representations of $\mathrm{SO}_0(1,3)$}\label{ssec:irrepsG}
The unitary irreducible representations of $G=\mathrm{SO}_0(1,3)$ have been classified in~\cite{Bargmann, Gelfand-Naimark}. They can all be realized as spaces of functions on the Riemann sphere $\mathbb{C}\cup\{\infty\}$. Recall that $G\cong\mathrm{PSL}_2(\mathbb{C})$, so $G$ acts on the Riemann sphere by fractional linear transformations
$$
\pm\begin{pmatrix}a & b\\c & d\end{pmatrix} \in \mathrm{PSL}_2(\mathbb{C})\,:\qquad
x\mapsto \frac{a x+b}{c x+d}\,.
$$
This action leads to the following construction of representations:

\begin{definition}\label{def:principal}
The (unitary) \emph{principal series} representation of $\mathrm{PSL}_2(\mathbb{C})$ will be denoted $R_{\Delta,J}$, where $\Delta = 1 + i t$ with $t\in\mathbb{R}$ and $J\in\mathbb{Z}$.  The underlying Hilbert space of $R_{\Delta,J}$ is $L^2(\mathbb{C})$, with the inner product normalized as
$$
(f_1,f_2)_{\Delta,J} = \pi\int\limits_{\mathbb{C}} f_1(x)\overline{f_2(x)} dx\,,
$$
where $dx$ is the standard Lebesgue measure on $\mathbb{C}$. The action of $g\in \mathrm{PSL}_2(\mathbb{C})$ on $f\in R_{\Delta,J}$ takes the form
\be\label{eq:principalAction}
(g\cdot f)(x) = |-c\,x +a|^{2(\Delta-2)}\left(\frac{-\bar{c}\,\xb+\bar{a}}{-c\,x + a}\right)^{J}
f\left(\tfrac{dx-b}{-cx+a}\right)\,,\quad\text{where}\quad
g=\pm\begin{pmatrix}a & b\\c & d\end{pmatrix}\,.
\ee
\end{definition}

\begin{definition}\label{def:complementary}
The \emph{complementary series} representation of $\mathrm{PSL}_2(\mathbb{C})$ will be denoted $R_{\Delta,0}$, where $\Delta\in(0,1)$. Its underlying space is the Hilbert space completion of $L^2(\mathbb{C})$ with respect to the inner product
$$
(f_1,f_2)_{\Delta,0} = (1-\Delta)\int\limits_{\mathbb{C}^2} \, |x-y|^{-2\Delta}f_1(x)\overline{f_2(y)} \, dx \, dy\,.
$$
The action of $g\in \mathrm{PSL}_2(\mathbb{C})$ on $f\in R_{\Delta,0}$ is given by~\eqref{eq:principalAction} with $J=0$.
\end{definition}

It is not hard to check that the principal and complementary series are unitary, i.e., that the action~\eqref{eq:principalAction} preserves the inner products given above. References~\cite{Bargmann,Gelfand-Naimark} proved the following general result:
\begin{theorem}
The complete list of unitary irreducible representations of $\mathrm{PSL}_2(\mathbb{C})$ consists of
\begin{enumerate}
\item the trivial representation $\mathbb{C}$,
\item the principal series $R_{\Delta,J}$, where $\Delta = 1 + i t$ with $t\in\mathbb{R}$ and $J\in\mathbb{Z}$,
\item the complementary series $R_{\Delta,0}$, where $\Delta\in(0,1)$.
\end{enumerate}
Furthermore, the only isomorphisms between these representations are between pairs of principal series $R_{\Delta,J}\cong R_{2-\Delta,-J}$.
\end{theorem}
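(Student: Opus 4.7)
The plan is to follow the classical Bargmann--Gelfand--Naimark strategy, which amounts to classifying irreducible admissible $(\mathfrak{g},K)$-modules and then determining which of them carry a positive-definite invariant Hermitian inner product. First I would establish admissibility: any unitary irreducible representation $V$ of $G$ decomposes under $K=\mathrm{SO}(3)$ as $V=\widehat{\bigoplus}_{n\geq 0} m_n V_n$ with $m_n<\infty$. This is the standard application of the Peter--Weyl theorem combined with a Schur-type argument, using that the commutant of $G$ in $V$ is $\mathbb{C}$ and that $K$-isotypic projectors are bounded and $G$-equivariant modulo $K$. Once admissibility is known, one passes to the space $V^K$ of $K$-finite vectors, which is a dense $(\mathfrak{g},K)$-submodule and irreducible as such.

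Next, I would classify irreducible admissible $(\mathfrak{g},K)$-modules. The two Casimirs of $\mathfrak{g}=\mathfrak{sl}_2(\mathbb{C})_{\mathbb{R}}$ act by scalars that can be encoded by a pair $(\Delta,J)\in\mathbb{C}\times\mathbb{Z}$, invariant under $(\Delta,J)\mapsto(2-\Delta,-J)$. Using the decomposition $\mathfrak{g}=\mathfrak{k}\oplus\mathfrak{p}$ and the fact that $\mathfrak{p}\otimes V_n$ decomposes as $V_{n-1}\oplus V_n\oplus V_{n+1}$ under $K$, one constructs explicit raising and lowering operators between $K$-types. Their commutation relations and Casimir eigenvalues determine all matrix elements up to an overall scale, giving a unique (up to isomorphism) irreducible admissible $(\mathfrak{g},K)$-module for each $(\Delta,J)$ modulo the Weyl symmetry, with $K$-types running over $n\geq|J|$ with multiplicity one. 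This argument also shows that the only isomorphisms among the $R_{\Delta,J}$ are $R_{\Delta,J}\cong R_{2-\Delta,-J}$: an intertwiner must preserve Casimir eigenvalues and match $K$-types.

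To realize these modules geometrically, I would identify each with the space of $K$-finite vectors in the parabolic induction $\mathrm{Ind}_B^G(\chi_{\Delta,J})$ from the Borel subgroup of upper triangular matrices, carried on $G/B\cong S^2$ and then pulled back to $\mathbb{C}$ by stereographic projection. Restricting to the open Bruhat cell recovers precisely the action \eqref{eq:principalAction}. For $\Delta=1+it$ on the critical line, the standard $L^2$-inner product on $\mathbb{C}$ is manifestly $G$-invariant by a direct Jacobian computation, and irreducibility at these parameters follows from the explicit ladder construction (no submodule can be both $K$-stable and annihilated by the raising operators for $t\in\mathbb{R}$).

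The main obstacle is the unitarity analysis off the critical line. Here one uses the Knapp--Stein intertwiner $I_{\Delta,J}:R_{\Delta,J}\to R_{2-\Delta,-J}$ given by a convolution with the kernel $|x-y|^{-2\Delta}$ (suitably regularized) to manufacture a candidate invariant Hermitian form $\langle f_1,f_2\rangle_{\Delta}=(I_{\Delta,J}f_1,f_2)_{L^2}$. The heart of the proof is to determine when this form is positive definite. Working $K$-type by $K$-type, $I_{\Delta,J}$ acts on $V_n$ by a scalar $a_n(\Delta,J)$ computable as a ratio of Gamma functions (essentially a Gegenbauer integral), and positivity of all $a_n(\Delta,J)$ forces $J=0$ and $\Delta\in(0,1)$, yielding exactly the complementary series; every other choice produces sign changes in some $a_n$ and hence no unitary structure. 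Conversely, one must check that no irreducible admissible $(\mathfrak{g},K)$-module outside these families admits a unitarization, which follows from the same Gamma-function analysis together with the observation that the non-principal, non-complementary parameters give reducible induced representations whose irreducible subquotients are finite-dimensional and hence non-unitary (as $G$ is noncompact simple). Combining these steps, together with the trivial representation as the exceptional finite-dimensional unitary case, completes the classification.
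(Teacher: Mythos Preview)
The paper does not prove this theorem at all; it states it as a classical result and attributes it to Bargmann and to Gelfand--Naimark with a citation, then moves on. So there is nothing to compare your approach against on the paper's side.

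Your sketch is a reasonable outline of the classical argument. Two small points worth tightening if you intend to flesh it out. First, before testing positivity you need the Hermitian step: an irreducible admissible module carries an invariant Hermitian form iff it is isomorphic to its Hermitian dual, and for $R_{\Delta,J}$ this forces either $\Delta\in 1+i\mathbb{R}$ (principal line) or $\Delta\in\mathbb{R}$ together with $J=0$ (via the isomorphism $R_{\Delta,J}\cong R_{2-\Delta,-J}$). You implicitly use this when you restrict the intertwiner analysis to $J=0$ and the kernel $|x-y|^{-2\Delta}$, but it should be stated. Second, the intertwining kernel for general $J$ is $(x-y)^{-\Delta+J}(\bar x-\bar y)^{-\Delta-J}$ (cf.\ the paper's equation \eqref{eq:shadow}), not $|x-y|^{-2\Delta}$; your special case is correct precisely because the Hermitian condition already forces $J=0$ off the critical line.
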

For $(\Delta,J)\neq(1,0)$, the intertwining operator $\mathcal{S}_{\Delta,J}:R_{\Delta,J}\rightarrow R_{2-\Delta,-J}$ takes the form\footnote{The integral is convergent for $\mathrm{Re}(\Delta)<1$ and the analytic continuation from there to $\mathrm{Re}(\Delta) = 1$ is a well-defined distribution.}
\be\label{eq:shadow}
\mathcal{S}_{\Delta,J}(f) =\frac{1-\Delta + |J|}{\pi} \int\limits_{\mathbb{C}}(x-y)^{-\Delta+J}(\overline{x}-\overline{y})^{-\Delta-J}f(y) \, dy\,.
\ee
The normalization is chosen so that $\mathcal{S}_{\Delta,J}$ preserves the norm and ${\mathcal{S}_{2-\Delta,-J}\circ \mathcal{S}_{\Delta,J} = \mathrm{id}}$. We will also define $\mathcal{S}_{1,0} = \mathrm{id}$, which agrees with the analytic continuation of~\eqref{eq:shadow} from  $\mathrm{Re}(\Delta)<1$ to $\Delta = 1$.

The principal and complementary series behave identically for most of our purposes, which justifies the unified notation $R_{\Delta,J}$, distinguished only by the range of $(\Delta,J)$. Furthermore, thanks to the isomorphism $R_{\Delta,J}\cong R_{2-\Delta,-J}$, we can assume that $J\geq 0$, which we will do from now on.

Ultimately, our spectral bounds will follow from the fact that squares of real numbers are positive. To that end, let us exhibit the real structure on $R_{\Delta,J}$. Recall that a representation $R$ of a group $G$ is real if and only if there exists a $G$-invariant antilinear involution ${\rho}: R\rightarrow R$. Then we have:

\begin{proposition}\label{prop:realStructure}
All unitary irreducible representations $R_{\Delta,J}$ of $G$ are real. For $\Delta\in(0,1]$ and $J=0$, ${\rho}$ coincides with complex conjugation, ${\rho}(f)(x) = \overline{f(x)}$. For the remaining cases, ${\rho}(f) = \overline{\mathcal{S}_{\Delta,J}(f)}$, where $\mathcal{S}_{\Delta,J}$ is the intertwining operator given in~\eqref{eq:shadow}.
\end{proposition}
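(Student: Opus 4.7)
The plan is to verify, in each case, that the prescribed antilinear map $\rho$ commutes with the $G$-action, is an isometry, and squares to the identity. The unifying observation is that pointwise complex conjugation, $C: f \mapsto \bar f$, is automatically an antilinear $G$-equivariant map from $R_{\Delta, J}$ to $R_{\bar\Delta, -J}$: this can be read off from~\eqref{eq:principalAction} using the elementary identities $\overline{|{-}cx + a|^{2(\Delta - 2)}} = |{-}cx + a|^{2(\bar\Delta - 2)}$ and $\overline{\bigl(\tfrac{-\bar c \bar x + \bar a}{-cx + a}\bigr)^J} = \bigl(\tfrac{-\bar c \bar x + \bar a}{-cx + a}\bigr)^{-J}$. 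Whether $(\Delta, J)$ is fixed by the involution $(\Delta, J) \mapsto (\bar \Delta, -J)$ dictates whether one uses $C$ directly or composes it with $\mathcal{S}_{\Delta, J}$ to return to $R_{\Delta, J}$.

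The easy case is $\Delta \in (0, 1]$ with $J = 0$: here $\Delta$ is real and $-J = J$, so $C$ itself is an antilinear $G$-intertwiner $R_{\Delta, 0} \to R_{\Delta, 0}$. Involutivity and antilinearity are immediate, and the isometry property follows from the explicit inner products of Definitions~\ref{def:principal} and~\ref{def:complementary} (for the complementary series, swap $x \leftrightarrow y$ in the integrand to see $\|\bar f\| = \|f\|$). In the remaining cases---principal series with $\Delta = 1 + it$ and either $J > 0$ or $t \neq 0$---we have $\bar \Delta = 2 - \Delta \neq \Delta$, so $C$ lands in $R_{2 - \Delta, -J}$. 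Composing with the $G$-equivariant isometry $\mathcal{S}_{\Delta, J}: R_{\Delta, J} \to R_{2-\Delta, -J}$ yields the desired antilinear $G$-equivariant isometry $\rho(f) = \overline{\mathcal{S}_{\Delta, J}(f)}$.

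To check $\rho^2 = \mathrm{id}$ in this second case, I would first establish the commutation relation $C \circ \mathcal{S}_{\Delta, J} = \mathcal{S}_{2-\Delta, -J} \circ C$ by direct computation on the kernel of~\eqref{eq:shadow}. Conjugating the kernel $(x - y)^{-\Delta + J}(\bar x - \bar y)^{-\Delta - J}$ via $\overline{(x-y)^a} = (\bar x - \bar y)^{\bar a}$ produces $(x - y)^{-\bar\Delta - J}(\bar x - \bar y)^{-\bar\Delta + J}$, which, upon substituting $\bar\Delta = 2 - \Delta$, is precisely the kernel of $\mathcal{S}_{2-\Delta, -J}$. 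The normalization constants match as well: conjugating $(1 - \Delta + |J|)/\pi$ gives $(1 - \bar\Delta + |J|)/\pi = (\Delta - 1 + |J|)/\pi$, which is exactly $(1 - (2-\Delta) + |-J|)/\pi$. Combining this commutation relation with $\mathcal{S}_{2-\Delta, -J} \circ \mathcal{S}_{\Delta, J} = \mathrm{id}$ and $C^2 = \mathrm{id}$ yields $\rho^2 = C \mathcal{S}_{\Delta, J} C \mathcal{S}_{\Delta, J} = \mathcal{S}_{2-\Delta, -J}\, C^2\, \mathcal{S}_{\Delta, J} = \mathrm{id}$.

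The main ``obstacle'' is not conceptual but computational: the careful tracking of complex conjugation on the fractional powers appearing in the automorphy factor of~\eqref{eq:principalAction} and in the shadow kernel of~\eqref{eq:shadow}, together with the joint sign changes $J \mapsto -J$ and $\Delta \mapsto 2 - \Delta$. Beyond this bookkeeping, the only external inputs needed are the isometry, the $G$-equivariance of $\mathcal{S}_{\Delta, J}$, and its involution property $\mathcal{S}_{2-\Delta, -J} \circ \mathcal{S}_{\Delta, J} = \mathrm{id}$, all stated in the discussion preceding Proposition~\ref{prop:realStructure}.
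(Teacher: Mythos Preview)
The paper states Proposition~\ref{prop:realStructure} without proof, so there is no argument to compare against. Your verification is correct and is exactly the routine check the paper implicitly leaves to the reader: conjugation $C$ intertwines $R_{\Delta,J}$ with $R_{\bar\Delta,-J}$, which for the principal series is $R_{2-\Delta,-J}$, and composing with the unitary intertwiner $\mathcal{S}_{\Delta,J}$ brings one back to $R_{\Delta,J}$; the kernel computation establishing $C\circ\mathcal{S}_{\Delta,J}=\mathcal{S}_{2-\Delta,-J}\circ C$ is accurate, and together with $\mathcal{S}_{2-\Delta,-J}\circ\mathcal{S}_{\Delta,J}=\mathrm{id}$ it yields $\rho^2=\mathrm{id}$.
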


To proceed, it will be convenient to pick the basis for the Lie algebra $\mathfrak{g}=\mathfrak{sl}_2(\mathbb{C})$ consisting of elements $\{A_1,A_2,A_3,B_1,B_2,B_3\}$, where $A_a = -i\sigma_a/2$, $B_a = \sigma_a/2$ and $\sigma_a$ are the Pauli matrices
$$
\sigma_1 =  \left(
    \begin{array}{cc}
      0 & 1\\
      1 & 0
    \end{array}
    \right)\,,\quad
\sigma_2 =  \left(
    \begin{array}{cc}
      0 & -i\\
      i & 0
    \end{array}
    \right)\,,\quad
\sigma_3 =  \left(
    \begin{array}{cc}
      1 & 0\\
      0 & -1
    \end{array}
    \right).
$$
The commutation relations are
$$
[A_a,A_b] = \sum\limits_{c=1}^{3}\epsilon_{abc}A_c\,,\qquad
[A_a,B_b] = \sum\limits_{c=1}^{3}\epsilon_{abc}B_c\,,\qquad
[B_a,B_b] = -\sum\limits_{c=1}^{3}\epsilon_{abc}A_c\,,
$$
where $\epsilon_{abc}$ is the completely antisymmetric tensor with $\epsilon_{123}=1$. Note that $\{A_1,A_2,A_3\}$ is a basis for the Lie algebra $\mathfrak{su}_2$ of the maximal compact subgroup. The center of the universal enveloping algebra $U(\mathfrak{g})$ is generated by two quadratic Casimir elements,
$$
C_2 = \sum\limits_{a=1}^{3}(A_aA_a - B_aB_a)\,,\qquad C'_2 = \sum\limits_{a=1}^{3}A_aB_a = \sum\limits_{a=1}^{3}B_aA_a\,.
$$
Elements of $U(\mathfrak{g})$ act on the space of smooth functions in $R_{\Delta,J}$ by differential operators. The action of the Lie algebra elements follows directly from~\eqref{eq:principalAction}:
\be\label{eq:difGens}
\begin{aligned}
A_1+iA_2 &= i[\partial+\xb^2 \bar{\partial}+(2-\Delta-J)\xb]\\
A_1-iA_2 &= i[-x^2\partial-\bar{\partial}-(2-\Delta+J)x]\\
A_3 &= i[x\partial- \xb\bar{\partial}+J]
\end{aligned}\qquad
\begin{aligned}
B_1+iB_2 &= -\partial+\xb^2 \bar{\partial}+(2-\Delta-J)\xb\\
B_1-iB_2 &= x^2\partial-\bar{\partial}+(2-\Delta+J)x\\
B_3 &= -x\partial- \xb\bar{\partial}+\Delta-2\,.
\end{aligned}
\ee
Here $\partial = \partial/\partial x$ and $\bar{\partial} = \partial/\partial\xb$. The following lemma can be derived by combining the definition of the Casimir elements with~\eqref{eq:difGens}.
\begin{lemma}\label{lem:casimirs}
The Casimir elements $C_2$, $C'_2$ act on the space of smooth vectors in $R_{\Delta,J}$ by multiplication by the following constants:
$$
C_2(\Delta,J) = \Delta(2-\Delta) - J^2\,,\qquad
C'_2(\Delta,J) = i (\Delta-1)J\,.
$$
\end{lemma}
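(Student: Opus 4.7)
The plan is to exploit centrality and then do a short direct computation. Since $C_2$ and $C_2'$ are central elements of $U(\mathfrak{g})$ and $R_{\Delta,J}$ is (topologically) irreducible, Schur's lemma guarantees they act on every smooth vector as multiplication by scalars. It therefore suffices to identify those scalars in the explicit realization of $R_{\Delta,J}$ given in Definition~\ref{def:principal}--\ref{def:complementary}, using the differential operators in~\eqref{eq:difGens}.

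The cleanest way to carry out the computation is to pass to the complexification. Setting $L_a = \tfrac{1}{2}(A_a - i B_a)$ and $\bar L_a = \tfrac{1}{2}(A_a + i B_a)$, the commutation relations in the text give $[L_a,L_b] = \sum_c \epsilon_{abc} L_c$, $[\bar L_a,\bar L_b] = \sum_c \epsilon_{abc}\bar L_c$, and $[L_a,\bar L_b] = 0$, so $\mathfrak{g}_{\mathbb{C}} \cong \mathfrak{sl}_2(\mathbb{C})\oplus\mathfrak{sl}_2(\mathbb{C})$. A short algebraic manipulation using $A_a = L_a + \bar L_a$, $B_a = i(L_a - \bar L_a)$, and $[L_a,\bar L_b]=0$ yields the identifications
$$
C_2 = 2\bigl(C_L + C_{\bar L}\bigr), \qquad C_2' = i\bigl(C_L - C_{\bar L}\bigr),
$$
where $C_L = \sum_a L_a^2$ and $C_{\bar L} = \sum_a \bar L_a^2$ are the two commuting $\mathfrak{sl}_2$ Casimirs. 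This reduces the problem to computing two $\mathfrak{sl}_2$ Casimir eigenvalues separately.

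Next I would plug~\eqref{eq:difGens} into the definitions of $L_a$ and $\bar L_a$. A pleasant feature that I would flag is the \emph{holomorphic factorization}: the $L_a$ end up involving only $x$ and $\partial$, while the $\bar L_a$ involve only $\bar x$ and $\bar\partial$. Explicitly one finds
$$
L_1 + iL_2 = i\partial,\qquad
L_3 = i\bigl(x\partial + \tfrac{J+2-\Delta}{2}\bigr),\qquad
L_1 - iL_2 = -i\bigl(x^2\partial + (2-\Delta+J)x\bigr),
$$
with the analogous formulas for $\bar L_a$ obtained by swapping $(x,\partial,J)\leftrightarrow(\bar x,\bar\partial,-J)$ and $\Delta\mapsto\Delta$. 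Using the standard rewriting $C_L = L_3^2 + (L_1+iL_2)(L_1-iL_2) + iL_3$, one verifies by plain differential-operator algebra that the higher-order derivative terms cancel and $C_L$ collapses to the scalar $(J+2-\Delta)(\Delta-J)/4$. The analogous computation for $C_{\bar L}$ gives $(2-\Delta-J)(\Delta+J)/4$.

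Finally, a trivial expansion gives
$$
C_L + C_{\bar L} = \tfrac{1}{2}\bigl[\Delta(2-\Delta) - J^2\bigr], \qquad
C_L - C_{\bar L} = (\Delta-1)J,
$$
and the two formulas in the lemma follow from $C_2 = 2(C_L+C_{\bar L})$ and $C_2' = i(C_L - C_{\bar L})$. The only real pitfall is bookkeeping of signs and factors of $i$ in passing between $\{A_a, B_a\}$ and $\{L_a, \bar L_a\}$ and in writing $B_a$ in terms of $L_a - \bar L_a$; once these are fixed correctly, the calculation is routine and the Casimir eigenvalues drop out as claimed.
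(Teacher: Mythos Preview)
Your proof is correct. The paper does not give a proof beyond noting that the lemma follows by combining the definitions of $C_2,\,C_2'$ with the differential operators~\eqref{eq:difGens}; your approach carries out exactly that computation, and the organization via the holomorphic/antiholomorphic split $L_a = \tfrac12(A_a - iB_a)$, $\bar L_a = \tfrac12(A_a + iB_a)$ is a clean way to execute it, reducing the problem to two independent $\mathfrak{sl}_2$ Casimir eigenvalues $a(1-a)$ with $a = (2-\Delta\pm J)/2$. (Note that Schur's lemma is not actually needed: the explicit computation already shows the Casimirs act as scalars.)
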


In practice, we will often find it useful to work directly with $K$-finite vectors in $R_{\Delta,J}$. Let us first state the $K$-content of $R_{\Delta,J}$, as found in \cite{Bargmann,Gelfand-Naimark}:
\begin{proposition}
Considered as a representation of $\mathrm{SO}(3)\subset G$, the representation $R_{\Delta,J}$ decomposes as a direct sum
$$
R_{\Delta,J} = \bigoplus\limits_{n=J}^{\infty} V_n\,.
$$
\end{proposition}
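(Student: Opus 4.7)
The plan is to realize $R_{\Delta,J}$ as the Hilbert space of $L^2$-sections of a homogeneous line bundle over $G/P$, where $P\subset G$ is the upper-triangular Borel subgroup, and then to apply Peter--Weyl together with the Iwasawa decomposition $G=KP$ to reduce the $K$-type count to the standard branching $\mathrm{SO}(3)\downarrow\mathrm{SO}(2)$.

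First I would identify $\mathbb{C}\cup\{\infty\}$ with $G/P$ so that the open Bruhat cell $\mathbb{C}\subset G/P$ is parametrized by $x$ as in Definition~\ref{def:principal}. The multiplier
\begin{equation*}
j_g(x)\coloneqq|-cx+a|^{2(\Delta-2)}\left(\tfrac{-\bar c\bar x+\bar a}{-cx+a}\right)^{J}
\end{equation*}
appearing in~\eqref{eq:principalAction} is the standard automorphy factor describing the $G$-action on sections of the homogeneous line bundle $L_{\Delta,J}\coloneqq G\times_P\mathbb{C}_{\chi_{\Delta,J}}$, where $\chi_{\Delta,J}$ is the character of $P$ read off by restricting~\eqref{eq:principalAction} to $g=p\in P$ (so $c=0$, $d=a^{-1}$). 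Hence $R_{\Delta,J}$ is unitarily isomorphic to $L^2(G/P,L_{\Delta,J})$ with its natural $G$-invariant inner product; the complementary series is handled analogously, using the intertwiner~\eqref{eq:shadow} to match invariant pairings.

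Next I would apply the Iwasawa decomposition $G=KP$, which yields the diffeomorphism $G/P\cong K/M$ with $M\coloneqq K\cap P\cong U(1)/\{\pm 1\}$. The restriction $\chi_J\coloneqq\chi_{\Delta,J}|_M$ is a character of $M$ of integer weight $J$, since the $\Delta$-dependent modulus factor trivializes on $M$ where $|a|=1$. Therefore
\begin{equation*}
\mathrm{Res}_K R_{\Delta,J}\;\cong\; \mathrm{Ind}_M^K\chi_J
\end{equation*}
as unitary $K$-representations, and Frobenius reciprocity for the compact pair $(K,M)$ (equivalently Peter--Weyl) gives
\begin{equation*}
\mathrm{Ind}_M^K\chi_J\;\cong\;\bigoplus_{n=0}^{\infty}V_n\otimes\mathrm{Hom}_M\!\left(V_n|_M,\mathbb{C}_{\chi_J}\right).
\end{equation*}
Since $V_n$ is the spin-$n$ representation of $\mathrm{SO}(3)$, its $M$-weights are exactly $-n,-n+1,\dots,n$, each with multiplicity one, so the Hom-space is one-dimensional when $n\geq|J|$ and zero otherwise. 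Under the standing assumption $J\geq 0$, this yields the stated decomposition $R_{\Delta,J}=\bigoplus_{n=J}^{\infty}V_n$.

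The main technical obstacle will be the careful matching of inner products between the noncompact model and the compact picture. Concretely, one must verify that the Lebesgue-measure integral defining $(\,\cdot\,,\,\cdot\,)_{\Delta,J}$ on $\mathbb{C}$ agrees, up to a positive constant on each $K$-isotypic component, with the canonical $K$-invariant $L^2$-pairing on $L^2(K/M,L_{\chi_J})$; this reduces to a Jacobian calculation via stereographic projection, with the appropriate factor of $(1+|x|^2)$ absorbed into the section. For the complementary series one additionally needs that $\mathcal{S}_{\Delta,0}$ is $K$-equivariant and intertwines the two invariant pairings. Once these identifications are in place, the $K$-type count follows at once from the general theory of induced representations of compact groups.
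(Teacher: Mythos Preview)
Your argument is correct and is in fact the standard modern proof of this result via the compact picture of the principal series. Note, however, that the paper does not supply its own proof of this proposition: it simply states the $K$-content of $R_{\Delta,J}$ and attributes it to the classical references of Bargmann and Gelfand--Naimark. So there is no proof in the paper to compare against; you have filled in what the paper leaves to the literature.

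A couple of minor remarks on your write-up. First, the ``technical obstacle'' you flag about matching inner products is not actually needed for the $K$-type decomposition: the $K$-action on $R_{\Delta,J}$ is the same linear action regardless of which ($K$-invariant) inner product one uses, so the isotypic decomposition is determined by the $K$-module structure alone. The inner-product matching only matters if you want to identify the unitary structures, which the proposition does not assert. Second, your observation that the $\Delta$-dependent modulus factor $|a|^{2(\Delta-2)}$ trivializes on $M$ is the key reason the $K$-content is independent of $\Delta$, and it is worth saying so explicitly. With those small clarifications, your proof is complete and self-contained.
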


To parametrize $K$-finite vectors in $R_{\Delta,J}$, we will introduce injective $K$-linear maps $\kappa^{(n)}_{\Delta,J}:V_n\rightarrow R_{\Delta,J}$, which exist for $n\geq J$. We will require all of them to preserve the real structure and also require $\kappa^{(J)}_{\Delta,J}$ to preserve the norm, which fixes it up to a minus sign. To fix this ambiguity, we simply prescribe the sign of $\kappa^{(J)}_{\Delta,J}(z_{+}^J)$, where $z_{\pm} = (1,\pm i,0)/\sqrt{2}$ are null vectors. To define $\kappa^{(n)}_{\Delta,J}$ for $n>J$, we use the fact that by acting with the generators $B_a$ of the noncompact part of the Lie algebra on elements of $V_{n}$ can produce elements of $V_{n+1}$.
\begin{definition}\label{def:kappaMaps}
Let $\kappa^{(J)}_{\Delta,J}: V_{J}\rightarrow R_{\Delta,J}$ be the unique $K$-linear, norm-preserving, real map which satisfies
$$
\kappa^{(J)}_{\Delta,J}(z_{+}^{J}) = \frac{i^{J}}{\pi}\overline{x}^{2J}(|x|^2+1)^{\Delta-J-2}\,.
$$
For $n\geq J$, let
$$
\kappa_{\Delta,J}^{(n)}(w) = \sum\limits_{a_1,\ldots,a_{m}=1}^{3}B_{a_1}\ldots B_{a_{m}}\cdot\kappa_{\Delta,J}^{(J)}(w^{a_1\ldots a_{m} \bullet})\,,
$$
where $m=n-J$ and $w^{a_1\ldots a_{m} \bullet}$ stands for the element of $V_{J}$ obtained by restricting the first $m$ indices of $w$ to the values $a_1,\ldots,\,a_{m}$.
\end{definition}

\begin{lemma}\label{lem:fPlusMinus}
The images under $\kappa_{\Delta,J}^{(n)}$ of the highest- and lowest-weight vectors $z_{\pm}^{n}\in V_n$ take the form
$$
\begin{aligned}
\kappa_{\Delta,J}^{(n)}(z_{+}^{n}) &= \frac{i^{J}2^{\frac{n-J}{2}}(2+J-\Delta)_{n-J}}{\pi}\,\overline{x}^{n+J}(|x|^2+1)^{\Delta-n-2} \, ,\\
\kappa_{\Delta,J}^{(n)}(z_{-}^{n}) &= \frac{(-i)^{J}2^{\frac{n-J}{2}}(2+J-\Delta)_{n-J}}{\pi}\,x^{n-J}(|x|^2+1)^{\Delta-n-2}\,.
\end{aligned}
$$
Here $(a)_{b}=\Gamma(a+b)/\Gamma(a)$ is the Pochhammer symbol.
\end{lemma}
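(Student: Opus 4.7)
The plan is to prove each identity in turn, using induction to reduce to a single differential-operator calculation, and then to deduce the second formula from the first via $K$-equivariance.

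\emph{Step 1: The formula for $\kappa_{\Delta,J}^{(n)}(z_+^n)$.} I will proceed by induction on $m := n-J$, the base case $m=0$ being the definition. For the inductive step, because $z_+^n$ has all entries equal to $z_+$, the element $(z_+^n)^{a_1\ldots a_m \bullet}\in V_J$ equals $(z_+)^{a_1}\cdots(z_+)^{a_m}\,z_+^J$. Pulling the scalars $(z_+)^{a_i}$ out of the $K$-linear map and out of the (constant-coefficient) action of $B_{a_i}$, Definition~\ref{def:kappaMaps} yields
$$
\kappa_{\Delta,J}^{(n)}(z_+^n)=\bigl(\textstyle\sum_a (z_+)^a B_a\bigr)^{m}\kappa_{\Delta,J}^{(J)}(z_+^J)
=\Bigl(\tfrac{1}{\sqrt{2}}(B_1+iB_2)\Bigr)^{m}\kappa_{\Delta,J}^{(J)}(z_+^J),
$$
using $z_+=(1,i,0)/\sqrt{2}$. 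So it suffices to evaluate iterated applications of $B_1+iB_2$ on the ansatz $f_m(x):=\bar x^{2J+m}(|x|^2+1)^{\Delta-J-m-2}$. Using the explicit formula for $B_1+iB_2$ from~\eqref{eq:difGens}, a direct computation (differentiating the factor $(|x|^2+1)^{\Delta-J-m-2}$ and using $\partial|x|^2=\bar x$, $\bar\partial|x|^2=x$) gives the recursion
$$
(B_1+iB_2)\,f_m = 2(J+m+2-\Delta)\,f_{m+1},
$$
the $|x|^2$ terms cancelling identically. Iterating this $m$ times, starting from $\kappa_{\Delta,J}^{(J)}(z_+^J)=(i^J/\pi)f_0$, produces the factor $2^{m/2}(2+J-\Delta)_m$ and the desired expression upon identifying $n=J+m$.

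\emph{Step 2: The formula for $\kappa_{\Delta,J}^{(n)}(z_-^n)$.} Rather than repeating the induction with $(B_1-iB_2)$, which would require first determining $\kappa_{\Delta,J}^{(J)}(z_-^J)$, I will apply $K$-equivariance of $\kappa_{\Delta,J}^{(n)}$. The rotation by $\pi$ about the first coordinate axis sends $z_+$ to $z_-$, and so $z_+^n$ to $z_-^n$. Under the double cover $\mathrm{SU}(2)\to\mathrm{SO}(3)$, this rotation is realized by $e^{\pi A_1}=-i\sigma_1$, whose image in $\mathrm{PSL}_2(\mathbb{C})$ is $g=\pm\bigl(\begin{smallmatrix}0 & -i\\ -i & 0\end{smallmatrix}\bigr)$. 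Hence $\kappa_{\Delta,J}^{(n)}(z_-^n)=g\cdot \kappa_{\Delta,J}^{(n)}(z_+^n)$. Applying~\eqref{eq:principalAction} with this $g$ gives
$$
(g\cdot f)(x)=|x|^{2(\Delta-2)}(-\bar x/x)^{J}f(1/x),
$$
and substituting the formula from Step~1, one computes that $\bar x^{n+J}\mapsto(-1)^J x^{n-J}$ and the $|x|$-exponents combine as $|x|^{2(\Delta-2)}\cdot|x|^{-2(\Delta-n-2)}\cdot|x|^{-2n}=1$, leaving the claimed expression with prefactor $i^J\cdot(-1)^J=(-i)^J$.

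\emph{Main obstacle.} The only delicate point is bookkeeping of the prefactor $(-1)^J$ arising from $(-\bar c\bar x+\bar a)/(-cx+a)=-\bar x/x$ when evaluating $g\cdot f$: a sign slip here would put $i^J$ rather than $(-i)^J$ in the final formula, masking the correct conjugation symmetry between the highest- and lowest-weight images. Once this is tracked carefully, both identities follow from the single one-line recursion in Step~1.
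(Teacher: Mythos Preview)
Your proof is correct. Step~1 coincides with the paper's approach: both reduce $\kappa_{\Delta,J}^{(n)}(z_+^n)$ to $2^{-(n-J)/2}(B_1+iB_2)^{n-J}\kappa_{\Delta,J}^{(J)}(z_+^J)$ and then iterate the differential operator from~\eqref{eq:difGens}. Step~2 differs: the paper stays at the Lie-algebra level, first computing $\kappa_{\Delta,J}^{(J)}(z_-^J)=\tfrac{1}{(2J)!}(A_1-iA_2)^{2J}\kappa_{\Delta,J}^{(J)}(z_+^J)$ and then applying $(B_1-iB_2)^{n-J}$, whereas you act once with the finite group element $g=e^{\pi A_1}\in K$ and invoke $K$-equivariance of $\kappa_{\Delta,J}^{(n)}$. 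Your route is shorter---a single application of~\eqref{eq:principalAction} replaces two separate ladder computations---at the cost of having to correctly identify the $\mathrm{PSL}_2(\mathbb{C})$ lift of the rotation and track the phase $(-\bar x/x)^J$, which you do. The paper's route keeps everything infinitesimal and parallel to Step~1, so no group-element lift is needed, but it requires a second recursion of the same type.
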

\begin{proof}
Indeed, note that by definition
$$
\begin{aligned}
\kappa^{(J)}_{\Delta,J}(z_{-}^{J}) &= \tfrac{1}{(2J)!}(A_1-i A_2)^{2J}\kappa^{(J)}_{\Delta,J}(z_{+}^{J}) \, ,\\
\kappa^{(n)}_{\Delta,J}(z_{+}^{n}) &= 2^{-\frac{n-J}{2}}(B_1+i B_2)^{n-J}\kappa^{(J)}_{\Delta,J}(z_{+}^{J}) \, ,\\
\kappa^{(n)}_{\Delta,J}(z_{-}^{n}) &= 2^{-\frac{n-J}{2}}(B_1-i B_2)^{n-J}\kappa^{(J)}_{\Delta,J}(z_{-}^{J})\,.
\end{aligned}
$$
The lemma now follows by acting on $\kappa^{(J)}_{\Delta,J}(z_{+}^{J})$  with an appropriate sequence of the differential operators given in~\eqref{eq:difGens}.
\end{proof}

One can see from the definition that $\kappa_{\Delta,J}^{(n)}$ preserves the real structure. However, it does not preserve the norm for $n>J$, as shown in the following lemma:
\begin{lemma}\label{lem:normN}
Let $w \in V_n$. The norm in $R_{\Delta, J}$ of $\kappa_{\Delta,J}^{(n)}(w)$ is given by
$$
\|\kappa_{\Delta,J}^{(n)}(w)\|^2_{\Delta,J} = (2n+1)p_{n}(\Delta,J)\|w\|^2_n\,,
$$
where
\be\label{eq:normN}
p_{n}(\Delta,J) = \frac{2^{n-J}(n-J)!(n+J)!(\Delta+J)_{n-J}(2-\Delta+J)_{n-J}}{(2n+1)!}\,.
\ee
\end{lemma}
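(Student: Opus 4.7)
The plan is to exploit Schur's lemma to reduce the lemma to a single explicit norm computation, perform this computation directly in the principal series, and then transfer the result to the complementary series via a Lie-algebraic recursion. For the first step, both $\|\cdot\|^2_n$ on $V_n$ and the pullback form $w \mapsto \|\kappa^{(n)}_{\Delta,J}(w)\|^2_{\Delta,J}$ are $K$-invariant positive-definite Hermitian forms on the irreducible $K$-representation $V_n$, hence proportional by Schur's lemma. Taking $w=z_+^n$ with $z_+=(1,i,0)/\sqrt{2}$, the identity $z_+\cdot\overline{z_+}=1$ gives $\|z_+^n\|^2_n = (z_+ \cdot \overline{z_+})^n/(2n+1) = 1/(2n+1)$, so the lemma reduces to the single claim $\|\kappa^{(n)}_{\Delta,J}(z_+^n)\|^2_{\Delta,J} = p_n(\Delta,J)$.

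For the principal series, $\Delta = 1+it$ and Lemma~\ref{lem:fPlusMinus} gives $\kappa^{(n)}_{\Delta,J}(z_+^n)(x) = C\,\bar x^{n+J}(|x|^2+1)^{\Delta-n-2}$ with $C = i^J 2^{(n-J)/2}(2+J-\Delta)_{n-J}/\pi$. Since $|(|x|^2+1)^{\Delta-n-2}|^2 = (|x|^2+1)^{-2n-2}$, polar coordinates and the substitution $u=|x|^2$ reduce the norm integral to the Beta function
$$
\int_0^\infty \frac{u^{n+J}}{(u+1)^{2n+2}}\,du \;=\; B(n+J+1,\,n-J+1) \;=\; \frac{(n+J)!\,(n-J)!}{(2n+1)!}.
$$
Combined with the identity $|(2+J-\Delta)_{n-J}|^2 = (2-\Delta+J)_{n-J}(\Delta+J)_{n-J}$ (using $\overline{\Delta} = 2-\Delta$ on the principal series), reassembling the prefactors yields $p_n(\Delta,J)$ on the nose.

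For the complementary series ($J=0$, $\Delta \in (0,1)$) the inner product involves a double integral with kernel $|x-y|^{-2\Delta}$ that is less amenable to direct term-by-term evaluation. Instead, I would use the raising relation $\kappa^{(n)}(z_+^n) = 2^{-1/2}(B_1+iB_2)\kappa^{(n-1)}(z_+^{n-1})$, implicit in Definition~\ref{def:kappaMaps}, together with the skew-adjointness $B_a^*=-B_a$ valid in any unitary representation, to write
$$
\|\kappa^{(n)}(z_+^n)\|^2 \;=\; -\tfrac{1}{2}\bigl(\kappa^{(n-1)}(z_+^{n-1}),\,(B_1-iB_2)(B_1+iB_2)\kappa^{(n-1)}(z_+^{n-1})\bigr).
$$
The operator $(B_1-iB_2)(B_1+iB_2)$ preserves $A_3$-weight, and orthogonality of distinct $K$-isotypes in a unitary representation, combined with the one-dimensionality of the lowest-weight line of $V_{n-1}$, forces the $V_{n-1}$-component of $(B_1-iB_2)(B_1+iB_2)\kappa^{(n-1)}(z_+^{n-1})$ to be a scalar $\mu_n(\Delta,J)$ times $\kappa^{(n-1)}(z_+^{n-1})$. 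Hence $\|\kappa^{(n)}\|^2/\|\kappa^{(n-1)}\|^2 = -\mu_n(\Delta,J)/2$. Crucially, $\mu_n$ is fixed by the action of a fixed pair of differential operators on the fixed explicit function $\kappa^{(n-1)}_{\Delta,0}(z_+^{n-1})$, and is therefore the same rational function of $\Delta$ in both principal and complementary series. Since this ratio equals $p_n(\Delta,J)/p_{n-1}(\Delta,J)$ in the principal series by the previous step, it equals $p_n/p_{n-1}$ in the complementary series as well. An induction from the base case $\|\kappa^{(J)}(z_+^J)\|^2 = 1/(2J+1) = p_J(\Delta,J)$, which follows from the norm-preserving normalization of $\kappa^{(J)}_{\Delta,J}$ in Definition~\ref{def:kappaMaps}, then closes the argument.

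The principal-series computation is mechanical, so the substantive step is the Lie-algebraic transfer. The main delicacy is the claim that $\mu_n(\Delta,J)$ does not depend on which of the two inner products is in use; this rests on the fact that the two realizations share a common underlying space of functions, the same $K$-action, the same Lie-algebra generators~\eqref{eq:difGens}, and (being unitary) both enjoy orthogonality of $K$-isotypes and skew-adjointness of Lie-algebra elements, so any identity built from these ingredients transfers uniformly. An explicit hand-computation of $\mu_n$ would require Clebsch--Gordan bookkeeping for $V_{n-1}\otimes V_1 = V_{n-2}\oplus V_{n-1}\oplus V_n$, since $B_3^2$ is not a Casimir; the matching argument above bypasses this, the ratio being pinned down as a rational function of $\Delta$ by its values on the principal-series line.
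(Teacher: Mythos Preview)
Your proof is correct. For the principal series it matches the paper exactly: reduce via Schur's lemma to the vector $z_+^n$, invoke Lemma~\ref{lem:fPlusMinus}, and evaluate the resulting $L^2$ integral as a Beta function.

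The difference is in the complementary series. The paper says simply to compute the norm directly from Definition~\ref{def:complementary}, i.e.\ to evaluate the double integral
$$
(1-\Delta)\int_{\mathbb{C}^2}|x-y|^{-2\Delta}\,\bar x^{\,n}(|x|^2+1)^{\Delta-n-2}\,y^{\,n}(|y|^2+1)^{\bar\Delta-n-2}\,dx\,dy
$$
and observe that the answer is again $p_n(\Delta,0)$; this is what the paper means by ``the final formula applies to both\ldots which justifies our relative normalization.'' You instead transfer the principal-series result by a unitarity argument: the ratio $\|\kappa^{(n)}\|^2/\|\kappa^{(n-1)}\|^2$ equals $-\tfrac12\mu_n$, where $\mu_n$ is the coefficient of $\kappa^{(n-1)}(z_+^{n-1})$ in the $V_{n-1}$-isotypic component of $(B_1-iB_2)(B_1+iB_2)\kappa^{(n-1)}(z_+^{n-1})$. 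Since the differential operators~\eqref{eq:difGens}, the explicit functions of Lemma~\ref{lem:fPlusMinus}, and the $K$-isotypic decomposition are all identical in the two realizations, $\mu_n$ is the same rational function of $\Delta$; unitarity (orthogonality of $K$-types and skew-adjointness of $B_a$) is the only place the inner product enters, and it enters identically. This is a clean way to avoid the convolution integral. The paper's route is shorter to state but leaves a nontrivial integral to the reader; yours trades that for the isotypic-projection bookkeeping, which you have justified adequately. One minor slip: you call the relevant weight line in $V_{n-1}$ the ``lowest-weight line,'' but $z_+^{n-1}$ is actually the highest-weight vector; this is cosmetic, since all that matters is one-dimensionality of the weight space.
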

\begin{proof}
Set $w = z_{+}^{n}$ or $w=z_{-}^{n}$, use Lemma~\ref{lem:fPlusMinus}, and compute the norm directly from Definitions~\ref{def:principal} and~\ref{def:complementary}. The final formula applies to both principal and complementary series, which justifies our relative normalization of their norms.
\end{proof}

\subsection{Spectrum of $L^2(\Gamma\backslash G)$}\label{ssec:spectrum}
Let $\Gamma$ be a cocompact lattice in $G$. The coset space $\Gamma\backslash G$  is a bundle over $\Gamma\backslash\hh$ where each fibre is $\mathrm{SO}(3)$. More precisely, it is the oriented orthonormal frame bundle of $\Gamma\backslash\hh$.
We are interested in the  Hilbert space $L^2(\Gamma\backslash G)$. Elements of this Hilbert space are functions $F: G\rightarrow\mathbb{C}$ satisfying $F(\gamma g) = F(g)$ for all $\gamma\in\Gamma$, $g\in G$, and having a finite $L^2$-norm.\footnote{More precisely, the elements are equivalence classes of functions under the relation of being equal almost everywhere, as usual in the definition of the $L^2$-space associated to a measure space.}

Let us normalize the Haar measure on $G$ so that $K$ has measure~1. We have
$$
\mathrm{vol}(\Gamma\backslash\hh) = \int\limits_{\Gamma\backslash G}1\,dg < \infty\,.
$$
Let us normalize the inner product on $L^2(\Gamma\backslash G)$ as follows:
$$
(F_1,F_2)_{\Gamma\backslash G} = \frac{1}{\mathrm{vol}(\Gamma\backslash\hh)}\int\limits_{\Gamma\backslash G} F_1(g) \overline{F_2(g)} dg\,.
$$
$G$ acts on elements of $L^2(\Gamma\backslash G)$ by right translation, making the latter into a unitary representation of $G$. Since $\Gamma\backslash G$ is compact, $L^2(\Gamma\backslash G)$ decomposes as a discrete direct sum of unitary irreducible representations of $G$
\be
L^2(\Gamma\backslash G) \cong \mathbb{C}\oplus \bigoplus\limits_{i=1}^{\infty}R_{\Delta_i,J_i}\,.
\label{eq:decomposition2}
\ee
See \cite{GelfandGraev} for the proof of discreteness. Since the action is transitive, the trivial representation appears precisely once, corresponding to the constant functions. In the remainder of this subsection, we will interpret the appearance of $R_{\Delta_i,J_i}$ in the spectrum in terms of the eigensections on $\Gamma\backslash\hh$ discussed in the Introduction.

For each $R_{\Delta_i,J_i}$ in $L^2(\Gamma\backslash G)$, there is a corresponding norm-preserving $G$-linear map
$$
\phi_i: R_{\Delta_i,J_i} \rightarrow L^2(\Gamma\backslash G)\,.
$$
Note that there is a natural real structure ${\rho}$ on $L^2(\Gamma\backslash G)$, provided by complex conjugation ${\rho}(F)(g)=\overline{F(g)}$. Recalling from Proposition~\ref{prop:realStructure} the real structure ${\rho}$ on $R_{\Delta,J}$, we will require that all $\phi_i$ are compatible with the real structure, i.e., ${\rho}\circ\phi_i = \phi_i\circ{\rho}$. If the spectrum exhibits no multiplicity, this reality condition determines $\phi_i$ up to an overall minus sign. More generally, when there are multiplicities, i.e., when $L^2(\Gamma\backslash G)$ contains $\mathbb{C}^{N}\otimes R_{\Delta,J}$ with $N>1$, we need to choose an orthonormal basis in $\mathbb{R}^{N}$ to specify all $\phi_i$ uniquely. We will assume that such a choice has been made.

Let us recall from Definition~\ref{def:kappaMaps} the $K$-linear embeddings $\kappa^{(n)}_{\Delta,J}:V_n\rightarrow R_{\Delta,J}$ and define
$$
\phi^{(n)}_{i} = \phi_i\circ \kappa^{(n)}_{\Delta_i,J_i}:\, V_n\rightarrow C^{\infty}(\Gamma\backslash G)\,.
$$
The image of $\phi^{(n)}_{i}$ is in smooth functions because $K$-finite vectors are smooth.

At this point, we will invoke the correspondence between $K$-linear maps $V_n\rightarrow C^{\infty}(\Gamma\backslash G)$ and smooth, traceless sections of $\mathrm{Sym}^{n}(T^{*}\Gamma\backslash\mathbb{H}^3)$, where $T^{*}\Gamma\backslash\mathbb{H}^3$ is the cotangent bundle of the orbifold. This correspondence is reviewed in detail in Appendix~\ref{app:difgeo}. The main result we need is the following proposition:
\begin{proposition}\label{prop:tensorFields}
Let $\theta : V_{n}\rightarrow C^{\infty}(\Gamma\backslash G)$ be a $K$-linear map. Then there exists a smooth traceless section $\varphi_{\theta}$ of $\mathrm{Sym}^{n}(T^{*}\Gamma\backslash\mathbb{H}^3)$ such that for all $w\in V_n$, $x\in\hh$ and $k\in K$
$$
\theta(w)(g(x,k)) = \varphi_{\theta}(x)(kw)\,,
$$
where we are parametrizing points in $G=NAK$ as $g(x,k)$ with $x\in NA = \hh$ and $k\in K$. Furthermore, the correspondence between $\theta$ and $\varphi_{\theta}$ is one-to-one.
\end{proposition}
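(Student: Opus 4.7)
The plan is to exploit the fact that $\Gamma\backslash G \to \Gamma\backslash\hh$ is the principal $K$-bundle of oriented orthonormal frames, under which sections of the associated bundle $(\Gamma\backslash G)\times_K V_n$ correspond bijectively to $K$-equivariant functions $\Gamma\backslash G \to V_n$. The datum of a $K$-linear map $\theta\colon V_n\to C^\infty(\Gamma\backslash G)$ is equivalent, via the self-duality of $V_n$ as a $K$-representation furnished by the bilinear form $\cdot$ of \eqref{eq:dotAndSquare}, to precisely such a $K$-equivariant function, so the proposition should be a concrete realization of this standard associated-bundle correspondence written out in Iwasawa coordinates.

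Concretely, I would first define $\varphi_\theta$ pointwise on the universal cover. For each $x\in\hh$ set
\begin{equation*}
\Phi_\theta(x)(w) := \theta(w)(g(x,e)), \qquad w\in V_n,
\end{equation*}
which via self-duality yields a traceless element $\Phi_\theta(x)\in V_n$; smoothness in $x$ is immediate from smoothness of $\theta$ and of the Iwasawa parametrization. The desired formula then follows from a one-line computation
\begin{equation*}
\theta(w)(g(x,k)) = \theta(w)(g(x,e)\cdot k) = \theta(kw)(g(x,e)) = \Phi_\theta(x)(kw),
\end{equation*}
where the middle equality uses $K$-linearity of $\theta$ with respect to right translation on $C^\infty(\Gamma\backslash G)$.

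The substantive step is descent from $\hh$ to $\Gamma\backslash\hh$: one must interpret the coordinate object $\Phi_\theta(x)\in V_n$ as an honest traceless symmetric tensor $\varphi_\theta(x)\in\mathrm{Sym}^n(T^{*}_x\hh)$, and check $\Gamma$-equivariance. For this I would invoke the fact that the left $\Gamma$-action respects the Iwasawa decomposition only up to a smooth cocycle $\alpha\colon\Gamma\times\hh\to K$ satisfying $\gamma\cdot g(x,e)=g(\gamma x,\alpha(\gamma,x))$, and that this same cocycle encodes how $\gamma_{*}$ relates the distinguished reference frames at $x$ and $\gamma x$. The invariance $\theta(w)(\gamma g)=\theta(w)(g)$ then translates exactly into the tensorial transformation law making $\varphi_\theta$ a well-defined smooth section on $\Gamma\backslash\hh$, with tracelessness inherited from the fact that $V_n\subset \mathrm{Sym}^n(V_1)$ consists of traceless tensors.

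The main obstacle I anticipate is bookkeeping: verifying carefully that the cocycle $\alpha$ acts on $V_n$ by the standard representation, and that the identification between $K$-equivariant $V_n$-valued functions and traceless symmetric cotangent tensors matches the conventions used elsewhere in the paper. Since this is precisely the business of Appendix~\ref{app:difgeo}, I would import those conventions rather than reprove them. The inverse map is immediate: given a traceless symmetric $\varphi$, define $\theta(w)(g(x,k)):=\varphi(x)(kw)$ and check $K$-linearity and $\Gamma$-invariance, closing the one-to-one correspondence.
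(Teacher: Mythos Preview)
Your proposal is correct and follows essentially the same route as the paper's proof in Appendix~\ref{app:difgeo}: define $\varphi_\theta$ by evaluating at the identity fibre, use $K$-linearity for the displayed formula, and then invoke the Iwasawa cocycle (your $\alpha(\gamma,x)$ is exactly the paper's $r(\gamma,x)$ of Lemma~\ref{lem:leftIwasawa}) together with its identification with the differential $d\gamma_x$ to obtain $\Gamma$-equivariance. The only notable difference is that the paper phrases the first step via the Peter--Weyl theorem, whereas your direct evaluation at $k=e$ is more elementary and equally valid.
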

Proposition~\ref{prop:tensorFields} is restated for hyperbolic manifolds of any dimension as Proposition~\ref{prop:tensorFieldsApp} in Appendix~\ref{app:difgeo} and proved therein.

We will denote by $\varphi_{i,n}$ the traceless section of $\mathrm{Sym}^{n}(T^{*}\Gamma\backslash\mathbb{H}^3)$ which corresponds to the map $\phi^{(n)}_{i}$, as provided by Proposition~\ref{prop:tensorFields}. This means
\be\label{eq:varphiDef}
\phi^{(n)}_{i}(w)(g(x,k)) = \varphi_{i,n;a_1\ldots a_n}(x)k^{a_1}_{\phantom{a_1}b_1}\ldots k^{a_n}_{\phantom{a_1}b_n}w^{b_1\ldots b_n}\,.
\ee
 $\varphi_{i,n;a_1\ldots a_n}(x)$ are the components of $\varphi_{i,n}$ in a basis of orthonormal 1-forms on $\hh$, and we are using the Einstein summation convention.
 
What is the relationship between $\varphi_{i,n}$ for different values of $n$? Proposition~\ref{prop:nabla} demonstrates that the action of the Lie algebra generators $B_a$ on $C^{\infty}(\Gamma\backslash G)$ becomes the covariant derivative. Therefore, $\varphi_{i,n}$ is obtained by acting on $\varphi_{i,J_i}$ with $n-J_i$ covariant derivatives, symmetrizing and removing traces. We can state it precisely as the following corollary:
\begin{corollary}\label{cor:Bnabla}
$\varphi_{i,n}$ is the unique smooth traceless section of $\mathrm{Sym}^{n}(T^{*}\Gamma\backslash\mathbb{H}^3)$ such that
$$
\varphi_{i,n}(Y) = (\nabla^{n-J_i}\varphi_{i,J_i})(Y)
$$
for any traceless section $Y$ of $\mathrm{Sym}^{n}(T_{*}\Gamma\backslash\mathbb{H}^3)$. Here $\varphi(Y)$ stands for the natural pairing between sections of $(T^{*}\Gamma\backslash\mathbb{H}^3)^{\otimes n}$ and $(T\,\Gamma\backslash\mathbb{H}^3)^{\otimes n}$.
\end{corollary}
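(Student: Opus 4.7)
The plan is to reduce the corollary to three earlier ingredients: the explicit formula for $\kappa^{(n)}_{\Delta,J}$ in Definition~\ref{def:kappaMaps}, the identification of the $B_a$-action with a covariant derivative on $\Gamma\backslash\mathbb{H}^3$ (Proposition~\ref{prop:nabla}, cited just above the corollary), and the bijection of Proposition~\ref{prop:tensorFields} between $K$-linear maps $V_n\to C^\infty(\Gamma\backslash G)$ and smooth traceless sections of $\mathrm{Sym}^n(T^*\Gamma\backslash\mathbb{H}^3)$.

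First I would combine the $G$-equivariance of $\phi_i$ with Definition~\ref{def:kappaMaps} to write, for $w\in V_n$ and $m=n-J_i$,
$$
\phi^{(n)}_i(w) = \sum_{a_1,\ldots,a_m=1}^{3} B_{a_1}\cdots B_{a_m}\,\phi^{(J_i)}_i(w^{a_1\ldots a_m\,\bullet}),
$$
where the $B_a$ act as right-invariant differential operators on $C^\infty(\Gamma\backslash G)$. By \eqref{eq:varphiDef}, $\phi^{(J_i)}_i$ is the function built from the components of $\varphi_{i,J_i}$.

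Next I would apply Proposition~\ref{prop:nabla} iteratively, converting the product $B_{a_1}\cdots B_{a_m}$, when evaluated at a point $g(x,k)\in \Gamma\backslash G$, into an $m$-fold covariant derivative along the moving orthonormal frame determined by $k$. Substituting \eqref{eq:varphiDef} yields the pointwise formula
$$
\phi^{(n)}_i(w)\bigl(g(x,k)\bigr) = \bigl(\nabla^{m}\varphi_{i,J_i}\bigr)_{a_1\ldots a_m b_1\ldots b_{J_i}}(x)\,k^{a_1}_{\phantom{a_1}c_1}\cdots k^{b_{J_i}}_{\phantom{b_{J_i}}d_{J_i}}\,w^{c_1\ldots c_m d_1\ldots d_{J_i}}.
$$
Since $\phi^{(n)}_i$ is $K$-linear, Proposition~\ref{prop:tensorFields} supplies a unique traceless symmetric section $\varphi_{i,n}$ representing it; the displayed formula then identifies $\varphi_{i,n}(Y)$ with $(\nabla^{m}\varphi_{i,J_i})(Y)$ for any traceless symmetric test section $Y$, which is the content of the corollary, and the uniqueness clause of Proposition~\ref{prop:tensorFields} closes the argument.

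The main subtlety I anticipate is the non-commutativity of the $B_a$'s: a priori the unsymmetrized ordering in Definition~\ref{def:kappaMaps} could obstruct identification with $\nabla^m$ acting on a symmetric traceless tensor. However, the commutators $[B_a,B_b]=-\sum_c\epsilon_{abc}A_c$ are antisymmetric in $(a,b)$, while $w\in V_n$ is totally symmetric, so antisymmetric contributions drop out in the contraction; trace contributions of the shape $\sum_a B_a B_a\cdots$ drop out upon pairing with the traceless tensor $Y$ (equivalently, by tracelessness of $w$). Consequently no curvature-type corrections survive, and the identification goes through exactly as stated, without needing to invoke any special symmetric-space properties of $\mathbb{H}^3$ beyond those already packaged in Proposition~\ref{prop:nabla}.
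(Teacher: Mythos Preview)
Your proposal is correct and follows essentially the same route as the paper, which simply cites the second part of Definition~\ref{def:kappaMaps} and Proposition~\ref{prop:nabla}; you have merely spelled out the details of that one-line proof. One small remark: your worry about non-commutativity of the $B_a$'s is unnecessary, since Proposition~\ref{prop:nabla} identifies each successive application of $B$ with a covariant derivative in the same order, so $B_{a_1}\cdots B_{a_m}$ corresponds to $\nabla_{a_1}\cdots\nabla_{a_m}$ with no reordering needed, and the pairing with the symmetric traceless $Y$ then extracts the claimed traceless symmetric part.
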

\begin{proof}
This follows directly from the second part of Definition~\ref{def:kappaMaps} and Proposition~\ref{prop:nabla}.
\end{proof}

We introduce the following natural norm on the space of sections of $\mathrm{Sym}^{n}(T^{*}\Gamma\backslash\mathbb{H}^3)$:
$$
\|\varphi\|^2 =  \frac{1}{\mathrm{vol}(\Gamma\backslash\hh)}\int\limits_{\Gamma\backslash\hh}\!\!
\varphi(x)\cdot\overline{\varphi(x)}\,d\mu(x)\,.
$$
Here $d\mu(x)$ is the invariant measure on $\Gamma\backslash\hh$ and the dot product $\varphi(x)\cdot\overline{\varphi(x)}$ is defined in~\eqref{eq:dotAndSquare}.

\begin{proposition}
The sections $\varphi_{i,n}$ have the following properties:
\begin{enumerate}
\item reality: $\overline{\varphi_{i,n}(x)} = \varphi_{i,n}(x)$,
\item normalization: $\|\varphi_{i,J_i}\|^2 = 1$. More generally, $\|\varphi_{i,n}\|^2 = (2n+1)p_{n}(\Delta_i,J_i)$, with $p_{n}(\Delta,J)$ given in~\eqref{eq:normN}.
\end{enumerate}
\end{proposition}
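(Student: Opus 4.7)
The plan is to derive both parts of the proposition from the defining relation~\eqref{eq:varphiDef}, which I will write compactly as $\phi^{(n)}_i(w)(g(x,k)) = \varphi_{i,n}(x)\cdot(kw)$, where $kw$ denotes the $\mathrm{SO}(3)$-action on $V_n$ and the $\cdot$ pairing between the two copies of $V_n$ is as in~\eqref{eq:dotAndSquare}. The only nontrivial inputs needed are the norm formula for $\kappa^{(n)}_{\Delta,J}$ from Lemma~\ref{lem:normN}, the real-structure preservation built into Definition~\ref{def:kappaMaps} and the assumption on $\phi_i$, and Schur orthogonality for the $K=\mathrm{SO}(3)$ action on the irreducible $V_n$.

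For the reality claim, both $\phi_i$ and $\kappa^{(n)}_{\Delta_i,J_i}$ are real, so $\phi^{(n)}_i(\bar w)(g)=\overline{\phi^{(n)}_i(w)(g)}$ for all $w\in V_n$ and $g\in G$. Applying the defining formula to both sides and recalling that $k\in\mathrm{SO}(3)$ has real matrix entries yields
$$
\varphi_{i,n;a_1\ldots a_n}(x)\,k^{a_1}_{\phantom{a_1}b_1}\!\!\cdots k^{a_n}_{\phantom{a_1}b_n}\,\bar w^{b_1\ldots b_n} \;=\; \overline{\varphi_{i,n;a_1\ldots a_n}(x)}\,k^{a_1}_{\phantom{a_1}b_1}\!\!\cdots k^{a_n}_{\phantom{a_1}b_n}\,\bar w^{b_1\ldots b_n}.
$$
Taking $k$ to be the identity and varying $\bar w$ forces $\varphi_{i,n;a_1\ldots a_n}(x) = \overline{\varphi_{i,n;a_1\ldots a_n}(x)}$, as claimed.

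For the normalization, the plan is to compute $\|\phi^{(n)}_i(w)\|^2_{\Gamma\backslash G}$ in two different ways and equate the results. Firstly, since $\phi_i$ preserves the norm, Lemma~\ref{lem:normN} gives
$$
\|\phi^{(n)}_i(w)\|^2_{\Gamma\backslash G} \;=\; \|\kappa^{(n)}_{\Delta_i,J_i}(w)\|^2_{\Delta_i,J_i} \;=\; (2n+1)\,p_n(\Delta_i,J_i)\,\|w\|^2_n.
$$
Secondly, parametrizing $g=g(x,k)$ with $dg = d\mu(x)\,dk$ and $\int_K dk=1$, I will integrate the squared modulus of $\varphi_{i,n}(x)\cdot(kw)$ first over $K$. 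Since $\cdot$ is, up to scalar, the unique $K$-invariant bilinear form on $V_n\otimes V_n$, Schur's lemma yields the identity
$$
\int_K (A\cdot k w)\,(B\cdot k\bar w)\,dk \;=\; \frac{A\cdot B}{2n+1}\,(w\cdot \bar w) \qquad\text{for all}\ A,B,w\in V_n,
$$
with the constant fixed by summing both sides against a real orthonormal basis of $V_n$ for the $\cdot$ pairing and using the $K$-invariance of $\cdot$. Specializing to $A=\varphi_{i,n}(x)$, $B=\overline{\varphi_{i,n}(x)}=\varphi_{i,n}(x)$ (by part~(1)), and using $w\cdot\bar w = (2n+1)\,\|w\|_n^2$, the $K$-integral becomes $\bigl(\varphi_{i,n}(x)\cdot\overline{\varphi_{i,n}(x)}\bigr)\,\|w\|_n^2$, and integrating this over $\Gamma\backslash\hh$ produces $\|\varphi_{i,n}\|^2\,\|w\|_n^2$. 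Equating the two expressions gives $\|\varphi_{i,n}\|^2 = (2n+1)\,p_n(\Delta_i,J_i)$, and at $n=J_i$ the formula~\eqref{eq:normN} collapses to $p_{J_i}(\Delta_i,J_i) = 1/(2J_i+1)$, yielding $\|\varphi_{i,J_i}\|^2 = 1$.

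The only genuinely delicate step is the Schur integral over $K$: it matters that $\cdot$ is complex bilinear rather than sesquilinear, so that completeness in a real orthonormal basis $\{e_\alpha\}$ of $V_n$ reads $\sum_\alpha (e_\alpha\cdot x)(e_\alpha\cdot y) = x\cdot y$, after which two applications of the $K$-invariance of $\cdot$ pin down the constant $1/(2n+1)$. Everything else is a bookkeeping exercise reconciling the reality conventions of Definitions~\ref{def:principal}--\ref{def:kappaMaps} with the $G=NAK$ decomposition of Haar measure.
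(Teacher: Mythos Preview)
Your proof is correct and follows essentially the same approach as the paper: reality from real-structure preservation of $\phi_i$ and $\kappa^{(n)}_{\Delta_i,J_i}$ applied to~\eqref{eq:varphiDef}, and normalization by computing $\|\phi^{(n)}_i(w)\|^2_{\Gamma\backslash G}$ two ways, using the Schur orthogonality integral over $K$ for one of them. The paper packages your Schur integral as Lemma~\ref{lem:KHaar2} (proved in Appendix~\ref{app:KHaar} by the same Schur argument you sketch), but otherwise the arguments are identical.
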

\begin{proof}
Since $\phi_i$ preserves the real structure, we have $\overline{\phi^{(n)}_i(w)} = \phi^{(n)}_i(\overline{w})$, and so (1) follows from~\eqref{eq:varphiDef}. To prove part (2), we note that
$$
\|\phi^{(n)}_{i}(w)\|^2 = 
\frac{1}{\mathrm{vol}(\Gamma\backslash\hh)}\int\limits_{\Gamma\backslash\hh}\!\int\limits_{K}
(\varphi_{i,n}(x) \cdot kw)(\varphi_{i,n}(x) \cdot k\overline{w})dk \,d\mu(x)\,.
$$
The integral over $K$ can be done with the help of Lemma~\ref{lem:KHaar2} proved in Appendix~\ref{app:KHaar}. The result is
$$
\|\phi^{(n)}_{i}(w)\|^2_{\Gamma\backslash G} = \|\varphi_{i,n}\|^2 \|w\|_n^2\,.
$$
Part (2) of the proposition now follows from Lemma~\ref{lem:normN} since $\phi_i$ preserves the norm.
\end{proof}

As a result of irreducibility of $R_{\Delta,J}$, the sections $\varphi_{i,n}$ satisfy various differential equations. In order to formulate these equations, we review the definitions of certain standard differential operators acting on smooth sections of $(T^{*}M)^{\otimes n}$ for a Riemannian three-manifold $M$.

\begin{definition}
Let $\varphi$ be a smooth section of $(T^{*}M)^{\otimes n}$. Then $\mathrm{div}(\varphi)$ is the smooth section of $(T^{*}M)^{\otimes(n-1)}$ given by the trace of $\nabla \varphi$ over the first two arguments. We also define the Laplacian of $\varphi$ as
$$
\lap \varphi = -\mathrm{div}(\nabla \varphi)\,,
$$
where $\nabla$ is the covariant derivative (Levi--Civita connection).
\end{definition}

In order to define the curl operator, we first recall the Hodge star operation.
\begin{definition}
Let $\omega$ be the volume form on $M$ and $X$ a vector field on $M$. The Hodge star of $X$ is the section of $\mathrm{\Lambda}^{2}(TM)$ defined by
$$
{\star}X = \omega(\cdot,\cdot,X)^{\sharp}\,,
$$
where $\sharp:\mathrm{\Lambda}^{2}(T^{*}M) \rightarrow \mathrm{\Lambda}^{2}(TM)$ is the musical isomorphism. Let $X_1$, $X_2$ be vector fields on $M$. Their cross product is the vector field defined by
$$
X_1 \times X_2 = \omega(\cdot,X_1,X_2)^{\sharp}\,.
$$
\end{definition}

\begin{definition}\label{def:curl}
Let $\varphi$ be a smooth traceless section of $\mathrm{Sym}^{n}(T^{*}M)$ and let $X_1,\ldots, X_n$ be any collection of vector fields on $M$. We define $\mathrm{curl}(\varphi)$ to be the section of $\mathrm{Sym}^{n}(T^{*}M)$ satisfying
$$
\mathrm{curl}(\varphi)(X_1,\ldots,X_n) = \frac{1}{n}\sum\limits_{i=1}^{n}\nabla \varphi({\star}X_i,X_1,\ldots,\widehat{X_i},\ldots,X_n)\,.
$$
\end{definition}
It is not hard to check that $\mathrm{curl}(\varphi)$ is symmetric and traceless. We are now ready to state the following result:

\begin{proposition}\label{prop:differentialEqs}
Let $\phi:R_{\Delta,J} \rightarrow L^2(\Gamma\backslash G)$ be an injective $G$-linear map and set $\phi^{(n)} = \phi\circ \kappa^{(n)}_{\Delta,J}$. Let $\varphi_{n}$ be the smooth traceless section of $\mathrm{Sym}^{n}(T^{*}\Gamma\backslash\mathbb{H}^3)$ associated to $\phi^{(n)}$ as in~\eqref{eq:varphiDef}. Then
\begin{enumerate}
\item $\lap \varphi_{n} = [\Delta(2-\Delta)+n(n+1)-J^2] \varphi_{n}$.
\item If $J > 0$, we have $\mathrm{div}(\varphi_{J}) = 0$.
\item $\mathrm{curl}(\varphi_{n}) = -i(\Delta-1)\frac{J}{n}\,\varphi_{n}$.
\end{enumerate}
\end{proposition}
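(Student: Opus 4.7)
The plan is to prove all three parts by translating the $\mathfrak{g}$-action on $\phi^{(n)}(w)\in C^{\infty}(\Gamma\backslash G)$ into differential-geometric operations on the associated tensor section $\varphi_{n}$. The dictionary has two ingredients: Proposition~\ref{prop:tensorFields} gives the bijection between $K$-linear maps $V_n\to C^\infty(\Gamma\backslash G)$ and traceless symmetric sections, while Corollary~\ref{cor:Bnabla} identifies iterated application of the noncompact generators $B_a$ with symmetrized traceless covariant differentiation $\nabla_{e_a}$ in an orthonormal frame. Complementarily, the compact generators $A_a$ act on the $K$-type $V_n$ as the infinitesimal SO(3) rotation, that is, as a derivation distributing the Lie algebra action over all $n$ tensor slots.

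For part~(1), I would apply Lemma~\ref{lem:casimirs} via the identity $C_2=\sum_a(A_aA_a-B_aB_a)$ to $\phi^{(n)}(w)$. On $V_n$ the compact Casimir $\sum_a A_aA_a$ acts by the scalar $-n(n+1)$ (the same computation used in the proof of Lemma~\ref{lem:quadrilinear}). On the geometric side, under the dictionary $\sum_a B_aB_a$ becomes $g^{ab}\nabla_a\nabla_b$ acting on $\varphi_n$, which by the definition $\lap=-\mathrm{div}\circ\nabla$ equals $-\lap\varphi_n$. Comparing with the scalar eigenvalue $\Delta(2-\Delta)-J^2$ supplied by Lemma~\ref{lem:casimirs} gives $\lap\varphi_n=[\Delta(2-\Delta)+n(n+1)-J^2]\varphi_n$.

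Part~(2) is a purely representation-theoretic observation. By Proposition~\ref{prop:tensorFields}, $\mathrm{div}(\varphi_{J})$ is a smooth traceless section of $\mathrm{Sym}^{J-1}(T^{*}\Gamma\backslash\hh)$ corresponding to a $K$-linear map $V_{J-1}\to C^{\infty}(\Gamma\backslash G)$. Because $\mathrm{div}$ is $G$-equivariant and $\varphi_J$ comes from the minimal $K$-type $V_J\subset R_{\Delta,J}$, this map factors through the $G$-subrepresentation $\phi(R_{\Delta,J})\subset L^2(\Gamma\backslash G)$. However, the $K$-type decomposition $R_{\Delta,J}\cong\bigoplus_{n\geq J}V_n$ established in Section~\ref{ssec:irrepsG} does not contain $V_{J-1}$, so Schur's lemma forces the map to vanish, giving $\mathrm{div}(\varphi_J)=0$.

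For part~(3), I would apply the second Casimir $C_2'=\sum_a A_aB_a$, which by Lemma~\ref{lem:casimirs} acts as the scalar $i(\Delta-1)J$. Under the dictionary $B_a$ becomes $\nabla_{e_a}$, while $A_a$ acts on $V_n$ as the sum over tensor slots of the infinitesimal SO(3) rotation, whose matrix elements in an orthonormal basis are essentially the Levi--Civita symbol. Contracting with $\nabla^a$ produces, on each tensor slot, exactly the combination $\epsilon_{a_i bc}\nabla^b\varphi_{n;c,a_1,\ldots,\widehat{a_i},\ldots,a_n}$ appearing in Definition~\ref{def:curl}, and the sum over slots from $A_a$ compensates the factor $1/n$ in the symmetrized curl up to an overall factor of $n$, so that $C_2'\varphi_n=-n\,\mathrm{curl}(\varphi_n)$. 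Solving yields part~(3). The main obstacle is precisely this last bookkeeping step---fixing the sign and the factor of $n$ relating $C_2'$ to $\mathrm{curl}$. I would verify it unambiguously by evaluating both sides on the highest-weight vector $w=z_{+}^{n}$, using the explicit formulas for $\kappa^{(n)}_{\Delta,J}(z_+^n)$ from Lemma~\ref{lem:fPlusMinus} and the differential operators in~\eqref{eq:difGens}, and comparing with $\mathrm{curl}(\varphi_n)$ computed directly in a hyperbolic orthonormal frame.
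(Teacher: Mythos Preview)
Your proposal is correct and follows essentially the same route as the paper: both prove (1) and (3) by letting the Casimirs $C_2$ and $C_2'$ act on $\phi^{(n)}(w)$, using that $\sum_a A_aA_a$ gives $-n(n+1)$ and that $B_a$ corresponds to $\nabla_{e_a}$ (Proposition~\ref{prop:nabla}), and both deduce (2) from the absence of the $K$-type $V_{J-1}$ in $R_{\Delta,J}$. The only difference is cosmetic: for the sign-and-factor bookkeeping in (3) the paper isolates a clean lemma (Lemma~\ref{lem:Aaction}) showing that the $A_a$-action corresponds to the cross product on vector fields, from which $C_2'\varphi_n=-n\,\mathrm{curl}(\varphi_n)$ drops out directly, whereas you propose to pin this down by explicit evaluation on the highest-weight vector---a valid but more laborious alternative.
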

To prove part (3) of this proposition, we will need a clean description of the action of the Lie algebra elements $A_a$ on the sections $\varphi$. For $u\in V_1$, let us denote $A(u) = \sum_{a=1}^{3}A_a u^a$. We can now proceed analogously to the discussion preceding Proposition~\ref{prop:nabla}. Given a $K$-linear map $\theta: V_n \rightarrow C^{\infty}(\Gamma\backslash G)$, we can define a new $K$-linear map $\widetilde{\theta}: V_1\otimes V_{n}\rightarrow C^{\infty}(\Gamma\backslash G)$ by acting with the Lie algebra elements $A(u)$:
$$
\forall u\in V_1,\, v\in V_n\,:\qquad \widetilde{\theta}(u\otimes v)(g(x,k)) := \left.\frac{d}{dt}\right|_{t=0}\theta(v)(g(x,k)\mathrm{exp}(t A(u)))\,.
$$

\begin{lemma}\label{lem:Aaction}
Let $\varphi$ be the smooth section of $\mathrm{Sym}^{n}(T^{*}\Gamma\backslash\mathbb{H}^3)$ corresponding to the map $\theta$ and let $\widetilde{\varphi}$ be the smooth section of $(T^{*}\Gamma\backslash\mathbb{H}^3)^{\otimes(n+1)}$ corresponding to the map $\widetilde{\theta}$. Then for any collection of vector fields $X_1,\ldots,X_{n+1}$
$$
\widetilde{\varphi}(X_1,\ldots,X_{n+1}) = \sum\limits_{i=2}^{n+1}\varphi(X_1\times X_i,X_2,\ldots,\widehat{X_{i}},\ldots,X_{n+1})\,.
$$
\end{lemma}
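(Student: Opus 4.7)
The plan is to unfold the definition of $\widetilde\theta$, recognize the right $K$-action on $g(x,k)$ coming from $\exp(tA(u))\in K$, and then identify the resulting Lie-algebra action on $V_n$ with the cross product via the Leibniz rule. First, since $A(u)\in\mathfrak{k}$, the one-parameter subgroup $\exp(tA(u))$ lies in $K$, so multiplying $g(x,k)$ on the right by $\exp(tA(u))$ only alters the $K$-factor: $g(x,k)\exp(tA(u))=g(x,k\exp(tA(u)))$. Using Proposition~\ref{prop:tensorFields} to express $\theta(v)$ in terms of $\varphi$, I would then compute
\[
\widetilde\theta(u\otimes v)(g(x,k))=\left.\tfrac{d}{dt}\right|_{t=0}\varphi(x)\bigl(k\exp(tA(u))v\bigr)=\varphi(x)\bigl(k\,A(u)v\bigr),
\]
where $A(u)$ acts on $V_n$ as a Lie-algebra element.

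Next, applying Proposition~\ref{prop:tensorFields} to the $K$-linear map $\widetilde\theta:V_1\otimes V_n\to C^\infty(\Gamma\backslash G)$ gives a section $\widetilde\varphi$ of $(T^{*}\Gamma\backslash\mathbb{H}^3)^{\otimes(n+1)}$ characterized by $\widetilde\varphi(x)(k(u\otimes v))=\varphi(x)(k\,A(u)v)$ for all $k\in K$. Setting $k$ equal to the identity (the general $k$ being recovered by $K$-equivariance of both sides) reduces the problem to identifying the linear map $A(u):V_n\to V_n$ with an operation built from the cross product on $V_1$.

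The next step is a purely representation-theoretic computation on $V_1$ and $V_n$. From the commutation relations $[A_a,A_b]=\sum_c\epsilon_{abc}A_c$, the action of $A_a$ on the fundamental representation $V_1$ is by the matrix $(A_a)^b{}_c=\epsilon_{abc}$ (up to a sign fixed by the conventions of Section~\ref{sec:rep-theory}), so that $A(u)v=u\times v$ on $V_1$. Since $V_n$ sits inside $V_1^{\otimes n}$ as the symmetric traceless part and $A(u)$ acts on tensor products by the Leibniz rule, it follows that
\[
A(u)(v_2\otimes\cdots\otimes v_{n+1})=\sum_{i=2}^{n+1} v_2\otimes\cdots\otimes(u\times v_i)\otimes\cdots\otimes v_{n+1}.
\]
Evaluating the identity $\widetilde\varphi(x)(u\otimes v)=\varphi(x)(A(u)v)$ with $u=X_1$ and $v=X_2\otimes\cdots\otimes X_{n+1}$ (understood via the frame as tangent vectors at $x$), and using the symmetry of $\varphi$ to reorder the arguments, yields the claimed formula.

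The bookkeeping I expect to be delicate is twofold: fixing the sign conventions so that $A(u)$ acts on $V_1$ exactly as $u\times\cdot$ rather than $-u\times\cdot$ (which requires tracking the sign choice implicit in $A_a=-i\sigma_a/2$ and the Hodge star defining the cross product), and verifying that the pairing $\varphi(X_1,\ldots,X_n)$ used in the statement of the lemma matches the pairing $\varphi_\theta(x)(kw)$ from Proposition~\ref{prop:tensorFields} under the frame identification $k\in K\leftrightarrow(\text{orthonormal frame at }x)$. Both are conventional, not conceptual, so the substantive content of the lemma is exactly the two representation-theoretic facts above: that $\exp(tA(u))$ acts from the right on $G$ by $k\mapsto k\exp(tA(u))$ (preserving the $NA$-factor), and that $A(u)$ acts on $V_n$ by the Leibniz-type sum of cross products.
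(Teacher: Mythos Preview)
Your proposal is correct and follows essentially the same approach as the paper's proof, which is very terse: the paper simply notes that $A(u)v = u\times v$ on $V_1$, that the cross product of vector fields becomes the cross product on $V_1$ in the orthonormal frame $e_a$, and leaves the rest as a direct calculation. Your write-up is a fully unpacked version of that calculation, including the observation that $\exp(tA(u))\in K$ moves only the $K$-factor and the Leibniz-rule extension to $V_n\subset V_1^{\otimes n}$.
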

\begin{proof}
Note that $A(u)v = u\times v$ for any $u,v\in V_1$, where $\times$ is the standard cross product on $V_1$. Also note that the cross product of vector fields becomes the cross product on $V_1$ when expressed in the basis $e_a$ of vector fields. The proof then follows by a direct calculation.
\end{proof}

\begin{proof}[Proof of Proposition~\ref{prop:differentialEqs}]
To prove part (1), we act with the Casimir element $C_2$ on $\varphi_{n}$. $A_aA_a$ is the quadratic Casimir of the $\mathrm{SO}(3)$ subgroup and it therefore acts on $\varphi_{n}$ by multiplication by the constant $-n(n+1)$. It follows from Proposition~\ref{prop:nabla} that $-B_aB_a \varphi_{n} = \lap \varphi_{n}$. Part (1) of the Proposition then follows from Lemma~\ref{lem:casimirs}.

To prove part (2), let $\gamma_n: V_{n-1}\rightarrow V_1\otimes V_{n}$ be a nonzero $K$-linear map (unique up to a multiplicative constant), and define $\delta_n: V_1\otimes V_n \rightarrow R_{\Delta,J}$ by $\delta_n(u,v) = B(u)\cdot\kappa_{\Delta,J}^{(n)}(v)$. Since $R_{\Delta,J}$ does not contain $V_{J-1}$, we get $ \delta_{J}\circ \gamma_{J} = 0$. Applying $\phi$ to this relation, and using Proposition~\ref{prop:nabla}, we get $\mathrm{div}(\varphi_{J}) = 0$. 

Finally, to prove part (3), we act with the Casimir element $C'_2$ on $\varphi_{n}$. It follows from Lemma~\ref{lem:Aaction} that
$$
C'_2\cdot \varphi_{n} = \mathrm{div}(\widetilde{\varphi_{n}}) = - n \,\mathrm{curl}(\varphi_{n})\,.
$$
Part (3) now follows directly from Lemma~\ref{lem:casimirs}.
\end{proof}

Proposition~\ref{prop:differentialEqs} implies the main result of this section:
\begin{theorem}\label{thm:spectralRelation}
When $R_{\Delta,0}$ appears inside $L^2(\Gamma\backslash G)$, it gives rise to an eigenfunction $\varphi$ of the Laplacian on $\Gamma\backslash\hh$ with eigenvalue $\Delta(2-\Delta)$
$$
\lap \varphi =\Delta(2-\Delta) \varphi\,.
$$
Similarly, when $R_{\Delta,J}$ with $J> 0$ appears inside $L^2(\Gamma\backslash G)$, it gives rise to a smooth, traceless, divergence-free section $\varphi$ of $\mathrm{Sym}^{J}(T^{*}\Gamma\backslash\mathbb{H}^3)$, which is an eigensection of the curl operator
$$
\mathrm{div}(\varphi) = 0\,,\qquad \mathrm{curl}(\varphi) = -i (\Delta-1)\,\varphi\,.
$$
Furthermore, the correspondence is one-to-one: Given a $\varphi$ satisfying these differential equations, there is a corresponding injective $G$-linear map $\phi : R_{\Delta,J} \rightarrow L^2(\Gamma\backslash G)$. 
\end{theorem}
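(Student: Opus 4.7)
The forward direction will follow essentially directly from Proposition~\ref{prop:differentialEqs}. Given $\phi:R_{\Delta,J}\to L^2(\Gamma\backslash G)$, I set $\varphi=\varphi_{\phi,J}$ as defined in~\eqref{eq:varphiDef}. For $J=0$, part~(1) of the Proposition specialized to $n=0$ yields $\lap\varphi=\Delta(2-\Delta)\varphi$. For $J>0$, $\varphi$ is automatically smooth, symmetric, and traceless by Proposition~\ref{prop:tensorFields}; part~(2) delivers $\mathrm{div}(\varphi)=0$ and part~(3) with $n=J$ delivers $\mathrm{curl}(\varphi)=-i(\Delta-1)\varphi$.

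For the converse, the plan is to invert the construction step by step. Given $\varphi$ satisfying the stated equations, applying Proposition~\ref{prop:tensorFields} in reverse produces a unique $K$-linear map $\theta_J:V_J\to C^\infty(\Gamma\backslash G)$. I then extend $\theta_J$ to the full space of $K$-finite vectors of $R_{\Delta,J}$ by mimicking Definition~\ref{def:kappaMaps}: for $w\in V_n$ with $n\geq J$, set
\[
\Theta(\kappa_{\Delta,J}^{(n)}(w)) := \sum_{a_1,\ldots,a_{n-J}=1}^{3} B_{a_1}\cdots B_{a_{n-J}}\cdot\theta_J(w^{a_1\ldots a_{n-J}\bullet}),
\]
where $B_a$ now acts by right differentiation on $C^\infty(\Gamma\backslash G)$. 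This map is $K$-equivariant by construction.

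The main obstacle will be verifying that $\Theta$ is a well-defined $(\mathfrak{g},K)$-module homomorphism, not merely a $K$-map. Since the $K$-finite subspace of $R_{\Delta,J}$ is the irreducible $(\mathfrak{g},K)$-module cyclically generated by the lowest $K$-type $V_J$, and since such an irreducible quotient of $U(\mathfrak{g})\otimes_{U(\mathfrak{k})}V_J$ is pinned down by the eigenvalues of the Casimirs $C_2$ and $C'_2$, it suffices to show that $C_2$ and $C'_2$ act on $\theta_J(V_J)$ by the scalars listed in Lemma~\ref{lem:casimirs}, namely $\Delta(2-\Delta)-J^2$ and $i(\Delta-1)J$. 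Here Proposition~\ref{prop:nabla} identifies $-B_aB_a$ with $\lap$, while Lemma~\ref{lem:Aaction} together with Definition~\ref{def:curl} identifies $C'_2=\sum_a A_a B_a$ with $-J\,\mathrm{curl}$ when acting on $\varphi_J$. Combined with the identity $\lap\varphi=(\mathrm{curl}\circ\mathrm{curl}+J+1)\varphi$ recorded in the Introduction, the hypothesized divergence-free and curl equations on $\varphi$ deliver exactly the required Casimir eigenvalues. The $J=0$ case is simpler since only the Laplace equation is relevant.

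Finally, I would extend $\Theta$ from the $K$-finite subspace to a bounded injective $G$-linear map $\phi:R_{\Delta,J}\to L^2(\Gamma\backslash G)$. Running the norm computation from Lemma~\ref{lem:normN} in reverse shows that, after normalizing $\|\varphi\|^2=1$, the map $\Theta$ is an isometry on each $K$-isotypic component, so it extends continuously to all of $R_{\Delta,J}$. Injectivity follows from irreducibility of $R_{\Delta,J}$ combined with the nontriviality of $\theta_J$ when $\varphi\neq 0$, and $G$-linearity of $\phi$ follows from its $(\mathfrak{g},K)$-linearity together with connectedness of $G$.
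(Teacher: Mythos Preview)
Your forward direction and the overall skeleton of the converse match the paper: both build the $(\mathfrak{g},K)$-module by applying $B$-operators (equivalently covariant derivatives, via Proposition~\ref{prop:nabla}) to $\theta_J(V_J)$. The difference is in how you certify that the resulting module is $R_{\Delta,J}$.

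Your Casimir argument has a gap for $J>0$. Matching $C_2$ and $C'_2$ on $\theta_J(V_J)$ shows only that the map from the generalized Verma module $U(\mathfrak{g})\otimes_{U(\mathfrak{k})}V_J$ factors through its quotient by the two Casimir relations. For $J>0$ that quotient still contains $V_{J-1}$: the $V_{J-1}$-summand of $B\cdot(1\otimes V_J)$ is untouched by either Casimir (the $C'_2$-relation lands in the $V_J$-summand of degree one, while the $C_2$-relation relates degree two to degree zero). So the Casimir quotient is strictly larger than $R_{\Delta,J}$, and you cannot conclude that your $\Theta$ is $\mathfrak{g}$-equivariant from Casimirs alone. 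The missing relation is precisely $\mathrm{div}(\varphi)=0$, used as the statement that the $V_{J-1}$-part of $B\cdot\theta_J(V_J)$ vanishes --- not merely as an input to the identity $\lap=\mathrm{curl}^2+J+1$. Alternatively, you can exploit that the image lies in the semisimple $(\mathfrak{g},K)$-module of $K$-finite vectors in $L^2(\Gamma\backslash G)$: the cyclic image is then a direct sum of unitary irreducibles with the given Casimir values and containing $V_J$, $R_{\Delta,J}$ is the only such, and cyclicity by one copy of $V_J$ forces the image to be a single $R_{\Delta,J}$.

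The paper's justification is different and more geometric: it observes that on a space of constant sectional curvature $-1$ the commutator of covariant derivatives on tensor fields reproduces $[B_a,B_b]=-\epsilon_{abc}A_c$, which is exactly the relation needed to propagate the correct $B$-action (the coefficients $\beta_{n,m}(\Delta,J)$ of Lemma~\ref{lem:BActionKfinite}) from $n=J$ to all $n\geq J$.
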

\begin{proof}
Given a nonzero $\phi:R_{\Delta,J}\rightarrow L^2(\Gamma\backslash G)$, let $\phi^{(J)} = \phi\circ\kappa_{\Delta,J}^{(J)}$ and take $\varphi = \varphi_{J}$ associated to $\phi^{(J)}$ through~\eqref{eq:varphiDef}. Then $\varphi$ satisfies the stated differential equation thanks to~Proposition~\ref{prop:differentialEqs}. Conversely, we need to show that any $\varphi$ satisfying the differential equations produces a $G$-linear map $\phi : R_{\Delta,J} \rightarrow L^2(\Gamma\backslash G)$. It suffices to exhibit the corresponding $(\mathfrak{g},K)$-module in $C^{\infty}(\Gamma\backslash G)$. To do that, we first define $\varphi_{n}$ by acting with $n-J$ covariant derivatives on $\varphi$ as in Corollary~\ref{cor:Bnabla}. This allows us to construct the family of maps $\phi^{(n)}:V_n\rightarrow C^{\infty}(\Gamma\backslash G)$ through~\eqref{eq:varphiDef}. These maps give rise to the correct $(\mathfrak{g},K)$-module. In particular, the commutator $[B_a,B_b]$ is correctly reproduced by the commutator of covariant derivatives since we are on a space of constant sectional curvature $-1$.
\end{proof}

Theorem~\ref{thm:spectralRelation} and its proof explain the connection, first stated in the Introduction in~\eqref{eq:spectrumIntroOld} and~\eqref{eq:spectrumIntro}, between the decomposition of $L^2(\Gamma\backslash G)$ into irreducibles, and the spectral problems~\eqref{eq:laplaceScalar},~\eqref{eq:curl}. Indeed, given the decomposition~\eqref{eq:decomposition2}, the eigensections of the spin-$J$ spectral problem are the sections $\varphi_{k,J_k}$ with $J_k=J$, and $t_k = -i(\Delta_k-1)$ are the corresponding eigenvalues.

From now on, we will write the section $\varphi_{k,J_k}$ more simply as $\varphi_k$. We will also define the spectrum $\Sigma_{\Gamma}$ of the cocompact lattice $\Gamma\subset \mathrm{PSL}_2(\mathbb{C})$ as the sequence $\Sigma_{\Gamma} = ((\Delta_k,J_k))_{k\in\mathbb{Z}_{> 0}}$. Recall Remark~\ref{rmk:spectrumNotation} for the relation between our notation for the full spectrum $((\Delta_k,J_k))_{k\in\mathbb{Z}_{> 0}} = ((1+i t_k,J_k))_{k\in\mathbb{Z}_{> 0}}$ and the individual spin-$J$ spectra $(t_\ell^{(J)})_{\ell\in\mathbb{Z}_{> 0}}$.

\begin{remark}\label{rmk:constants}
For many practical purposes, we can treat the trivial representation uniformly with the rest of the spectrum. Thus, we will set $(\Delta_0,J_0) = (0,0)$ and define $R_{0,0}:=\mathbb{C}$ to be the trivial representation of $G$. The spectral decomposition then reads $L^2(\Gamma\backslash G) = \bigoplus\limits_{k=0}^{\infty} R_{\Delta_{k},J_k}$.
\end{remark}

\section{Correlations and triple products}
\label{sec:correlators}
\subsection{Correlations of smooth vectors}
Our goal will be to derive universal constraints on the spectrum~\eqref{eq:decomposition2} of irreducible representations $R_{\Delta_i,J_i}$ appearing in $L^2(\Gamma\backslash G)$. These will follow from consistency of correlations, defined as follows:

\begin{definition}
Let $F_1,\ldots,F_N$ be smooth functions on $\Gamma\backslash G$. Their correlation is defined as the Haar average of the pointwise product
$$
\langle F_1\ldots F_N\rangle = \frac{1}{\mathrm{vol}(\Gamma\backslash G)}\int\limits_{\Gamma\backslash G}\!F_1(g)\ldots F_N(g)dg\,.
$$
\end{definition}
Smoothness of $F_i$ and compactness of $\Gamma\backslash G$ guarantee that the correlations are finite. The normalization is chosen so that $\langle 1\rangle = 1$. It follows immediately from the definition that correlations are $G$-invariant, i.e.,
$$
\langle \cdot \ldots \cdot\rangle:\; (C^{\infty}(\Gamma\backslash G))^N \rightarrow \mathbb{C}
$$
is an invariant functional.

We will focus on the situation where $F_j$ with $j\in\{1,\ldots,N\}$ are smooth vectors in irreducible representations, i.e., for each $j$, we have $F_j = \phi_{i}(f_j)$ for some $i\in\mathbb{Z}_{>0}$ and some $f_j\in R_{\Delta_i,J_i}^{\infty}$. Note that the space of smooth vectors $R_{\Delta,J}^{\infty}$ coincides with smooth functions on the Riemann sphere belonging to $R_{\Delta,J}$. Furthermore, for any $f\in R_{\Delta_i,J_i}^{\infty}$, $\phi_i(f)$ is a smooth function on $\Gamma\backslash G$. Thus, we will analyze the collection of $G$-invariant functionals
\be\label{eq:correlationFunctions}
\langle\phi_{i_1}(\cdot)\ldots \phi_{i_N}(\cdot)\rangle:\;R^{\infty}_{\Delta_{i_1},J_{i_1}}\times\ldots\times R^{\infty}_{\Delta_{i_N},J_{i_N}}\rightarrow \mathbb{C}\,.
\ee

For $N=1$, the correlation is the orthogonal projection onto the one-dimensional $G$-invariant subspace of $L^2(\Gamma\backslash G)$. This means that $\langle\phi_i(f)\rangle = 0$ for all $i>0$ and all $f\in R^{\infty}_{\Delta_i,J_i}$.

The next simplest case is $N=2$, i.e., the two-point correlation. It can be analyzed through its relation to the inner product on $L^2(\Gamma\backslash G)$ and is covered by the following proposition:
\begin{proposition}\label{prop:2ptFunction}
Let $R_{\Delta_i,J_i}$ and $R_{\Delta_j,J_j}$ appear in $L^2(\Gamma\backslash G)$ and let $\phi_{i}: R_{\Delta_i,J_i}\rightarrow L^2(\Gamma\backslash G)$, $\phi_{j}: R_{\Delta_j,J_j}\rightarrow L^2(\Gamma\backslash G)$ be the corresponding $G$-linear, norm-preserving, real maps, as described in Section~\ref{ssec:spectrum}. The two-point correlations take the form
$$
\langle\phi_i(f_1)\phi_j(f_2)\rangle = \delta_{ij}\,\pi\int\limits_{\mathbb{C}}\! f_1(x)(\mathcal{S}_{\Delta_j,J_j}f_2)(x) dx\,,
$$
where $\mathcal{S}_{\Delta,J}$ is the intertwining operator defined by~\eqref{eq:shadow}.
\end{proposition}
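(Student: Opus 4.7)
The plan is to reduce the two-point correlation to an inner product on $L^2(\Gamma\backslash G)$ and then exploit orthogonality of the spectral decomposition together with the real structure.

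First, note that the Haar normalization satisfies $\mathrm{vol}(\Gamma\backslash G) = \mathrm{vol}(\Gamma\backslash \hh)$, since $K$ has measure $1$. The natural real structure on $L^2(\Gamma\backslash G)$ is complex conjugation, so for any smooth $F_1,F_2$,
$$
\langle F_1 F_2\rangle = \frac{1}{\mathrm{vol}(\Gamma\backslash G)}\int_{\Gamma\backslash G} F_1(g)F_2(g)\,dg = (F_1,\overline{F_2})_{\Gamma\backslash G}.
$$
Applying this to $F_1 = \phi_i(f_1)$, $F_2 = \phi_j(f_2)$ and using that the $\phi_j$ are defined to intertwine the real structures, $\overline{\phi_j(f_2)} = \phi_j(\rho f_2)$, I rewrite
$$
\langle \phi_i(f_1)\phi_j(f_2)\rangle = (\phi_i(f_1),\phi_j(\rho f_2))_{\Gamma\backslash G}.
$$

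Next I would invoke the spectral decomposition \eqref{eq:decomposition2}. By construction, the images of distinct $\phi_i,\phi_j$ (including in the presence of multiplicity, where the basis of the multiplicity space was chosen orthonormal) lie in mutually orthogonal summands of $L^2(\Gamma\backslash G)$. Hence $(\phi_i(\cdot),\phi_j(\cdot))_{\Gamma\backslash G}$ vanishes unless $i=j$. When $i=j$, since $\phi_i$ is norm-preserving and $G$-linear, polarization gives
$$
(\phi_i(f_1),\phi_i(g))_{\Gamma\backslash G} = (f_1,g)_{\Delta_i,J_i}.
$$
Combining, this yields the desired $\delta_{ij}$ and reduces the claim to the identity
$$
(f_1,\rho f_2)_{\Delta,J} = \pi\int_{\mathbb{C}} f_1(x)\,\mathcal{S}_{\Delta,J}(f_2)(x)\,dx.
$$

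The final step is to verify this identity in each case, using Proposition~\ref{prop:realStructure} and the definition of $\mathcal{S}_{\Delta,J}$ in \eqref{eq:shadow}. For the principal series with $(\Delta,J)\neq(1,0)$, I substitute $\rho(f_2)=\overline{\mathcal{S}_{\Delta,J}(f_2)}$ into the principal-series inner product of Definition~\ref{def:principal}; the two conjugations cancel and give the formula directly. The case $\Delta=1$, $J=0$ is immediate since $\mathcal{S}_{1,0}=\mathrm{id}$. For the complementary series, $\rho$ is ordinary complex conjugation, and
$$
(f_1,\rho f_2)_{\Delta,0} = (1-\Delta)\int_{\mathbb{C}^2}|x-y|^{-2\Delta}f_1(x)f_2(y)\,dx\,dy,
$$
which after applying Fubini and recognizing the normalized kernel of $\mathcal{S}_{\Delta,0}$ equals $\pi\int f_1(x)\mathcal{S}_{\Delta,0}(f_2)(x)\,dx$. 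The symmetry of $|x-y|^{-2\Delta}$ in $x\leftrightarrow y$ is what makes principal and complementary series collapse to the same formula.

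The main obstacle I anticipate is the bookkeeping in the second step when multiplicities are present, i.e., making sure the Kronecker $\delta_{ij}$ is really the orthogonality of the chosen embeddings and not merely a statement up to unitary mixing; this is handled by recalling that the $\phi_i$ are fixed once a real orthonormal basis is chosen in each multiplicity space. A minor point is the uniform treatment of the intertwiner in the complementary series, where the normalization $(1-\Delta)/\pi$ in \eqref{eq:shadow} is precisely what matches $(1-\Delta)$ in the inner product of Definition~\ref{def:complementary}; this matching is not accidental and is what makes the unified formula possible.
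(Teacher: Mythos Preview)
Your proof is correct and follows essentially the same route as the paper's own proof: rewrite the correlation as $(\phi_i(f_1),\overline{\phi_j(f_2)})_{\Gamma\backslash G}$, use that $\phi_j$ intertwines the real structures to replace $\overline{\phi_j(f_2)}$ by $\phi_j(\rho f_2)$, invoke orthogonality and the norm-preserving property to reduce to $(f_1,\rho f_2)_{\Delta_i,J_i}$, and then read off the result from the explicit formulas for $\rho$ and the inner products. The paper leaves the final case-by-case verification implicit, whereas you spell it out, but the argument is the same.
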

\begin{proof}
Note that $\langle\phi_i(f_1)\phi_j(f_2)\rangle = (\phi_i(f_1),\overline{\phi_j(f_2)})_{\Gamma\backslash G}$, where $ (\cdot,\cdot)_{\Gamma\backslash G}$ is the inner product on $L^2(\Gamma\backslash G)$. Since $\phi_j$ is a real, norm-preserving map, we get
$$
\langle\phi_i(f_1)\phi_j(f_2)\rangle = (\phi_i(f_1),\phi_j({\rho}(f_2)))_{\Gamma\backslash G} = \delta_{ij}(f_1,{\rho}(f_2))_{R_{\Delta_i,J_i}}\,,
$$
where ${\rho}: R_{\Delta_j,J_j}\rightarrow R_{\Delta_j,J_j}$ is the antilinear involution described in Proposition~\ref{prop:realStructure} and $(\cdot,\cdot)_{\Delta_i,J_i}$ is the inner product on $R_{\Delta_i,J_i}$, described in Definitions~\ref{def:principal} and~\ref{def:complementary}. The proposition follows from the explicit forms of ${\rho}$ and $(\cdot,\cdot)_{\Delta_i,J_i}$ quoted therein.
\end{proof}
\begin{remark}
Proposition~\ref{prop:2ptFunction} reflects the observation that the space of invariant maps \linebreak${R^{\infty}_{\Delta_{1},J_{1}}\times R^{\infty}_{\Delta_{2},J_{2}}\rightarrow\mathbb{C}}$ is one-dimensional if $(\Delta_1,J_1)=(\Delta_2,J_2)$ and zero-dimensional otherwise. It is not hard to check explicitly that the distribution $(x_1-x_2)^{-\Delta_1+J_1}\,(\xb_1-\xb_2)^{-\Delta_1-J_1}$ is indeed invariant. Uniqueness follows because the action of $G$ on ordered pairs of points on the Riemann sphere is transitive. In the context of conformal field theory, this statement is known as the uniqueness of two-point functions.
\end{remark}

We will also need a version of Proposition~\ref{prop:2ptFunction} for two-point correlations of $K$-finite vectors. Recall the $K$-linear maps $\phi^{(n)}_i:V_n\rightarrow C^{\infty}(\Gamma\backslash G)$, introduced in Section~\ref{ssec:spectrum}. Their two-point correlations are captured by the following lemma:

\begin{lemma}\label{lem:2ptK}
For all $i,j>0$, $n_1\geq J_i$, $n_2\geq J_j$ and $w_1\in V_{n_1}$, $w_2\in V_{n_2}$, we have
$$
\langle \phi_i^{(n_1)}(w_1) \phi_j^{(n_2)}(w_2) \rangle = \delta_{ij}\,\delta_{n_1 n_2}
\,p_{n_1}(\Delta_i,J_i) \,w_1\cdot w_2\,,
$$
where $p_n(\Delta,J)$ is given in~\eqref{eq:normN}.
\end{lemma}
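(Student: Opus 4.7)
The plan is to reduce the two-point correlation to an inner product inside $R_{\Delta_i,J_i}$, then use the $K$-isotypic decomposition together with Lemma~\ref{lem:normN}.

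First, by the same manipulation used in the proof of Proposition~\ref{prop:2ptFunction}, realness and norm-preservation of $\phi_i$ and $\phi_j$ give
$$
\langle \phi_i^{(n_1)}(w_1)\,\phi_j^{(n_2)}(w_2)\rangle
=\delta_{ij}\,\bigl(\kappa^{(n_1)}_{\Delta_i,J_i}(w_1),\;{\rho}\bigl(\kappa^{(n_2)}_{\Delta_i,J_i}(w_2)\bigr)\bigr)_{\Delta_i,J_i}\,.
$$
Since $\kappa^{(n)}_{\Delta,J}$ preserves the real structure by Definition~\ref{def:kappaMaps}, and ${\rho}$ acts on $V_{n_2}$ by componentwise complex conjugation $w_2\mapsto\overline{w_2}$, this rewrites as
$$
\delta_{ij}\,\bigl(\kappa^{(n_1)}_{\Delta_i,J_i}(w_1),\;\kappa^{(n_2)}_{\Delta_i,J_i}(\overline{w_2})\bigr)_{\Delta_i,J_i}\,.
$$

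Next I would invoke the $K$-isotypic decomposition $R_{\Delta_i,J_i}=\bigoplus_{n\geq J_i}V_n$ from Section~\ref{ssec:irrepsG}. The map $\kappa^{(n)}_{\Delta_i,J_i}$ lands in the copy of $V_n$, and the inner product on $R_{\Delta_i,J_i}$ is $K$-invariant, so distinct $K$-isotypic summands are orthogonal. This immediately gives the factor $\delta_{n_1 n_2}$, and reduces the problem to the case $n_1=n_2=n$.

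For $n_1=n_2=n$, the bilinear form $(w_1,w_2)\mapsto\bigl(\kappa^{(n)}_{\Delta_i,J_i}(w_1),\kappa^{(n)}_{\Delta_i,J_i}(\overline{w_2})\bigr)_{\Delta_i,J_i}$ is $K$-invariant in $w_1,w_2$ and bilinear (not sesquilinear), so by irreducibility of $V_n$ and Schur's lemma it must be proportional to the unique $K$-invariant bilinear pairing $(w_1,w_2)\mapsto w_1\cdot w_2$. Polarizing Lemma~\ref{lem:normN}: for $w_2=\overline{w_1}=:\overline{w}$ we have
$$
\bigl(\kappa^{(n)}_{\Delta_i,J_i}(w),\kappa^{(n)}_{\Delta_i,J_i}(w)\bigr)_{\Delta_i,J_i}
=\|\kappa^{(n)}_{\Delta_i,J_i}(w)\|^2_{\Delta_i,J_i}
=(2n+1)\,p_n(\Delta_i,J_i)\,\|w\|_n^2\,,
$$
while $w\cdot\overline{w}=(2n+1)\,\|w\|_n^2$. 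The proportionality constant is therefore $p_n(\Delta_i,J_i)$, giving
$$
\bigl(\kappa^{(n)}_{\Delta_i,J_i}(w_1),\kappa^{(n)}_{\Delta_i,J_i}(\overline{w_2})\bigr)_{\Delta_i,J_i}
=p_n(\Delta_i,J_i)\,w_1\cdot w_2\,,
$$
and combining with the previous two steps yields the claimed formula.

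There is no real obstacle: the only subtlety is bookkeeping the distinction between the $K$-invariant bilinear form $w_1\cdot w_2$ and the $K$-invariant Hermitian form, and the matching normalization convention $\|w\|_n^2=w\cdot\overline{w}/(2n+1)$ from~\eqref{eq:dotAndSquare}.
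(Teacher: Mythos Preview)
Your proof is correct and follows the same approach as the paper: reduce to the inner product in $R_{\Delta_i,J_i}$ via reality and norm-preservation of $\phi_i$, then invoke Lemma~\ref{lem:normN}. The paper's proof is a terse two-line version of yours; you have simply made explicit the $K$-isotypic orthogonality (giving $\delta_{n_1n_2}$) and the Schur's lemma step (giving proportionality to $w_1\cdot w_2$) that the paper leaves implicit in the phrase ``the claim follows from Lemma~\ref{lem:normN}.''
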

\begin{proof}
We have
$$
\langle \phi_i^{(n_1)}(w_1) \phi_j^{(n_2)}(w_2) \rangle = (\phi_i^{(n_1)}(w_1),\phi_j^{(n_2)}(\overline{w_2}))_{\Gamma\backslash G} =
\delta_{ij}(\kappa_{\Delta_i,J_i}^{(n_1)}(w_1),\kappa_{\Delta_i,J_i}^{(n_2)}(\overline{w_1}))_{\Delta_i,J_i}\,,
$$
so the claim follows from Lemma~\ref{lem:normN}.
\end{proof}

\subsection{Three-point correlations}\label{ssec:threePtCorrelations}
The next natural step, which will be essential in the rest of this paper, is to study the three-point correlations $\langle\phi_i(f_1)\phi_j(f_2)\phi_k(f_3)\rangle$. It turns out that $G$-invariance fixes the dependence of this object on $f_1,\,f_2,\,f_3$ up to an overall constant. This is an immediate consequence of the following proposition, proved in \cite{Oksak1973,Loke2001}:

\begin{proposition}\label{prop:trilinear}
Let $R_{\Delta_1,J_1},\,R_{\Delta_2,J_2},\,R_{\Delta_3,J_3}$ each be a nontrivial unitary irreducible representation of $G=\mathrm{PSL}_2(\mathbb{C})$. Then the space of invariant trilinear functionals
$$
R^{\infty}_{\Delta_1,J_1}\times R^{\infty}_{\Delta_2,J_2}\times R^{\infty}_{\Delta_3,J_3}\rightarrow\mathbb{C}
$$
is one-dimensional. Each such functional is proportional to the normalized functional
$$
\mathcal{T}_{\Delta_1,J_1;\Delta_2,J_2;\Delta_3,J_3}(f_1,f_2,f_3)= \int\limits_{\mathbb{C}^3}\! f_1(x_1)f_2(x_2)f_3(x_3)
\tau_{\Delta_1,J_1;\Delta_2,J_2;\Delta_3,J_3}(x_1,x_2,x_3)
dx_1dx_2dx_3\,.
$$
Here
\ba
\tau_{\Delta_1,J_1;\Delta_2,J_2;\Delta_3,J_3}(x_1,x_2,x_3) &=
x_{12}^{-h_1-h_2+h_3}x_{13}^{-h_1-h_3+h_2}x_{23}^{-h_2-h_3+h_1}\times\\
&\quad\times\xb_{12}^{-\hb_1-\hb_2+\hb_3}\xb_{13}^{-\hb_1-\hb_3+\hb_2}\xb_{23}^{-\hb_2-\hb_3+\hb_1}\,,
\label{eq:threePointStructure}
\ea
where $x_{ij} = x_i-x_j$ and we introduced the notation
$$
h_i = \frac{\Delta_i-J_i}{2}\,,\quad\hb_i = \frac{\Delta_i+J_i}{2}\,.
$$
\end{proposition}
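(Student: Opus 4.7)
My plan is to split the proof into an existence part, which is a direct $G$-invariance check on the explicit kernel $\tau$, and a uniqueness part, which exploits the fact that $G = \mathrm{PSL}_2(\mathbb{C})$ acts simply 3-transitively on ordered triples of distinct points of the Riemann sphere.

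\textbf{Existence.} Under the fractional linear transformation $x\mapsto (dx-b)/(-cx+a)$ with $ad-bc=1$, a short computation gives $x_{ij} \mapsto x_{ij}/[(-cx_i+a)(-cx_j+a)]$ and the analogous formula in $\bar{x}_{ij}$. Multiplying the three factors appearing in $\tau_{\Delta_1,J_1;\Delta_2,J_2;\Delta_3,J_3}$ and collecting the cocycle factors at each vertex $i$ produces $(-cx_i+a)^{2h_i}(\overline{-cx_i+a})^{2\bar{h}_i}$, which combined with the Jacobian of $dx_1\,dx_2\,dx_3$ precisely cancels the product of three transformation factors from \eqref{eq:principalAction}, so $\mathcal{T}$ is $G$-invariant. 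Convergence is subtle because $\mathrm{Re}(\Delta_i)=1$ on the principal series; I would define $\mathcal{T}$ first in a region of the $\Delta_i$-parameter space where the integral is absolutely convergent and extend by analytic continuation, treating the resulting kernel as a tempered distribution paired against the smooth vectors $f_i$.

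\textbf{Uniqueness.} By the Schwartz kernel theorem, any continuous $G$-invariant trilinear functional on smooth vectors corresponds to an invariant distribution on $(\mathbb{C}\cup\{\infty\})^3$, twisted by the equivariant line bundles whose sections model $R_{\Delta_i,J_i}$. On the open orbit $U$ of triples of pairwise distinct points, $G$ acts simply transitively, so an equivariant section of a rank-one equivariant bundle over $U$ is determined by its value at a single reference triple such as $(0,1,\infty)$. This gives a one-dimensional space of admissible restrictions, spanned by $\tau|_U$, and the normalization in the proposition is the one determined by this reference value.

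\textbf{The main obstacle} is controlling the behavior of the putative functional on the closed complement $(\mathbb{C}\cup\{\infty\})^3\setminus U$, namely the pairwise and total diagonals where two or more arguments coincide. A contact-term contribution supported on $\{x_i=x_j\}$ would have to be invariant under the stabilizer of a point in $\mathbb{C}\cup\{\infty\}$, and in particular transform with a definite weight under the dilation subgroup fixing that point; generically this weight does not match the homogeneity of any smooth density on the diagonal, forcing the contribution to vanish. For the discrete set of $(\Delta_i,J_i)$ where such a diagonal distribution could \emph{a priori} exist, one argues by analytic continuation in the $\Delta_i$ that any such term must coincide with a distributional limit of $\tau$ itself and so introduces no new freedom. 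This last analysis is the delicate part of the argument, carried out in \cite{Oksak1973, Loke2001}.
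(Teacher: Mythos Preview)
Your proposal is correct and follows essentially the same route as the paper: the existence argument via the transformation law $x_{ij}\mapsto x_{ij}/[(-cx_i+a)(-cx_j+a)]$ is exactly the content of the paper's Lemma~\ref{lem:3PtInvariance}, and the uniqueness argument via simple 3-transitivity of $\mathrm{PSL}_2(\mathbb{C})$ on distinct ordered triples is the paper's stated reason as well. Your treatment of the diagonal contributions is in fact more careful than what the paper records; the paper's proof is a brief sketch that defers the full analysis to \cite{Oksak1973,Loke2001}, whereas you at least outline why generic weights forbid contact terms and how analytic continuation handles the exceptional parameter values.
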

\begin{remark}
Expression~\eqref{eq:threePointStructure} is well-defined for $x_1,\,x_2,\,x_3\in\mathbb{R}$ and $x_1>x_2>x_3$. For general $x_1,x_2,x_3\in\mathbb{C}$, it is defined by analytic continuation from that region. The result does not depend on the path of analytic continuation because $J_1,\,J_2,\,J_3\in\mathbb{Z}$.
\end{remark}
To prove Proposition~\ref{prop:trilinear}, it will be convenient to first state the following invariance property of $\tau_{\Delta_1,J_1;\Delta_2,J_2;\Delta_3,J_3}(x_1,x_2,x_3)$:
\begin{lemma}\label{lem:3PtInvariance}
Let $\tau(x_1,x_2,x_3)$ be the distribution given in~\eqref{eq:threePointStructure}. Then for any $\begin{psmallmatrix}a&b\\c&d\end{psmallmatrix}\in\mathrm{SL}_2(\mathbb{C})$
$$
\tau\!\left(\tfrac{a x_1+b}{c x_1+d},\tfrac{a x_2+b}{c x_2+d},\tfrac{a x_3+b}{c x_3+d}\right) = \tau(x_1,x_2,x_3)\prod\limits_{i=1}^{3}\left[(c x_i+d)^{2h_i}(\bar{c} \,\bar{x}_i+\bar{d})^{2\widetilde{h}_i}\right]\,.
$$
\end{lemma}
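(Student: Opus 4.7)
The plan is to reduce the whole statement to a single algebraic identity and then handle the branch-cut bookkeeping by analytic continuation. The starting point is the direct computation
\[
\frac{ax_i+b}{cx_i+d} - \frac{ax_j+b}{cx_j+d} = \frac{(ad-bc)(x_i-x_j)}{(cx_i+d)(cx_j+d)} = \frac{x_i-x_j}{(cx_i+d)(cx_j+d)},
\]
which uses $ad-bc=1$; the complex conjugate identity holds with all quantities barred. I would first verify this and its conjugate by expanding numerators.

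Substituting the identity into each of the six factors of $\tau$ evaluated at the transformed arguments, the three numerators reproduce the factors $x_{12}^{-h_1-h_2+h_3}x_{13}^{-h_1-h_3+h_2}x_{23}^{-h_2-h_3+h_1}$ and their antiholomorphic analogues, giving back $\tau(x_1,x_2,x_3)$. It remains to collect the exponents of the denominators. The factor $(cx_1+d)$ is produced by the $x_{12}$ and $x_{13}$ factors with respective exponents $h_1+h_2-h_3$ and $h_1+h_3-h_2$, summing to $2h_1$; the factors $(cx_2+d)^{2h_2}$ and $(cx_3+d)^{2h_3}$ arise by the same arithmetic. The conjugate denominators $(\bar c\,\bar x_i+\bar d)^{2\widetilde h_i}$ come out identically from the antiholomorphic calculation. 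This matches the right-hand side of the claimed transformation exactly.

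It remains to address single-valuedness, which is the only genuine subtlety. Each factor of $\tau$ has the form $z^{\alpha}\bar z^{\bar\alpha}$ with $\bar\alpha-\alpha\in\mathbb{Z}$, because for the $x_{ij}$-factor this difference equals $J_k-J_i-J_j$ and for the $(cx_i+d)$-factor it equals $2J_i$. Every such expression extends to a single-valued real-analytic function on $\mathbb{C}^{\times}$. Both sides of the proposed identity are therefore well-defined real-analytic functions off the singular locus $\bigcup_{i<j}\{x_i=x_j\}\cup\bigcup_i\{cx_i+d=0\}$. Restricting to $x_1>x_2>x_3$ real with $g$ in a small neighborhood of the identity, so that the induced map preserves this ordering and every factor is a positive real with its principal branch, reduces the identity to the elementary arithmetic check described above. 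Analytic continuation in $(x_1,x_2,x_3)$ and connectedness of $\mathrm{SL}_2(\mathbb{C})$ then propagate the identity to the full domain.

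The main obstacle is therefore not the algebra, which is a one-line expansion, but the careful accounting of holomorphic versus antiholomorphic exponents and the justification that analytic continuation from the real slice is unambiguous; both ultimately reduce to $J_i\in\mathbb{Z}$.
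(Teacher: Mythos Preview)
Your proof is correct and takes essentially the same approach as the paper: the paper's entire proof consists of stating the single identity $\tfrac{a x_i+b}{c x_i+d} - \tfrac{a x_j+b}{c x_j+d} = \tfrac{x_{ij}}{(cx_i+d)(cx_j+d)}$ and declaring the lemma immediate. Your additional discussion of single-valuedness and analytic continuation is more careful than the paper, which relegates that point to the remark following Proposition~\ref{prop:trilinear} (noting that $J_i\in\mathbb{Z}$ makes the continuation path-independent).
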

\begin{proof}
The lemma follows immediately from the identity
$
\tfrac{a x_i+b}{c x_i+d} - \tfrac{a x_j+b}{c x_j+d} = \tfrac{x_{ij}}{(cx_i+d)(cx_j+d)}\,.
$
\end{proof}
\begin{proof}[Proof of Proposition~\ref{prop:trilinear}]
We start by proving that $\mathcal{T}$ is invariant. Let $g\cdot f_{i}$ for $i=1,2,3$ be the result of acting on $f_i$ with $g\in G$, see~\eqref{eq:principalAction}. Hence
$$
\begin{aligned}
\mathcal{T}(g\cdot f_1,g\cdot f_2,g\cdot f_3) &= 
\int\limits_{\mathbb{C}^3}\!
\tau(x_1,x_2,x_3)\prod\limits_{i=1}^{3}\left[ (-c\,x_i+a)^{2h_i-2}(-\bar{c}\,\bar{x}_i+\bar{a})^{2\widetilde{h}_i-2}
f_i\!\left(\tfrac{dx_i-b}{-cx_i+a}\right)dx_i\right]\\
&=\int\limits_{\mathbb{C}^3}\!
\tau\!\left(\tfrac{a y_1+b}{c y_1+d},\tfrac{a y_2+b}{c y_2+d},\tfrac{a y_3+b}{c y_3+d}\right)\prod\limits_{i=1}^{3}\left[ (c\,y_i+d)^{-2h_i}(\bar{c}\,\bar{y}_i+\bar{d})^{-2\widetilde{h}_i}
f_i(y_i)dy_i\right]\,.
\end{aligned}
$$
To go to the second line, we changed variables $x_i = (ay_i+b)/(cy_i+d)$. It now follows from Lemma~\ref{lem:3PtInvariance} that $\mathcal{T}(g\cdot f_1,g\cdot f_2,g\cdot f_3) = \mathcal{T}(f_1,f_2,f_3)$. To prove uniqueness, note that the action of $\mathrm{PSL}_2(\mathbb{C})$ on ordered triplets of points on the Riemann sphere is transitive. Hence the value of the distribution $\tau(x_1,x_2,x_3)$ for any triplet $(x_1,x_2,x_3)$ is fixed by its value on any particular triplet and so the space of such distributions is one-dimensional.
\end{proof}

We are now ready to state the general form of three-point correlations.
\begin{corollary}\label{cor:threePtFunctions}
Let $\phi_i,\,\phi_j,\,\phi_k:R_{\Delta_i,J_i},\,R_{\Delta_j,J_j},\,R_{\Delta_k,J_k}\rightarrow L^2(\Gamma\backslash G)$ be defined as in Section~\ref{ssec:spectrum}. Then for any smooth vectors $f_1\in R^{\infty}_{\Delta_i,J_i}$, $f_2\in R^{\infty}_{\Delta_j,J_j}$, $f_3\in R^{\infty}_{\Delta_k,J_k}$, we have
$$
\langle\phi_i(f_1)\phi_j(f_2)\phi_k(f_3)\rangle = c_{ijk}\,\mathcal{T}_{\Delta_i,J_i;\Delta_j,J_j;\Delta_k,J_k}(f_1,f_2,f_3),
$$
for some constant $c_{ijk}\in\mathbb{C}$.
\end{corollary}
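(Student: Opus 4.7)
The plan is to deduce the corollary as a direct application of Proposition~\ref{prop:trilinear}. Consider the map
$$
T_{ijk}: R^{\infty}_{\Delta_i,J_i}\times R^{\infty}_{\Delta_j,J_j}\times R^{\infty}_{\Delta_k,J_k}\to\mathbb{C},\qquad
(f_1,f_2,f_3)\mapsto \langle\phi_i(f_1)\phi_j(f_2)\phi_k(f_3)\rangle.
$$
First, I would note that $T_{ijk}$ is well-defined and trilinear: smoothness of the $f_\ell$ implies that each $\phi_\ell(f_\ell)$ is a smooth, hence bounded, function on the compact space $\Gamma\backslash G$, so the integral defining the correlation converges absolutely; trilinearity is immediate from linearity of each $\phi_\ell$ and of the integral.

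Next, I would verify that $T_{ijk}$ is $G$-invariant under the diagonal action of $G$ on $R^{\infty}_{\Delta_i,J_i}\times R^{\infty}_{\Delta_j,J_j}\times R^{\infty}_{\Delta_k,J_k}$. Since each $\phi_\ell$ is $G$-linear, $\phi_\ell(g\cdot f_\ell)(g')=\phi_\ell(f_\ell)(g'g)$. Substituting into the definition of $T_{ijk}$ and changing variables $g''=g'g$ in the Haar integral over $\Gamma\backslash G$, right-invariance of the Haar measure gives
$$
T_{ijk}(g\cdot f_1,g\cdot f_2,g\cdot f_3)=T_{ijk}(f_1,f_2,f_3)
$$
for every $g\in G$.

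Finally, Proposition~\ref{prop:trilinear} asserts that the space of $G$-invariant trilinear functionals on $R^{\infty}_{\Delta_i,J_i}\times R^{\infty}_{\Delta_j,J_j}\times R^{\infty}_{\Delta_k,J_k}$ is one-dimensional and is spanned by $\mathcal{T}_{\Delta_i,J_i;\Delta_j,J_j;\Delta_k,J_k}$. Hence there exists a unique scalar $c_{ijk}\in\mathbb{C}$ with $T_{ijk}=c_{ijk}\,\mathcal{T}_{\Delta_i,J_i;\Delta_j,J_j;\Delta_k,J_k}$, which is the claim. There is no real obstacle here: all the substantive work has been done in establishing Proposition~\ref{prop:trilinear}, and the only thing to check is that the correlation functional fits its hypotheses, which is a routine consequence of $G$-equivariance of the $\phi_\ell$ and right-invariance of the Haar measure.
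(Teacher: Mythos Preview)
Your proposal is correct and is exactly the argument the paper has in mind: the corollary is stated immediately after Proposition~\ref{prop:trilinear} with no separate proof, precisely because the three-point correlation is a $G$-invariant trilinear functional on smooth vectors (by $G$-equivariance of the $\phi_\ell$ and right-invariance of the Haar measure), so Proposition~\ref{prop:trilinear} applies directly. There is nothing to add.
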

We will soon see that $c_{ijk}$ is proportional to the integral of an invariant product of the eigensections $\varphi_i$, $\varphi_j$, $\varphi_k$ provided by Theorem~\ref{thm:spectralRelation}.

\begin{remark}
The discussion of this subsection is familiar from 2D conformal field theory. In that context, the distribution $\tau_{\Delta_1,J_1;\Delta_2,J_2;\Delta_3,J_3}(x_1,x_2,x_3)$ agrees with the three-point correlation function of local primary operators of left- and right-moving weights $(h_1,\hb_1)$, $(h_2,\hb_2)$, $(h_3,\hb_3)$.
\end{remark}
\begin{remark}\label{rmk:cSymmetry}
The three-point correlations $\langle\phi_i(f_1)\phi_j(f_2)\phi_k(f_3)\rangle$ are symmetric under simultaneous permutation of the indices $i,\,j,\,k$ and the functions $f_1,\,f_2,\,f_3$. At the same time, the functional $\mathcal{T}$ transforms under permutations with a possible sign,
$$
\mathcal{T}_{\Delta_2,J_2;\Delta_1,J_1;\Delta_3,J_3}(f_2,f_1,f_3) = (-1)^{J_1+J_2+J_3}\mathcal{T}_{\Delta_1,J_1;\Delta_2,J_2;\Delta_3,J_3}(f_1,f_2,f_3)\,.
$$
It follows that if $J_i+J_j+J_k\in 2\mathbb{Z}$, then $c_{ijk}$ is completely symmetric in its labels, and otherwise it is completely antisymmetric. In particular, $c_{iik} = 0$ if $J_k$ is odd.
\end{remark}

The next step is to study the three-point correlations of $K$-finite vectors. They are uniquely determined in terms of the three-point coefficients $c_{ijk}$ and the spectral data.
\begin{proposition}\label{prop:threePtKFinite}
Let $i,j,k>0$ and let $n_1\geq J_i$, $n_2\geq J_j$, $n_3\geq J_k$ be integers satisfying the triangle inequalities. Then
$$
\langle \phi_i^{(n_1)}(w_1) \phi_j^{(n_2)}(w_2)  \phi_k^{(n_3)}(w_3)\rangle =
c_{ijk}\,\alpha_{n_1,n_2,n_3}(\Delta_i,J_i;\Delta_j,J_j;\Delta_k,J_k)\,T_{n_1,n_2,n_3}(w_1,w_2,w_3)\,.
$$
Here $c_{ijk}$ is the three-point coefficient defined by Corollary~\ref{cor:threePtFunctions}. $T_{n_1,n_2,n_3}$ is the $K$-invariant trilinear form introduced in Definition~\ref{def:Trilinear}. Finally, $\alpha_{n_1,n_2,n_3}(\Delta_1,J_1;\Delta_2,J_2;\Delta_3,J_3)$ is a universal function, independent of $\Gamma$.
\end{proposition}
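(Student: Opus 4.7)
The plan is to reduce the statement to an application of the uniqueness of $K$-invariant trilinear forms on $V_{n_1}\otimes V_{n_2}\otimes V_{n_3}$ recorded above Definition~\ref{def:Trilinear}. First, I would apply Corollary~\ref{cor:threePtFunctions} with the smooth vectors $f_1 = \kappa^{(n_1)}_{\Delta_i,J_i}(w_1)$, $f_2 = \kappa^{(n_2)}_{\Delta_j,J_j}(w_2)$, $f_3 = \kappa^{(n_3)}_{\Delta_k,J_k}(w_3)$, giving
\[
\langle \phi_i^{(n_1)}(w_1) \phi_j^{(n_2)}(w_2)  \phi_k^{(n_3)}(w_3)\rangle = c_{ijk}\,\mathcal{T}_{\Delta_i,J_i;\Delta_j,J_j;\Delta_k,J_k}\!\left(\kappa^{(n_1)}_{\Delta_i,J_i}(w_1),\kappa^{(n_2)}_{\Delta_j,J_j}(w_2),\kappa^{(n_3)}_{\Delta_k,J_k}(w_3)\right).
\]
In particular, the entire $(w_1,w_2,w_3)$-dependence is encoded in the second factor, which is visibly trilinear.

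Next, I would show that this second factor is a $K$-invariant trilinear form on $V_{n_1}\otimes V_{n_2}\otimes V_{n_3}$. This follows from two facts stated earlier in the paper: the maps $\kappa^{(n)}_{\Delta,J}$ are $K$-linear by Definition~\ref{def:kappaMaps}, and $\mathcal{T}$ is $G$-invariant, hence $K$-invariant, by Proposition~\ref{prop:trilinear}. Combining them gives $K$-invariance in $(w_1,w_2,w_3)$. The decomposition of $V_{n_1}\otimes V_{n_2}$ into irreducibles recalled above Definition~\ref{def:Trilinear} then implies that when $(n_1,n_2,n_3)$ satisfies the triangle inequalities, the space of $K$-invariant trilinear forms on $V_{n_1}\otimes V_{n_2}\otimes V_{n_3}$ is one-dimensional, spanned by $T_{n_1,n_2,n_3}$. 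Hence there is a unique scalar, which I will call $\alpha_{n_1,n_2,n_3}(\Delta_i,J_i;\Delta_j,J_j;\Delta_k,J_k)$, such that
\[
\mathcal{T}_{\Delta_i,J_i;\Delta_j,J_j;\Delta_k,J_k}\!\left(\kappa^{(n_1)}_{\Delta_i,J_i}(w_1),\kappa^{(n_2)}_{\Delta_j,J_j}(w_2),\kappa^{(n_3)}_{\Delta_k,J_k}(w_3)\right)
= \alpha_{n_1,n_2,n_3}(\Delta_i,J_i;\Delta_j,J_j;\Delta_k,J_k)\,T_{n_1,n_2,n_3}(w_1,w_2,w_3).
\]
Combining this with the first display finishes the proof, and by construction $\alpha_{n_1,n_2,n_3}$ depends only on the representation labels, not on $\Gamma$.

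The only genuinely nontrivial step, if an explicit formula for $\alpha$ is desired, is the evaluation of both sides on a convenient triple of test vectors to fix the proportionality constant; this is not needed to establish the proposition as stated. A natural choice is to take $w_a = v_a^{n_a}$ for null vectors $v_a\in V_1$, since then $T_{n_1,n_2,n_3}$ has the closed form given in Definition~\ref{def:Trilinear}, and the $\kappa$-images $\kappa^{(n_a)}_{\Delta_a,J_a}(v_a^{n_a})$ can be generated from the explicit highest-/lowest-weight formulas of Lemma~\ref{lem:fPlusMinus} by a suitable $K$-rotation, reducing $\alpha$ to an explicit integral of the three-point kernel $\tau$ from~\eqref{eq:threePointStructure}. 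I expect this integral computation to be the main technical obstacle if one wishes to produce a closed-form expression for $\alpha$, but the universality and well-definedness of $\alpha$ follow purely from representation-theoretic uniqueness and require no computation.
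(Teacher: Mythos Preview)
Your proof is correct and follows essentially the same approach as the paper. The only cosmetic difference is the order of operations: you first apply Corollary~\ref{cor:threePtFunctions} and then invoke $K$-invariance on the $\mathcal{T}$-side, while the paper first observes that the correlation itself is a $K$-invariant trilinear form on $V_{n_1}\times V_{n_2}\times V_{n_3}$ and then uses Corollary~\ref{cor:threePtFunctions} to split off $c_{ijk}$ and identify $\alpha_{n_1,n_2,n_3}$ as the ratio in~\eqref{eq:alphaDef}; both routes rest on the same uniqueness statement and are logically equivalent.
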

\begin{proof}
$\langle \phi_i^{(n_1)}(\cdot) \phi_j^{(n_2)}(\cdot)  \phi_k^{(n_3)}(\cdot)\rangle$ is a $K$-invariant trilinear form $V_{n_1}\times V_{n_2}\times V_{n_3} \rightarrow \mathbb{C}$, and so it must be proportional to $T_{n_1,n_2,n_3}$. The proposition then follows immediately from Corollary~\ref{cor:threePtFunctions}, with $\alpha_{n_1,n_2,n_3}$ given by
\be\label{eq:alphaDef}
\!\!\!\alpha_{n_1,n_2,n_3}(\Delta_1,J_1;\Delta_2,J_2;\Delta_3,J_3) =
\frac{\mathcal{T}_{\Delta_1,J_1;\Delta_2,J_2;\Delta_3,J_3}(\kappa_{\Delta_1,J_1}^{(n_1)}(w_1),\kappa_{\Delta_2,J_2}^{(n_2)}(w_2),\kappa_{\Delta_3,J_3}^{(n_3)}(w_3))}{T_{n_1,n_2,n_3}(w_1,w_2,w_3)}
\ee
for any choice of $w_1,w_2,w_3$ such that $T_{n_1,n_2,n_3}(w_1,w_2,w_3)\neq 0$.
\end{proof}
To derive the bootstrap spectral identities, it will be essential to efficiently compute $\alpha_{n_1,n_2,n_3}$. To that end, we will formulate a recursion relation which relates $\alpha_{n_1,n_2,n_3}$ for different values of $(n_1,n_2,n_3)$.
\begin{lemma}\label{lem:recursion}
For any $n_1\geq J_1$, $n_2\geq J_2$, $n_3\geq J_3$, satisfying the triangle inequalities, $\alpha_{n_1,n_2,n_3}(\Delta_1,J_1;\Delta_2,J_2;\Delta_3,J_3)$ satisfies the recursion relations
$$
\begin{aligned}
&-\tfrac{(n_1+n_2-n_3+1)(n_1-n_2+n_3+1)}{(2n_1+1)(2n_1+2)}\alpha_{n_1+1,n_2,n_3}+\tfrac{n_1+n_2-n_3+1}{2n_2+2}\alpha_{n_1,n_2+1,n_3} +
\tfrac{n_1-n_2+n_3+1}{2n_3+2}\alpha_{n_1,n_2,n_3+1}\\
&+\frac{i}{\sqrt{2}}\left[\tfrac{(n_3-n_2)(\Delta_1 -1)J_1}{n_1(n_1+1)}+\tfrac{(\Delta_2 -1)J_2}{n_2+1}-\tfrac{(\Delta_3 -1)J_3}{n_3+1}
\right]\alpha_{n_1,n_2,n_3}\\
&+\tfrac{(n_1-J_1)(n_1+J_1) (\Delta_1+n_1-1) (\Delta_1 -n_1-1)}{n_1 (2 n_1+1)}\alpha_{n_1-1,n_2,n_3} = 0\,,
\end{aligned}
$$\vspace{10pt}
$$
\begin{aligned}
&-\tfrac{(n_2+n_3-n_1+1)(n_2-n_3+n_1+1)}{(2n_2+1)(2n_2+2)}\alpha_{n_1,n_2+1,n_3}+\tfrac{n_2+n_3-n_1+1}{2n_3+2}\alpha_{n_1,n_2,n_3+1} +
\tfrac{n_2-n_3+n_1+1}{2n_1+2}\alpha_{n_1+1,n_2,n_3}\\
&+\frac{i}{\sqrt{2}}\left[\tfrac{(n_1-n_3)(\Delta_2 -1)J_2}{n_2(n_2+1)}+\tfrac{(\Delta_3 -1)J_3}{n_3+1}-\tfrac{(\Delta_1 -1)J_1}{n_1+1}
\right]\alpha_{n_1,n_2,n_3}\\
&+\tfrac{(n_2-J_2)(n_2+J_2) (\Delta_2+n_2-1) (\Delta_2 -n_2-1)}{n_2 (2 n_2+1)}\alpha_{n_1,n_2-1,n_3} = 0\,,
\end{aligned}
$$\vspace{10pt}
$$
\begin{aligned}
&-\tfrac{(n_3+n_1-n_2+1)(n_3-n_1+n_2+1)}{(2n_3+1)(2n_3+2)}\alpha_{n_1,n_2,n_3+1}+\tfrac{n_3+n_1-n_2+1}{2n_1+2}\alpha_{n_1+1,n_2,n_3} +
\tfrac{n_3-n_1+n_2+1}{2n_2+2}\alpha_{n_1,n_2+1,n_3}\\
&+\frac{i}{\sqrt{2}}\left[\tfrac{(n_2-n_1)(\Delta_3 -1)J_3}{n_3(n_3+1)}+\tfrac{(\Delta_1 -1)J_1}{n_1+1}-\tfrac{(\Delta_2 -1)J_2}{n_2+1}
\right]\alpha_{n_1,n_2,n_3}\\
&+\tfrac{(n_3-J_3)(n_3+J_3) (\Delta_3+n_3-1) (\Delta_3 -n_3-1)}{n_3 (2 n_3+1)}\alpha_{n_1,n_2,n_3-1} = 0\,.
\end{aligned}
$$
Note that the three equations are related by cyclic permutations of all labels.
\end{lemma}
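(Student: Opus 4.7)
The three recursion relations are cyclic permutations of one another, so it suffices to prove the first. The plan is to impose the infinitesimal $G$-invariance of $\mathcal{T}_{\Delta_1,J_1;\Delta_2,J_2;\Delta_3,J_3}$ along the noncompact directions of $\mathfrak{g}$. Invariance under the compact generators $A_a$ is already built into the definition~\eqref{eq:alphaDef} of $\alpha_{n_1,n_2,n_3}$ via the $K$-invariant trilinear forms $T_{n_1,n_2,n_3}$, so the only nontrivial Lie algebra identity is
\begin{equation}\label{eq:invariance-Lie}
\mathcal{T}(B(u)f_1,f_2,f_3) + \mathcal{T}(f_1,B(u)f_2,f_3) + \mathcal{T}(f_1,f_2,B(u)f_3) = 0,
\end{equation}
where $B(u) := \sum_{a=1}^{3} u^a B_a$ for $u\in V_1$, and $f_i = \kappa^{(n_i)}_{\Delta_i,J_i}(w_i)$.

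The core computation is the $K$-isotypic decomposition of $B(u)\kappa^{(n)}_{\Delta,J}(w)$. Since $\{B_a\}$ transforms as $V_1$ and $V_1\otimes V_n\cong V_{n-1}\oplus V_n\oplus V_{n+1}$, the action splits uniquely as
\begin{equation}\label{eq:B-decomp}
B(u)\kappa^{(n)}_{\Delta,J}(w) = \kappa^{(n+1)}_{\Delta,J}(P_+(u\otimes w)) + c_0\, \kappa^{(n)}_{\Delta,J}(P_0(u\otimes w)) + c_-\, \kappa^{(n-1)}_{\Delta,J}(P_-(u\otimes w)),
\end{equation}
where $P_+$ is the symmetric--traceless projection (with unit coefficient, directly from Definition~\ref{def:kappaMaps}), $P_0$ is the Levi--Civita contraction, and $P_-$ is the trace contraction. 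The constants $c_- = c_-(\Delta,J,n)$ and $c_0 = c_0(\Delta,J,n)$ are pinned down via the Casimir eigenvalues of Lemma~\ref{lem:casimirs}: applying $C_2 = \sum_a(A_aA_a - B_aB_a)$ and using $A_aA_a|_{V_n} = -n(n+1)$ determines $c_-$, while the eigenvalue $C'_2 = i(\Delta-1)J$, together with Lemma~\ref{lem:Aaction} which realizes $A_a$ as a cross product on symmetric tensors, determines $c_0$.

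Substituting~\eqref{eq:B-decomp} into~\eqref{eq:invariance-Lie} produces, after expanding via~\eqref{eq:alphaDef}, a linear combination of $\alpha_{n_1\pm 1,n_2,n_3}$, $\alpha_{n_1,n_2\pm 1,n_3}$, $\alpha_{n_1,n_2,n_3\pm 1}$ and $\alpha_{n_1,n_2,n_3}$, contracted against various $K$-invariant quadrilinear forms in $(u,w_1,w_2,w_3)$. To isolate the first stated recursion, the plan is to specialize $w_i = v_i^{n_i}$ with $v_i\in V_1$ null (as in Definition~\ref{def:Trilinear}) and to choose $u$ satisfying the two linear conditions $u\cdot v_2 = u\cdot v_3 = 0$, which is possible because $V_1$ is three-dimensional. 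Under this choice, $P_-(u\otimes v_i^{n_i})\propto u\cdot v_i$ vanishes for $i=2,3$, eliminating the $\alpha_{n_1,n_2-1,n_3}$ and $\alpha_{n_1,n_2,n_3-1}$ contributions and leaving exactly the structure displayed in the lemma. The remaining two recursions follow by cyclic permutation of the labels $(1,2,3)$.

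The main obstacle is the scalar bookkeeping. Reproducing the coefficient $(n_1^2-J_1^2)[(\Delta_1-1)^2-n_1^2]/[n_1(2n_1+1)]$ of $\alpha_{n_1-1,n_2,n_3}$ requires combining the Pochhammer normalizations of $\kappa^{(n)}$ from Lemma~\ref{lem:fPlusMinus}, the constant $p_n(\Delta,J)$ from Lemma~\ref{lem:normN}, and the explicit values of $T_{n_1',n_2',n_3'}$ on null vectors from Definition~\ref{def:Trilinear}; matching the $(\Delta_i-1)J_i$ prefactors in the $\alpha_{n_1,n_2,n_3}$ term likewise requires tracking the $C'_2$ eigenvalue through the $c_0$ contribution from each of the three arguments. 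Checking that these Clebsch--Gordan-like factors collapse into exactly the asymmetric form of the stated relation is the chief technical task.
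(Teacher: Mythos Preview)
Your strategy is essentially the paper's: impose the infinitesimal invariance
\[
\mathcal{T}(B(u)f_1,f_2,f_3)+\mathcal{T}(f_1,B(u)f_2,f_3)+\mathcal{T}(f_1,f_2,B(u)f_3)=0,
\]
decompose each $B(u)\kappa^{(n)}_{\Delta,J}(w)$ along $V_1\otimes V_n\cong V_{n-1}\oplus V_n\oplus V_{n+1}$, and fix the three branching coefficients via the Casimir eigenvalues $C_2,\,C_2'$ (this is exactly how the paper establishes what it packages as $\beta_{n,m}(\Delta,J)$ in Lemma~\ref{lem:BActionKfinite}). Up to this point your proof and the paper's coincide.

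The one genuine divergence is in the extraction step. The paper takes $u=z_4$ a fourth \emph{null} vector, so that each of the nine terms becomes a $K$-invariant quadrilinear form $H^{1,n_1,n_2,n_3}_m$ which, by Lemma~\ref{lem:quadrilinear}, is a prefactor times a polynomial in the cross-ratio $r(z_2,z_3,z_4,z_1)$; since $n_3=1$ here, this polynomial is at most quadratic, and its three coefficients give all three recursion relations at once. Your route instead fixes $u\propto v_2\times v_3$ so that $u\cdot v_2=u\cdot v_3=0$, killing the two $P_-$ contributions in slots $2$ and $3$ and thereby singling out the first relation directly. This is perfectly valid --- among the three-dimensional space of invariant forms on $V_1\otimes V_{n_1}\otimes V_{n_2}\otimes V_{n_3}$, your two constraints pick out a one-dimensional subspace, and the first recursion is the unique one not involving $\alpha_{n_1,n_2-1,n_3}$ or $\alpha_{n_1,n_2,n_3-1}$. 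The trade-off is that your $u$ is not null ($u\cdot u=-(v_2\cdot v_3)^2$), so you cannot simply quote Definition~\ref{def:Trilinear} for the surviving $T$-values and must compute them by hand, whereas the paper's cross-ratio approach lets it read everything off the closed hypergeometric formula~\eqref{eq:HcalDef}. Either way the bookkeeping you flag in your last paragraph is the real work.
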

The proof of this lemma is somewhat technical, and we supply it in Appendix~\ref{app:recursionProof}.

\subsection{Triple products of eigensections}\label{ssec:triple}
We would now like to relate the three-point correlation coefficients $c_{ijk}$, defined in Corollary~\ref{cor:threePtFunctions}, to triple products of eigensections. Besides being interesting in its own right, this relation can be used to find the complex phase of $c_{ijk}$.

In order to define the triple products, let us recall the normalized sections $\varphi_i$ provided by Theorem~\ref{thm:spectralRelation}. 
\begin{definition}\label{def:cHats}
Let $i,j,k>0$. If $J_i,\,J_j,\,J_k$ satisfy the triangle inequalities, we define the triple products by
$$
\widehat{c}_{ijk} =
\frac{1}{\mathrm{vol}(\Gamma\backslash \mathbb{H}^3)}\int\limits_{\Gamma\backslash\mathbb{H}^3}\!T_{J_i,J_j,J_k}(\varphi_i(x),\varphi_j(x),\varphi_k(x))\,d\mu(x)\,.
$$
On the other hand, if $J_k>J_i+J_j$, we define
$$
\widehat{c}_{ijk} =
\frac{1}{\mathrm{vol}(\Gamma\backslash \mathbb{H}^3)}\int\limits_{\Gamma\backslash\mathbb{H}^3}\!T_{J_k-J_j,J_j,J_k}(\nabla^{J_k-J_i-J_j}\varphi_i(x),\varphi_j(x),\varphi_k(x))\,d\mu(x)\,.
$$
The remaining cases $J_i>J_j+J_k$ and $J_j>J_i+J_k$ are covered by defining $\widehat{c}_{ijk}$ to be completely symmetric or antisymmetric if $J_i+J_j+J_k$ is even or odd.
\end{definition}

\begin{remark}
Note that $\widehat{c}_{ijk}\in\mathbb{R}$ by definition. Note also that $\widehat{c}_{ijk}$ transforms in the same way as $c_{ijk}$ under permutations of $i,j,k$, see Remark~\ref{rmk:cSymmetry}. In the first case, this is because $T_{n_1,n_2,n_3}$ transforms correctly. In the second case, we need to integrate by parts $(J_k-J_i-J_j)$ times to relate $\widehat{c}_{ijk}$ and $\widehat{c}_{jik}$.
\end{remark}

The precise proportionality between three-point correlation coefficients $c_{ijk}$ and triple products $\widehat{c}_{ijk}$ can now be stated as the following theorem:

\begin{theorem}\label{thm:acRelation}
Let $i,j,k>0$. The three-point correlation coefficients $c_{ijk}$ and triple products $\widehat{c}_{ijk}$ are proportional, $c_{ijk} = \mathfrak{q}(\Delta_i,J_i;\Delta_j,J_j;\Delta_k,J_k)\,\widehat{c}_{ijk}$, with a universal proportionality constant $\mathfrak{q}$. Indeed, if $J_i,\,J_j,\,J_k$ satisfy the triangle inequalities, then
$$
\mathfrak{q}(\Delta_i,J_i;\Delta_j,J_j;\Delta_k,J_k) = \frac{(1-2h_i)(1-2h_j)(1-2h_k)}{q(-h_i,-h_j,-h_k)}\,.
$$
On the other hand, if $J_k>J_i+J_j$, we have
$$
\mathfrak{q}(\Delta_i,J_i;\Delta_j,J_j;\Delta_k,J_k) =\frac{(1-2h_i)(1-2h_j)(1-2h_k)}{(\sqrt{2}i)^{J_k-J_i-J_j}(h_k-h_i-h_j+1)_{J_k-J_i-J_j}\,q(-h_i,-h_j,-h_k)}\,.
$$
The remaining cases are covered by declaring $\mathfrak{q}$ to be invariant under all permutations of $i,j,k$. Here $q(n_1,n_2,n_3)$ is the ratio of products of gamma functions given in~\eqref{eq:qDefinition}.
\end{theorem}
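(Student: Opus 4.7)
\emph{Strategy.} The plan is to first establish the existence of a universal proportionality constant by invoking the uniqueness of invariant trilinear forms, and then compute it by evaluating both sides on a convenient triple of $K$-finite test vectors. Proposition~\ref{prop:trilinear} tells us that the space of $G$-invariant trilinear functionals on $R^{\infty}_{\Delta_i,J_i}\times R^{\infty}_{\Delta_j,J_j}\times R^{\infty}_{\Delta_k,J_k}$ is one-dimensional whenever each factor is nontrivial. Both $c_{ijk}$ and $\widehat{c}_{ijk}$ are scalars extracting the value of this universal functional applied to specific representation-theoretic data---the pairing against automorphic vectors in one case, the geometric trilinear contraction of the normalized sections provided by Theorem~\ref{thm:spectralRelation} in the other---so $c_{ijk}/\widehat{c}_{ijk}$ must depend only on the representation labels. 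The content of the theorem is thus to compute this ratio in closed form.

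\emph{Triangle-inequality case.} I would pin down $\mathfrak{q}$ by evaluating the three-point correlation at the minimal $K$-type, namely $(n_1,n_2,n_3)=(J_i,J_j,J_k)$. Proposition~\ref{prop:threePtKFinite} gives
$$
\langle\phi_i^{(J_i)}(w_1)\phi_j^{(J_j)}(w_2)\phi_k^{(J_k)}(w_3)\rangle = c_{ijk}\,\alpha_{J_i,J_j,J_k}(\Delta_i,J_i;\Delta_j,J_j;\Delta_k,J_k)\,T_{J_i,J_j,J_k}(w_1,w_2,w_3).
$$
The same correlation can be rewritten using~\eqref{eq:varphiDef} as
$$
\frac{1}{\mathrm{vol}(\Gamma\backslash\hh)}\int\limits_{\Gamma\backslash\hh}\int\limits_K \bigl[\varphi_i(x)\cdot(kw_1)\bigr]\bigl[\varphi_j(x)\cdot(kw_2)\bigr]\bigl[\varphi_k(x)\cdot(kw_3)\bigr]\,dk\,d\mu(x),
$$
and the inner $K$-average, being a $K$-invariant trilinear form on $V_{J_i}\otimes V_{J_j}\otimes V_{J_k}$, must equal $T_{J_i,J_j,J_k}(\varphi_i,\varphi_j,\varphi_k)\,T_{J_i,J_j,J_k}(w_1,w_2,w_3)/q(J_i,J_j,J_k)$, with the Schur orthogonality constant from Lemma~\ref{lem:qDefinition}. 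Equating the two expressions and invoking Definition~\ref{def:cHats} identifies $\mathfrak{q}=1/[q(J_i,J_j,J_k)\,\alpha_{J_i,J_j,J_k}]$, reducing everything to computing $\alpha_{J_i,J_j,J_k}$.

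\emph{Evaluation of $\alpha$ and extension to $J_k>J_i+J_j$.} To evaluate $\alpha_{J_i,J_j,J_k}$ from~\eqref{eq:alphaDef}, I would pick $w_\ell$ so that $T_{J_i,J_j,J_k}(w_1,w_2,w_3)\neq 0$ (for instance a mix of $z_+^{J_\ell}$ and $z_-^{J_\ell}$), substitute the explicit formulas of Lemma~\ref{lem:fPlusMinus} for $\kappa^{(J_\ell)}_{\Delta_\ell,J_\ell}(w_\ell)$, and use the $G$-invariance of $\tau$ (Lemma~\ref{lem:3PtInvariance}) to gauge-fix two of the three integration points. The resulting $\mathbb{C}$-integral is of Mellin--Barnes/Symanzik type and collapses to a ratio of $\Gamma$-functions which, after simplification, should reproduce $q(J_i,J_j,J_k)/\bigl[(1-2h_i)(1-2h_j)(1-2h_k)\,q(-h_i,-h_j,-h_k)\bigr]\cdot\mathfrak{q}^{-1}$ by rearranging the $\Gamma$-ratios. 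For $J_k>J_i+J_j$, I would repeat the argument with $(n_1,n_2,n_3)=(J_k-J_j,J_j,J_k)$: by Corollary~\ref{cor:Bnabla} the section $\varphi_{i,n_1}$ is the traceless symmetric part of $\nabla^{J_k-J_i-J_j}\varphi_i$, so the geometric side reproduces precisely Definition~\ref{def:cHats} in this case. Iteratively applying the recursion of Lemma~\ref{lem:recursion} (which is exactly the $B$-action generating covariant derivatives) propagates $\alpha$ from the value already computed to $\alpha_{J_k-J_j,J_j,J_k}$, producing the extra factor $(\sqrt{2}i)^{J_k-J_i-J_j}(h_k-h_i-h_j+1)_{J_k-J_i-J_j}$.

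\emph{Main obstacle.} The key technical step is the explicit evaluation of $\alpha_{J_i,J_j,J_k}$ as a closed-form $\Gamma$-function ratio. Although the integrand is essentially a 2D conformal three-point function, the interplay between the spin prefactors $\overline{x}^{2J_\ell}$, the base factors $(1+|x_\ell|^2)^{\Delta_\ell-J_\ell-2}$ coming from $\kappa^{(J_\ell)}_{\Delta_\ell,J_\ell}$, and the cross-ratio kernel $\tau$ produces a delicate combinatorics of $\Gamma$-identities. The appearance of $(1-2h_\ell)$ in $\mathfrak{q}$ can plausibly be traced to the intertwiner normalization of~\eqref{eq:shadow}, which suggests that an alternative derivation via the shadow transform $\mathcal{S}_{\Delta,J}$ could serve as a cross-check. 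A further consistency test is that the final formula must respect the permutation symmetry/antisymmetry of $c_{ijk}$ recorded in Remark~\ref{rmk:cSymmetry} and reduce correctly to known scalar results in the case $J_i=J_j=J_k=0$.
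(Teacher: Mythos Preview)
Your overall architecture is exactly the paper's: reduce the proportionality to the identity
\[
\frac{1}{\mathrm{vol}(\Gamma\backslash\hh)}\int_{\Gamma\backslash\hh} T_{n_1,n_2,n_3}(\varphi_{i,n_1},\varphi_{j,n_2},\varphi_{k,n_3})\,d\mu
= c_{ijk}\,q(n_1,n_2,n_3)\,\alpha_{n_1,n_2,n_3}(\Delta_i,J_i;\Delta_j,J_j;\Delta_k,J_k),
\]
obtained by performing the $K$-integral (Lemma~\ref{lem:KHaar3}), then specialize $(n_1,n_2,n_3)$ so that the left-hand side matches Definition~\ref{def:cHats}, and finally compute the resulting $\alpha$. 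The paper records this reduction as the lemma immediately preceding the proof and then defers the $\alpha$-evaluations to Appendix~\ref{app:trilinear} (Lemmas~\ref{lem:integral1} and~\ref{lem:integral3}).

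The interesting difference is that the paper organizes the direct-integration/recursion split in the \emph{opposite} way to what you propose. For the triangle-inequality case, the paper does \emph{not} attempt to evaluate $\alpha_{J_1,J_2,J_3}$ directly; instead it computes $\alpha_{J_1,J_2,J_1+J_2}$ by integration (Lemma~\ref{lem:integral2}, which lands on a ${}_3F_2$), and then uses the recursion of Lemma~\ref{lem:recursion} in $n_3$ to descend from $n_3=J_1+J_2$ to $n_3=J_3$ (Lemma~\ref{lem:integral3}). The miracle is that this descent collapses the ${}_3F_2$ into the clean ratio $q(-h_1,-h_2,-h_3)/[(1-2h_1)(1-2h_2)(1-2h_3)\,q(J_1,J_2,J_3)]$. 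Your direct attack on $\alpha_{J_1,J_2,J_3}$ is viable in principle but would likely hit the same ${}_3F_2$ complexity without an obvious simplification mechanism. Conversely, for $J_k>J_i+J_j$ the paper computes $\alpha_{n_1,n_2,J_k}$ with $n_1+n_2=J_k$ \emph{directly} by integration (Lemma~\ref{lem:integral1}), obtaining the Pochhammer factor without recursion.

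There is a genuine gap in your non-triangle paragraph: you say the recursion ``propagates $\alpha$ from the value already computed to $\alpha_{J_k-J_j,J_j,J_k}$,'' but there is no value already computed in this family. The recursion of Lemma~\ref{lem:recursion} runs in $(n_1,n_2,n_3)$ with the representation labels $(\Delta_i,J_i;\Delta_j,J_j;\Delta_k,J_k)$ held fixed; it cannot import the triangle-case result, which lives in a different $(J_i,J_j,J_k)$ regime. In particular, $\alpha_{J_i,J_j,J_k}$ is not defined here since $V_{J_k}\not\subset V_{J_i}\otimes V_{J_j}$. You still need one directly computed seed in the non-triangle family, and the natural one---any $\alpha_{n_1,n_2,J_k}$ on the edge $n_1+n_2=J_k$---is exactly what Lemma~\ref{lem:integral1} supplies.
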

Since $\widehat{c}_{ijk}\in\mathbb{R}$, the theorem in particular fixes the complex phase of $c_{ijk}$ modulo $\pi$ in terms of the spectral data. To prove the theorem, we start with the following lemma:
\begin{lemma}
For all $i,j,k>0$ and all $n_1\geq J_1$, $n_2\geq J_j$, $n_3\geq J_k$
$$
\begin{aligned}
&\frac{1}{\mathrm{vol}(\Gamma\backslash \mathbb{H}^3)}\int\limits_{\Gamma\backslash\mathbb{H}^3}\!T_{n_1,n_2,n_3}(\nabla^{n_1-J_i}\varphi_i(x),\nabla^{n_2-J_j}\varphi_j(x),\nabla^{n_3-J_k}\varphi_k(x))\,d\mu(x)=\\
&\qquad=c_{ijk}\,q(n_1,n_2,n_3)\,\alpha_{n_1,n_2,n_3}(\Delta_i,J_i;\Delta_j,J_j;\Delta_k,J_k) \,.
\end{aligned}
$$
\end{lemma}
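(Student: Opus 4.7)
The plan is to recognise the integral on the left-hand side as proportional to the three-point correlation $\langle\phi_i^{(n_1)}(w_1)\phi_j^{(n_2)}(w_2)\phi_k^{(n_3)}(w_3)\rangle$ on $\Gamma\backslash G$, after integrating out the $K$-fibre, and then to read off the result by comparison with the formula from Proposition~\ref{prop:threePtKFinite}. When the triangle inequalities fail there is no nonzero $K$-invariant in $V_{n_1}\otimes V_{n_2}\otimes V_{n_3}$ and both sides vanish trivially (with the convention $\alpha_{n_1,n_2,n_3}=0$), so I restrict attention to the triangle case.

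First, I would use Corollary~\ref{cor:Bnabla} to replace each $\nabla^{n-J}\varphi_{*}$ in the integrand by the traceless symmetric section $\varphi_{*,n}$: since the coefficient tensor of $T_{n_1,n_2,n_3}$ is traceless and symmetric in each of its three groups of indices, only the traceless symmetric projection of each argument contributes. Next, using Proposition~\ref{prop:tensorFields} in the form $\phi_i^{(n)}(w)(g(x,k))=\varphi_{i,n}(x)\cdot(kw)$, and the fact that the Haar measure on $G$ factorises as $d\mu(x)\,dk$ (with $K$ of unit volume), the correlator becomes
\[
\langle\phi_i^{(n_1)}(w_1)\phi_j^{(n_2)}(w_2)\phi_k^{(n_3)}(w_3)\rangle
=\frac{1}{\mathrm{vol}(\Gamma\backslash\mathbb{H}^3)}\int\limits_{\Gamma\backslash\mathbb{H}^3}\!\mathcal{I}(x)\,d\mu(x),
\]
where $\mathcal{I}(x)=\int_K(\varphi_{i,n_1}(x)\cdot kw_1)(\varphi_{j,n_2}(x)\cdot kw_2)(\varphi_{k,n_3}(x)\cdot kw_3)\,dk$.

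The main step is the evaluation of $\mathcal{I}(x)$. The operator $\int_K k^{\otimes 3}\,dk$ on $V_{n_1}\otimes V_{n_2}\otimes V_{n_3}$ is the orthogonal projector (with respect to the bilinear pairing $\cdot$) onto the $K$-invariant subspace; under the triangle inequalities this is one-dimensional and spanned by $\Theta=(b_{n_1}\otimes b_{n_2}\otimes b_{n_3})^{-1}\circ T^{*}_{n_1,n_2,n_3}(1)$. Then $(\Theta,v)=T_{n_1,n_2,n_3}(v)$ for any $v$, and Lemma~\ref{lem:qDefinition} yields $(\Theta,\Theta)=T_{n_1,n_2,n_3}(\Theta)=q(n_1,n_2,n_3)$. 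Substituting into the explicit projector formula $P(v)=(\Theta,v)\Theta/(\Theta,\Theta)$ and contracting against $\varphi_{i,n_1}(x)\otimes\varphi_{j,n_2}(x)\otimes\varphi_{k,n_3}(x)$ in the $X$-variables gives
\[
\mathcal{I}(x)=\frac{T_{n_1,n_2,n_3}(\varphi_{i,n_1}(x),\varphi_{j,n_2}(x),\varphi_{k,n_3}(x))\,T_{n_1,n_2,n_3}(w_1,w_2,w_3)}{q(n_1,n_2,n_3)}.
\]
This is the delicate step: the correct normalisation $1/q(n_1,n_2,n_3)$ hinges on applying Lemma~\ref{lem:qDefinition} while carefully distinguishing the bilinear pairing $\cdot$ from the Hermitian inner product.

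Finally, comparing the resulting expression for the correlator with the formula $c_{ijk}\,\alpha_{n_1,n_2,n_3}\,T_{n_1,n_2,n_3}(w_1,w_2,w_3)$ from Proposition~\ref{prop:threePtKFinite}, and cancelling the common factor $T_{n_1,n_2,n_3}(w_1,w_2,w_3)$ (which can be chosen nonzero by nondegeneracy of $T_{n_1,n_2,n_3}$), gives the claimed identity. Everything beyond the $K$-integral computation is bookkeeping via Corollary~\ref{cor:Bnabla} and Proposition~\ref{prop:tensorFields}.
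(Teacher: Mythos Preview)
Your proof is correct and follows essentially the same route as the paper: write the three-point correlator as an integral over $\Gamma\backslash\hh$ times a $K$-integral, evaluate the $K$-integral to produce the factor $T_{n_1,n_2,n_3}(\varphi_{i,n_1},\varphi_{j,n_2},\varphi_{k,n_3})\,T_{n_1,n_2,n_3}(w_1,w_2,w_3)/q(n_1,n_2,n_3)$, and compare with Proposition~\ref{prop:threePtKFinite}. The only cosmetic difference is that the paper cites Lemma~\ref{lem:KHaar3} directly for the $K$-integral, whereas you rederive that identity inline via the projector onto the one-dimensional invariant subspace together with Lemma~\ref{lem:qDefinition}; since in the paper Lemma~\ref{lem:qDefinition} is in fact deduced from Lemma~\ref{lem:KHaar3}, the two presentations are logically equivalent.
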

\begin{proof}
By definition
\ba\label{eq:tripleProductIntegralG}
&\langle \phi_i^{(n_1)}(w_1) \phi_j^{(n_2)}(w_2)  \phi_k^{(n_3)}(w_3)\rangle \\
&\qquad= \frac{1}{\mathrm{vol}(\Gamma\backslash\hh)}\int\limits_{\Gamma\backslash\hh}\!\int\limits_{K}
(\varphi_{i,n_1}(x) \cdot k'w_1)(\varphi_{j,n_2}(x) \cdot k'w_2)(\varphi_{k,n_3}(x) \cdot k'w_3)dk' \,d\mu(x)\\
&\qquad=
\frac{1}{\mathrm{vol}(\Gamma\backslash \mathbb{H}^3)}\int\limits_{\Gamma\backslash\mathbb{H}^3}\!T_{n_1,n_2,n_3}(\nabla^{n_1-J_i}\varphi_i(x),\nabla^{n_2-J_j}\varphi_j(x),\nabla^{n_3-J_k}\varphi_k(x))\,d\mu(x)\\
&\qquad\qquad\qquad\times \frac{T_{n_1,n_2,n_3}(w_1,w_2,w_3)}{q(n_1,n_2,n_3)}\,,
\ea
where to go from second to third line, we integrated over $K$ using Lemma~\ref{lem:KHaar3}. The lemma then follows from Proposition~\ref{prop:threePtKFinite}.
\end{proof}
\begin{proof}[Proof of Theorem~\ref{thm:acRelation}]
To prove the theorem, it remains to evaluate $\alpha_{n_1,n_2,n_3}$ for certain special values of $n_1,n_2,n_3$. In particular, for the first case of the theorem, we need $\alpha_{J_1,J_2,J_3}$, while for the second case, we need $\alpha_{J_3-J_2,J_2,J_3}$. These special cases can be obtained by evaluating the triple integral which defines the trilinear functional $\mathcal{T}_{\Delta_1,J_1;\Delta_2,J_2;\Delta_3,J_3}$ on special test vectors. The calculations are quite technical and we defer them to Appendix~\ref{app:trilinear}. In particular, the results we need are recorded in Lemma~\ref{lem:integral1} and Lemma~\ref{lem:integral3}.
\end{proof}

\begin{remark}
Note that, in the spirit of Remark~\ref{rmk:constants}, Definition~\ref{def:cHats} makes sense also for $i=0$, i.e., when any of the eigenfunctions are the constant function. In that case, $\widehat{c}_{0jk} = \delta_{jk}$ for all $j,k\geq 0$. Consistency with Theorem~\ref{thm:acRelation} then requires that we set $c_{0jk} = (1-2h_j)\delta_{jk}$.
\end{remark}

We can now rewrite Proposition~\ref{prop:threePtKFinite} in a way that manifests reality of both sides:
\ba\label{eq:tripleProductReal}
\langle \phi_i^{(n_1)}(w_1) \phi_j^{(n_2)}(w_2)  \phi_k^{(n_3)}(w_3)\rangle =
&\widehat{c}_{ijk}\,\widehat{\alpha}_{n_1,n_2,n_3}(\Delta_i,J_i;\Delta_j,J_j;\Delta_k,J_k)\\
&\quad\times T_{n_1,n_2,n_3}(w_1,w_2,w_3)\,.
\ea

Here $\widehat{\alpha}_{n_1,n_2,n_3}$ is a rescaled version of the coefficients $\alpha_{n_1,n_2,n_3}$:
\ba\label{eq:alphaHat}
\widehat{\alpha}_{n_1,n_2,n_3}(\Delta_i,J_i;\Delta_j,J_j;\Delta_k,J_k) \coloneqq
 &\mathfrak{q}(\Delta_i,J_i;\Delta_j,J_j;\Delta_k,J_k)\\
&\quad\times\alpha_{n_1,n_2,n_3}(\Delta_i,J_i;\Delta_j,J_j;\Delta_k,J_k)\,,
\ea
with $\mathfrak{q}(\Delta_i,J_i;\Delta_j,J_j;\Delta_k,J_k)$ given in Theorem~\ref{thm:acRelation}.

\begin{remark}\label{rmk:alphaHats}
Since the left-hand side of~\eqref{eq:tripleProductReal} is real whenever $w_1,\,w_2,\,w_3$ are real tensors, it must be that $\widehat{\alpha}_{n_1,n_2,n_3}(\Delta_i,J_i;\Delta_j,J_j;\Delta_k,J_k)$ is real whenever all of its arguments are on the principal or complementary series.

For any $J_i,\, J_j,\, J_k$ and $n_1,\,n_2,\,n_3$, $\widehat{\alpha}_{n_1,n_2,n_3}(\Delta_i,J_i;\Delta_j,J_j;\Delta_k,J_k)$ is a rational function of $\Delta_i$, $\Delta_j$, $\Delta_k$. To see this, note that $\widehat{\alpha}_{n_1,n_2,n_3}$ satisfies the same recursion relations of Lemma~\ref{lem:recursion} as $\alpha_{n_1,n_2,n_3}$. For any $J_i,\, J_j,\, J_k$, these recursion relations can be used to determine all $\widehat{\alpha}_{n_1,n_2,n_3}$ up to an overall $(n_1,n_2,n_3)$-independent function of $\Delta_i$, $\Delta_j$, $\Delta_k$. This function is fixed by comparing~\eqref{eq:tripleProductIntegralG} with the special cases featuring in Definition~\ref{def:cHats}. The result is that whenever $J_i,\,J_j,\,J_k$ satisfy the triangle inequalities, we have
$$
\widehat{\alpha}_{J_i,J_j,J_k}(\Delta_i,J_i;\Delta_j,J_j;\Delta_k,J_k) = \frac{1}{q(J_i,J_j,J_k)}\,.
$$
On the other hand, when $J_i,\,J_j,\,J_k$ do not satisfy the triangle inequalities and $J_i+J_j < J_k$, we have
$$
\widehat{\alpha}_{n,J_k-n,J_k}(\Delta_i,J_i;\Delta_j,J_j;\Delta_k,J_k) = \frac{(-1)^{n+J_j+J_k}}{2J_k+1}
$$
for any $n\in\{J_i,\ldots, J_k-J_j\}$. The remaining cases are covered by using the fact that $\widehat{\alpha}_{n_i,n_j,n_k}(\Delta_i,J_i;\Delta_j,J_j;\Delta_k,J_k)$ transforms under an arbitrary transposition of the labels $i,\,j,\, k$ with the sign $(-1)^{n_i+n_j+n_k+J_i+J_j+J_k}$, e.g.,
$$
\widehat{\alpha}_{n_1,n_2,n_3}(\Delta_i,J_i;\Delta_j,J_j;\Delta_k,J_k) =
(-1)^{n_1+n_2+n_3+J_i+J_j+J_k} \widehat{\alpha}_{n_1,n_3,n_2}(\Delta_i,J_i;\Delta_k,J_k;\Delta_j,J_j)\,.
$$
\end{remark}

\section{Product expansion and associativity}
\label{sec:OPE-and-associativity}
\subsection{Product expansion}
One of the conclusions of the previous section is that the information contained in correlations~\eqref{eq:correlationFunctions} for $N\leqslant 3$ is equivalent to the knowledge of the spectrum $(\Delta_i,J_i)$ and the three-point correlation coefficients $c_{ijk}$ for all $i,j,k>0$. We will now explain how the same data can be used to reconstruct \emph{all} correlations for $N>3$.

The idea is to apply the spectral decomposition~\eqref{eq:decomposition2} to the pointwise product $\phi_i(f_1)\phi_j(f_2)$. Due to $G$-invariance, the spectral decomposition is uniquely determined by the aforementioned spectral data.

\begin{proposition}[Product expansion]\label{prop:ope}
Let $i,j>0$ and let $f_1\in R^{\infty}_{\Delta_i,J_i}$ and $f_2\in R^{\infty}_{\Delta_j,J_j}$. Then $\phi_i(f_1)\phi_j(f_2)\in L^2(\Gamma\backslash G)$ and its spectral decomposition takes the form
$$
\phi_i(f_1)\phi_j(f_2) =  \langle\phi_i(f_1)\phi_j(f_2)\rangle+ \sum\limits_{m=1}^{\infty}c_{ijm}\,\phi_m(f_1\star f_2)\,.
$$
Here $\langle\phi_i(f_1)\phi_j(f_2)\rangle$ is the two-point correlation, $c_{ijm}\in\mathbb{C}$ are the three-point correlation coefficients, and
\be\label{eq:starProduct}
(f_1\star f_2)(x_3) =
\int\limits_{\mathbb{C}^2} f_1(x_1)f_2(x_2)
\frac{\tau_{\Delta_i,J_i;\Delta_j,J_j;2-\Delta_m,-J_m}(x_1,x_2,x_3)}{(1-2h_m)S(\Delta_i,J_i;\Delta_j,J_j|\Delta_m,J_m)}dx_1dx_2\,,
\ee
where

\be\label{eq:SFactor}
S(\Delta_i,J_i;\Delta_j,J_j|\Delta_m,J_m) =
\tfrac{\pi(-1)^{J_i+J_j+J_m}\Gamma(\hb_i-\hb_j+\hb_m)\Gamma(-\hb_i+\hb_j+\hb_m)\Gamma(1-2\hb_m)}{\Gamma(1+h_i-h_j-h_m)\Gamma(1-h_i+h_j-h_m)\Gamma(2h_m)}\,.
\ee
\end{proposition}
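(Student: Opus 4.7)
The plan is to spectrally decompose $\phi_i(f_1)\phi_j(f_2)\in L^2(\Gamma\backslash G)$ and identify each isotypic component using the uniqueness of invariant trilinear forms (Proposition~\ref{prop:trilinear}). Since $f_1, f_2$ are smooth, the functions $\phi_i(f_1)$ and $\phi_j(f_2)$ are smooth on the compact space $\Gamma\backslash G$, hence bounded, so their pointwise product indeed lies in $L^2(\Gamma\backslash G)$. Applying \eqref{eq:decomposition2}, the projection onto the trivial summand $\mathbb{C}$ equals $\langle \phi_i(f_1)\phi_j(f_2)\rangle$, so it remains to identify the projection $P_m(\phi_i(f_1)\phi_j(f_2))$ onto each $R_{\Delta_m,J_m}$. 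Writing this projection as $\phi_m(B_m(f_1,f_2))$ for some $B_m(f_1,f_2)\in R_{\Delta_m,J_m}$, the map $B_m$ is $G$-equivariant because pointwise multiplication commutes with right translation and each $\phi$ is $G$-linear.

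Uniqueness now forces the form of $B_m$. The $G$-invariant pairing $R_{\Delta,J}\times R_{2-\Delta,-J}\to\mathbb{C}$, $(f,g)\mapsto\int f(x)g(x)\,dx$, identifies $R_{2-\Delta_m,-J_m}$ with the smooth dual of $R_{\Delta_m,J_m}$. Hence $G$-equivariant bilinear maps $R_{\Delta_i,J_i}^\infty\otimes R_{\Delta_j,J_j}^\infty\to R_{\Delta_m,J_m}$ correspond bijectively to $G$-invariant trilinear forms on $R_{\Delta_i,J_i}^\infty\otimes R_{\Delta_j,J_j}^\infty\otimes R_{2-\Delta_m,-J_m}^\infty$, and by Proposition~\ref{prop:trilinear} the latter space is at most one-dimensional, spanned by $\mathcal{T}_{\Delta_i,J_i;\Delta_j,J_j;2-\Delta_m,-J_m}$. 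The explicit operator $\star$ in \eqref{eq:starProduct} realizes this bilinear map, its $G$-equivariance following from Lemma~\ref{lem:3PtInvariance} combined with \eqref{eq:principalAction}. Consequently $B_m(f_1,f_2)=\lambda_{ijm}(f_1\star f_2)$ for some scalar $\lambda_{ijm}$.

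To pin down $\lambda_{ijm}$, I would pair with $\phi_m(f_3)$ for an arbitrary test vector $f_3\in R_{\Delta_m,J_m}^\infty$ and evaluate in two ways. By orthogonality of the spectral summands together with Corollary~\ref{cor:threePtFunctions}, one gets $\langle P_m(\phi_i(f_1)\phi_j(f_2))\phi_m(f_3)\rangle=\langle \phi_i(f_1)\phi_j(f_2)\phi_m(f_3)\rangle=c_{ijm}\,\mathcal{T}_{\Delta_i,J_i;\Delta_j,J_j;\Delta_m,J_m}(f_1,f_2,f_3)$. Substituting $P_m(\cdot)=\lambda_{ijm}\phi_m(f_1\star f_2)$ and using Proposition~\ref{prop:2ptFunction}, the same quantity equals $\lambda_{ijm}\,\pi\int_\mathbb{C}(f_1\star f_2)(x)(\mathcal{S}_{\Delta_m,J_m}f_3)(x)\,dx$. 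Equating the two reduces the entire problem to showing that $\pi$ times the $\mathcal{S}_{\Delta_m,J_m}$-transform, applied to the $y$-variable of $\tau_{\Delta_i,J_i;\Delta_j,J_j;2-\Delta_m,-J_m}(x_1,x_2,y)$, equals $(1-2h_m)\,S(\Delta_i,J_i;\Delta_j,J_j|\Delta_m,J_m)\,\tau_{\Delta_i,J_i;\Delta_j,J_j;\Delta_m,J_m}(x_1,x_2,x_3)$, at which point $\lambda_{ijm}=c_{ijm}$.

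The main obstacle is verifying this kernel identity and recovering the explicit factor $S$ in \eqref{eq:SFactor}. Both sides are covariant under $G$ acting diagonally on $(x_1,x_2,x_3)$ by Lemma~\ref{lem:3PtInvariance}, so it suffices to evaluate at a convenient configuration such as $x_1=0$, $x_2\to\infty$, $x_3=1$ (with the usual limiting procedure for the point at infinity); this reduces the question to a one-dimensional shadow integral over $\mathbb{C}$ of a product of complex powers of $y$ and $1-y$. Such integrals are evaluated by the standard Symanzik (complex beta) formula and produce ratios of Gelfand--Graev gamma factors of exactly the form \eqref{eq:SFactor}. Careful bookkeeping of the holomorphic and antiholomorphic weights, and of the branch-cut phases that arise when some of the $J$'s are nonzero, is what makes this step technically involved; it is essentially the same calculation familiar from shadow transforms of conformal three-point functions in $2$D CFT.
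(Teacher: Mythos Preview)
Your argument is correct and follows essentially the same route as the paper: spectrally decompose the product, identify each projection up to a scalar via the uniqueness of invariant trilinear forms (Proposition~\ref{prop:trilinear}), and fix the scalar by pairing with $\phi_m(f_3)$ and invoking Proposition~\ref{prop:2ptFunction} together with Corollary~\ref{cor:threePtFunctions}. The paper isolates your final kernel identity as a separate lemma (Lemma~\ref{lem:shadowTransformTau}) and evaluates it by sending $x_1\to\infty$ rather than your $(x_1,x_2,x_3)=(0,\infty,1)$, but this is a cosmetic difference.
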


To prove the proposition, we first state the following lemma, which provides an interpretation of the function $S(\Delta_i,J_i;\Delta_j,J_j|\Delta_m,J_m)$:
\begin{lemma}\label{lem:shadowTransformTau}
Let $f_1\in R^{\infty}_{\Delta_i,J_i}$, $f_2\in R^{\infty}_{\Delta_j,J_j}$, and $f_3\in R^{\infty}_{\Delta_m,J_m}$. The functional $\mathcal{T}$ satisfies
$$
\mathcal{T}_{\Delta_i,J_i;\Delta_j,J_j;2-\Delta_m,-J_m}(f_1,f_2,\mathcal{S}_{\Delta_m,J_m}(f_3))=
\tfrac{1-\Delta_m+|J_m|}{\pi}S(\Delta_i,J_i;\Delta_j,J_j|\Delta_m,J_m)
\mathcal{T}_{\Delta_i,J_i;\Delta_j,J_j;\Delta_m,J_m}(f_1,f_2,f_3),
$$
where $\mathcal{S}_{\Delta, J}$ is the intertwining operator defined in~\eqref{eq:shadow}.
\end{lemma}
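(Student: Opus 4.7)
The plan is to use the uniqueness of invariant trilinear functionals and reduce the identity to a classical Symanzik integral over $\mathbb{C}$. Since $\mathcal{S}_{\Delta_m,J_m}\colon R_{\Delta_m,J_m}\to R_{2-\Delta_m,-J_m}$ is $G$-equivariant, the left-hand side, viewed as a functional of $(f_1,f_2,f_3)\in R^{\infty}_{\Delta_i,J_i}\times R^{\infty}_{\Delta_j,J_j}\times R^{\infty}_{\Delta_m,J_m}$, is $G$-invariant. By Proposition~\ref{prop:trilinear} the space of such functionals is one-dimensional and spanned by $\mathcal{T}_{\Delta_i,J_i;\Delta_j,J_j;\Delta_m,J_m}$; hence both sides are automatically proportional, and only the scalar constant remains to be identified.

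To isolate that constant, I would substitute~\eqref{eq:shadow} together with the integral definition of $\mathcal{T}$ from Proposition~\ref{prop:trilinear} into the left-hand side and exchange the order of integration, obtaining
$$
\mathrm{LHS} = \tfrac{1-\Delta_m+|J_m|}{\pi}\int_{\mathbb{C}^3} f_1(x_1)f_2(x_2)f_3(y)\,I(x_1,x_2,y)\,dx_1\,dx_2\,dy,
$$
where
$$
I(x_1,x_2,y) = \int_{\mathbb{C}}(x_3-y)^{-\Delta_m+J_m}(\xb_3-\bar{y})^{-\Delta_m-J_m}\tau_{\Delta_i,J_i;\Delta_j,J_j;2-\Delta_m,-J_m}(x_1,x_2,x_3)\,dx_3.
$$
A covariance check analogous to Lemma~\ref{lem:3PtInvariance} shows that under $\mathrm{PSL}_2(\mathbb{C})$ the kernel $I(x_1,x_2,y)$ transforms as a three-point structure with weights $(\Delta_i,J_i),(\Delta_j,J_j),(\Delta_m,J_m)$ at $(x_1,x_2,y)$. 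Together with the transitivity of the $\mathrm{PSL}_2(\mathbb{C})$-action on ordered triples in $\mathbb{C}\cup\{\infty\}$, this forces
$$
I(x_1,x_2,y) = S(\Delta_i,J_i;\Delta_j,J_j\,|\,\Delta_m,J_m)\,\tau_{\Delta_i,J_i;\Delta_j,J_j;\Delta_m,J_m}(x_1,x_2,y)
$$
for a scalar $S$ depending only on the representation labels.

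To pin down $S$, I would specialize to the conformal frame $x_1=0$, $y=1$, $x_2\to\infty$; after stripping off the appropriate powers of $x_2$ dictated by~\eqref{eq:threePointStructure}, the remaining $x_3$-integral collapses to a classical complex Symanzik integral of the form $\int_{\mathbb{C}} x^{\alpha-1}\xb^{\tilde\alpha-1}(1-x)^{\beta-1}(1-\xb)^{\tilde\beta-1}\,dx$, whose value is a standard ratio of Gamma functions (valid when $\alpha-\tilde\alpha\in\mathbb{Z}$, which is automatic here since $J_i,J_j,J_m\in\mathbb{Z}$). Collecting the Gamma factors with exponents built from $h_i,\hb_i$ reproduces precisely~\eqref{eq:SFactor}. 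The main obstacle is careful bookkeeping of branch choices: the three-point structure is single-valued only after combining holomorphic and antiholomorphic pieces, and the sign $(-1)^{J_i+J_j+J_m}$ in~\eqref{eq:SFactor} arises exactly as the relative phase picked up when the $x_2\to\infty$ scaling is carried out consistently in both factors. Convergence of the Symanzik integral only holds in a strip of representation labels, so the final step also requires analytic continuation in $\Delta$ to cover the unitary range; this is routine once the covariance argument has reduced the problem to evaluating a single beta-type integral.
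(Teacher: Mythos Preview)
Your proposal is correct and follows essentially the same strategy as the paper: invoke uniqueness of invariant trilinear functionals (Proposition~\ref{prop:trilinear}) to reduce to a single scalar, then specialize one external point to infinity and evaluate the remaining one-variable integral over $\mathbb{C}$. The only difference is cosmetic: the paper sends $x_1\to\infty$ and evaluates the resulting convolution by passing to Fourier space, whereas you send $x_2\to\infty$ with $x_1=0$, $y=1$ and evaluate the result as a complex Symanzik/beta integral. These are interchangeable evaluations of the same object; the Fourier route slightly shortens the branch/phase bookkeeping you flag, while your route makes the Gamma-function structure of~\eqref{eq:SFactor} appear more transparently.
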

\begin{proof}
The left-hand side is an invariant functional $R^{\infty}_{\Delta_i,J_i}\times R^{\infty}_{\Delta_j,J_j}\times R^{\infty}_{\Delta_m,J_m}\rightarrow\mathbb{C}$ and therefore is proportional to $\mathcal{T}_{\Delta_i,J_i;\Delta_j,J_j;\Delta_m,J_m}(\cdot,\cdot,\cdot)$ by Proposition~\ref{prop:trilinear}. The constant of proportionality is given by
$$
S(\Delta_i,J_i;\Delta_j,J_j|\Delta_m,J_m) =
\frac{\int\limits_{\mathbb{C}}\tau_{\Delta_i,J_i;\Delta_j,J_j;2-\Delta_m,-J_m}(x_1,x_2,x_4)x_{34}^{-2h_m}\overline{x}_{34}^{-2\hb_m}dx_4}{\tau_{\Delta_i,J_i;\Delta_j,J_j;\Delta_m,J_m}(x_1,x_2,x_3)}\,.
$$
After taking the limit $x_1\rightarrow\infty$, the right-hand side reduces to a convolution integral which can be evaluated by inserting the Fourier representation of $x_{34}^{-2h_4}\overline{x}_{34}^{-2\hb_3}$, with the result given by~\eqref{eq:SFactor}.
\end{proof}

\begin{proof}[Proof of Proposition~\ref{prop:ope}]
Since $f_1\in R^{\infty}_{\Delta_i,J_i}$ and $f_2\in R^{\infty}_{\Delta_j,J_j}$, we have $\phi_i(f_1)\phi_j(f_2)\in C^{\infty}(\Gamma\backslash G)\subset L^2(\Gamma\backslash G)$. The spectral decomposition then takes the form
\be\label{eq:ope}
\phi_i(f_1)\phi_j(f_2) = \sum\limits_{m=0}^{\infty}P_{m}(\phi_i(f_1)\phi_j(f_2))\,,
\ee
where $P_0$ is the orthogonal projection on the trivial representation and $P_{m}$ with $m>0$ is the orthogonal projection onto the image of $\phi_m$. The sum over $m$ converges in the $L^2$ sense. The $P_0$ term is precisely the two-point correlation $\langle\phi_i(f_1)\phi_j(f_2)\rangle$, whose general form is given in Proposition~\ref{prop:2ptFunction}.

Similarly, compatibility with the $G$-action fixes $P_{m}(\phi_i(f_1)\phi_j(f_2))$ up to an overall multiplicative constant. This is because all $G$-linear maps
$$
R_{\Delta_i,J_i}^{\infty}\times R_{\Delta_j,J_j}^{\infty} \rightarrow R_{\Delta_m,J_m}
$$
are constant multiples of
$$
(f_1,f_2)\mapsto
\int\limits_{\mathbb{C}^2} f_1(x_1)f_2(x_2)
\tau_{\Delta_i,J_i;\Delta_j,J_j;2-\Delta_m,-J_m}(x_1,x_2,x_3)dx_1 dx_2\,.
$$
The proof of this statement follows from the same argument as in the proof of Proposition~\ref{prop:trilinear}. To find the constant of proportionality between this expression and $P_{m}(\phi_i(f_1)\phi_j(f_2))$, we multiply both sides of~\eqref{eq:ope} by $\phi_m(f_3)$ and compute the expectation value using in turn Proposition~\ref{prop:2ptFunction} and Corollary~\ref{cor:threePtFunctions}. The proof then follows from Lemma~\ref{lem:shadowTransformTau}.
\end{proof}

\begin{remark}
One can apply Proposition~\ref{prop:ope} to write any $N$-point correlation as an infinite, convergent sum of $(N-1)$-point correlations. The sum over $m$ can be exchanged with the operation of integrating over $\Gamma\backslash G$ thanks to $L^2$ convergence and the Cauchy--Schwartz inequality. Proceeding inductively down to $N=3$, we see that all $N$-point correlations are determined by the three-point correlation coefficients $c_{ijk}$ and the spectrum $(\Delta_i,J_i)$. We will refer to the three-point correlation coefficients and the spectrum collectively as the spectral data.
\end{remark}

Our next order of business is to apply the product expansion of Proposition~\ref{prop:ope} to $K$-finite vectors. To describe the result, let us recall the map $T^{\vee}_{n_1,n_2,n_3}: V_{n_1}\otimes V_{n_2}\rightarrow V_{n_3}$ from Definition~\ref{def:Tcheck}.

\begin{corollary}\label{cor:opeKFinite}
Let $i,j>0$, $n_1\geq J_i$, $n_2\geq J_j$ and $w_1\in V_{n_1}$, $w_2\in V_{n_2}$. Then
$$
\begin{aligned}
&\phi^{(n_1)}_i(w_1)\phi_j^{(n_2)}(w_2) = \langle\phi^{(n_1)}_i(w_1)\phi_j^{(n_2)}(w_2)\rangle +\\
&\qquad+\sum\limits_{\substack{k=1}}^{\infty}\;\sum\limits_{n_3=\max(J_k,|n_1-n_2|)}^{n_1+n_2}\widehat{c}_{ijk}
\tfrac{\widehat{\alpha}_{n_1,n_2,n_3}(\Delta_i,J_i;\Delta_j,J_j;\Delta_k,J_k)}{p_{n_3}(\Delta_k,J_k)}
\phi^{(n_3)}_k(T^{\vee}_{n_1,n_2,n_3}(w_1,w_2))\,.
\end{aligned}
$$
\end{corollary}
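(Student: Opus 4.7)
The plan is to derive the corollary from Proposition~\ref{prop:ope} applied to the $K$-finite smooth vectors $f_1=\kappa^{(n_1)}_{\Delta_i,J_i}(w_1)$ and $f_2=\kappa^{(n_2)}_{\Delta_j,J_j}(w_2)$, and then to repackage each spectral component $\phi_k(f_1\star f_2)\in\phi_k(R_{\Delta_k,J_k})$ as a finite sum over $K$-types. Since $f_1\star f_2$ is the image of $w_1\otimes w_2$ under a $K$-equivariant bilinear map, while $V_{n_1}\otimes V_{n_2}$ decomposes into the $K$-types $V_{n_3}$ with $|n_1-n_2|\leq n_3\leq n_1+n_2$ and $R_{\Delta_k,J_k}$ contains only $K$-types with $n_3\geq J_k$, an appeal to the uniqueness of the $K$-linear map $V_{n_1}\otimes V_{n_2}\to V_{n_3}$ (which up to scalar is precisely $T^{\vee}_{n_1,n_2,n_3}$ of Definition~\ref{def:Tcheck}) forces
\[
\phi^{(n_1)}_i(w_1)\phi^{(n_2)}_j(w_2) - \langle\phi^{(n_1)}_i(w_1)\phi^{(n_2)}_j(w_2)\rangle = \sum_{k,n_3} C_{i,j,k;n_1,n_2,n_3}\,\phi^{(n_3)}_k\bigl(T^{\vee}_{n_1,n_2,n_3}(w_1,w_2)\bigr)
\]
for certain constants $C_{i,j,k;n_1,n_2,n_3}$, with the inner sum over $n_3\in\{\max(J_k,|n_1-n_2|),\ldots,n_1+n_2\}$ automatically finite and the outer sum over $k$ convergent in $L^2$.

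Next, to identify the constants, I would pair both sides with an arbitrary test vector $\phi^{(n_3)}_k(w_3)$ inside the correlation bracket $\langle\,\cdot\,\rangle$. The left-hand side is computed directly via the real form of Proposition~\ref{prop:threePtKFinite} recorded in~\eqref{eq:tripleProductReal}, yielding $\widehat{c}_{ijk}\,\widehat{\alpha}_{n_1,n_2,n_3}(\Delta_i,J_i;\Delta_j,J_j;\Delta_k,J_k)\,T_{n_1,n_2,n_3}(w_1,w_2,w_3)$. On the right, orthogonality from Lemma~\ref{lem:2ptK} collapses the double sum to a single term with matching $k$ and $n_3$, producing $C_{i,j,k;n_1,n_2,n_3}\,p_{n_3}(\Delta_k,J_k)\,T^{\vee}_{n_1,n_2,n_3}(w_1,w_2)\cdot w_3$; by Definition~\ref{def:Tcheck} this dot product equals $T_{n_1,n_2,n_3}(w_1,w_2,w_3)$. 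Solving yields exactly the coefficient $\widehat{c}_{ijk}\,\widehat{\alpha}_{n_1,n_2,n_3}/p_{n_3}(\Delta_k,J_k)$ claimed in the statement, and the non-degeneracy of $T_{n_1,n_2,n_3}$ as $w_3$ varies ensures uniqueness.

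The only subtle point is justifying the term-by-term reorganization of the product expansion into $K$-types given that Proposition~\ref{prop:ope} only guarantees $L^2$-convergence. Since the pairing against any single $L^2$-vector $\phi^{(n_3)}_k(w_3)$ is continuous and the $n_3$-sum is finite for each $k$, testing both sides against such vectors is harmless and suffices to pin down all coefficients, completing the identification of the two $L^2$ elements.
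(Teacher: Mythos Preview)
Your proposal is correct and follows essentially the same approach as the paper: the paper's one-sentence proof invokes precisely the spectral decomposition together with Lemma~\ref{lem:2ptK} and equation~\eqref{eq:tripleProductReal}, which is exactly what you do when you write down the $K$-type ansatz and then pair with $\phi^{(n_3)}_k(w_3)$ to extract the coefficients. Your explicit treatment of the $L^2$-convergence issue is a welcome addition that the paper leaves implicit.
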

\begin{proof}
The corollary follows immediately from the spectral decomposition of $\phi^{(n_1)}_i(w_1)\phi_j^{(n_2)}(w_2)$ and the general form of two- and three-point correlations of $K$-finite vectors, stated in Lemma~\ref{lem:2ptK} and equation~\eqref{eq:tripleProductReal}.
\end{proof}

\subsection{The four-point correlations}\label{ssec:fourPoints}
We are finally ready to discuss the four-point correlations. Let us first make a preliminary definition.

\begin{definition}[Conformal partial waves]\label{def:CPW}
The conformal partial waves are given by
$$
\begin{aligned}
&\psi^{ijk\ell}_{m}(x_1,x_2,x_3,x_4) =\\
&\tfrac{1}{(1-2h_m)S(\Delta_i,J_i;\Delta_j,J_j|\Delta_m,J_m)}\int\limits_{\mathbb{C}}
\tau_{\Delta_i,J_i;\Delta_j,J_j;2-\Delta_m,-J_m}(x_1,x_2,x_5)
\tau_{\Delta_m,J_m;\Delta_k,J_k;\Delta_\ell,J_\ell}(x_5,x_3,x_4)
dx_5 \ .
\end{aligned}
$$
$\psi^{ijk\ell}_{m}$ is a function of four complex variables $x_1,\ldots,x_4$, defined whenever all of these are distinct. It depends on $i,j,k,\ell,m$ only through the spectral parameters $(\Delta_i,J_i)$, $(\Delta_j,J_j)$, $(\Delta_k,J_k)$, $(\Delta_\ell,J_\ell)$, $(\Delta_m,J_m)$.
\end{definition}

\begin{remark}
It is possible to find a closed formula for $\psi^{ijk\ell}_{m}(x_1,x_2,x_3,x_4)$ in terms of the Gauss hypergeometric function by evaluating the integral over $x_5$. The formula, which first appeared in literature on conformal field theory \cite{Ferrara:1973vz,Ferrara:1974ny,DO1,DO2}, is recorded in Appendix~\ref{app:partialwave}. However, we will not make use of this formula in the rest of this work.
\end{remark}

Note that $\psi^{ijk\ell}_{m}$ gives rise to a family of $G$-invariant functionals
\be\label{eq:Psi}
\begin{aligned}
&\Psi^{ijk\ell}_{m}:R_{\Delta_i,J_i}^{\infty}\times R_{\Delta_j,J_j}^{\infty}\times R^{\infty}_{\Delta_k,J_k}\times R^{\infty}_{\Delta_\ell,J_\ell}\rightarrow \mathbb{C}\\
&(f_1, f_2,f_3,f_4) \mapsto \Psi^{ijk\ell}_{m}(f_1,f_2,f_3,f_4) \coloneqq \int\limits_{\mathbb{C}^4}
\psi^{ijk\ell}_{m}(x_1,x_2,x_3,x_4) \prod\limits_{n=1}^{4}f_n(x_n)\prod\limits_{n=1}^{4}dx_n.
\end{aligned}
\ee
These functionals are universal in the sense that they depend only on $(\Delta_m,J_m)$ and make no reference to $\Gamma$.

\begin{remark}
For $i=j$, $k=\ell$ and $J_m = 0$, the functional $\Psi^{iikk}_{m}$ is continuous in $\Delta_m$ at $\Delta_m = 0$, and it is natural to define $\Psi^{iikk}_{0}$ to be its limit there. Using the formula in Appendix~\ref{app:partialwave}, we get
$$
(1-2h_i)(1-2h_k)\Psi^{iikk}_{0}(f_1,f_2,f_3,f_4) = \langle\phi_{i}(f_1)\phi_{i}(f_2)\rangle\langle\phi_{k}(f_3)\phi_{k}(f_4)\rangle\,.
$$
\end{remark}

The reason for introducing the functionals $\Psi^{ijk\ell}_{m}$ is that they arise from the product expansion of four-point correlations. The following theorem shows how the four-point correlations are determined in terms of the spectral data:
\begin{theorem}\label{thm:cpwExpansion}
The four-point correlations admit a representation as a convergent sum
$$
\langle\phi_{i}(f_1)\phi_{j}(f_2)\phi_{k}(f_3)\phi_{\ell}(f_4)\rangle =
 \sum\limits_{m=0}^{\infty}c_{ijm}c_{k\ell m} \Psi^{ijk\ell}_{m}(f_1,f_2,f_3,f_4)\,,
$$
where $c_{ijk}$ are the three-point correlation coefficients and $\Psi^{ijk\ell}_{m}$ are the functionals defined by~\eqref{eq:Psi}.
\end{theorem}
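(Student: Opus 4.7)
The strategy is to apply Proposition~\ref{prop:ope} to the pair $\phi_i(f_1)\phi_j(f_2)$, integrate termwise against $\phi_k(f_3)\phi_\ell(f_4)$, and then recognize each summand by matching against the trilinear form $\mathcal{T}$. The key algebraic identity we want is, for each $m\geq 1$,
\[
\mathcal{T}_{\Delta_m,J_m;\Delta_k,J_k;\Delta_\ell,J_\ell}(f_1\star f_2,\,f_3,\,f_4) \;=\; \Psi^{ijk\ell}_m(f_1,f_2,f_3,f_4),
\]
which, after Fubini, is immediate from the formula \eqref{eq:starProduct} for the star product and Definition~\ref{def:CPW} of $\psi^{ijk\ell}_m$: the two $\tau$-kernels (one from the star product, one from $\mathcal{T}$) fuse into exactly the $x_5$-integrated kernel $\psi^{ijk\ell}_m$.

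The execution proceeds as follows. First, Proposition~\ref{prop:ope} gives the $L^2$-convergent expansion
\[
\phi_i(f_1)\phi_j(f_2) \;=\; \langle\phi_i(f_1)\phi_j(f_2)\rangle + \sum_{m\geq 1} c_{ijm}\,\phi_m(f_1\star f_2).
\]
Because $\phi_k(f_3)\phi_\ell(f_4)$ is smooth on the compact space $\Gamma\backslash G$, it lies in $L^2(\Gamma\backslash G)$, so the functional $F\mapsto \langle F\cdot\phi_k(f_3)\phi_\ell(f_4)\rangle$ is $L^2$-continuous by Cauchy--Schwarz and may be applied termwise. This produces the $m=0$ contribution $\langle\phi_i(f_1)\phi_j(f_2)\rangle\langle\phi_k(f_3)\phi_\ell(f_4)\rangle$, which equals $c_{ij0}c_{k\ell 0}\Psi^{ijk\ell}_0$ via the remark preceding the theorem together with the convention $c_{ij0}=(1-2h_i)\delta_{ij}$ (both sides automatically vanish unless $i=j$ and $k=\ell$, by Proposition~\ref{prop:2ptFunction}). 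For $m\geq 1$, Corollary~\ref{cor:threePtFunctions} applied to the three-point correlation of $\phi_m(f_1\star f_2)$, $\phi_k(f_3)$, $\phi_\ell(f_4)$ yields
\[
\langle\phi_m(f_1\star f_2)\phi_k(f_3)\phi_\ell(f_4)\rangle \;=\; c_{mk\ell}\,\mathcal{T}_{\Delta_m,J_m;\Delta_k,J_k;\Delta_\ell,J_\ell}(f_1\star f_2,\,f_3,\,f_4),
\]
and $c_{mk\ell}=c_{k\ell m}$ because cyclic shifts preserve both complete symmetry and complete antisymmetry, so the sign in Remark~\ref{rmk:cSymmetry} is the same on both sides. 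Combining with the identity in the first paragraph finishes the proof.

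The main technical obstacle is the Fubini step needed for the kernel identification: the distributions $\tau_{\,\cdot\,}$ are only borderline integrable, and smooth vectors in the principal or complementary series are not compactly supported on $\mathbb{C}$, so the absolute convergence of the iterated integral is not automatic. I would handle this by first verifying the identity on the dense subspace of $K$-finite vectors, where $\mathcal{T}$ and $\Psi^{ijk\ell}_m$ both reduce to the explicit representation-theoretic expressions underlying Proposition~\ref{prop:threePtKFinite} and its analogue for four-point correlations, and then extending to arbitrary smooth vectors by continuity of both sides as multilinear forms. A secondary point worth checking is that $f_1\star f_2$ is indeed a smooth vector in $R_{\Delta_m,J_m}^\infty$, which is built into the statement of Proposition~\ref{prop:ope}, so that Corollary~\ref{cor:threePtFunctions} applies.
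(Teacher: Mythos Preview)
Your proof is correct and follows essentially the same route as the paper: apply Proposition~\ref{prop:ope} to $\phi_i(f_1)\phi_j(f_2)$, interchange summation with integration (the paper also justifies this by $L^2$ convergence and Cauchy--Schwarz), identify the $m=0$ term via the remark preceding the theorem, and for $m\geq 1$ apply Corollary~\ref{cor:threePtFunctions} to get $c_{mk\ell}\Psi^{ijk\ell}_m$. Your final paragraph on the Fubini step and smoothness of $f_1\star f_2$ is more cautious than the paper, which simply asserts the identification without comment.
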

\begin{proof}
We apply the product expansion from Proposition~\ref{prop:ope} to $\phi_{i}(f_1)\phi_{j}(f_2)$. We can swap the summation with integration over $\Gamma\backslash G$, which leads to
$$
\langle\phi_{i}(f_1)\phi_{j}(f_2)\phi_{k}(f_3)\phi_{\ell}(f_4)\rangle =
 \langle\phi_i(f_1)\phi_j(f_2)\rangle  \langle\phi_k(f_3)\phi_{\ell}(f_4)\rangle +
 \sum\limits_{m=1}^{\infty}c_{ijm} \langle\phi_m(f_1\star f_2)\phi_k(f_3)\phi_{\ell}(f_4)\rangle\,.
$$
Recalling that $c_{ij0} = (1-2h_i)\delta_{ij}$ and the preceding remark, we see that the first term correctly reproduces the $m=0$ term of the sum. Finally, applying Corollary~\ref{cor:threePtFunctions} to the three-point correlation inside the sum produces
$$
\langle\phi_m(f_1\star f_2)\phi_k(f_3)\phi_{\ell}(f_4)\rangle = c_{mk\ell} \Psi^{ijk\ell}_{m}(f_1,f_2,f_3,f_4)\,.
$$
\end{proof}

In practice, we will be working with four-point correlations of $K$-finite vectors of the general form
$$
\langle\phi_i^{(n_1)}(w_1)\phi_j^{(n_2)}(w_2)\phi_k^{(n_3)}(w_3)\phi_\ell^{(n_4)}(w_4)\rangle
$$
for some $w_{1}\in V_{n_{1}}$, $w_{2}\in V_{n_{2}}$, $w_{3}\in V_{n_{3}}$, $w_{4}\in V_{n_{4}}$. As explained in Section~\ref{ssec:invariant4K}, without loss of generality, we can set $w_1 = v_1^{n_1}$, $w_2 = v_2^{n_2}$, $w_3 = v_3^{n_3}$, $w_4 = v_4^{n_4}$ for null vectors $v_{1,2,3,4}\in V_1$. Since $\langle\phi_i^{(n_1)}(w_1)\phi_j^{(n_2)}(w_2)\phi_k^{(n_3)}(w_3)\phi_\ell^{(n_4)}(w_4)\rangle$ is an invariant form on $V_{n_1}\otimes V_{n_2}\otimes V_{n_3}\otimes V_{n_4}$, it follows from Lemma~\ref{lem:quadrilinear} that, when $n_1+n_2+n_3+n_4$ is even, we can write
\ba\label{eq:4ptCorEven}
&\langle\phi_i^{(n_1)}(v_1^{n_1})\phi_j^{(n_2)}(v_2^{n_2})\phi_k^{(n_3)}(v_3^{n_3})\phi_\ell^{(n_4)}(v_4^{n_4})\rangle =
(v_1\cdot v_2)^{\frac{n_1+n_2-n_3-n_4}{2}} (v_1\cdot v_4)^{\frac{n_1-n_2-n_3+n_4}{2}}\\
 &\qquad\qquad\qquad\qquad\qquad\qquad
 \times(v_1\cdot v_3)^{n_3}(v_2\cdot v_4)^{\frac{-n_1+n_2+n_3+n_4}{2}}
\mathcal{G}_{ijk\ell}(n_1,n_2,n_3,n_4; r)\,.
\ea
Similarly, when $n_1+n_2+n_3+n_4$ is odd, we can write
\ba\label{eq:4ptCorOdd}
&\langle\phi_i^{(n_1)}(v_1^{n_1})\phi_j^{(n_2)}(v_2^{n_2})\phi_k^{(n_3)}(v_3^{n_3})\phi_\ell^{(n_4)}(v_4^{n_4})\rangle =
\frac{[v_1,v_2,v_4]}{\sqrt{2}}(v_2\cdot v_4)^{\frac{-n_1+n_2+n_3+n_4-1}{2}}
 \\
 &\qquad\times(v_1\cdot v_3)^{n_3}(v_1\cdot v_2)^{\frac{n_1+n_2-n_3-n_4-1}{2}} (v_1\cdot v_4)^{\frac{n_1-n_2-n_3+n_4-1}{2}}
\mathcal{G}_{ijk\ell}(n_1,n_2,n_3,n_4; r)\,.
\ea
Here $r = r(v_1,v_2,v_3,v_4)$ is the $K$-invariant ratio defined in~\eqref{eq:rDefinition1} and $\mathcal{G}_{ijk\ell}(n_1,n_2,n_3,n_4; r)$ is a polynomial in $r$ of degree at most $\min(2n_3,n_3+n_4+n_2-n_1)$, thanks to Remark~\ref{rmk:Hdegree}.

We are finally in a position to derive a version of the product expansion of Theorem~\ref{thm:cpwExpansion}, but this time for the polynomial $\mathcal{G}_{ijk\ell}(n_1,n_2,n_3,n_4; r)$:
\begin{proposition}\label{prop:productExpG}
The product expansion of $\mathcal{G}_{ijk\ell}(n_1,n_2,n_3,n_4; r)$ reads: 
$$
\mathcal{G}_{ijk\ell}(n_1,n_2,n_3,n_4; r) = 
\sum\limits_{m=0}^{\infty}\widehat{c}_{ijm}\widehat{c}_{k\ell m}\, \widehat{\Psi}^{ijk\ell}_{m}(n_1,n_2,n_3,n_4;r)\,,
$$
where
\ba\label{eq:PsiHat}
&\!\!\widehat{\Psi}^{ijk\ell}_{m}(n_1,n_2,n_3,n_4;r) \\
&\!\!\!\!\!\coloneqq\sum\limits_{n_5}
\frac{\widehat{\alpha}_{n_1,n_2,n_5}(\Delta_i,J_i;\Delta_j,J_j;\Delta_m,J_m)\widehat{\alpha}_{n_3,n_4,n_5}(\Delta_k,J_k;\Delta_\ell,J_\ell;\Delta_m,J_m)}{p_{n_5}(\Delta_m,J_m)}
\mathcal{H}^{n_1,n_2,n_3,n_4}_{n_5}(r)\,,
\ea
with the sum ranging over $\mathrm{max}(|n_1-n_2|,|n_3-n_4|,J_m)\leq n_5\leq \mathrm{min}(n_1+n_2,n_3+n_4)$. Here $\mathcal{H}^{n_1,n_2,n_3,n_4}_{n_5}(r)$, $p_{n_5}(\Delta_m,J_m)$ and $\widehat{\alpha}_{n_1,n_2,n_5}(\Delta_i,J_i;\Delta_j,J_j;\Delta_m,J_m)$ are given respectively in~\eqref{eq:HcalDef},~\eqref{eq:normN} and~\eqref{eq:alphaHat}.
\end{proposition}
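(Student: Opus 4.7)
The plan is to apply Corollary~\ref{cor:opeKFinite} separately to the two $K$-finite products $\phi_i^{(n_1)}(w_1)\phi_j^{(n_2)}(w_2)$ and $\phi_k^{(n_3)}(w_3)\phi_\ell^{(n_4)}(w_4)$, obtaining two $L^2$-convergent expansions into irreducible summands of $L^2(\Gamma\backslash G)$, and then compute their four-point correlation by pairing the two expansions through the two-point orthogonality of $K$-finite vectors in Lemma~\ref{lem:2ptK}. By continuity of the $L^2$ inner product, the resulting double sum can be moved outside the integration, and Lemma~\ref{lem:2ptK} collapses it to a single sum indexed by a common intermediate representation $m$ and a common intermediate $K$-type $n_5$.

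After this step, the sum takes the form
$$
\sum_{m,n_5}\widehat{c}_{ijm}\widehat{c}_{k\ell m}\,\frac{\widehat{\alpha}_{n_1,n_2,n_5}(\Delta_i,J_i;\Delta_j,J_j;\Delta_m,J_m)\,\widehat{\alpha}_{n_3,n_4,n_5}(\Delta_k,J_k;\Delta_\ell,J_\ell;\Delta_m,J_m)}{p_{n_5}(\Delta_m,J_m)}\,T^{\vee}_{n_1,n_2,n_5}(w_1,w_2)\cdot T^{\vee}_{n_3,n_4,n_5}(w_3,w_4),
$$
where one factor of $p_{n_5}(\Delta_m,J_m)$ produced by Lemma~\ref{lem:2ptK} has cancelled one of the two factors in the denominator coming from the two applications of Corollary~\ref{cor:opeKFinite}. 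By~\eqref{eq:HDef}, the contraction of the two $T^{\vee}$'s is precisely the $K$-invariant quadrilinear form $H^{n_1,n_2,n_3,n_4}_{n_5}(w_1,w_2,w_3,w_4)$. Specializing to $w_i=v_i^{n_i}$ for null vectors $v_i\in V_1$ and invoking Lemma~\ref{lem:quadrilinear}, this form factors as the universal $v_i$-prefactor that appears in~\eqref{eq:4ptCorEven} and~\eqref{eq:4ptCorOdd} for $\mathcal{G}_{ijk\ell}$, times $\mathcal{H}^{n_1,n_2,n_3,n_4}_{n_5}(r)$. Stripping off the common prefactor then yields the stated expansion of $\mathcal{G}_{ijk\ell}(n_1,n_2,n_3,n_4;r)$, with $\widehat{\Psi}^{ijk\ell}_m$ exactly as in~\eqref{eq:PsiHat}, since the range of $n_5$ is the intersection of the two ranges produced by Corollary~\ref{cor:opeKFinite}.

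The main subtlety lies in the $m=0$ term. Since $R_{0,0}=\mathbb{C}$ contains only $V_0$, the $m=0$ contribution survives only when $n_5=0$, which forces $n_1=n_2$ and $n_3=n_4$; using the conventions $\widehat{c}_{ij0}=\delta_{ij}$ and $\widehat{c}_{k\ell 0}=\delta_{k\ell}$, one verifies that this $m=0$ piece reproduces exactly the disconnected contribution $\langle\phi_i^{(n_1)}(w_1)\phi_j^{(n_2)}(w_2)\rangle\,\langle\phi_k^{(n_3)}(w_3)\phi_\ell^{(n_4)}(w_4)\rangle$ that arises from the constant terms of the two product expansions in Corollary~\ref{cor:opeKFinite}. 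Concretely, this reduces to evaluating $\widehat{\alpha}_{n,n,0}(\Delta,J;\Delta,J;0,0)$, which can be extracted from Lemma~\ref{lem:recursion} and Remark~\ref{rmk:alphaHats} and matched with the normalization in Lemma~\ref{lem:2ptK}. The interchange of summation and integration is justified throughout by the $L^2$-convergence of Corollary~\ref{cor:opeKFinite} together with the Cauchy--Schwarz inequality. Once this bookkeeping is in place, the remainder of the argument is purely algebraic and follows directly from the identities already established in Sections~\ref{sec:rep-theory} and~\ref{sec:correlators}.
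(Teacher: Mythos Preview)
Your argument is correct. The route you take differs slightly from the paper's: the paper applies Corollary~\ref{cor:opeKFinite} only to the first pair $\phi_i^{(n_1)}(w_1)\phi_j^{(n_2)}(w_2)$ and then evaluates the resulting three-point correlations $\langle \phi_m^{(n_5)}(\cdot)\,\phi_k^{(n_3)}(w_3)\,\phi_\ell^{(n_4)}(w_4)\rangle$ directly via~\eqref{eq:tripleProductReal}, rather than performing a second product expansion on the (34) pair and then invoking the two-point orthogonality of Lemma~\ref{lem:2ptK}. Both routes land on the same contraction $T^{\vee}_{n_1,n_2,n_5}\cdot T^{\vee}_{n_3,n_4,n_5}=H^{n_1,n_2,n_3,n_4}_{n_5}$ and hence on Lemma~\ref{lem:quadrilinear}. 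The paper's one-OPE-plus-three-point variant is marginally cleaner: only a single $L^2$ sum needs to be exchanged with the integral, and the $m=0$ piece is already packaged as the constant term in Corollary~\ref{cor:opeKFinite}, so no separate bookkeeping for $\widehat{\alpha}_{n,n,0}$ is required. Your double-OPE variant is more manifestly symmetric in the two pairs and makes the cancellation of one factor of $p_{n_5}$ explicit; the extra care you take with the $m=0$ term is necessary in your setup and is handled correctly.
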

\begin{proof}
We apply the product expansion of Corollary~\ref{cor:opeKFinite} to the first two entries inside the correlation $\langle\phi_i^{(n_1)}(v_1^{n_1})\phi_j^{(n_2)}(v_2^{n_2})\phi_k^{(n_3)}(v_3^{n_3})\phi_\ell^{(n_4)}(v_4^{n_4})\rangle$, and then we use formula~\eqref{eq:tripleProductReal} to evaluate the resulting three-point correlations. The proposition then follows by evaluating $T^{\vee}_{n_1,n_2,n_5}(v_1^{n_1},v_2^{n_2})\cdot T^{\vee}_{n_3,n_4,n_5}(v_3^{n_3},v_4^{n_4})$ using Lemma~\ref{lem:quadrilinear}.
\end{proof}

\begin{remark}
For given $n_1,\,n_2,\,n_3,\,n_4$ and given $J_i,\, J_j,\, J_k,\, J_{\ell},\, J_{m}$, $\widehat{\Psi}^{ijk\ell}_{m}(n_1,n_2,n_3,n_4;r)$ is a polynomial in $r$ of degree at most $\min(2n_3,n_3+n_4+n_2-n_1)$, whose coefficients are rational functions of $\Delta_i$, $\Delta_j$, $\Delta_k$, $\Delta_\ell$, $\Delta_m$. This follows directly from the definition~\eqref{eq:PsiHat} and the fact that $\widehat{\alpha}_{n_1,n_2,n_3}(\Delta_i,J_i;\Delta_j,J_j;\Delta_k,J_k)$ are rational functions of $\Delta_i$, $\Delta_j$, $\Delta_k$, as explained in Remark~\ref{rmk:alphaHats}. Furthermore, the coefficients of the polynomial are real whenever all five representations $R_{\Delta_i,J_i}$, $R_{\Delta_j,J_j}$, $R_{\Delta_k,J_k}$, $R_{\Delta_\ell,J_\ell}$, $R_{\Delta_m,J_m}$ are unitary. 
\end{remark}

\subsection{Spectral identities}
The main idea we will use is that the spectral data $\Delta_i$, $J_i$ and $c_{ijk}$ are constrained by associativity of multiplication in the smooth function ring $C^{\infty}(\Gamma\backslash G)$. Associativity amounts to the equality
$$
(\phi_i(f_1)\,\phi_j(f_2))\,\phi_k(f_3) = \phi_i(f_1)\,(\phi_j(f_2)\,\phi_k(f_3))
$$
for all $i,j,k>0$ and all $(f_1,f_2,f_3)\in R_{\Delta_i,J_i}^{\infty}\times R_{\Delta_j,J_j}^{\infty}\times R_{\Delta_k,J_k}^{\infty}$. On both sides of the equation, we use Proposition~\ref{prop:ope} twice but in different orders. It is convenient to project the equality onto the image of $\phi_{\ell}$ by multiplying by $\phi_\ell(f_4)$ and computing the expectation value. The following corollary then follows immediately from Theorem~\ref{thm:cpwExpansion}:

\begin{corollary}[Spectral identities]\label{cor:crossing}
The spectral data $\Delta_i,\,J_i$, $c_{ijk}$ of any cocompact lattice $\Gamma\subset \mathrm{PSL}_2(\mathbb{C})$ must satisfy the identities
\be\label{eq:crossing}
\sum\limits_{m=0}^{\infty}c_{ijm}c_{k\ell m} \Psi^{ijk\ell}_{m}(f_1,f_2,f_3,f_4) =
\sum\limits_{m=0}^{\infty}c_{i\ell m}c_{kjm} \Psi^{i\ell k j}_{m}(f_1,f_4,f_3,f_2)
\ee
for all $i,j,k,\ell >0$ and all $(f_1,f_2,f_3,f_4)\in R_{\Delta_i,J_i}^{\infty}\times R_{\Delta_j,J_j}^{\infty}\times R_{\Delta_k,J_k}^{\infty}\times R_{\Delta_\ell,J_\ell}^{\infty}$.
\end{corollary}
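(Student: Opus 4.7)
The corollary is essentially a direct consequence of Theorem~\ref{thm:cpwExpansion} together with the commutativity of pointwise multiplication in $C^{\infty}(\Gamma\backslash G)$. The starting observation is that the four-point correlation $\langle\phi_i(f_1)\phi_j(f_2)\phi_k(f_3)\phi_\ell(f_4)\rangle$ is defined as the Haar average on $\Gamma\backslash G$ of the pointwise product of four smooth functions. Since pointwise multiplication is commutative and associative, the correlation is invariant under any permutation of the four factors (as long as the permutation is applied simultaneously to the $\phi_{\cdot}(f_{\cdot})$ labels). In particular, swapping the second and fourth entries yields
\begin{equation*}
\langle\phi_i(f_1)\phi_j(f_2)\phi_k(f_3)\phi_\ell(f_4)\rangle = \langle\phi_i(f_1)\phi_\ell(f_4)\phi_k(f_3)\phi_j(f_2)\rangle.
\end{equation*}

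The next step is to apply Theorem~\ref{thm:cpwExpansion} to each side. On the left, the theorem directly gives $\sum_{m=0}^{\infty} c_{ijm} c_{k\ell m} \Psi^{ijk\ell}_m(f_1,f_2,f_3,f_4)$. On the right, the same theorem, now applied with the role of the pair $(j,f_2)$ replaced by $(\ell,f_4)$ and vice versa, produces $\sum_{m=0}^{\infty} c_{i\ell m} c_{kjm} \Psi^{i\ell kj}_m(f_1,f_4,f_3,f_2)$. Equating the two expansions produces the desired identity~\eqref{eq:crossing} for every quadruple $(i,j,k,\ell)$ and every choice of smooth vectors $(f_1,f_2,f_3,f_4)$.

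There is essentially no substantive obstacle, but there is a small bookkeeping point worth flagging. The convergent expansions from Theorem~\ref{thm:cpwExpansion} on each side of the equality hold as equalities of complex numbers for fixed test vectors, which is exactly what is needed. The legitimacy of rearranging the pointwise product before applying the product expansion of Proposition~\ref{prop:ope} in two different orders relies on the $L^2$-convergence established there, together with the Cauchy--Schwartz argument noted in the remark following Theorem~\ref{thm:cpwExpansion}, which allowed swapping the sum over $m$ with the Haar integral over $\Gamma\backslash G$. The same argument works for either ordering, so both expansions of the common four-point correlation are valid, and their equality is the content of~\eqref{eq:crossing}.
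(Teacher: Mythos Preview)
Your proof is correct and follows essentially the same approach as the paper: both derive the identity by noting that the four-point correlation is invariant under swapping the second and fourth factors and then applying Theorem~\ref{thm:cpwExpansion} to each ordering. The paper's surrounding discussion phrases this as associativity of the product expansion (applying Proposition~\ref{prop:ope} in two different groupings), but the actual derivation of the corollary is stated to follow ``immediately from Theorem~\ref{thm:cpwExpansion},'' which is exactly what you do.
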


In practice, we will use these spectral identities in the case when all of $f_1$, $f_2$, $f_3$, $f_4$ are $K$-finite vectors, ranging over the images of maps $\kappa^{(n)}_{\Delta,J}$. In that case, we can state the identities as follows:
\begin{theorem}[Spectral identities for $K$-finite vectors]\label{thm:crossingKFinite}
The spectral data $\Delta_i,\,J_i$, $\widehat{c}_{ijk}$ of any cocompact lattice $\Gamma\subset \mathrm{PSL}_2(\mathbb{C})$ must satisfy the identities
\ba\label{eq:crossingKFinite}
&\sum\limits_{m=0}^{\infty}\widehat{c}_{ijm}\widehat{c}_{k\ell m}\, \widehat{\Psi}^{ijk\ell}_{m}(n_1,n_2,n_3,n_4;r) \\
&\qquad\qquad=(-1)^{n_1+n_2+n_3+n_4}\sum\limits_{m=0}^{\infty}\widehat{c}_{i\ell m}\widehat{c}_{k j m}\, \widehat{\Psi}^{i\ell k j}_{m}(n_1,n_4,n_3,n_2;1-r)
\ea
for all $i,j,k,\ell >0$, and all $n_1\geq J_i$, $n_2\geq J_j$, $n_3\geq J_k$, $n_4\geq J_\ell$. Here $\widehat{\Psi}^{ijk\ell}_{m}(n_1,n_2,n_3,n_4;r)$ are the polynomials of $r$ given in~\eqref{eq:PsiHat}. The identity~\eqref{eq:crossingKFinite} holds as an equality of polynomials in the variable $r$.
\end{theorem}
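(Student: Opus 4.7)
The plan is to deduce Theorem~\ref{thm:crossingKFinite} from its ``smooth vector'' analogue, Corollary~\ref{cor:crossing}, by specializing to $K$-finite test vectors. Concretely, I would pick four null vectors $v_1,v_2,v_3,v_4 \in V_1$ and set $f_s = \kappa^{(n_s)}_{\Delta,J}(v_s^{n_s})$ for $s=1,\ldots,4$, with the appropriate $(\Delta,J)$. Since $\kappa^{(n)}_{\Delta,J}(v^n)$ with $v$ null suffices to detect any $K$-linear functional on $V_n$ (by the footnote following Lemma~\ref{lem:quadrilinear} and its reference to~\cite{Costa:2011dw}), establishing the identity on these specific vectors is equivalent to establishing it on all $K$-finite vectors, and hence on the full $(\mathfrak{g},K)$-module.

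For the left-hand side of~\eqref{eq:crossing}, I would use the factored form~\eqref{eq:4ptCorEven}--\eqref{eq:4ptCorOdd} to strip off the kinematic prefactor built out of the invariants $v_a\cdot v_b$ (and, in the odd case, $[v_1,v_2,v_4]$), leaving a polynomial $\mathcal{G}_{ijk\ell}(n_1,n_2,n_3,n_4;r)$ in the cross-ratio $r=r(v_1,v_2,v_3,v_4)$. Proposition~\ref{prop:productExpG} expands this polynomial into a sum of products $\widehat{c}_{ijm}\widehat{c}_{k\ell m}\,\widehat{\Psi}^{ijk\ell}_{m}(n_1,n_2,n_3,n_4;r)$, exactly matching the left-hand side of~\eqref{eq:crossingKFinite}. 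For the right-hand side of~\eqref{eq:crossing}, I would apply the same analysis, but now to the correlator
$\langle\phi_i^{(n_1)}(v_1^{n_1})\phi_\ell^{(n_4)}(v_4^{n_4})\phi_k^{(n_3)}(v_3^{n_3})\phi_j^{(n_2)}(v_2^{n_2})\rangle$, i.e., after swapping the roles $(j,n_2,v_2)\leftrightarrow(\ell,n_4,v_4)$.

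The crux is to verify that the two kinematic prefactors agree (up to the announced sign) and that the new cross-ratio equals $1-r$. By direct inspection, permuting $v_2\leftrightarrow v_4$ and $n_2\leftrightarrow n_4$ leaves the even-case prefactor in~\eqref{eq:4ptCorEven} invariant (all four $v\cdot v$ factors conspire symmetrically), while in the odd case~\eqref{eq:4ptCorOdd} the Levi--Civita factor $[v_1,v_2,v_4]$ flips sign. Together these give precisely the factor $(-1)^{n_1+n_2+n_3+n_4}$, since $n_1+n_2+n_3+n_4$ odd is the odd case. For the cross-ratio, a short computation using~\eqref{eq:rDefinition3} and the antisymmetry of $[\cdot,\cdot,\cdot]$ gives
\[
r(v_1,v_4,v_3,v_2) \;=\; \frac{(v_2\cdot v_3)[v_1,v_4,v_3]}{(v_1\cdot v_3)[v_4,v_3,v_2]} \;=\; \frac{(v_2\cdot v_3)[v_1,v_3,v_4]}{(v_1\cdot v_3)[v_2,v_3,v_4]} \;=\; 1-r.
\]
Applying Proposition~\ref{prop:productExpG} in this swapped channel then produces the right-hand side of~\eqref{eq:crossingKFinite} evaluated at $1-r$.

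Finally, the equality of the two sides of~\eqref{eq:crossingKFinite} must hold as an identity in $r$: Remark~\ref{rmk:Hdegree} (plus the definition of $\widehat{\Psi}$) shows that each side is a polynomial in $r$ of degree at most $\min(2n_3,n_2+n_3+n_4-n_1)$, and $r$ sweeps out an open subset of $\mathbb{C}$ as the null vectors $v_1,\ldots,v_4$ vary, so the polynomial identity is forced. The main technical obstacle I anticipate is the bookkeeping of signs and combinatorial factors in matching the prefactors of~\eqref{eq:4ptCorEven}--\eqref{eq:4ptCorOdd} after the $v_2\leftrightarrow v_4$ swap; in particular, one must be careful that the symmetry classes (even versus odd $n_1+n_2+n_3+n_4$) are respected by the $j\leftrightarrow \ell$ exchange, which they are since $J_j+J_\ell$ enters only through $n_2+n_4$. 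Once those signs are accounted for and the $r\mapsto 1-r$ identity is in hand, the theorem follows from Corollary~\ref{cor:crossing} by matching polynomial coefficients in the invariants $v_a\cdot v_b$.
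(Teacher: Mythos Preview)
Your proposal is correct and follows essentially the same approach as the paper: start from the symmetry of the four-point correlation under $(j,n_2,v_2)\leftrightarrow(\ell,n_4,v_4)$, strip off the kinematic prefactors of~\eqref{eq:4ptCorEven}--\eqref{eq:4ptCorOdd}, verify they match up to the sign $(-1)^{n_1+n_2+n_3+n_4}$, check that $r(v_1,v_4,v_3,v_2)=1-r$, and then apply Proposition~\ref{prop:productExpG} on both sides. The only cosmetic difference is that the paper phrases the starting point as the ``trivial equality'' of the two orderings of the correlation rather than invoking Corollary~\ref{cor:crossing}, and it cites~\eqref{eq:rDefinition2} rather than~\eqref{eq:rDefinition3} for the cross-ratio transformation; both choices lead to the same one-line computation.
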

\begin{proof}
The theorem follows from the trivial equality
\be\label{eq:crossing0}
\langle\phi_i^{(n_1)}(v_1^{n_1})\phi_j^{(n_2)}(v_2^{n_2})\phi_k^{(n_3)}(v_3^{n_3})\phi_\ell^{(n_4)}(v_4^{n_4})\rangle = 
\langle\phi_i^{(n_1)}(v_1^{n_1})\phi_\ell^{(n_4)}(v_4^{n_4})\phi_k^{(n_3)}(v_3^{n_3})\phi_j^{(n_2)}(v_2^{n_2})\rangle\,.
\ee
Note that under the simultaneous transposition $j\leftrightarrow\ell$, $n_2\leftrightarrow n_4$, $v_2\leftrightarrow v_4$, the prefactor multiplying $\mathcal{G}_{ijk\ell}$ on the right-hand side of~\eqref{eq:4ptCorEven} and~\eqref{eq:4ptCorOdd} stays invariant, up to an overall sign $(-1)^{n_1+n_2+n_3+n_4}$. Furthermore, it is easy to see from~\eqref{eq:rDefinition2} that the ratio $r$ transforms as follows:
$$
r(v_1,v_4,v_3,v_2) = 1- r(v_1,v_2,v_3,v_4)\,.
$$
Applying the product expansion of Proposition~\ref{prop:productExpG} on both sides of~\eqref{eq:crossing0} yields the theorem.
\end{proof}

For a given spectrum $\Sigma=((\Delta_i,\,J_i))_{i\in\mathbb{Z}_{\geq0}}$, the spectral identities~\eqref{eq:crossing} or~\eqref{eq:crossingKFinite} constrain the values of the triple products $\widehat{c}_{ijk}$. Moreover, we will see that it may happen that for a given $\Sigma$ there exists no choice of $\widehat{c}_{ijk}$ consistent simultaneously with the spectral identities and the constraint $\widehat{c}_{ijk}\in\mathbb{R}$. In that case, $\Sigma$ can not arise as the spectrum for any $\Gamma$. In the following subsection, we will see how to implement this idea to derive universal bounds on the low-energy spectrum.

\subsection{Spectral bounds from linear programming}
In this work, we will limit ourselves to exploring the consequences of a tiny subset of the spectral identities of Theorem~\ref{thm:crossingKFinite}: those coming from a single four-point correlation of identical representations, i.e., $i=j=k=\ell$. In that case, we can rewrite~\eqref{eq:crossingKFinite} as
\be\label{eq:crossingIdentical}
\sum\limits_{m=0}^{\infty}(\widehat{c}_{\ell\ell m})^2\,
\left[\widehat{\Psi}^{\ell\ell\ell\ell}_{m}(n_1,n_2,n_3,n_4;r)-
(-1)^{n_1+n_2+n_3+n_4}\widehat{\Psi}^{\ell\ell\ell\ell}_{m}(n_1,n_4,n_3,n_2;1-r)
\right]
= 0\,.
\ee
Note that $(\widehat{c}_{\ell\ell m})^2 \geq 0$ since $\widehat{c}_{\ell\ell m}\in\mathbb{R}$. The identities~\eqref{eq:crossingIdentical} can be turned into bounds on the spectrum by using the simple fact that a sum of non-negative real numbers, at least one of which is positive, cannot vanish. It will be more convenient to switch to parametrizing the spectrum by $t$, where $\Delta_{\ell} = 1 + it_{\ell}$, so that $t$ is real on the principal series and pure imaginary on the complementary series with $t\in (0,1)i$. In order to describe a general spectral identity following from~\eqref{eq:crossingIdentical}, we introduce the following notation:

\begin{definition}
For any $n_1,\,n_2,\,n_3,\,n_4\geq J_{\ell}$ and any $k\geq 0$, let $\omega_{n_1,n_2,n_3,n_4;k}(t_{\ell},J_{\ell};t_{m},J_m)$ be the coefficient of $r^k$ in the contribution of $R_{1+it_m,J_m}$ to the spectral identity~\eqref{eq:crossingIdentical}
$$
\begin{aligned}
\omega_{n_1,n_2,n_3,n_4;k}(t_{\ell},J_{\ell};t_{m},J_m)\coloneqq \left.\frac{1}{k!}\frac{d^{k}}{dr^{k}}\right|_{r=0}
&\left[\widehat{\Psi}^{\ell\ell\ell\ell}_{m}(n_1,n_2,n_3,n_4;r)\right.\\
&\quad\left.-(-1)^{n_1+n_2+n_3+n_4}\widehat{\Psi}^{\ell\ell\ell\ell}_{m}(n_1,n_4,n_3,n_2;1-r)
\right]\,.
\end{aligned}
$$
$\omega_{n_1,n_2,n_3,n_4;k}(t_{\ell},J_{\ell};t_{m},J_m)$ is a rational function of $t_\ell$ and $t_{m}$, which can be computed explicitly following Remark~\ref{rmk:alphaHats}.
\end{definition}

Finally, by taking linear combinations of the identities~\eqref{eq:crossingIdentical}, we arrive at:
\begin{corollary}\label{cor:crossingOmega}
Let us take $\Omega_{y}(t_\ell,J_\ell;t_m,J_m)$ to be a finite linear combination of functions $\omega_{n_1,n_2,n_3,n_4;k}(t_{\ell},J_{\ell};t_{m},J_m)$ with real coefficients $y_{n_1,n_2,n_3,n_4;k}$:
\be\label{eq:omegaDef}
\Omega_y(t_\ell,J_\ell;t_m,J_m) = 
\sum\limits_{n_1,n_2,n_3,n_4;k}
y_{n_1,n_2,n_3,n_4;k}\;
\omega_{n_1,n_2,n_3,n_4;k}(t_{\ell},J_{\ell};t_{m},J_m)\,,
\ee
where $y$ stands for the vector with multi-index components $y_{n_1,n_2,n_3,n_4;k}$. The spectral data $t_i,\,J_i$, $\widehat{c}_{ijk}$ of any cocompact lattice $\Gamma\subset \mathrm{PSL}_2(\mathbb{C})$ must satisfy, for any $\ell>0$,
\be\label{eq:crossingOmega}
\sum\limits_{m=0}^{\infty}(\widehat{c}_{\ell\ell m})^2\,
\Omega_{y}(t_\ell,J_\ell;t_m,J_m) = 0\,.
\ee
\end{corollary}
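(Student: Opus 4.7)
The plan is to obtain Corollary~\ref{cor:crossingOmega} as a direct specialization of Theorem~\ref{thm:crossingKFinite}, followed by termwise coefficient extraction in the variable $r$ and a finite linear combination. The key observation is that once the identities~\eqref{eq:crossingIdentical} are regarded as polynomial identities in $r$ with a degree bound uniform in the summation index $m$, every step is purely algebraic.

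First, I would set $i=j=k=\ell$ in the spectral identities~\eqref{eq:crossingKFinite}. Both products $\widehat{c}_{ijm}\widehat{c}_{k\ell m}$ and $\widehat{c}_{i\ell m}\widehat{c}_{kjm}$ then collapse to $(\widehat{c}_{\ell\ell m})^2$, and bringing everything to one side reproduces identity~\eqref{eq:crossingIdentical}. By Theorem~\ref{thm:crossingKFinite}, this is an equality of polynomials in $r$, valid for arbitrary $n_1,n_2,n_3,n_4\geq J_\ell$.

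Second, I would extract the coefficient of $r^k$ from~\eqref{eq:crossingIdentical} for each admissible $k$. The remark following Proposition~\ref{prop:productExpG} guarantees that each $\widehat{\Psi}^{\ell\ell\ell\ell}_{m}(n_1,n_2,n_3,n_4;r)$ is a polynomial in $r$ of degree at most $\min(2n_3,n_3+n_4+n_2-n_1)$, \emph{uniformly} in $m$. This uniform degree bound, combined with the convergence of the conformal partial wave expansion provided by Theorem~\ref{thm:cpwExpansion} and realized on $K$-finite vectors through~\eqref{eq:4ptCorEven}--\eqref{eq:4ptCorOdd}, legitimizes the interchange of the infinite sum over $m$ with the linear functional $\frac{1}{k!}\frac{d^{k}}{dr^{k}}\big|_{r=0}$. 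The resulting identity reads
\begin{equation*}
\sum_{m=0}^{\infty}(\widehat{c}_{\ell\ell m})^2\,\omega_{n_1,n_2,n_3,n_4;k}(t_\ell,J_\ell;t_m,J_m) = 0
\end{equation*}
for every quintuple $(n_1,n_2,n_3,n_4,k)$ with $n_i\geq J_\ell$ and $0\leq k\leq\min(2n_3,n_3+n_4+n_2-n_1)$.

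Finally, I would take any finite real linear combination of these identities with coefficients $y_{n_1,n_2,n_3,n_4;k}$ as in~\eqref{eq:omegaDef}. Since only finitely many $y$'s are nonzero, the interchange of the outer finite linear combination with the inner convergent infinite sum is immediate, producing exactly~\eqref{eq:crossingOmega}. The only nontrivial step in the whole argument is the termwise coefficient extraction in $r$; this is the step where the uniform polynomial degree bound of Proposition~\ref{prop:productExpG} is essential, since without it one would have to argue convergence of the extracted numerical series directly, rather than inheriting it from the polynomial convergence.
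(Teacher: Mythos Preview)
Your proposal is correct and follows essentially the same route as the paper, which simply states that the corollary follows by taking linear combinations of the identities~\eqref{eq:crossingIdentical}. You have made explicit the intermediate step of coefficient extraction in $r$ and correctly identified the uniform degree bound from Proposition~\ref{prop:productExpG} as the reason this extraction commutes with the infinite sum over $m$; the paper leaves these details implicit.
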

\begin{remark}
From the point of view of the spectral identities in the form~\eqref{eq:crossing}, picking the vector $y$ means picking $f\in (R_{\Delta_\ell,J_\ell}^{\infty})^{\otimes 4}$.
\end{remark}

Corollary~\ref{cor:crossingOmega} enables us to set up linear programs which imply bounds on the spectrum. To state the result precisely, let us denote by $U \subset \mathbb{C}\times \mathbb{Z}$ the set of ordered pairs $(t,J)$ whose elements parametrize all the inequivalent nontrivial unitary irreducible representations $R_{1+i t,J}$ of $\mathrm{PSL}_2(\mathbb{C})$, i.e.,
$$
U=
\{(t ,0)|t\in i(0,1)\}\cup
\{(t ,0)|t\in \mathbb{R}_{\geq 0}\} \cup
\{(t ,J)|t\in \mathbb{R},J\in \mathbb{Z}_{>0}\}\,.
$$
Let $U_{\mathrm{even}} = \{(t,J)\in U| J\in 2\mathbb{Z}\}$. Finally, let us write $\sigma_{\Gamma} = \{(t_m,J_m)| m\in\mathbb{Z}_{> 0}\}\subset U$ for the $(t,J)$ spectrum of a lattice $\Gamma$. Note that our convention is to not include the trivial representation $(t_0,J_0)=(i,0)$ in $\sigma_{\Gamma}$. Our strategy for deriving spectral bounds is spelled out as follows.

\begin{proposition}[Spectral bounds]\label{prop:spectralBounds}
Let $(t_{\ell},J_{\ell})\in U$, let $U'\subset U_{\mathrm{even}}$ be a subset and suppose there exists a finite collection of coefficients $y_{n_1,n_2,n_3,n_4;k}\in\mathbb{R}$ such that $\Omega_{y}(t_\ell,J_\ell;\,t,J)$, given by~\eqref{eq:omegaDef}, satisfies
\begin{enumerate}
\item $\Omega_{y}(t_\ell,J_\ell;\,i,0) = 1$,\vspace{5pt}
\item $\Omega_{y}(t_\ell,J_\ell;\,t,J) \geq 0$ for all $(t,J)\in  U' $.
\end{enumerate}
Then for any cocompact lattice $\Gamma\subset\mathrm{PSL}_2(\mathbb{C})$ with $(t_\ell,J_\ell)\in\sigma_{\Gamma}$, the set ${\sigma_{\Gamma}\cap (U_{\mathrm{even}}\setminus U')}$ must be nonempty.
\end{proposition}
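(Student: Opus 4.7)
The plan is a proof by contradiction built on the spectral identity of Corollary~\ref{cor:crossingOmega}. Assume $\sigma_\Gamma\cap(U_{\mathrm{even}}\setminus U')=\emptyset$. Since $(t_\ell,J_\ell)\in\sigma_\Gamma$, the summand $R_{1+it_\ell,J_\ell}$ appears in $L^2(\Gamma\backslash G)$, and applying Corollary~\ref{cor:crossingOmega} with the given vector $y$ yields
$$
\sum_{m=0}^{\infty}(\hat{c}_{\ell\ell m})^2\,\Omega_y(t_\ell,J_\ell;t_m,J_m)=0.
$$
The strategy is to peel off the $m=0$ (trivial representation) term, show that it contributes exactly $+1$, and argue that every remaining term is nonnegative, thereby obtaining the absurd inequality $0\geq 1$.

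The $m=0$ term equals $(\hat{c}_{\ell\ell 0})^2\,\Omega_y(t_\ell,J_\ell;i,0)$, since $(t_0,J_0)=(i,0)$ by Remark~\ref{rmk:constants}. The remark following Theorem~\ref{thm:acRelation} gives $\hat{c}_{0\ell\ell}=\delta_{\ell\ell}=1$; as $J_0+J_\ell+J_\ell=2J_\ell$ is even, Remark~\ref{rmk:cSymmetry} makes $\hat{c}_{ijk}$ symmetric on this triple of indices, so $\hat{c}_{\ell\ell 0}=1$. Combined with condition~(1) of the proposition, the $m=0$ term evaluates to exactly $1$. For each $m\geq 1$, the coefficient $(\hat{c}_{\ell\ell m})^2\geq 0$ automatically because $\hat{c}_{\ell\ell m}\in\mathbb{R}$ by Definition~\ref{def:cHats}. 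Moreover, Remark~\ref{rmk:cSymmetry} forces $\hat{c}_{\ell\ell m}=0$ whenever $J_m$ is odd (since the three-index object is then antisymmetric), so such terms drop out regardless of the sign of $\Omega_y$ there. What survives is the sum over $m\geq 1$ with $(t_m,J_m)\in \sigma_\Gamma\cap U_{\mathrm{even}}$; the contradiction hypothesis places every such pair in $U'$, so condition~(2) gives $\Omega_y(t_\ell,J_\ell;t_m,J_m)\geq 0$ for each of these $m$. The sum of nonnegative terms is therefore nonnegative, and adding the $m=0$ contribution yields $0\geq 1$, the desired contradiction.

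The only nonroutine ingredient is the parity vanishing $\hat{c}_{\ell\ell m}=0$ for $J_m$ odd, which is exactly what allows us to restrict attention to $U_{\mathrm{even}}$ in hypothesis~(2); this is already furnished by the symmetry analysis in Remark~\ref{rmk:cSymmetry}. Consequently, the argument amounts to a short assembly of previously established facts, and I anticipate no substantive obstacle beyond carefully tracking the normalization $\hat{c}_{\ell\ell 0}=1$ and the labeling $(t_0,J_0)=(i,0)$ of the trivial representation.
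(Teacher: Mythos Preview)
Your proof is correct and follows essentially the same approach as the paper's own proof: apply the spectral identity of Corollary~\ref{cor:crossingOmega}, identify the $m=0$ contribution as $1$ via $\hat{c}_{\ell\ell 0}=1$ and condition~(1), kill odd-$J_m$ terms using Remark~\ref{rmk:cSymmetry}, and use condition~(2) together with $(\hat{c}_{\ell\ell m})^2\geq 0$ on the remaining terms. The only cosmetic difference is that you frame it as a contradiction ($0\geq 1$) whereas the paper argues directly that some term must be strictly negative.
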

\begin{proof}
Let $\Gamma$ be such that $(t_\ell,J_\ell)\in\sigma_{\Gamma}$ and let $y_{n_1,n_2,n_3,n_4;k}\in\mathbb{R}$ be such that $\Omega_{y}$ satisfies properties (1) and (2). The spectral data of $\Gamma$ then satisfies the identity~\eqref{eq:crossingOmega}. Recall that $\widehat{c}_{\ell\ell 0} = 1$ so that the $m=0$ term on the left-hand side of~\eqref{eq:crossingOmega} equals 1 thanks to property (1). Recall also from Remark~\ref{rmk:cSymmetry} that $\widehat{c}_{\ell \ell m} = 0$ if $J_m$ is odd. It follows that there must be a term on the left-hand side of~\eqref{eq:crossingOmega} with $m>0$, $J_m$ even and $\widehat{c}_{\ell \ell m}\neq 0$ such that $\Omega_{y}(t_\ell,J_\ell;\,t,J) < 0$. Therefore, ${\sigma_{\Gamma}\cap (U_{\mathrm{even}}\setminus U')}$ is nonempty thanks to property (2).
\end{proof}

Since $\omega_{n_1,n_2,n_3,n_4;k}(t_{\ell},J_{\ell};t_{m},J_m)$ can be computed explicitly, conditions (1) and (2) in the statement of Proposition~\ref{prop:spectralBounds} take the form of a concrete linear program in the space of parameters $y_{n_1,n_2,n_3,n_4;k}$. By making various choices of $U'$ and asking whether the resulting linear program admits a feasible solution, Proposition~\ref{prop:spectralBounds} implies a plethora of nontrivial information about the spectra of lattices in $\mathrm{PSL}_2(\mathbb{C})$.

To implement these linear programs in practice, we must contend with the fact that both the space of parameters $y_{n_1,n_2,n_3,n_4;k}$ and the space of constraints (2) are infinite-dimensional, for generic $U'$. To deal with the first infinity, we simply restrict to the finite-dimensional subspace with $y_{n_1,n_2,n_3,n_4;k}$ nonvanishing only if $J_{\ell}\leq n_1,\,n_2,\,n_3,\,n_4\leq N$ for some $N\in\mathbb{Z}_{>0}$. Since $\widehat{\Psi}^{\ell\ell\ell\ell}_{m}(n_1,n_2,n_3,n_4;r)$ is a polynomial in $r$ of degree at most $\min(2n_3,n_2+n_3+n_4-n_1)$, we can then take $k\in\{0,\ldots,2N\}$ without loss of generality.

Let $\mathcal{I}(N) = \{J_{\ell},\ldots,N\}^4\times\{0,\ldots,2N\}$ and fix $(t_{\ell},J_\ell)$. $\omega_{n_1,n_2,n_3,n_4;k}(t_{\ell},J_{\ell};t,J)$ with $(n_1,n_2,n_3,n_4;k)\in \mathcal{I}(N)$ are typically highly linearly dependent in the space of functions of $(t,J)$ on $U_{\mathrm{even}}$. One reason for this is that $\Psi_m^{\ell\ell\ell\ell}: (R_{1+i t_{\ell},J_{\ell}}^{\infty})^{\otimes 4}\rightarrow\mathbb{C}$ is a $G$-invariant functional, and thus acting with elements of the Lie algebra $B_a\in\mathfrak{g}$ leads to linear dependences among $\omega_{n_1,n_2,n_3,n_4;k}(t_{\ell},J_{\ell};t,J)$. From now on, we will assume that for each $N>0$ we have chosen a set $\mathcal{I}'(N) \subseteq \mathcal{I}(N)$ of tuples $(n_1,n_2,n_3,n_4;k)$ such that $\{\omega_{I}(t_{\ell},J_{\ell};t,J)| I\in \mathcal{I}'(N)\}$ is a basis for the space of functions of $(t,J)$ on $U_{\mathrm{even}}$ generated by $\{\omega_{I}(t_{\ell},J_{\ell};t,J)| I\in \mathcal{I}(N)\}$.

In this work, we will choose $U'$ such that Proposition~\ref{prop:spectralBounds} produces estimates on the low-energy spectrum. In particular, we will consider the following two linear programs:

\begin{linpro}\label{linpro:1}
Fix $(t_\ell,J_\ell)\in U$ and suppose a cocompact lattice $\Gamma\subset \mathrm{PSL}_2(\mathbb{C})$ is such that $t_{\ell} = t_{1}^{(J_\ell)} \in \sigma^{(J_\ell)}_{\Gamma}$, where $\sigma^{(J)}_{\Gamma}$ denotes the set of spin-$J$ eigenvalues $t_i^{(J)}$ for $\Gamma \backslash \hh$. Let $t_{*}>0$ and $J_*\neq J_{\ell}$ with $J_{*}\in 2\mathbb{Z}_{\geq 0}$. Let $N>0$. Suppose there exists $y\in \mathbb{R}^{|\mathcal{I}'(N)|}$ such that
\begin{enumerate}
\item $\sum\limits_{I\in\mathcal{I}'(N)}y_{I}\,\omega_{I}(t_\ell,J_\ell;\,i,0) = 1$,\vspace{5pt}
\item $\sum\limits_{I\in\mathcal{I}'(N)}y_{I}\,\omega_{I}(t_\ell,J_\ell;\,t,J) \geq 0$ for all $(t,J)\in  U'_1(t_{*},J_{*})$,
\end{enumerate}
where
$$
U'_1(t_{*},J_{*}) =
(\{(t,J_*)| t \in \mathbb{R},\, t^2\geq t_{*}^2\} \cup
\{(t,J_\ell)|  t \in \mathbb{C},\,t^2\geq t_{\ell}^2\} \cup
\{(t,J)| t \in \mathbb{C}, J\neq J_{\ell}, J_{*}\})\cap U_{\mathrm{even}}\,.
$$
Then we must have $t\in\sigma^{(J_*)}_{\Gamma}$ for some $t$ with $t^2 < t_{*}^2$.
\end{linpro} 
In other words, Linear Program~\ref{linpro:1} provides a universal upper bound on the spectral gap $|t_1^{(J_*)}|$ for a given $t_1^{(J_\ell)}$.

\begin{linpro}\label{linpro:2}
Fix $(t_\ell,J_\ell)\in U$ and suppose a cocompact lattice $\Gamma\subset \mathrm{PSL}_2(\mathbb{C})$ is such that $t_{\ell} = t_{1}^{(J_\ell)}\in \sigma^{(J_\ell)}_{\Gamma}$. Let $t_{*}>0$ such that $t_{*}^2>t^2_{\ell}$. Let $N>0$. Suppose there exists $y\in \mathbb{R}^{|\mathcal{I}'(N)|}$ such that
\begin{enumerate}
\item $\sum\limits_{I\in\mathcal{I}'(N)}y_{I}\,\omega_{I}(t_\ell,J_\ell;\,i,0) = 1$,\vspace{5pt}
\item $\sum\limits_{I\in\mathcal{I}'(N)}y_{I}\,\omega_{I}(t_\ell,J_\ell;\,t,J) \geq 0$ for all $(t,J)\in  U'_2(t_{*})$,
\end{enumerate}
where
$$
U'_2(t_{*}) =
(\{(t_{\ell},J_{\ell})\}\cup
\{(t,J_\ell)| t \in \mathbb{R},\,t^2\geq t_{*}^2\} \cup
\{(t,J)| t \in \mathbb{C}, J\neq J_{\ell}\})\cap U_{\mathrm{even}}\,.
$$
Then $t_{2}^{(J_\ell)}\in \sigma^{(J_\ell)}_{\Gamma}$ must satisfy $(t_{2}^{(J_\ell)})^2 < t_{*}^2$. 
\end{linpro}
In other words, Linear Program~\ref{linpro:2} provides a universal upper bound on the second smallest Laplacian eigenvalue $(t_{2}^{(J)})^2$ for a given value of $t_1^{(J)}$.

We will present bounds on the low-lying eigenvalues which follow from Linear Programs~\ref{linpro:1}~and~\ref{linpro:2} in Section~\ref{sec:bounds}. To obtain optimal bounds, we proceed as follows. For any $N>0$ and either of the linear programs, let
$$
t_{\mathrm{optimal}}(N) = \inf\{t_{*} | \text{ the linear program is feasible}\}\,.
$$
By increasing $N$ we are enlarging the vector space of spectral identities, so $t_{\mathrm{optimal}}(N)$ is a nonincreasing function of $N$. It therefore has a limit as $N\rightarrow \infty$, which is the optimal upper bound following from the spectral identities~\eqref{eq:crossingIdentical}.

Any finite-dimensional linear program with finitely many constraints admits an algorithmic solution. On the other hand, while Linear Programs~\ref{linpro:1}~and~\ref{linpro:2} do have finitely many variables, they still feature an \emph{infinite} set of constraints (2), parametrized by continuous $t$ and discrete $J$. Fortunately, it turns out that certain special features of our linear programs allow them to be translated into \emph{finite-dimensional} semidefinite programs, for which efficient algorithmic approaches exist.

The first simplification that occurs for Linear Programs~\ref{linpro:1}~and~\ref{linpro:2} is that for any $I\in \mathcal{I}'(N)$, $\omega_{I}(t_\ell,J_\ell;\,t,J)$ vanishes identically for all $J>2N$, since in that case the summation range of $n_5$ in~\eqref{eq:PsiHat} is empty. Therefore, we only need to check the constraints (2) in the linear program for the finitely many values $J\in\{0,2,\ldots,2N\}$. To deal with the continuum of possible values of $t$ in the constraints (2), we recall that for any $I$, $t_\ell$, $J_{\ell}$ and $J$, $\omega_{I}(t_\ell,J_\ell;\,t,J)$ is a rational function of $t$. Since at fixed $N$, $I$ only ranges over the finitely many values $\mathcal{I}'(N)$, we can bring the rational functions to a common denominator and write
$$
\omega_{I}(t_\ell,J_\ell;\,t,J) = \frac{P_{I}(t_\ell,J_\ell,J;t)}{\widetilde{P}(t_\ell,J_\ell,J;t)}\,,
$$
where $P_{I}(t_\ell,J_\ell,J;t)$ and $\widetilde{P}(t_\ell,J_\ell,J;t)$ are polynomials in $t$, with coefficients depending on $t_{\ell}$, $J_{\ell}$ and $J$. Furthermore, it turns out that we can choose $\widetilde{P}(t_\ell,J_\ell,J;t)>0$ for all $(t,J)\in U_{\mathrm{even}}$. It follows that the continuously infinite families of positivity conditions from Linear Programs~\ref{linpro:1}~and~\ref{linpro:2} are equivalent to
$$
\sum\limits_{I\in\mathcal{I}'(N)} y_{I} \,P_{I}(t_\ell,J_\ell,J;t) \geq 0 \quad \text{for all}\quad
t^2 \geq T(J)\quad\text{and all}\quad J\in\{0,\ldots,2N\}
$$
for particular values $T(J)$ depending on the details of the linear programs. This type of linear program with polynomial constraints is known to be exactly equivalent to a finite-dimensional semi-definite program~\cite{Simmons-Duffin:2015qma}. It has an efficient implementation provided by \texttt{SDPB}~\cite{Simmons-Duffin:2015qma,  Landry:2019qug}.

\begin{remark}
Using terminology from physics, we will sometimes refer to the eigentensor corresponding to $(t_\ell,J_\ell)$ as the external state and the other eigentensors appearing in the spectral identities as exchanged states.
\end{remark}

\begin{remark} An alternative approach for deriving the spectral identities for $K$-finite vectors is to directly consider products of derivatives of eigentensors integrated over $\Gamma \backslash \hh$, as in~\cite{Bonifacio:2020xoc, Bonifacio:2021msa, Bonifacio:2021aqf}. The product of any two such tensors can be expanded in terms of curl eigentensors using a spectral decomposition, with coefficients given by the triple products. Expanding quadruple products in two different ways then results in the spectral identities. 
\end{remark}

\section{Computing the spectrum using the trace formula}
\label{sec:STF} 

In this section, we discuss how to use the Selberg trace formula to derive bounds on the spectra of specific orbifolds, following the approach of Booker--Str\"ombergsson and Lin--Lipnowski~\cite{Booker-Strombergsson2007, Lin-Lipnowski2018, Lin-Lipnowski2020, Lin-Lipnowski2021}.

Numerical estimates for the eigenvalues $\lambda^{(0)}_i$ of some hyperbolic 3-manifolds obtained using a different approach can be found in~\cite{Inoue:1998nz, Cornish1999, Inoue2001}.

\subsection{Elements of ${\rm PSL}_2( \mathbb{C})$}
First we must review the classification of elements of ${\rm PSL}_2( \mathbb{C})$. See, e.g.,~\cite{Elstrodt1997}. Elements $\gamma \neq \pm I$ of ${\rm SL}_2( \mathbb{C})$ can be classified as follows:
\begin{itemize}
\item hyperbolic if ${\rm tr}(\gamma) \in \mathbb{C}\setminus [-2,2]$ (sometimes such elements are called loxodromic, with the term hyperbolic reserved for elements with zero holonomy)
\item elliptic if ${\rm tr}(\gamma) \in (-2,2)$
\item parabolic if ${\rm tr}(\gamma)=\pm2$
\end{itemize}
Elements of $ G={\rm PSL}_2( \mathbb{C})$ are similarly classified according to the classification of their preimages in ${\rm SL}_2( \mathbb{C})$.
If $\gamma \in G$ is a hyperbolic element then it fixes two points on $\partial \mathbb{H}^3$. There is a unique geodesic on $\mathbb{H}^3$ connecting these points called the axis of $\gamma$. The conjugacy class $[\gamma]$ of a hyperbolic element $\gamma$ in $\Gamma \subset G$ corresponds to the closed geodesic on $\Gamma \backslash \mathbb{H}^3$ given by projecting the axis.  If $\gamma^{-1} \notin [\gamma]$, then the closed geodesic is topologically a circle, otherwise it is topologically a mirrored interval. The latter possibility can only occur on orbifolds (such a geodesic is labelled as a \textit{mirrored arc} by \texttt{SnapPy}~\cite{SnapPy}). The length of the closed geodesic corresponding to $[\gamma]$ is denoted by $\ell(\gamma)$ and the holonomy by $\phi(\gamma)$. Together these define the complex length $\mathbb{C} \ell (\gamma) \coloneqq \ell(\gamma)+i \phi(\gamma)$, which is related to the trace of the preimages $\tilde{\gamma} \in {\rm SL}_2( \mathbb{C})$ of $\gamma$ by
$$
{\rm Tr}( \tilde{\gamma}) = \pm 2 \cosh \left( \frac{\mathbb{C}\ell(\gamma)}{2}\right).
$$
An elliptic element $\gamma$ also fixes two point on $\partial \mathbb{H}^3$ and corresponds to a pure rotation about its axis. The order of an elliptic element $\gamma$ is the smallest positive integer $n$ such that $\gamma^n =1$. The subgroup $\Gamma$ contains elliptic elements only if $\Gamma \backslash \mathbb{H}^3$ is an orbifold. The projection of the axes of the elliptic elements gives the singular set of the orbifold. There are no parabolic elements in subgroups $\Gamma$ that are cocompact.

Denote the centralizer of $\gamma$ in $G$ by $G_{\gamma}$ and the centralizer of $\gamma$ in $\Gamma$ by $\Gamma_{\gamma}$. An important quantity appearing on the geometric side of the trace formula is the covolume of these centralizers, denoted by ${\rm vol}(\Gamma_{\gamma} \backslash G_{\gamma} ) $. For $\gamma$ a hyperbolic element that does not share an axis with an elliptic element in $\Gamma$, we have
\be \label{eq:covol-hyperbolic}
{\rm vol}\left(\Gamma_{\gamma} \backslash G_{\gamma} \right) = \ell(\gamma_0),
\ee
where $\gamma_0$ is a primitive element such that $\gamma=\gamma_0^n$. 

Hyperbolic elements that share an axis with an elliptic element are called bad hyperbolic elements in~\cite{Lin-Lipnowski2020}. Consider a primitive elliptic element $\gamma_e$ of order $n$ and let $\gamma_h$ be a hyperbolic element of minimal length that shares an axis with $\gamma_e$. Up to conjugation, bad hyperbolic elements sharing an axis with $\gamma_e$ take the form $\gamma_e^p \gamma_h^q$ for $p\in \{0,1, \dots , n-1\}$ and $q \in \mathbb{Z}\setminus \{0\}$. The covolumes of the centralizers for these elements are
\be \label{eq:bad-covolume}
{\rm vol}\left(\Gamma_{\gamma_e^p \gamma_h^q} \backslash G_{\gamma_e^p \gamma_h^q} \right) = \frac{\ell(\gamma_h)}{n}.
\ee 
Equation~\eqref{eq:bad-covolume} also holds for the elliptic case $q=0$, unless $n=2p$ and there does not exist another order-2 element in $\Gamma$ that commutes with $\gamma_e$, in which case there is an extra factor of 2~\cite{Lin-Lipnowski2020},
$$
{\rm vol}\left(\Gamma_{\gamma_e^{\frac{n}{2}}} \backslash G_{\gamma_e^{\frac{n}{2}}} \right) = \frac{2\ell(\gamma_h)}{n}.
$$ 
Whether or not a bad hyperbolic element $\gamma_e^p \gamma_h^q$ is primitive, and hence whether or not it appears in the spectrum computed by \texttt{SnapPy}, depends on whether we can write $\gamma_e^p \gamma_h^q = \left(\gamma_e^{p'} \gamma_h^{q'}\right)^m $ for $m>1$. Unlike for regular hyperbolic elements, summing over the conjugacy classes of bad hyperbolic elements in $\Gamma$ is not equivalent to summing over iterates of the primitive conjugacy classes.

\subsection{Selberg trace formula}
\label{subsec:STF}
We follow the presentation of the trace formulas given in~\cite{Lin-Lipnowski2018, Lin-Lipnowski2020, Lin-Lipnowski2021}. We start with the even trace formulas.
\begin{theorem}[Even trace formula for 3-orbifolds]
\label{thm:trace-even}
Let $\Gamma \backslash \mathbb{H}^3$ be a closed oriented hyperbolic 3-orbifold of volume $V$ and let $H: \mathbb{R} \rightarrow \mathbb{R}$ be an even test function that is smooth and compactly supported. For $\gamma \in \Gamma$, define the function 
$$
\Theta(\gamma)= \begin{cases} 
\frac{1}{2}, \quad \text{$\gamma$ is an elliptic element of order 2}, \\
1, \quad \text{otherwise.}
\end{cases}
$$
The Selberg trace formula for functions gives
\be\label{eq:STF}
\sum_{j=1}^{\infty} \hat{H}\left(t^{(0)}_j\right)+\hat{H}\left(i\right)= -\frac{V}{2\pi}H''(0)+ \sum_{[\gamma] \neq 1}  \Theta(\gamma) {\rm vol}(\Gamma_{\gamma} \backslash G_{\gamma} ) \frac{H(\ell(\gamma))}{2(\cosh \ell(\gamma)-\cos \phi(\gamma))},
\ee
where $H'' = d^2 H/dx^2$, $\hat{H}(t) = \int_{-\infty}^{\infty} H(x) e^{-i x t} \, dx$ is the Fourier transform of $H$, and the sum on the left-hand side runs over the Laplace--Beltrami eigenvalues with $\lambda^{(0)}_j = (t^{(0)}_j)^2 +1$.

The even Selberg trace formula for divergence-free, symmetric, traceless, rank-$J$ eigentensors of the curl operator with $J>0$ gives
\be \label{eq:STF-J}
\sum_{j=1}^{\infty} \hat{H}\left(t^{(J)}_j\right) - \delta_{J, 1} \hat{H}(0)= \frac{V}{\pi}\left(J^2 H(0)-H''(0)\right)+ \sum_{[\gamma] \neq 1}  \Theta(\gamma) {\rm vol}(\Gamma_{\gamma} \backslash G_{\gamma} ) \frac{\cos( J \phi(\gamma))H(\ell(\gamma))}{\cosh \ell(\gamma)-\cos \phi(\gamma)},
\ee
where the sum on the left-hand side runs over the spin-$J$ curl eigenvalues $t_j^{(J)}$ of $\Gamma \backslash \mathbb{H}^3$.
\end{theorem}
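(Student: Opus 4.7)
The plan is to follow the classical derivation of the Selberg trace formula in the form tailored to $\mathrm{PSL}_2(\mathbb{C})$, treating both equations in parallel. First I would attach to the test function $H$ an integral kernel $k_{H,J}$ on $G$ that transforms appropriately under the $K$-action for the chosen spin $J$: for $J=0$ it is bi-$K$-invariant, while for $J>0$ it lies in the $K$-type determined by the representation on symmetric traceless rank-$J$ tensors. The defining property is that the convolution operator $T$ on $L^2(\Gamma\backslash G)$ that $k_{H,J}$ induces acts on each irreducible $R_{1+it,J}\subset L^2(\Gamma\backslash G)$ from Section~\ref{ssec:spectrum} by the scalar $\hat{H}(t)$, and annihilates all other $K$-types. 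The existence of such a $k_{H,J}$ is a consequence of inverting the spherical (resp.\ $K$-type) transform on $\mathbb{H}^3$; in three dimensions this inversion has an explicit closed form via the Abel transform, which is what ultimately makes the right-hand sides polynomial in $H$ and $H''$ evaluated at zero.

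Having constructed $T$, I would trace it in two ways. Spectrally, the decomposition~\eqref{eq:decomposition2} gives $\operatorname{tr}T=\sum_{j}\hat H(t_j^{(J)})$, where the trivial representation (for $J=0$) contributes $\hat{H}(i)$ because the constant eigenfunction corresponds to $t_0^{(0)}=i$, and for $J=1$ the harmonic $1$-forms must be singled out, producing the $-\delta_{J,1}\hat H(0)$ correction. Geometrically, $\operatorname{tr}T=\int_{\Gamma\backslash G}\sum_{\gamma\in\Gamma}k_{H,J}(g^{-1}\gamma g)\,dg$ rearranges into a sum over conjugacy classes $[\gamma]$ in $\Gamma$, each term being $\mathrm{vol}(\Gamma_\gamma\backslash G_\gamma)$ times an orbital integral $\mathcal{O}_\gamma(k_{H,J})$. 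The identity contribution evaluates $k_{H,J}$ at $e$, which by the inversion formula equals $-\frac{1}{2\pi}H''(0)$ for $J=0$ and $\frac{1}{\pi}(J^2H(0)-H''(0))$ for $J>0$, accounting for the Casimir shift $C_2(\Delta,J)$ from Lemma~\ref{lem:casimirs}.

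For a regular hyperbolic class I would conjugate $\gamma$ into the diagonal Cartan subgroup, parametrize $G_\gamma\backslash G$ by the Iwasawa decomposition, and reduce the orbital integral to a one-dimensional integral supported where $k_{H,J}$ is, yielding the factor $\cos(J\phi(\gamma))H(\ell(\gamma))/(\cosh\ell(\gamma)-\cos\phi(\gamma))$; the covolume is $\ell(\gamma_0)$ as recorded in~\eqref{eq:covol-hyperbolic}. For elliptic classes the same computation applies at $\ell(\gamma)=0$, producing the term $H(0)/(1-\cos\phi(\gamma))$, with the covolume~\eqref{eq:bad-covolume}. The $\Theta(\gamma)$ factor enters because when $\gamma$ is a nontrivial order-2 element, it is conjugate to its own inverse so each conjugacy class is counted twice in the natural parametrization of $\Gamma\backslash\Gamma$, requiring a compensating $\tfrac12$. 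The bad hyperbolic case is handled by the same orbital integral computation, but carefully bookkeeping which powers of $\gamma_e^p\gamma_h^q$ are primitive determines the covolume, matching~\eqref{eq:bad-covolume}.

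The main obstacle is to pin down the exact normalization constants: the proportionality in the inversion formula relating $k_{H,J}(e)$ to $H''(0)$ depends on conventions for the Haar measure, the Plancherel density on the unitary dual of $G$, and the normalization of $\hat{H}$, and must be consistent with the measure choice $\mathrm{vol}(K)=1$ made in Section~\ref{ssec:spectrum}. A secondary obstacle is the careful treatment of the elliptic and bad hyperbolic covolumes, which requires checking the stabilizer structure case by case, in particular the exceptional factor of $2$ for the class $\gamma_e^{n/2}$ when no second order-2 element commutes with $\gamma_e$. Once these normalizations are fixed, the two trace formulas~\eqref{eq:STF} and~\eqref{eq:STF-J} follow by assembling the spectral and geometric expansions.
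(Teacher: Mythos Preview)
Your outline is the standard derivation of the Selberg trace formula for $\mathrm{PSL}_2(\mathbb{C})$ and would work, but it is considerably more than what the paper actually does. The paper's proof is a one-line citation: it invokes Corollary~B.7 of Lin--Lipnowski~\cite{Lin-Lipnowski2018}, which already states a trace formula for test functions $F(u,\theta)$ on the Cartan subgroup, and specializes to $F(u,\theta)=H(u)\cos(J\theta)$, together with the order-2 elliptic adaptation from~\cite{Lin-Lipnowski2020} and the dictionary of Theorem~\ref{thm:spectralRelation}. What you sketch is essentially the content of the cited references rather than the present paper's argument.

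One point in your sketch is not right: your explanation of the factor $\Theta(\gamma)=\tfrac12$ for order-2 elliptic elements. It is not that the conjugacy class is ``counted twice in the natural parametrization of $\Gamma\backslash\Gamma$''; rather, the issue is that the centralizer $G_\gamma$ in $G$ is strictly larger than the maximal torus. An order-2 elliptic element $\gamma$ satisfies $\gamma^{-1}=\gamma$, so the nontrivial Weyl element (which conjugates any torus element to its inverse) actually commutes with $\gamma$, and hence $G_\gamma$ is the full normalizer $N(T)$ rather than $T$. The orbital integral $\int_{G_\gamma\backslash G}k(g^{-1}\gamma g)\,dg$ therefore differs by a factor of $[N(T):T]=2$ from the uniform formula one writes down assuming $G_\gamma=T$, and $\Theta(\gamma)=\tfrac12$ compensates for this. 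This is the ``adaptation for order-2 elliptic elements'' the paper attributes to~\cite{Lin-Lipnowski2020}.
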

\begin{proof} These are special cases of the general trace formula. In particular, we can select the irreducible representations $R_{1+i t, J}$ for fixed $J$  by specializing the trace formula in Corollary B.7 of~\cite{Lin-Lipnowski2018} with the function $F(u, \theta) = H(u) \cos (J \theta)$, with the adaptation for order-2 elliptic elements  described in~\cite{Lin-Lipnowski2020}. The correspondence between these representations and divergence-free, symmetric, traceless, rank-$J$ eigentensors of the curl operator is given in Theorem~\ref{thm:spectralRelation}.  
\end{proof}
The right-hand sides of~\eqref{eq:STF}  and~\eqref{eq:STF-J} are the geometric sides of the trace formulas, involving sums over the nontrivial conjugacy classes $[\gamma]$ of $\Gamma$, while
the left-hand sides are the spectral sides.
The case $J=1$ of the even trace formula, corresponding to coexact 1-forms, was considered for manifolds in~\cite{Lin-Lipnowski2018} and for orbifolds in~\cite{Lin-Lipnowski2020}.  
Note that with our conventions the inverse Fourier transform is
$$
H(x) = \frac{1}{2 \pi}\int_{-\infty}^{\infty} \hat{H}(t) e^{i x t} \, dt.
$$ 

There is also an odd trace formula for $J> 0$,  as considered for  $J=1$ in~\cite{Lin-Lipnowski2021}.
\begin{theorem}[Odd trace formula for 3-orbifolds]
\label{thm:trace-odd}
Let $\Gamma \backslash \mathbb{H}^3$ be a closed oriented hyperbolic 3-orbifold and let $K: \mathbb{R} \rightarrow \mathbb{R}$ be a smooth compactly supported odd test function. Then for $J>0$ we have the odd Selberg trace formula
\be \label{eq:odd-selberg}
\sum_{j=1}^{\infty} \hat{K}\left(t^{(J)}_j\right)=  i \sum_{[\gamma] \neq 1} {\rm vol}(\Gamma_{\gamma} \backslash G_{\gamma} ) \frac{\sin( J \phi(\gamma))K(\ell(\gamma))}{\cosh \ell(\gamma)-\cos \phi(\gamma)}.
\ee
\end{theorem}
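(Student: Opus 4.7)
The approach is to mirror the proof sketched for Theorem~\ref{thm:trace-even}, but with a test function adapted to the odd sector. Specifically, I would start from the general Selberg trace formula for $L^2(\Gamma \backslash G)$ as formulated in Corollary B.7 of \cite{Lin-Lipnowski2018} (with the orbifold adaptation from \cite{Lin-Lipnowski2020}), and specialize it with the bi-invariant test function $F(u,\theta) = K(u) \sin(J\theta)$ rather than $H(u)\cos(J\theta)$. Since $K$ is smooth and compactly supported, so is $F$, and the trace formula applies.

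The key analysis then proceeds by examining each of the three contributions. On the spectral side, integrating the character of $R_{1+it,J'}$ against $F$ picks out representations with spin $\pm J$; using $\sin(J\theta) = (e^{iJ\theta}-e^{-iJ\theta})/(2i)$ together with the isomorphism $R_{\Delta,J} \cong R_{2-\Delta,-J}$ recorded in Section~\ref{ssec:irrepsG}, each spin-$J$ representation $R_{1+i t_j^{(J)},J}$ in the decomposition of $L^2(\Gamma \backslash G)$ (via Theorem~\ref{thm:spectralRelation}) contributes a term proportional to $\hat{K}(t_j^{(J)})$. Because $K$ is real and odd, $\hat{K}$ is pure imaginary and odd, which ultimately supplies the explicit factor of $i$ on the geometric side. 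For the identity contribution, the standard formula involves $F$ and its low-order derivatives at $(u,\theta)=(0,0)$; since $K$ is smooth and odd, all even derivatives of $K$ vanish at $0$, and $\sin(J\cdot 0)=0$, so the identity contribution vanishes identically. In particular, there is no volume term, and no analogue of the $\delta_{J,1}\hat{H}(0)$ term, since $\hat{K}(0)=0$ forces harmonic forms to drop out automatically.

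On the geometric side, each nontrivial conjugacy class $[\gamma]$ contributes an orbital integral over $\Gamma_\gamma \backslash G_\gamma$; the standard computation for $\mathrm{PSL}_2(\mathbb{C})$ (see, e.g., the derivation leading to \eqref{eq:STF-J}) evaluates this centralizer integral to $\mathrm{vol}(\Gamma_\gamma \backslash G_\gamma)\, F(\ell(\gamma),\phi(\gamma))/(\cosh\ell(\gamma)-\cos\phi(\gamma))$ for hyperbolic $\gamma$, with $\ell, \phi$ being the length and holonomy; substituting $F$ yields exactly the $\sin(J\phi(\gamma)) K(\ell(\gamma))$ structure required. For elliptic conjugacy classes, $\ell(\gamma)=0$, so $K(\ell(\gamma))=K(0)=0$ and they drop out automatically; this is why the $\Theta(\gamma)$ factor present in the even formula does not appear here, since the elliptic subtlety it was encoding is moot. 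Summing over all (necessarily hyperbolic) conjugacy classes gives the right-hand side of \eqref{eq:odd-selberg}.

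The main obstacle I expect is the careful bookkeeping of normalization constants: reconciling the conventions of Corollary B.7 in \cite{Lin-Lipnowski2018} (which is phrased in terms of bi-$K$-invariant functions on $G$) with the $(u,\theta)$ parametrization of the Cartan subgroup, and correctly combining the $+J$ and $-J$ character contributions so that the $1/(2i)$ from $\sin(J\theta) = (e^{iJ\theta}-e^{-iJ\theta})/(2i)$ precisely produces the $i$ on the right-hand side of \eqref{eq:odd-selberg} after accounting for the equivalence $R_{1+it,J}\cong R_{1-it,-J}$. The algebraic content is essentially identical to that of Theorem~\ref{thm:trace-even}; once the constants are tracked through, the factorization of $F$ into $K(u)\sin(J\theta)$ cleanly separates the length-spectrum dependence from the holonomy dependence on the geometric side, and yields the stated identity.
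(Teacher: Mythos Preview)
Your proposal is correct and takes essentially the same approach as the paper: the paper's proof is the single line ``Take $F(u,\theta) = K(u)\sin(J\theta)$ in Corollary B.7 of~\cite{Lin-Lipnowski2018},'' and your discussion simply unpacks what that specialization entails on the spectral, identity, and geometric sides. Your observations that the identity and elliptic contributions vanish because $K$ is odd (so $K(0)=0$) are correct and explain why the $\Theta(\gamma)$ factor and volume term are absent from \eqref{eq:odd-selberg}.
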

\begin{proof}
Take $F(u, \theta) = K(u) \sin (J \theta)$ in Corollary B.7 of~\cite{Lin-Lipnowski2018}.
\end{proof}
Note that elliptic conjugacy classes do not contribute to the sum on the right-hand side of the odd trace formula~\eqref{eq:odd-selberg} since $K(0)=0$. 

\begin{remark}
The trace formulas are valid for more general classes of test functions. For example, Theorem 2.2 of~\cite{Lin-Lipnowski2018} shows that compactly supported even functions $H: \mathbb{R} \rightarrow \mathbb{R}$ such that
$$
\int \left( \left|\hat{H}(t) \right|^2 + \left|\hat{H}'(t) \right|^2\right)(\sqrt{1+t^2})^{2\delta} < \infty
$$
for some $\delta > 5/2$ are admissible to use in the even trace formulas.
 The even trace formulas were also shown to hold for Gaussian test functions in~\cite{Lin-Lipnowski2021}.
\end{remark}

\subsection{Ruling out eigenvalues}
\label{subsec:selberg-intervals}
Given an orbifold $M= \Gamma \backslash \mathbb{H}^3$ and its length spectrum computed up to some cutoff $\ell_{\rm max}$, we can use the Selberg trace formula to find constraints on the possible eigenvalues of $M$, following the approach of Booker--Str\"ombergsson~\cite{Booker-Strombergsson2007} and Lin--Lipnowski~\cite{Lin-Lipnowski2018, Lin-Lipnowski2020, Lin-Lipnowski2021}. Given a candidate eigenvalue $t^*$, the idea is to try to prove that it cannot be in the spectrum of $M$ by finding a function $H$ for which the Selberg trace formula would otherwise give a contradiction. In more detail, we consider even functions $H$ with support on an interval $[-\ell_{\rm max}, \ell_{\rm max}]$ such that $\hat{H} \geq 0$ and $\hat{H}\left(t^* \right) = 1$. Using the explicit length spectrum, we can evaluate the geometric side of the trace formula for a particular $H$ and thus obtain the weighted sum over eigenvalues on the spectral side,
$$
I^{(J)}_{\hat{H}} \coloneqq \sum_{j=1}^\infty \hat{H}\left(t_j^{(J)}\right).
$$
If $I^{(J)}_{\hat{H}}<1$, then we know that $t^* $ cannot appear in the spin-$J$ spectrum since by assumption $\hat{H}\left(t^*\right) = 1$. By optimizing the choice of test function $H$, we can hope to constrain the location of possible low-lying eigenvalues to small intervals.

A nice set of test functions to optimize over are of the type used by~\cite{Booker-Strombergsson2007, Lin-Lipnowski2018, Lin-Lipnowski2020, Lin-Lipnowski2021}, given in terms of convolutions of indicator functions. Fix positive integers $m$ and $n$, a length cutoff $\ell_{\rm max}$, and define $\delta \coloneqq  \ell_{\rm max}/(2n+2m)$. The number $m$ determines the convolutional power of the indicator function that we consider and $n$ determines the size of the space of functions to optimize over. Define $h$ as the $m$\textsuperscript{th} convolutional power of the normalized indicator function on the interval $[-\delta, \delta]$,
\be \label{eq:h-basic}
h \coloneqq \left( \frac{1}{2 \delta}\mathds{1}_{[-\delta, \delta]} \right)^{* m}.
\ee
For $k \in \{0 , 1, \dots, n\}$ define 
$$
h_k (x) \coloneqq \frac{1}{2} \left[h(x+k \delta ) + h(x- k \delta) \right],
$$
which has support on the interval $[-(k+m)\delta, (k+m)\delta]$. The Fourier transform of $h_k$ is
$$
\hat{h}_k(t) = \cos (k \delta t) \left( \frac{\sin(\delta t)}{\delta t} \right)^m.
$$
For $\vec{x} \in \mathbb{R}^{n+1}$ with components $x_k$, we define the linear combination
$$
h_{\vec{x}} \coloneqq \sum_{k=0}^n x_k h_k.
$$
Then we take the test function to be the convolution of this linear combination with itself,
$$
H_{\vec{x} } \coloneqq h_{\vec{x} }  * h_{\vec{x} },
$$
which has the non-negative Fourier transform
$$
\hat{H}_{\vec{x} } = \hat{h}_{\vec{x} }^2.
$$
For a candidate eigenvalue $t^*$, consider the set of functions of this form that are normalized at $t^*$,
\be \label{eq:function-set-S}
S(t^*) \coloneqq \left\{ H_{\vec{x} }: \,  \vec{x} \in \mathbb{R}^{n+1}, \quad \sum_{k=0}^n x_k \hat{h}_k(t^*) =1 \right\} \,.
\ee
To try to rule out $t^*$ from the spin-$J$ spectrum, we minimize $I^{(J)}_{\hat{H}}$ over these functions,
\be \label{eq:inf}
\mathcal{J}^{(J)}(t^*)  \coloneqq \inf_{H \in S(t^*) } I^{(J)}_{\hat{H}} .
\ee
If $\mathcal{J}^{(J)}(t^*) <1$, then $t^*$ cannot be in the spectrum.

Finding the infimum is equivalent to a quadratic optimization problem with a linear constraint and can be solved explicitly for all $t^*$. Define the $(n+1)\times (n+1)$ matrix $A^{(J)}$ with components
$$
A^{(J)}_{ kl} \coloneqq \sum_{j=1}^{\infty} \widehat{h_k * h_l}\left(t^{(J)}_j\right).
$$
This matrix can be computed in terms of the length spectrum and other geometric quantities via the even trace formula. This assumes that the even trace formula is valid for this class of functions, which is the case for $m \geq 2$~\cite{Lin-Lipnowski2018, Lin-Lipnowski2021}. Then the method of Lagrange multipliers gives~\cite{Lin-Lipnowski2018}
$$
\mathcal{J}^{(J)}(t^*)  = \left[ \sum_{k, l =0}^n \left(A^{(J)}\right)^{-1}_{kl} \hat{h}_k(t^*) \hat{h}_l(t^*)  \right]^{-1}.
$$
As $n$ and $\ell_{\rm max}$ are increased, $\mathcal{J}^{(J)}(t)$ better approximates the indicator function $m_{\Gamma}(R_{1+it, J} )+m_{\Gamma}(R_{1-it, J}) $, where $m_{\Gamma}( R_{\Delta, J})$ is the dimension of the eigenspace corresponding to the representation $R_{\Delta, J}$.

Recall that $\pm t$ cannot be an eigenvalue if $\mathcal{J}^{(J)}(t)<1$, so using the above procedure we can definitively exclude candidate eigenvalues.  For $J>0$ we therefore obtain a union of intervals containing the absolute values of the curl eigenvalues of $\Gamma \backslash \hh$,
\be \label{eq:interval-union}
\left\{ \big| t_i^{(J)} \big| \right\}_{i=1}^{\infty} \subset [u_1, v_1] \cup [u_2, v_2] \cup  \dots  \cup [u_n, v_n] \cup [u_{n+1}, \infty),
\ee
where $0 \leqslant u_1 < v_1 < u_2 <v_2 \dots$; these are the intervals where $\mathcal{J}^{(J)}(t) \geq 1$. If $|v_k-u_k|$ is small and $\mathcal{J}^{(J)}(t)$ peaks just above a positive integer $N$ in the interval $[u_k, v_k]$, then this suggests that there are $N$ eigenvalues $\pm t^*$ with $|t^*| \subseteq [u_k, v_k]$, but this approach cannot distinguish between this situation and the case of distinct nearby eigenvalues in $[u_k, v_k]$. Similarly, for $J=0$ we obtain a union of imaginary and real intervals that contain the eigenvalues $t_i^{(0)}$.

As an instructive example, we show in Figure~\ref{fig:weeks-bounds}  plots of $\mathcal{J}^{(J)}(t)$ with $J=0,1,2$ and $t \in [0, 7]$ for the Weeks manifold, taking $\ell_{\rm max}=7$, $n=30$, and $m=3$. From the $J=0$ curve we can resolve three clear peaks whose maxima exceed an integer by a small amount: these suggest the locations of the first three distinct nonzero eigenvalues, where the second has multiplicity two. There is also a curve for $t \in i(0,1)$ that shows that there are no small eigenvalues. These values and multiplicities are in agreement with those obtained in~\cite{Cornish-Spergel}. In contrast, the first peak of the $J=1$ curve has height $2.75$, which suggests that there are nearby eigenvalues that have not yet been resolved (this can be confirmed by looking at quotient orbifolds~\cite{Mednykh1998}). Lastly, the $J=2$ curve has two narrow peaks, which suggest the locations of the first two distinct eigenvalues, each with multiplicity two. We similarly show in Figure~\ref{fig:T2-Z2-bounds} plots of $\mathcal{J}^{(J)}(t)$ with $J=0,1,2$ for the smallest closed orientable hyperbolic 3-orbifold~\cite{Gehring-Martin-I, Marshall-Martin-II}, which is described in Appendix~\ref{app:tetrahedra}, taking $\ell_{\rm max}=5$, $n=30$, and $m=3$. This is a good illustration of an example for which we obtain many narrow peaks, allowing for a precise determination of the spectrum.

\begin{figure}[h]
\begin{center}
\epsfig{file=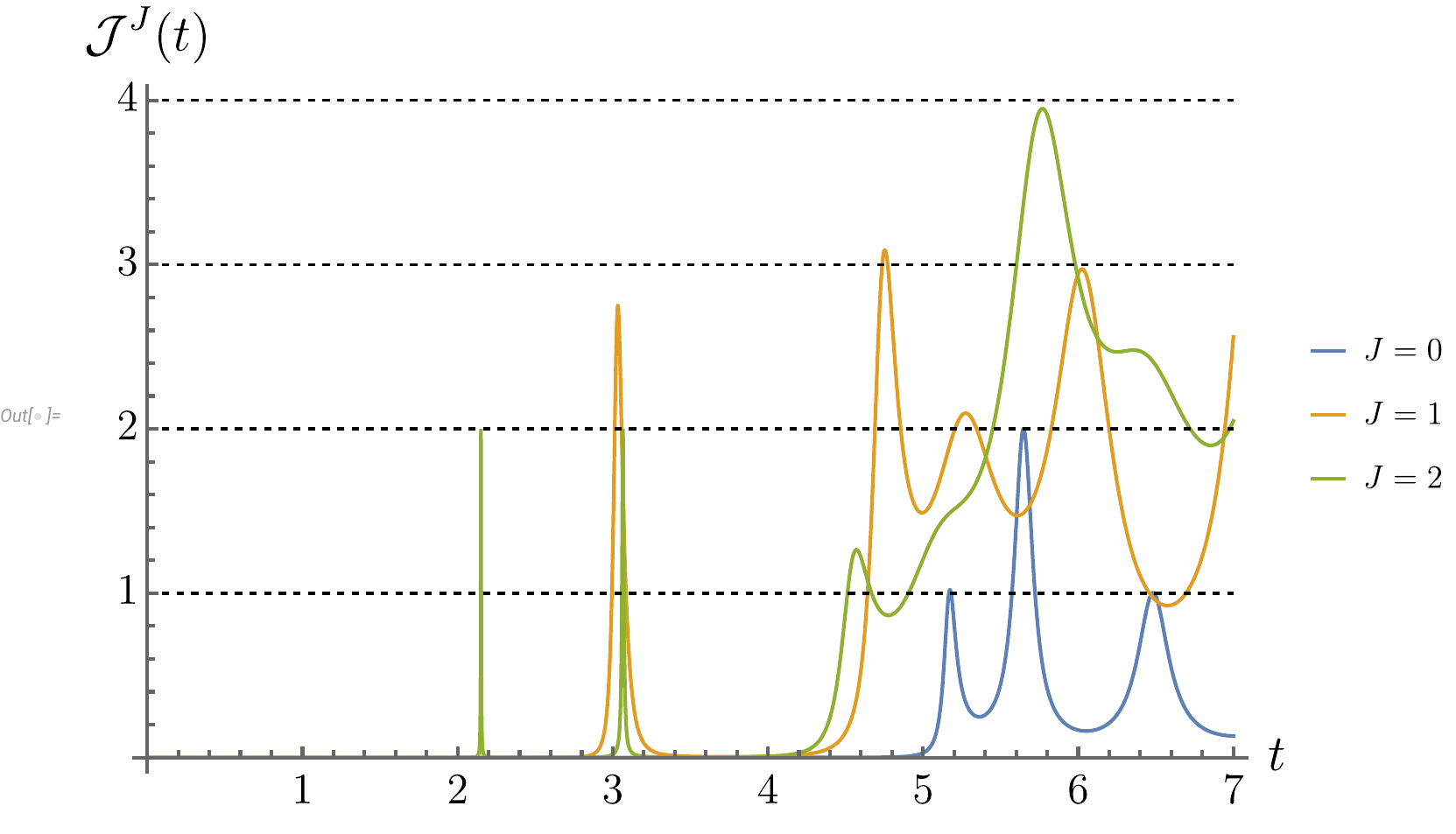,scale=.4}
\end{center}
\caption{Plots of $\mathcal{J}^{(J)}(t)$ with $J=0,1,2$ for the smallest closed orientable hyperbolic 3-manifold, the Weeks manifold m003$(-3,1)$, taking $\ell_{\rm max}=7$, $n=30$, and $m=3$.}
\label{fig:weeks-bounds}
\end{figure}
\begin{figure}[h]
\begin{center}
\epsfig{file=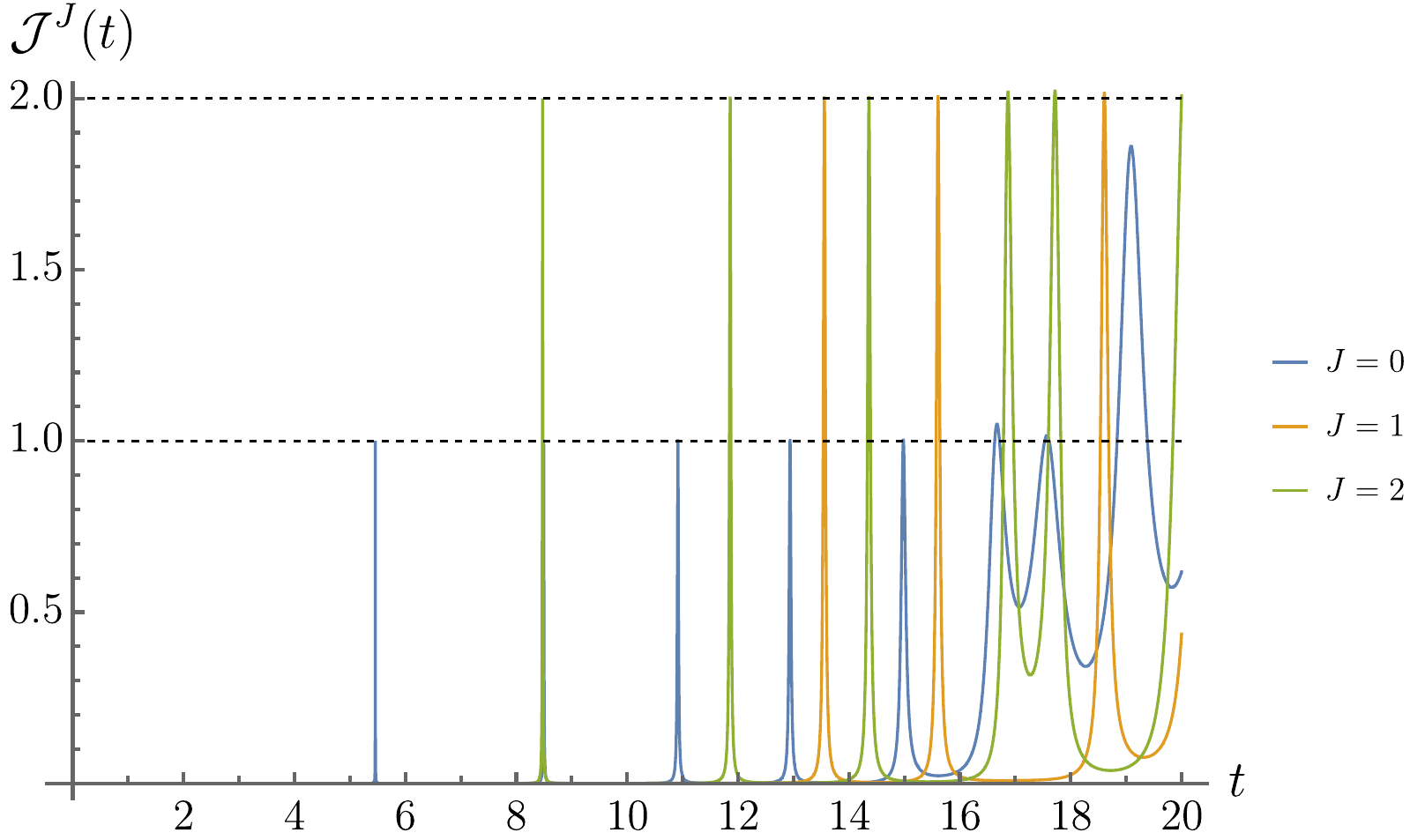,scale=.4}
\end{center}
\caption{Plots of $\mathcal{J}^{(J)}(t)$ with $J=0,1,2$ for the smallest closed orientable hyperbolic 3-orbifold, taking $\ell_{\rm max}=5$, $n=30$, and $m=3$.}
\label{fig:T2-Z2-bounds}
\end{figure}

\subsection{Ruling in eigenvalues}
\label{subsec:ruling-in}
The procedure from Section~\ref{subsec:selberg-intervals} gives a union of intervals containing the absolute values of the low-lying eigenvalues for a particular orbifold. In principle, the finite intervals may or may not contain any eigenvalues. However, as suggested in~\cite{Lin-Lipnowski2018}, we can attempt to use the Selberg trace formula to also verify that a given interval does actually contain an eigenvalue.

Suppose that we want to verify that the interval $[0, v]$ contains the absolute value of a spin-$J$ curl eigenvalue for an orbifold $M = \Gamma \backslash \mathbb{H}^3$, where $J>0$.
Suppose also that we have computed the length spectrum of $M$ up to $\ell_{\rm max}$. For a fixed integer $m\geq 3$ and a fixed $\delta$ satisfying $0<\delta \leqslant \ell_{\rm max}/2m$, we consider the even test function $H_{v}$ with Fourier transform
\be \label{eq:H-nu}
\hat{H}_{v}(t)=\left(1-\frac{t^2}{v^2}\right)\left(\frac{\sin \delta t }{\delta t}\right)^{2m}.
\ee 
The function $\hat{H}_{v}$ is positive for $t \in[0,v)$ and negative for $t>v$. The condition $\delta \leqslant \ell_{\rm max}/2m$ ensures that $H_{v}$ vanishes outside $[-\ell_{\rm max}, \ell_{\rm max}]$, while the condition $m\geq 3$ ensures the validity of the trace formula~\cite{Lin-Lipnowski2018}. From the even trace formula we can explicitly determine the spectral sum $I_{H_{v}}^{(J)}$.
If $I_{H_{v}}^{(J)}>0$, then we can conclude that there must be an eigenvalue $t^{(J)}_j \in \sigma^{(J)}_{\Gamma}$ with $|t^{(J)}_j | \in [0, v)$. For example, by taking $v<u_2$ from~\eqref{eq:interval-union}, we can verify that the smallest non-zero spin-$J$ eigenvalue has absolute value in the interval $[u_1,v_1]$. For $J=0$, the same approach allows us to show that there is an eigenvalue $\lambda^{(0)}_j$ in the interval $ (0, v^2+1)$.

To show that there is an eigenvalue with absolute value in the interval $[u, v]$ with $u>0$, we instead consider even test functions $H_{u, v}$ with Fourier transform
$$
\hat{H}_{u,v}(t)=-\left(1-\frac{t^2}{u^2}\right)\left(1-\frac{t^2}{v^2}\right)\left(\frac{\sin \delta t }{\delta t}\right)^{2m},
$$ 
where $m\geq 4$ and $0<\delta \leqslant \ell_{\rm max}/2m$. This function is non-negative only for $t \in[u,v]$.

\subsection{Determining chiralities}
Let $J>0$ and assume that we have $N(t^*)$ spin-$J$ eigenvalues $\pm t^*$ with $t^* \neq 0$ for the orbifold $\Gamma \backslash \mathbb{H}^3$. Let $m_{\Gamma}( R_{1\pm it^*,J})$ be the multiplicity of $\pm t^*$, so that we have
$$
N(t^*) = m_{\Gamma}( R_{1+it^*, J}) +m_{\Gamma}( R_{1-it^*, J}).
$$
The even trace formula is only sensitive to the total multiplicity $N(t^*)$, not the separate multiplicities of each chirality. However, using the odd trace formula we can find the difference of these multiplicities, which then lets us deduce $m_{\Gamma}( R_{1\pm it^*,J})$.

 Given $t^* \in \sigma^{(J)}_{\Gamma}$ and given fixed $m\geq3$, $n$, and $\ell_{\rm max}$, let $H_{\vec{x}(t^*)} \in S(t^*)$ be the function from the set~\eqref{eq:function-set-S} that minimizes the spectral sum $I_{\hat{H}}^{(J)}$. 
 Explicitly, we have
$$
x_i(t^*) = \mathcal{J}^{(J)}(t^*)  \sum_{j=0}^n\left(A^{(J)} \right)^{-1}_{ ij}\hat{h}_j(t^*).
$$ 
The Fourier transform $\hat{H}_{\vec{x}(t^*)}$ equals one at $t=\pm t^*$ and, for large enough $n$ and $\ell_{\rm max}$, takes small values at other $t$ in $\pm \sigma^{(J)}_{\Gamma}$. We write the spectral sum as
\be \label{eq:epsilon-def}
I_{\hat{H}_{\vec{x}(t^*)}}^{(J)} = N(t^*) + \epsilon^2(t^*), \qquad \epsilon^2(t^*) \coloneqq \sum_{\substack{j=1, \\ \big|t_j^{(J)} \big| \neq |t^*|}}^\infty \hat{H}_{\vec{x}(t^*)}\left(t_j^{(J)}\right).
\ee
Now define $K =  H'_{\vec{x}(t^*)}$, so that $\hat{K}(t) = i t \hat{H}_{\vec{x}(t^*)}(t)$. This function can be used in the odd trace formula~\eqref{eq:odd-selberg} since $m \geq 3$~\cite{Lin-Lipnowski2021}. The left-hand side of~\eqref{eq:odd-selberg} is then
\be \label{eq:lhs-odd}
i \sum_{j=1}^{\infty} t_j^{(J)} \hat{H}_{\vec{x}(t^*)}\left( t_j^{(J)} \right) = i t^* \left(m_{\Gamma}( R_{1+it^*, J}) -m_{\Gamma}( R_{1-it^*, J}) \right) + i \tilde{\epsilon}(t^*),
\ee
where 
$$
\tilde{\epsilon}(t^*) \coloneqq \sum_{\substack{j=1, \\ \big|t_j^{(J)} \big| \neq |t^*|}}^{\infty} t_j^{(J)} \hat{H}_{\vec{x}(t^*)}\left( t_j^{(J)} \right) .
$$
We can explicitly find~\eqref{eq:lhs-odd} from the length spectrum using the odd trace formula~\eqref{eq:odd-selberg}. If $| \tilde{\epsilon}(t^*)|$ is sufficiently small, we can then use this to determine the integer multiplicities $m_{\Gamma}( R_{1\pm it^*, J})$.

To uniquely determine the multiplicity of each chirality, we need a sufficiently strong upper bound on $|\tilde{\epsilon}(t^*)|$. Suppose that we have an upper bound on the number of spin-$J$ eigenvalues of $\Gamma \backslash \hh$ whose absolute values lie in a unit interval centered at $\nu$,
$$
 \Bigg|  \left\{ \big|t_i^{(J)} \big| \right\}_{i=1}^{\infty} \cap \left[\nu - \frac{1}{2}, \nu+\frac{1}{2} \right]  \Bigg|   \leqslant   \alpha_J \nu^2 + \beta_J, \quad \nu \geq \nu_{\rm min}\geq0,
$$
where $\alpha_J$ and $\beta_J$ are constants. Such a bound is called a local Weyl law and can be obtained for a particular orbifold by following the approach from Section 5 of~\cite{Lin-Lipnowski2021}. 
 Then we can obtain the following bound on $|\tilde{\epsilon}(t^*)|$:
$$
\begin{aligned}
\left| \tilde{\epsilon}(t^*) \right| & \leqslant \sum_{\substack{j=1, \\ \big|t_j^{(J)} \big| \neq |t^*|}}^{\infty} \big| t_j^{(J)} \big| \hat{H}_{\vec{x}(t^*)}\left( t_j^{(J)} \right) \leqslant \sum_{\nu=0}^{\infty} \sum_{\substack{\big| t_j^{(J)}  \big| \in [\nu-\frac{1}{2},\nu+\frac{1}{2}] , \\ \big|t_j^{(J)} \big| \neq |t^*|}} \big| t_j^{(J)} \big| \hat{H}_{\vec{x}(t^*)}\left( t_j^{(J)} \right) \nn \\
& \leqslant \sum_{\nu=0}^{k} \sum_{\substack{\big| t_j^{(J)}  \big| \in [\nu-\frac{1}{2},\nu+\frac{1}{2}] , \\ \big|t_j^{(J)} \big| \neq |t^*|}} \big| t_j^{(J)} \big| \hat{H}_{\vec{x}(t^*)}\left( t_j^{(J)} \right) +\sum_{\nu=k+1}^{\infty} \sum_{\big| t_j^{(J)}  \big| \in [\nu-\frac{1}{2},\nu+\frac{1}{2}]}\big| t_j^{(J)} \big| \hat{H}_{\vec{x}(t^*)}\left( t_j^{(J)} \right) \nn \\
& \leqslant \epsilon^2(t^*)  \left(k+ \frac{1}{2}\right)  +|\vec{x}(t^*)|_1^2 \sum_{\nu=k+1}^{\infty} \frac{ \left(\nu+ \frac{1}{2}\right) (\alpha_J \nu^2 + \beta_J)  }{(\delta  \left(\nu-\frac{1}{2}\right))^{2m} } \eqqcolon F(k),
\end{aligned}
$$
where $\delta =  \ell_{\rm max}/(2n+2m)$ and $ k \geq \nu_{\rm min}-1$ is a non-negative integer. The last step used
$$
\hat{H}_{\vec{x}(t^*)}\left( t \right)  =  \left[\sum_{l=0}^n x_l(t^*) \cos (l \delta t) \right]^2 \left( \frac{\sin(\delta t)}{\delta t} \right)^{2m}\leqslant \frac{|\vec{x}(t^*)|_1^2}{(\delta t)^{2m}}
$$
together with the local Weyl law to bound each term in the infinite sum, while the definition of $\epsilon(t^*)$ from~\eqref{eq:epsilon-def} was used to bound the finite sum.
To optimize the bound we choose $k$ to minimize $F(k)$. If the resulting interval $[(-i I^{(J)}_{\hat{K}}- F(k))/t^*,(-i I^{(J)}_{\hat{K}}+F(k))/t^*]$ contains exactly one integer, then we can uniquely determine the multiplicities $m_{\Gamma}( R_{1\pm it^*,J})$.  

\subsection{Examples}
We applied the above procedures to bound the eigenvalues of various examples, all with volume less than $35$. We considered examples from the following (partially overlapping) classes of orbifolds: 
\begin{itemize}
\item \texttt{SnapPy}'s census of closed orientable manifolds~\cite{SnapPy}.
\item Manifolds and link orbifolds (orbifolds whose singular locus is a link) obtained by Dehn filling manifolds in \texttt{SnapPy}'s census of orientable cusped manifolds, using small Dehn filling coefficients.
\item Tetrahedral orbifolds and their quotients, as described in Appendix~\ref{app:tetrahedra}. 
\item Closed orientable manifolds from \texttt{SnapPy}'s censuses of Platonic manifolds~\cite{platonic}. 
\item The small orbifolds listed in Heard's thesis~\cite{Heard-thesis}. For many of these examples we used the program \texttt{Orb} to find matrix generators of $\Gamma$~\cite{orb}.\footnote{The version of \texttt{Orb} available at~\cite{orb} does not easily run on recent versions of macOS. We ported the source code from \texttt{Qt3} to \texttt{Qt4} to build a version that runs on macOS 13.0.1.} 
\end{itemize}
The exact list of examples can be found in the ancillary file ``3D-spectral-data.txt," together with the bounds we obtained on their spectra. We used \texttt{SnapPy} to compute the length spectra for each example, using fixed precision floating-point numbers. For the orbifolds that are not link orbifolds, we computed their length spectra by first defining their Dirichlet domains in \texttt{SnapPy} using the matrix generators included in the ancillary files.

We show in Table~\ref{tab:large-eigenvalues} the examples we found from our list with the largest spin-0 spectral gaps and the largest first spin-$J$ curl eigenvalues for $J=1,\dots, 4$. 
\begin{table}
\centering
  \resizebox{\textwidth}{!}{
  \begin{tabular}{  | c |c | c| c|c| c|}
  \thickhline
  & 1 & 2 & 3 & 4 & 5   \\ \thickhline
 $\big|t_1^{(0)}\big| $  &\makecell{orb$(0.0527)$ \\ $7.81369(3)^1$} & \makecell{orb$(0.0660)$ \\ $7.40279(3)^1$}  & \makecell{orb$(0.1178)$ \\  $6.75173(6)^1$} & \makecell{orb$(0.0409)$ \\  $6.734106(7)^1$}  & \makecell{m$345(2,0)$ \\ $6.02863(9)^1$} \\ \hline
  $\big|t^{(1)}_1\big|$  &\makecell{orb$(0.0391)$ \\ $13.5574(8)^{\pm 1}$ } & \makecell{orb$(0.0718)$ \\ $11.3739(4)^{\pm 1}$ } & \makecell{orb$(0.0858)$ \\ $10.8147(5)^{\pm 1}$ }  & \makecell{orb$(0.0781)$ \\ $10.2517(3)^{\pm 1}$ } & \makecell{orb$(0.0662)$ \\ $10.1045(7)^{-1}$ }   \\ \hline
  $\big|t^{(2)}_1\big|$  &\makecell{orb$(0.0391)$ \\ $8.47501(4)^{\pm 1}$ } & \makecell{orb$(0.0718)$ \\ $7.19644(3)^{\pm 1}$ } & \makecell{orb$(0.0858)$ \\ $6.12140(2)^{\pm 1}$ }  & \makecell{orb$(0.0662)$ \\ $5.5743(2)^{- 1}$ } & \makecell{orb$(0.0933)$ \\ $5.044753(5)^{\pm 1}$ }   \\ \hline  
    $\big|t^{(3)}_1\big|$  &\makecell{orb$(0.0391)$ \\ $11.7600(3)^{\pm 1}$ } & \makecell{orb$(0.0409)$ \\ $10.123(3)^{+1}$ } & \makecell{orb$(0.0660)$ \\ $9.616(5)^{-1}$ }  & \makecell{orb$(0.1269)$ \\ $9.1749(7)^{\pm 1}$ } & \makecell{orb$(0.0718)$ \\ $9.11114(8)^{\pm 1}$ }   \\ \hline  
        $\big|t^{(4)}_1\big|$  &\makecell{orb$(0.0391)$ \\ $3.8516179(4)^{\pm 1}$ } & \makecell{orb$(0.0409)$ \\ $3.4557791(5)^{+1}$ } & \makecell{orb$(0.1028)$ \\ $2.9217(2)^{-1}$ }  & \makecell{orb$(0.0662)$ \\ $2.32(6)$ }  & \makecell{orb$(0.0660)$ \\ $2.02979(8)^{-1}$ }  \\ \hline  
 \thickhline
\end{tabular}}
\caption{Example orbifolds with large eigenvalues $\big| t_1^{(J)} \big|$ for $J=0, \ldots, 4$. orb$(a)$ denotes the orbifold with volume $V \approx a$ from Table 4.1 or Table 4.3 of Heard's thesis~\cite{Heard-thesis}. The parentheses denote the uncertainty in the last digit of the eigenvalue such that the corresponding interval contains the eigenvalue, e.g., for orb$(0.0527)$ we find $\big|t_1^{(0)}\big|  \in [7.81366, 7.81372]$. The superscripts denote the eigenvalue multiplicities if we were able to determine this, with $+$ or $-$ denoting the chirality. For cases in which an orbifold and one of its covers have the same smallest eigenvalue, we have only listed the smaller orbifold. }
\label{tab:large-eigenvalues}
\end{table}
For each eigenvalue in the table, we used the approach described in Section~\ref{subsec:ruling-in} to verify that there is an eigenvalue in the quoted interval.\footnote{For the eigenvalues with multiplicity two, we quote the interval in which $\mathcal{J}^{(J)}(t) \geq 2$ rather than $\mathcal{J}^{(J)}(t) \geq 1$. We are guaranteed to have an eigenvalue in this smaller interval as the corresponding orbifolds are all achiral, which means that their spin-$J$ eigenvalues with $J>0$ come in pairs with opposite chiralities.} orb$(0.0391)$ is the smallest closed orientable hyperbolic 3-orbifold~\cite{Gehring-Martin-I, Marshall-Martin-II} and it has the largest first spin-$J$ eigenvalues amongst our examples for $J=1, \dots, 4$; its double cover $\Gamma^+(T_2) \backslash \mathbb{H}^3$ has the same $t_1^{(2)}$ and $t_1^{(4)}$. We show in Table~\ref{tab:T2-Z2-eigenvalues} some eigenvalues for orb$(0.0391)$ computed at higher precision and for larger values of $J$. Let us add a few remarks about some of the other orbifolds appearing in Table~\ref{tab:large-eigenvalues}: orb$(0.0409)$ is the second-smallest closed orientable hyperbolic 3-orbifold~\cite{Marshall-Martin-II},  orb$(0.0527)$ is the third-smallest orbifold we know (an arithmetic description for an orbifold of this volume is given in~\cite{smallest-arithmetic-manifold}), orb$(0.0718)$ is the tetrahedral orbifold $\Gamma^+(T_3) \backslash \mathbb{H}^3$, orb$(0.0781)$ is the tetrahedral orbifold $\Gamma^+(T_2) \backslash \mathbb{H}^3$, orb$(0.0858)$ is the quotient of the tetrahedral orbifold $\Gamma^+(T_5) \backslash \mathbb{H}^3$ described in Appendix~\ref{app:tetrahedra}, orb$(0.0933)$ is the quotient of the tetrahedral orbifold  $\Gamma^+(T_4) \backslash \mathbb{H}^3$ described in Appendix~\ref{app:tetrahedra}, orb$(0.1178)$ is the smallest known orbifold with all elliptic elements of order two~\cite{Heard-thesis}, and m$345(2,0)$ is a link orbifold with volume $V \approx 0.3159$. See~\cite{Atkinson2012, Atkinson2017} for more discussion of small volume link orbifolds.

\begin{table}
\begin{center}
  \resizebox{\textwidth}{!}{
  \begin{tabular}{  | c |c | c| c|c| c| c | }
  \thickhline
 $J=0 $  &5.4574691592(5) & 8.494100975(7) & 10.91535883(4) & 12.9382783(2)& 14.981647(6)\\   
 $J=1$  &13.5572082(2) & 15.6075150(3) & 18.607848(4)& 20.2825(2) & 21.474(2)   \\   
 $J=2 $  & 8.475000851(6) & 11.85814782(5) & 14.3611760(2)&16.863777(9) & 17.722926(8)   \\   
 $J=3$ & 11.75997183(5)& 16.0027595(7) &17.60783(2) & 19.0854(5) & 20.0365(4) \\
 $J=4$ & 3.85161774528(6) & 9.49996163(2)& 13.3528534(2) &15.257644(4)&16.67488(8) \\
 $J=5$ & 8.854663434(8) &11.96430372(6) & 13.9851772(2)&17.262576(7) &  \\
 $J=6$ & 0 & 4.6555236272(3) & 7.387048105(4)& 10.44231496(4) &12.1675583(2)  \\
 $J=7$ & 13.18143467(9) & 15.9125565(4) & 18.19365(2)  & & \\
 $J=8$ & 0 & 6.797730017(2) & 9.37023688(2) & 12.5363956(2) & 14.1291619(4)   \\
 $J=9$ & 4.3942968302(2)& 8.610423081(6) & 11.65042016(5) &13.2029898(2) & 15.1960708(9)  \\
 $J=10$ & 0 & 2.19569290764(5) &5.1920621318(5) & 7.616579020(5) & 9.85973901(4)  \\
  \thickhline
\end{tabular}}
\caption{Low-lying curl eigenvalues $t_i^{(J)}$ for the smallest closed orientable hyperbolic 3-orbifold for $J\leq 10$.  The parentheses denote the uncertainty in the last digit of the eigenvalue, as in Table~\ref{tab:large-eigenvalues}.  For the eigenvalues shown, those with $J=0$ have multiplicity one, the positive ones with $J>0$ each come in a positive and negative chirality pair, and the zero eigenvalues have multiplicity one.  These were computed using the procedure from Section~\ref{subsec:selberg-intervals} with $m=3$ and $n=300$, using the length spectrum at 300 digits of precision with cutoff $\ell_{\rm max} =5$.
}
\label{tab:T2-Z2-eigenvalues}
\end{center}
\end{table}

Amongst manifolds, the largest spin-0 spectral gap we find is for the Meyerhoff manifold m$003(-2,3)$ with $\big|t_1^{(0)}\big| \in(5.3114,5.3689)$, which is consistent with~\cite{Cornish-Spergel}. To get a more precise estimate we can look at the link orbifold m$115(2,0)$, which has the Meyerhoff manifold as a double cover and the same spectral gap. The spectral gap of m$115(2,0)$ is $\big|t_1^{(0)}\big|  \in  (5.3116, 5.3125)$.

\section{Bootstrap bounds}\label{sec:bounds}
	
In this section, we present our bootstrap bounds for the Laplacian eigenvalues of closed hyperbolic 3-orbifolds. 
To compare our bounds to explicit examples, we also display lower bounds on the eigenvalues of some  orbifolds, as determined from the Selberg trace formula using the approach described in Section~\ref{sec:STF}. If we have an upper bound and a lower bound for the eigenvalue on the horizontal axis, then we display the example as a red dot with horizontal error bars, whereas we use a blue dot if we only have a lower bound. 

\subsection{External spin-0 bounds}
We start with bootstrap bounds from the spectral identities with identical external $J=0$ eigenfunctions. We show an upper bound on the gap $\big| t_2^{(0)} \big|-\big| t_1^{(0)} \big|$ for different values of $\big| t_1^{(0)} \big|$ in Figure~\ref{fig:bound_0_0}, together with examples. The bound applies more generally to the difference of the first two distinct non-vanishing eigenvalues, so we show this quantity for the examples with degenerate first eigenvalues. For this plot we used 18 spectral identities with $N=4$, i.e., including exchanged states up to $J=8$, and we apply Linear Program~\ref{linpro:2}. We have only shown the bound for the principal series scalars, $t_1^{(0)} \in \mathbb{R}$, but there is also a bound for the complementary series. We did not find examples that were close to saturating this bound, unlike for the analogous bound for hyperbolic surfaces~\cite{Bonifacio:2021msa}. In Figure~\ref{fig:bound_0_2} we show an upper bound on the smallest spin-2 eigenvalue $|t^{(2)}_1|$  for different values of $\big| t_1^{(0)} \big|$. For this plot we used 35 spectral identities with $N=6$, using Linear Program~\ref{linpro:1}.
\begin{figure}[h]
\begin{center}
\epsfig{file=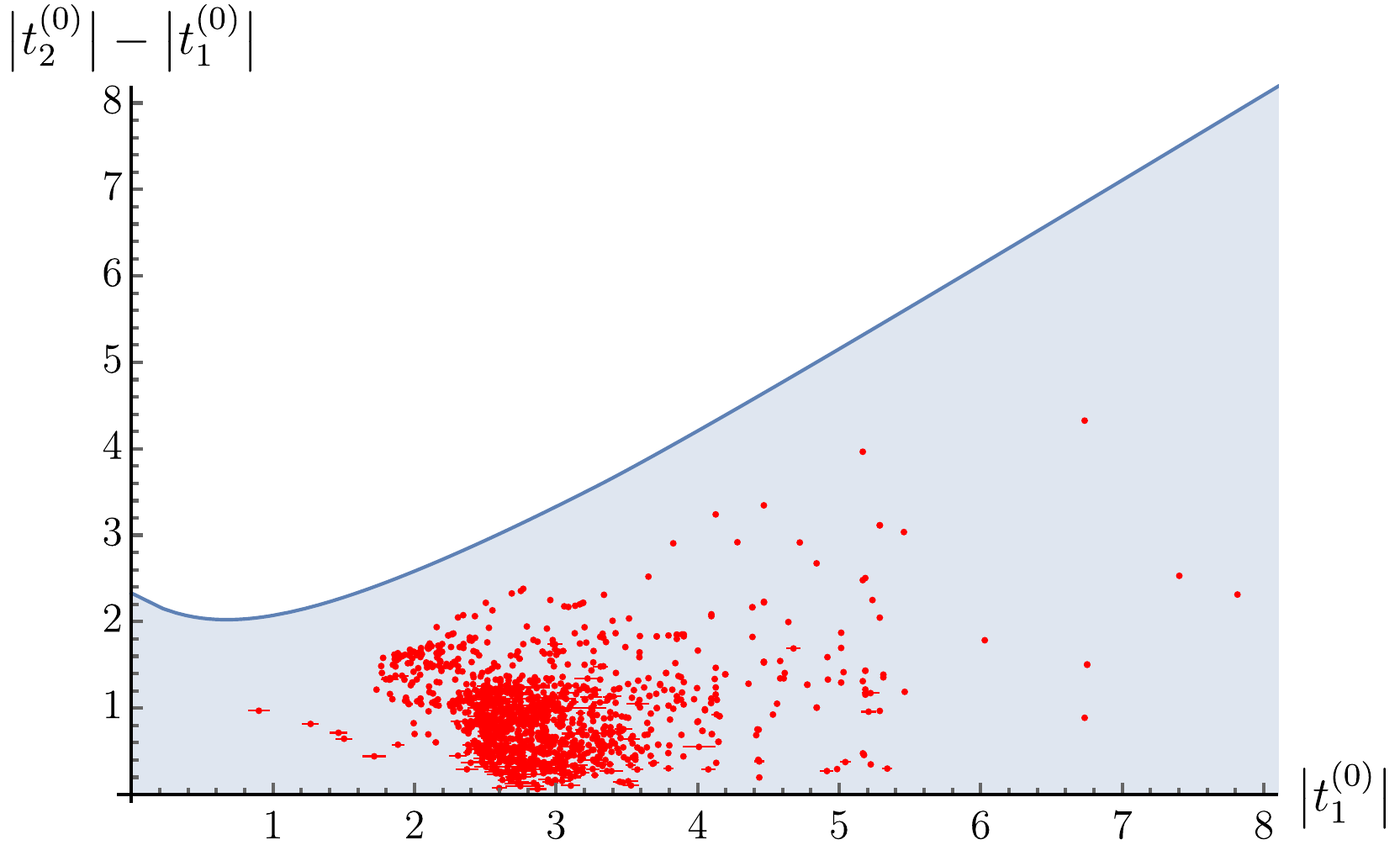,scale=.4}
\end{center}
\caption{An upper bound on the gap $\big| t_2^{(0)} \big|-\big| t_1^{(0)} \big|$ between the first two non-vanishing eigenvalues of the scalar Laplacian for different values of $\big| t_1^{(0)} \big|$ with $t_1^{(0)} \in \mathbb{R}$. The  blue region is allowed by the bound. The red dots represent various example  orbifolds.}
\label{fig:bound_0_0}
\end{figure}

\begin{figure}[h]
\begin{center}
\epsfig{file=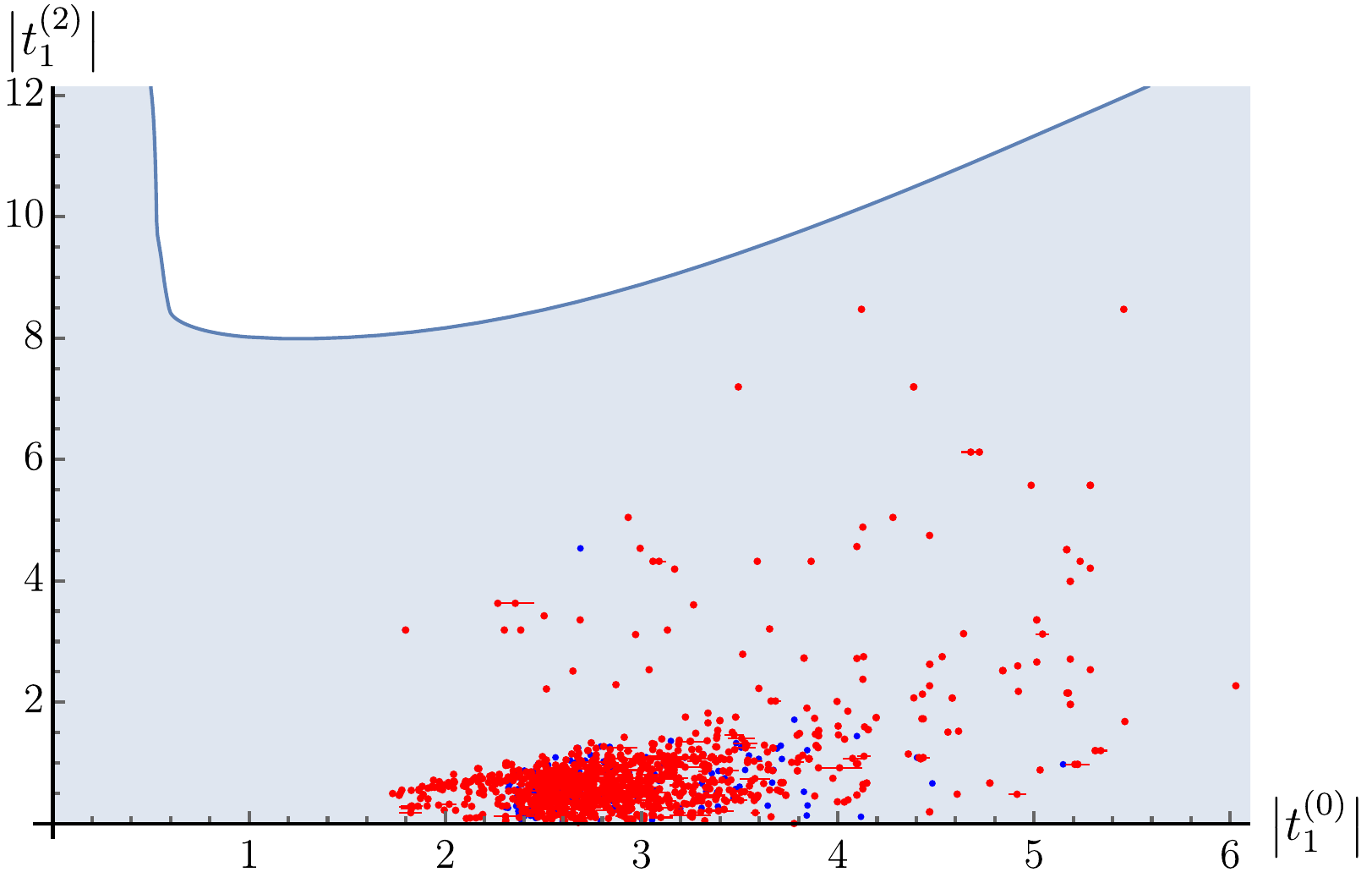,scale=.4}
\end{center}
\caption{An upper bound on the smallest spin-2 eigenvalue for different values of the spin-0 spectral gap with $t_1^{(0)} \in \mathbb{R}$. The red dots are example  orbifolds for which we have upper and lower bounds on $\big|t_1^{(0)} \big|$. The blue dots are example  orbifolds for which we only have lower bounds on $\big|t_1^{(0)} \big|$.}
\label{fig:bound_0_2}
\end{figure}

\subsection{External spin-1 bounds}
Now we consider identical external $J=1$ modes using the framework of Linear Program~\ref{linpro:1}. An upper bound on the spin-0 spectral gap $\big| t_1^{(0)} \big|$ for different values of the smallest spin-1 eigenvalue $\big| t^{(1)}_1 \big|$  is shown in Figure~\ref{fig:bound_1_0}, where an upper bound $\big|t^{(0)}_1 \big|< c$ is to be interpreted as $\lambda^{(0)}_1 < c^2+1$. For this plot we used 18 spectral identities with $N=3$. The closest example to saturating this bound is the orbifold m203$(0,3)(2,0)$ of volume $V \approx 0.4866$, which has $t^{(1)}_1=0$ and $\big| t_1^{(0)} \big|=5.4629(6)$. If we use 101 spectral identities with $N=8$, then the bootstrap bound at $t^{(1)}_1=0$ gives $\big| t^{(0)}_1 \big|<5.5289$. We checked this result rigorously using rational arithmetic by rationalizing the output of \texttt{SDPB} and checking positivity of the resulting rational polynomials using the algorithm from Appendix B of \cite{Kravchuk:2021akc}. 

In Figure~\ref{fig:bound_1_2}, which is presented in the introduction, we show an upper bound on the smallest spin-2 eigenvalue $\big| t^{(2)}_1 \big|$ for different values of $\big| t^{(1)}_1 \big|$. For this plot we used 29 consistency conditions with $N =4$. Table~\ref{tab:bound_1_2-examples} lists some examples that are close to this bound, together with upper bounds on $\big| t^{(2)}_1 \big|$.
We have rigorously checked the upper bound on $\big| t^{(2)}_1 \big|$ for $ t^{(1)}_1 = 0$ by rationalizing the output of \texttt{SDPB}. This result, together with the upper bound on $\big| t^{(0)}_1 \big|$ mentioned above, can be stated as the following theorem, which was given in the Introduction:

 \begin{theoremLambda1-Betti}
Let $M$ be a closed oriented hyperbolic 3-orbifold with positive first Betti number. The first positive eigenvalue of the Laplace--Beltrami operator on $M$ satisfies
$$
\lambda^{(0)}_1 < \frac{252549}{8000} \approx 31.569.
$$
Similarly, the first eigenvalue $\lambda_1^{(2)}$ of the $J=2$ spectral problem \eqref{eq:spinJequation} satisfies
$$
\lambda_1^{(2)}<\frac{46685}{8000}\approx 5.8356\,.
$$
\end{theoremLambda1-Betti} 

Note that the bounds in Figures~\ref{fig:bound_1_2} and \ref{fig:bound_1_0} are also valid with $\big| t^{(1)}_1 \big|$  replaced by $\big| t^{(1)}_i \big|$ for any $i \geq 1$, since the odd spins do not appear as exchanged states in the consistency conditions we consider.

\begin{figure}[h]
\begin{center}
\epsfig{file=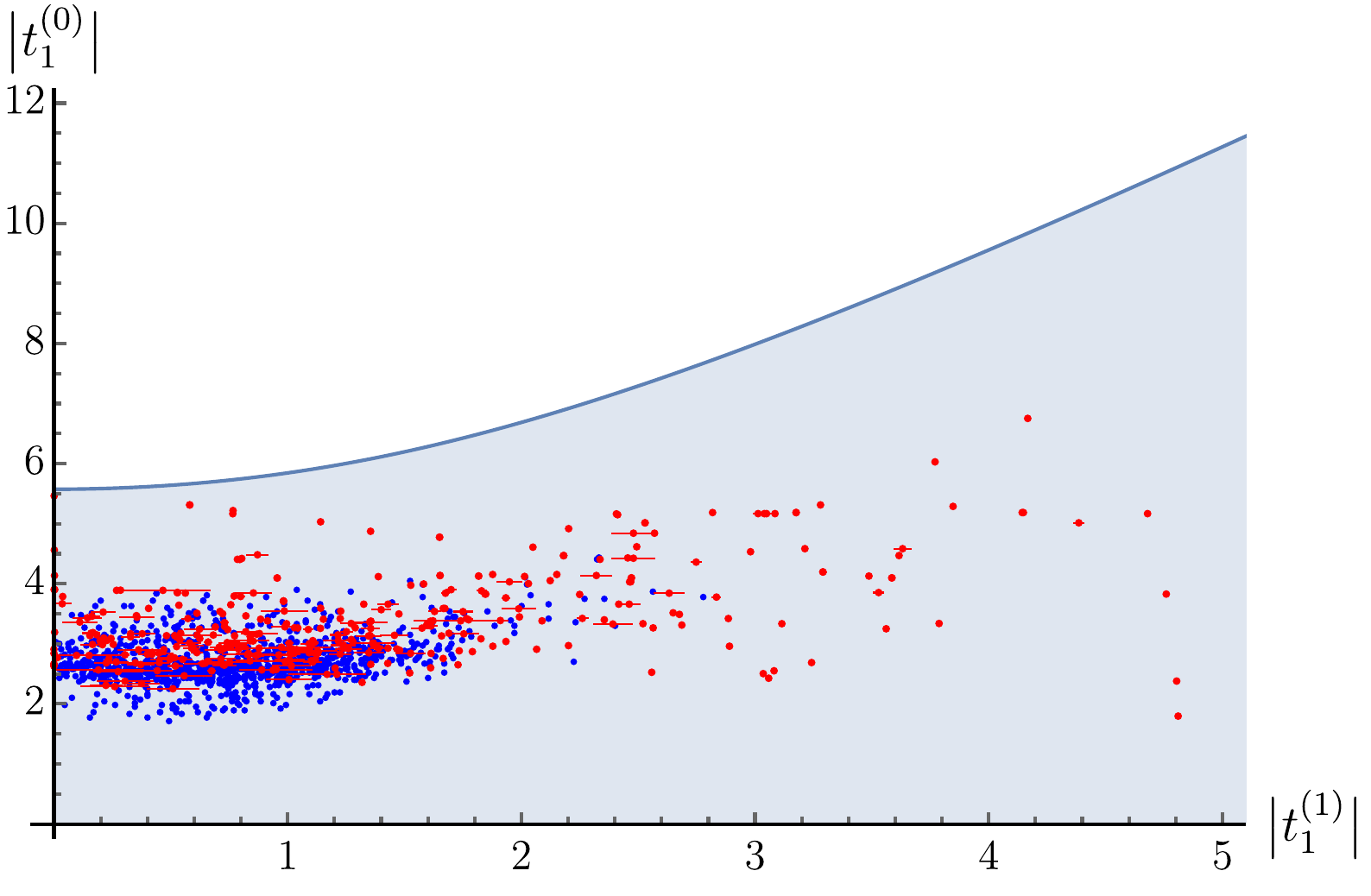,scale=.4}
\end{center}
\caption{An upper bound on the spin-0 spectral gap for different values of the smallest spin-1 eigenvalue. Here an upper bound $\big|t^{(0)}_1 \big|< c$ means that $\lambda^{(0)}_1 < c^2+1$.}
\label{fig:bound_1_0}
\end{figure}

\subsection{External spin-2 bounds}
Lastly, we consider identical external $J=2$ modes. An upper bound on the spin-0 spectral gap $\big|t_1^{(0)}\big|$ for different values of the smallest spin-2 eigenvalue $\big| t^{(2)}_1 \big|$ is shown in Figure~\ref{fig:bound_2_0}. An upper bound on the spin-2 gap $\big| t^{(2)}_2 \big|-\big| t^{(2)}_1 \big|$  for different values of $\big| t^{(2)}_1 \big|$ is shown in Figure~\ref{fig:bound_2_2}. The bound applies more generally to the difference of the absolute values of the first two distinct eigenvalues, so we show this quantity for the examples with degenerate first eigenvalues. We also show an upper bound on the smallest spin-4 eigenvalue $\big| t^{(4)}_1 \big|$  for different values of $\big| t^{(2)}_1 \big|$ in Figure~\ref{fig:bound_2_4}. For each of these plots we used 39 spectral identities with $N=5$. The first and third bounds use Linear Program~\ref{linpro:1}, while the second bound follows Linear Program~\ref{linpro:2}.

\begin{figure}[h]
\begin{center}
\epsfig{file=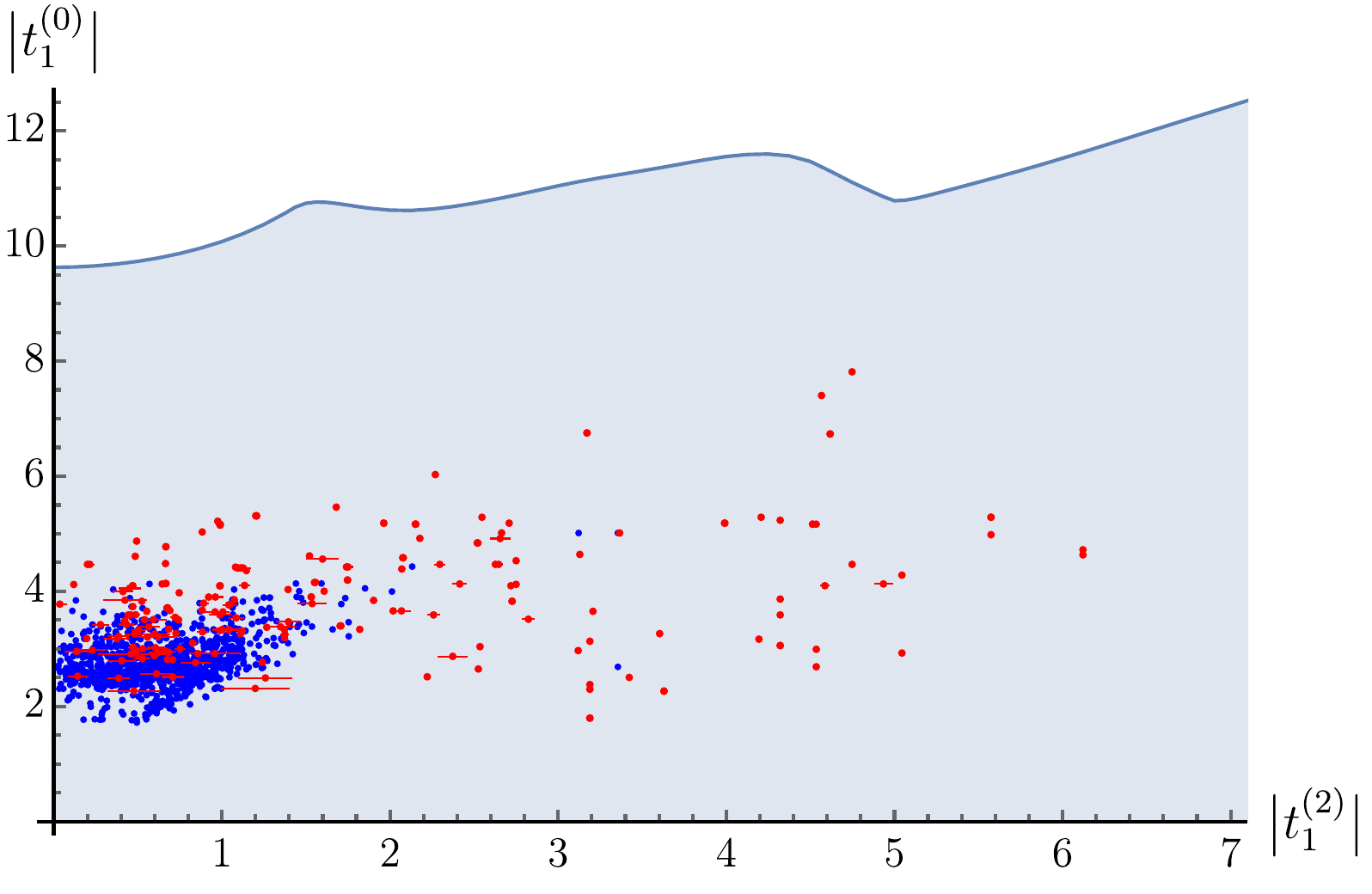,scale=.4}
\end{center}
\caption{An upper bound on the spin-0 spectral gap for different values of the smallest spin-2 eigenvalue. Here an upper bound $\big|t^{(0)}_1 \big|< c$ means that $\lambda^{(0)}_1 < c^2+1$.}
\label{fig:bound_2_0}
\end{figure}

\begin{figure}[h]
\begin{center}
\epsfig{file=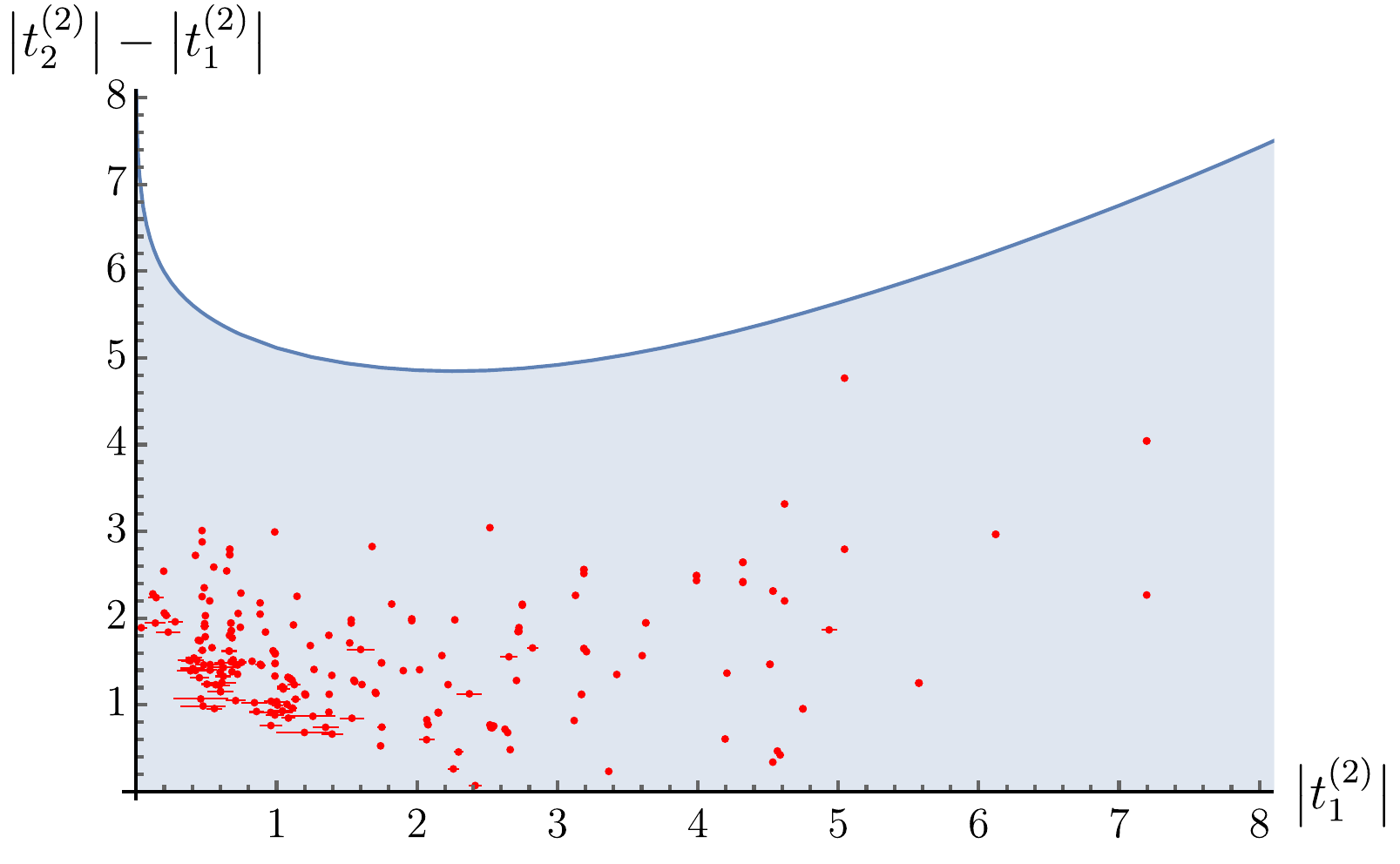,scale=.4}
\end{center}
\caption{An upper bound on the spin-2 gap $\big| t^{(2)}_2 \big|-\big| t^{(2)}_1 \big|$ for different values of the smallest spin-2 eigenvalue.}
\label{fig:bound_2_2}
\end{figure}

\begin{figure}[h]
\begin{center}
\epsfig{file=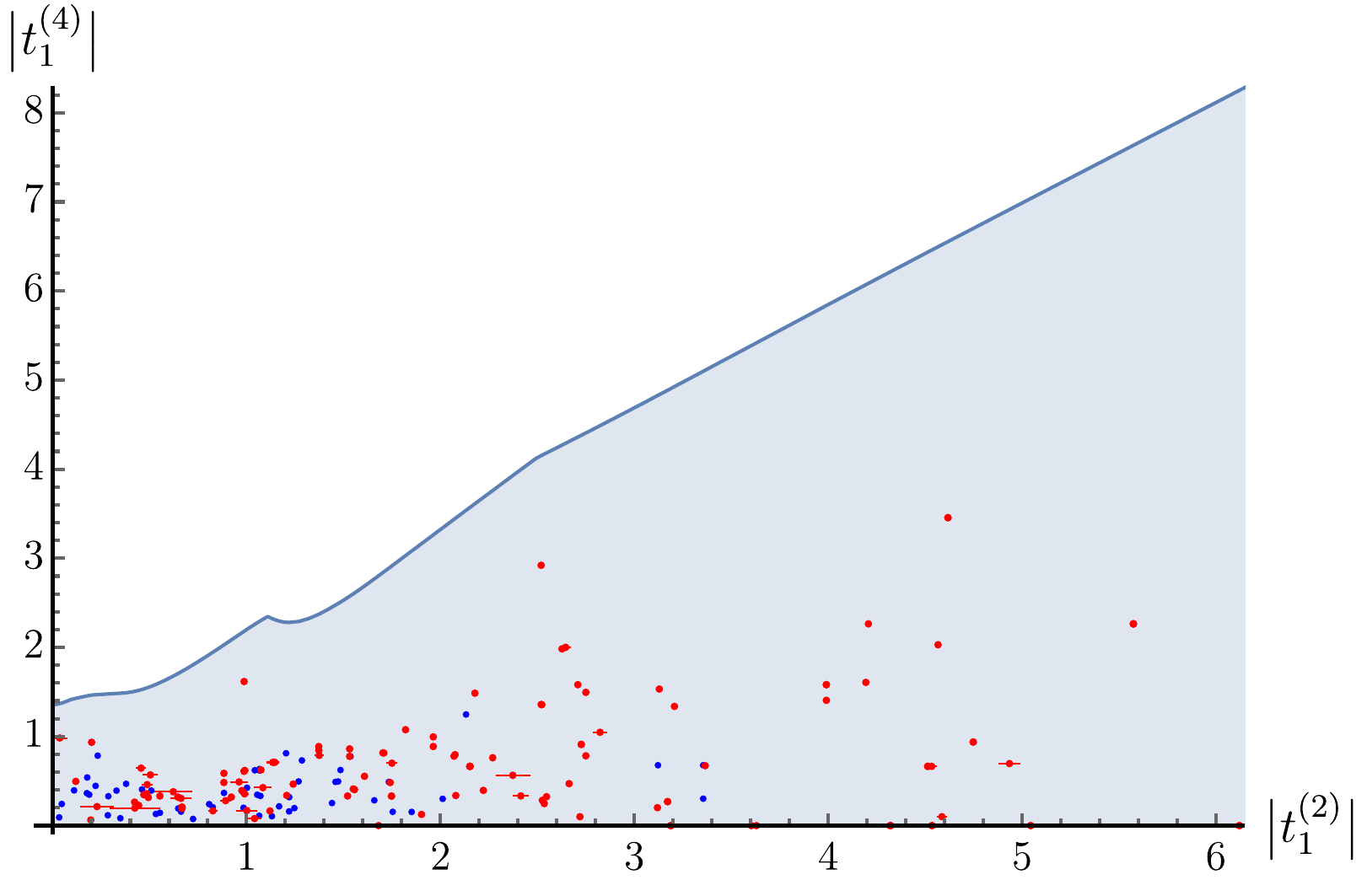,scale=.4}
\end{center}
\caption{An upper bound on the smallest spin-4 eigenvalue for different values of the smallest spin-2 eigenvalue.}
\label{fig:bound_2_4}
\end{figure}

\newpage 
	
\section{Bounds from the trace formula}
\label{sec:selberg-general-bounds}
In this section, we use the Selberg trace formula to derive bounds for manifolds and orbifolds of a particular volume. In particular, we find upper bounds on the spectral gap $\lambda^{(0)}_1$ and the systole length in terms of volume.

\subsection{Bounds on the spectral gap of the Laplace--Beltrami operator}
We start by looking for upper bounds on the spectral gap $\lambda^{(0)}_1$ for orbifolds.
	\begin{proposition}\label{STFpragmatic}
	 Let us define 
	 $$
	\begin{aligned}
	N_{j, \omega}(V)&:= \frac{1}{j!} \frac{d^j}{d\beta^j} \left(1-\frac{V}{2\pi }\frac{e^{-\beta}}{4 \sqrt{\pi\beta^3 }}\right)\Bigg|_{\beta=\omega}\,,\\
	S_{j}(\lambda)&:=\frac{1}{j!} e^{\omega\lambda}\frac{d^j}{d\beta^j}  \left(e^{-\beta\lambda}\right)\Bigg|_{\beta=\omega}\,,\quad T_{j, \omega }(p):=-\frac{1}{j!}e^{\frac{p}{4 \omega}}\frac{d^j}{d\beta^j}  \left(\frac{e^{-\frac{p}{4 \beta}-\beta}}{2 \sqrt{\pi \beta}}\right)\Bigg|_{\beta=\omega}\,,
	\end{aligned}
	$$
	and 
	$$
	c^2_\gamma:= \frac{\Theta(\gamma) {\rm vol}(\Gamma_{\gamma} \backslash G_{\gamma} )}{2(\cosh \ell(\gamma)-\cos \phi(\gamma))}\geqslant 0\,.
	$$
	In terms of these variables and denoting $p_\gamma=\ell^2_\gamma$,  $\lambda=t^2+1$, the Selberg trace formula~\eqref{eq:STF} implies the following conditions:
	\begin{equation}\label{eq:STFpragmatic}
	N_{j, \omega} (V)+ \sum_{i=1}^{\infty}e^{-\omega\lambda^{(0)}_i} S_{j}(\lambda^{(0)}_i) + \sum_{[\gamma]\neq 1} c_\gamma^2 e^{-\frac{p_\gamma}{4 \omega}} T_{j, \omega }(p_\gamma)=0\,,  \quad  \forall\ j\in \mathbb{Z}_{\geq 0}\,.
	\end{equation}
	\end{proposition}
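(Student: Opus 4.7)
The plan is to apply Theorem~\ref{thm:trace-even} with the one-parameter family of Gaussian test functions
\[
\hat H_\beta(t)=e^{-\beta(t^2+1)},\qquad H_\beta(x)=\frac{e^{-\beta}}{2\sqrt{\pi\beta}}\,e^{-x^2/(4\beta)},\qquad \beta>0,
\]
and then to differentiate the resulting identity $j$ times in $\beta$ at $\beta=\omega$, normalizing by $1/j!$. Validity of the even trace formula on Gaussian test functions is the content of the remark following Theorem~\ref{thm:trace-even}.

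Computationally, the three ingredients entering~\eqref{eq:STF} evaluate to: the exceptional contribution $\hat H_\beta(i)=e^{-\beta(i^2+1)}=1$; the identity term $-\tfrac{V}{2\pi}H''_\beta(0)=\tfrac{V\,e^{-\beta}}{8\pi\sqrt{\pi\beta^3}}$, coming from $H''_\beta(0)=-e^{-\beta}/(4\sqrt{\pi\beta^3})$; and the geodesic contribution $H_\beta(\ell_\gamma)=\tfrac{e^{-\beta}}{2\sqrt{\pi\beta}}\,e^{-p_\gamma/(4\beta)}$, with $p_\gamma=\ell_\gamma^2$. Using the abbreviation $c_\gamma^2=\tfrac{\Theta(\gamma)\,\mathrm{vol}(\Gamma_\gamma\backslash G_\gamma)}{2(\cosh\ell_\gamma-\cos\phi_\gamma)}$ and moving everything to one side, the trace formula becomes the single real-analytic identity
\[
F(\beta):=\Bigl(1-\tfrac{V}{2\pi}\tfrac{e^{-\beta}}{4\sqrt{\pi\beta^3}}\Bigr)+\sum_{i=1}^{\infty} e^{-\beta\lambda_i^{(0)}}-\sum_{[\gamma]\neq 1}c_\gamma^2\,\frac{e^{-\beta-p_\gamma/(4\beta)}}{2\sqrt{\pi\beta}}=0\qquad\text{for all }\beta>0.
\]

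Next I would apply $\tfrac{1}{j!}\partial_\beta^j|_{\beta=\omega}$ to $F(\beta)=0$ term by term. The derivative of the first bracket reproduces $N_{j,\omega}(V)$ by definition. For the spectral sum one uses $\tfrac{1}{j!}\partial_\beta^j e^{-\beta\lambda}|_{\beta=\omega}=\tfrac{(-\lambda)^j}{j!}e^{-\omega\lambda}=e^{-\omega\lambda}S_j(\lambda)$, reproducing $\sum_i e^{-\omega\lambda_i^{(0)}}S_j(\lambda_i^{(0)})$. For the geodesic sum, the very definition of $T_{j,\omega}(p)$ says that $\tfrac{1}{j!}\partial_\beta^j\bigl(\tfrac{e^{-\beta-p/(4\beta)}}{2\sqrt{\pi\beta}}\bigr)|_{\beta=\omega}=-e^{-p/(4\omega)}T_{j,\omega}(p)$, so the minus sign in front of the geodesic sum in $F(\beta)$ cancels the minus inside $T_{j,\omega}$, and~\eqref{eq:STFpragmatic} drops out.

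The only genuine point that needs care, and which I expect to be the main (though routine) obstacle, is justifying that $\partial_\beta^j$ commutes with the two infinite sums. On the spectral side, Weyl's law in three dimensions gives $\lambda_i^{(0)}=O(i^{2/3})$, so each $j$-th derivative produces only the polynomial factor $\lambda_i^j$, uniformly dominated on a small neighborhood $[\omega-\varepsilon,\omega+\varepsilon]$ by the rapidly decaying $e^{-(\omega-\varepsilon)\lambda_i^{(0)}/2}$. On the geometric side, the prime geodesic theorem bounds the number of classes $[\gamma]$ with $\ell_\gamma\le L$ by $O(e^{2L}/L)$, while Leibniz's rule expresses $\partial_\beta^j\bigl(e^{-\beta-p/(4\beta)}/\sqrt{\beta}\bigr)$ as a polynomial in $p$ and $1/\beta$ times the same kernel; the Gaussian-in-$\ell$ factor $e^{-\ell_\gamma^2/(4\beta)}$ then absorbs the exponential count. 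Dominated convergence therefore legitimizes the interchange, and~\eqref{eq:STFpragmatic} holds for every $\omega>0$ and every $j\in\mathbb{Z}_{\geq 0}$.
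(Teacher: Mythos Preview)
Your proof is correct and follows exactly the paper's approach: insert the Gaussian test function $\hat H_\beta(t)=e^{-\beta(t^2+1)}$ into the $J=0$ trace formula (admissibility by the remark after Theorem~\ref{thm:trace-even}), rewrite the resulting identity in $\beta$, and apply $\tfrac{1}{j!}\partial_\beta^j|_{\beta=\omega}$. Your additional justification of the interchange of differentiation with the two infinite sums is more than the paper provides, but otherwise the arguments are identical.
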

	
	\begin{proof}
	We consider the $J=0$ Selberg trace formula with Gaussian test functions,
	$$
	\hat{H}(t):=e^{-\beta \left(t^2+1\right)}\,,\quad H(\ell)=\frac{e^{-\frac{\ell^2}{4 \beta}-\beta}}{2 \sqrt{\pi \beta}}\,,
	$$
	which were shown to be admissible test functions in~\cite{Lin-Lipnowski2021}.
	Using these functions in ~\eqref{eq:STF} leads to 
	$$
	 \left(1-\frac{V}{8\pi }\frac{e^{-\beta}}{\sqrt{\pi\beta^3 }}\right) + \sum_{i=1}^{\infty}e^{-\beta \left((t_i^{(0)})^2+1\right)} = \sum_{[\gamma] \neq 1} c^2_\gamma  \frac{e^{-\frac{\ell_\gamma^2}{4 \beta}-\beta}}{2 \sqrt{\pi \beta}}.
	$$ 
The result follows by taking derivatives of this equation with respect to $\beta$ at $\beta=\omega$.
	\end{proof}
	
	The form of the Selberg trace formula in Proposition~\ref{STFpragmatic} is amenable to semi-definite programming owing to the following proposition:

\begin{proposition}\label{prop:spectralBoundsSTF}
Let $\lambda^{(0)}_1$ be the first nonzero eigenvalue of the Laplace--Beltrami operator on an oriented closed hyperbolic 3-manifold $M$ with volume V.  For a given $\Lambda\in \mathbb{N}$ and $\lambda_*>0$,  if there exists a vector $\vec{\alpha}$ with components $\{\alpha_j\}_{j=0,\cdots, \Lambda}$ such that
\begin{enumerate}
	\item\label{eq:functional}
	 $\sum_{j=0}^{\Lambda}\alpha_j N_{j, \omega}(V)=1$\,,
	\item $\sum_{j=0}^{\Lambda}\alpha_j S_j(\lambda) \geqslant 0$ \, for all \ $\lambda\geqslant \lambda_*,$
\item $ \sum_{j=0}^{\Lambda} \alpha_j T_{j, \omega}(p)\geqslant 0$\, for all \ $p\geqslant 0$,
	\end{enumerate}
	then $\lambda^{(0)}_1 <\lambda_*$.
\end{proposition}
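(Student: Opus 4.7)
The plan is a short linear-programming argument exploiting the spectral identity of Proposition~\ref{STFpragmatic}: take the linear combination $\sum_{j=0}^{\Lambda}\alpha_j\cdot\eqref{eq:STFpragmatic}$ and argue by contradiction against the sign conditions on $\vec\alpha$. This is the linear-programming dual of the spectral identity, in direct analogy with Cohn--Elkies bounds for sphere packings and with the bootstrap bound Proposition~\ref{prop:spectralBounds}.

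In detail, suppose for contradiction that $\lambda_1^{(0)}\geq\lambda_*$. Interchanging the finite $j$-sum with the spectral sum over $i$ and the geometric sum over $[\gamma]$—permissible because the Gaussian kernel makes both sums absolutely convergent and $S_j(\lambda)$ and $T_{j,\omega}(p)$ are polynomially bounded in their arguments—transforms~\eqref{eq:STFpragmatic} into
$$
\sum_{j=0}^{\Lambda}\alpha_j N_{j,\omega}(V)+\sum_{i=1}^{\infty}e^{-\omega\lambda_i^{(0)}}\Bigl(\sum_{j=0}^{\Lambda}\alpha_j S_j(\lambda_i^{(0)})\Bigr)+\sum_{[\gamma]\neq 1}c_\gamma^2\,e^{-p_\gamma/(4\omega)}\Bigl(\sum_{j=0}^{\Lambda}\alpha_j T_{j,\omega}(p_\gamma)\Bigr)=0.
$$
Condition~(1) sets the first term equal to $1$. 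Since $\lambda_i^{(0)}\geq\lambda_1^{(0)}\geq\lambda_*$ for every $i\geq 1$, condition~(2) makes every summand in the spectral sum nonnegative. Since $p_\gamma=\ell(\gamma)^2\geq 0$ and $c_\gamma^2\geq 0$ for every nontrivial conjugacy class, condition~(3) makes every summand in the geometric sum nonnegative. The left-hand side is therefore $\geq 1$, contradicting its equality to zero. Hence $\lambda_1^{(0)}<\lambda_*$.

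Given that Proposition~\ref{STFpragmatic} is already available, there is essentially no obstacle in the proof itself; the argument is a textbook positivity/duality manipulation. The only subtleties worth verifying explicitly are the sign $c_\gamma^2\geq 0$—manifest since $\cosh\ell(\gamma)\geq 1>\cos\phi(\gamma)$ for every nontrivial $\gamma$, with equality in the first place only for elliptic elements, which by definition have $\phi(\gamma)\neq 0$—and the Fubini-type interchange of the finite sum in $j$ with the infinite sums in $i$ and $[\gamma]$, which is immediate from absolute convergence of the Selberg trace formula with Gaussian test functions. The real difficulty, which the proposition purposely offloads, is the \emph{construction} of $\vec\alpha$ satisfying (1)--(3) for a prescribed volume~$V$ with $\lambda_*$ as small as possible; this is accomplished numerically by recasting the continuous positivity requirements~(2) on $\lambda\geq\lambda_*$ and~(3) on $p\geq 0$ as polynomial positivity constraints and feeding the resulting semidefinite program into \texttt{SDPB}, and it is this construction—rather than the present proposition—that forms the technical core of the eventual proof of Theorem~\ref{thm:gap-bound}.
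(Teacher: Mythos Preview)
Your proof is correct and follows essentially the same approach as the paper's: both take the linear combination $\sum_{j=0}^{\Lambda}\alpha_j$ of the identities~\eqref{eq:STFpragmatic}, use condition~(1) to set the constant term to $1$, and then invoke conditions~(2) and~(3) together with $c_\gamma^2\geq 0$ to force a spectral contribution with $\lambda<\lambda_*$. The only cosmetic difference is that you phrase it as a contradiction argument (assume $\lambda_1^{(0)}\geq\lambda_*$ and derive $0\geq 1$), whereas the paper rearranges the identity so that the spectral sum is seen to be strictly negative and then observes that a negative term must occur.
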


\begin{proof}
Let us denote 
\be \label{eq:vec-notation}
\vec{\alpha} \cdot \vec{N}_{\omega, \Lambda}(V) \coloneqq \sum_{j=0}^{\Lambda}\alpha_j N_{j, \omega}(V), \quad   \vec{\alpha} \cdot \vec{S}_{\Lambda}(\lambda ) \coloneqq \sum_{j=0}^{\Lambda}\alpha_j S_j(\lambda), \quad  \vec{\alpha} \cdot \vec{T}_{\omega, \Lambda}(p ) \coloneqq  \sum_{j=0}^{\Lambda} \alpha_j T_{j, \omega}(p).
\ee
We observe from~\eqref{eq:STFpragmatic} that the Selberg trace formula gives 
\begin{equation} \label{eq:STF-lambda-eq}
\sum_i e^{-\omega\lambda^{(0)}_i}\vec{\alpha} \cdot \vec{S}_{\Lambda}(\lambda^{(0)}_i)=-\vec{\alpha} \cdot \vec{N}_{\omega, \Lambda}(V) -\sum_{[\gamma]\neq 1} c_\gamma^2\vec{\alpha} \cdot \vec{T}_{\omega, \Lambda}(p_\gamma)\,.
\end{equation}
The right-hand side of~\eqref{eq:STF-lambda-eq} is negative, by our first and third assumptions and the fact that $c_{\gamma}^2\geq0$, and therefore the spectral sum on the left-hand side is also negative. By our second assumption, we have $\vec{\alpha} \cdot \vec{S}_{\Lambda}(\lambda )\geq 0$ for all $\lambda\geqslant \lambda_*$, so there must be a contribution to the spectral sum with $\lambda <\lambda_*$, which proves the proposition.
\end{proof}

Using Proposition~\ref{prop:spectralBoundsSTF}, we can find upper bounds on $\lambda^{(0)}_1$ in terms of volume. In Figure~\ref{fig:selberg-upper-bound}, which is presented in the introduction, we show the resulting numerical upper bound on $\big|t^{(0)}_1\big|$ for $\Lambda=25$ and $\omega=1/2$, together with lower bounds on the spectral gaps of some example orbifolds. For $V = 0.94$, the upper bound we find is less than $47.32$. We can extend this to a rigorous bound for $V\geq 0.94$ to obtain the following result:
\begin{theoremLambda1}
Let $\lambda^{(0)}_1$ be the first nonzero eigenvalue of the Laplace--Beltrami operator on an oriented closed hyperbolic 3-manifold. This satisfies the following upper bound:
\be
\lambda^{(0)}_1 < \frac{193785}{4096} \approx 47.3108.
\ee
\end{theoremLambda1}
\begin{proof}
The smallest closed hyperbolic 3-manifold is the Weeks manifold~\cite{Gabai2009}, which has volume $V= 0.94270736 \ldots >47/50$. It is therefore sufficient to verify the upper bound on the spectral gap for manifolds with $V>47/50$. Consider the following vector, which is also included in the ancillary file ``alpha-manifold.txt":
$$
\begin{aligned}
\vec{\alpha}(V) & = \frac{-1}{10^8}\Big(\frac{71121394927352129365612271758976459 \sqrt{2} V-15944911639937280000000000000 \sqrt{e} \pi ^{3/2}}{39862279099843200000
   \left(4 \sqrt{e} \pi ^{3/2}-\sqrt{2}
   V\right)}, \nn \\
& -19245520229997,9006560617729,-1814726225141,6547197712933, \nn \\
&10721444671903,  26479728197731, 52987341181353,97645713842881,15955921283
   4925,\nn \\
   &232045826988633,298547797283859,  338853972868892,337870740962673, 294614195206713, \nn \\ 
   & 223363554736767,146183319207000,818246341579
   11,  38702935837026,15224422720686, \nn \\
   & 4873578082873,1231368841228,234699426881,31398670005,2589176277,  97567520\Big). \nn
\end{aligned}
$$
We will show that this vector satisfies the three conditions in Proposition~\ref{prop:spectralBoundsSTF} with $\omega=1/2$, $\Lambda=25$, and $\lambda^*=193785/4096$ for  $47/50  \leqslant V \leqslant 25$. This then implies that all manifolds with $V\leqslant 25$ satisfy the stated bound on the spectral gap by Proposition~\ref{prop:spectralBoundsSTF}. 

The condition $ \vec{\alpha}(V) \cdot \vec{N}_{1/2, 25}(V)=1$ follows from the normalization of $\vec{\alpha}(V)$. To verify the other two conditions in Proposition~\ref{prop:spectralBoundsSTF}, let us define the following two order-25 polynomials with rational coefficients:
$$
\begin{aligned}
P_1(x) &  \coloneqq \vec{\alpha}(V) \cdot \vec{S}_{25}(193785/4096 +x ) -\vec{\alpha}(V) \cdot \vec{S}_{25}(193785/4096  ), \\
P_2(x) &  \coloneqq \sqrt{2 \pi e} \left[ \vec{\alpha}(47/50)  \cdot \vec{T}_{1/2, 25}(x )-\vec{\alpha}(47/50)  \cdot \vec{T}_{1/2, 25}(0 )\right]+2438516 .
\end{aligned}
$$
It can be checked rigorously that $P_1(x) \geq 0$ and $P_2(x) \geq 0$ for $x \geq 0$. We verified this using the algorithm from Appendix B of~\cite{Kravchuk:2021akc}.
We also note that we can write
$$
\begin{aligned}
\vec{\alpha}(V) \cdot \vec{S}_{25}(193785/4096  ) & = q_1 -\frac{q_2 V}{2 \sqrt{2 e \pi^3}-V}, \label{eq:V-condition-1} \\
\sqrt{2 \pi e} \left[ \vec{\alpha}(V)-\vec{\alpha}(47/50) \right] \cdot \vec{T}_{1/2, 25}(p )  & = \frac{ q_3  \sqrt{e \pi^3}(50V-47 )}{\sqrt{2} (47 \sqrt{2}-200 \sqrt{e \pi^3})(\sqrt{2} V-4 \sqrt{e \pi^3})}, \\
\sqrt{2 \pi e}  \, \vec{\alpha}(47/50)  \cdot \vec{T}_{1/2, 25}(0 )-2438516 & = \frac{\sqrt{e \pi ^3} q_4 - \sqrt{2} q_5}{47 \sqrt{2} -200 \sqrt{e \pi^3}},
\end{aligned}
$$
where $q_1, \dots q_5$ are particular positive rational numbers. It can be checked straightforwardly that for $47/50  \leqslant V \leqslant 25$ we have
$$
\begin{aligned}
 \vec{\alpha}(V) \cdot \vec{S}(193785/4096 ) & \geq 0\quad  \text{and} \quad \left[ \vec{\alpha}(V)-\vec{\alpha}(47/50) \right] \cdot \vec{T}_{1/2, 25}(p ) \geq 0. \nn
\end{aligned}
$$
For example, the right-hand side of~\eqref{eq:V-condition-1} is positive at $V=0$ and vanishes at $V=2 q_1 \sqrt{2 e \pi^3}/(q_1 + q_2)<2 \sqrt{2 e \pi^3}$, and it can be verified that this zero is greater than 25 using rational approximations to $e$ and $\pi$.
Similarly, we can also show that
$$
\sqrt{2 \pi e}  \,\vec{\alpha}(47/50)  \cdot \vec{T}_{1/2, 25}(0 )-2438516 >0. \nn
$$
By combining these inequalities with the non-negativity of the polynomials $P_1(x)$ and $P_2(x)$ for $x \geq 0$, we deduce that for $47/50  \leqslant V \leqslant 25$ and $ x \geq 0$ we have
$$
 \vec{\alpha}(V) \cdot \vec{S}_{25}(193785/4096 +x ) \geq 0 \quad  \text{and}  \quad\vec{\alpha}(V) \cdot \vec{T}_{1/2, 25}(x ) \geq 0 . \nn
$$

To complete the proof, we need to show that manifolds with $V>25$ also satisfy the stated bound on $\lambda^{(0)}_1$. For this we use the following vector:
$$
\begin{aligned}
\vec{\alpha}'(V)  = \frac{1}{10^8} & \Big( \frac{119054041669332482567 \sqrt{3} V+1600000000 e^{1/3} \pi ^{3/2}}{2 \left(8 e^{1/3} \pi
   ^{3/2}-3 \sqrt{3} V\right)},-3514199558672594815, \nn \\
   &\quad  23240623445190391,-74712908174666 \Big). \nn
\end{aligned}
$$
We show that this vector satisfies the three conditions in Proposition~\ref{prop:spectralBoundsSTF} with $\omega=1/3$, $\Lambda=3$, and $\lambda^*=193785/4096 $ for  $ V \geq 25$. This then implies that all manifolds with $V\geq  25$ satisfy the bound on the spectral gap. As before, the condition $ \vec{\alpha}'(V) \cdot \vec{N}_{1/3, 3}(V)=1$ follows from the normalization of $\vec{\alpha}'(V)$. Let us now define the following two cubic polynomials with rational coefficients:
$$
\begin{aligned}
P_3(x) &  \coloneqq \vec{\alpha}'(V) \cdot \vec{S}_3(193785/4096 +x ) -\vec{\alpha}'(V) \cdot \vec{S}_3(193785/4096  ), \nn \\
P_4(x) &  \coloneqq \sqrt{3 \pi} e^{1/3} \vec{\alpha}'(V)  \cdot\left[  \vec{T}_{1/3, 3}(x )-  \vec{T}_{1/3, 3}(0 )\right]. \nn
\end{aligned}
$$
These polynomials are easily verified to be non-negative for $x \geq 0$. We can also verify that for $V \geq 25$ we have
$$
\vec{\alpha}'(V) \cdot \vec{S}_3(193785/4096  ) \geq 0 \quad  \text{and}  \quad  \vec{\alpha}'(V)  \cdot \vec{T}_{1/3, 3}(0 ) \geq 0,
$$
since these are simple rational functions of $V$.
Combining these inequalities with the non-negativity of the polynomials $P_3(x)$ and $P_4(x)$ for $x \geq 0$, we deduce that for $V \geq 25$ and $x \geq 0$ we have
$$
\vec{\alpha}'(V) \cdot \vec{S}_3(193785/4096 +x ) \geq 0 \quad  \text{and}  \quad  \vec{\alpha}'(V)  \cdot  \vec{T}_{1/3, 3}(x ) \geq 0.
$$
This then implies the stated bound on $\lambda^{(0)}_1$.
\end{proof}
	
	Theorem~\ref{thm:gap-bound} is the strongest available upper bound on the spectral gap for closed hyperbolic 3-manifolds, as far as we know.\footnote{Another way to obtain an explicit bound of this form is by combining the upper bound on Cheeger's constant $h\leqslant 57.7131$ from Callahan's thesis~\cite{Callahan-thesis} with Buser's inequality $\lambda^{(0)}_1\leqslant 4 h+10h^2$~\cite{Buser-inequality}, which gives $\lambda^{(0)}_1< 33549$. } This bound can be strengthened by increasing the derivative order $\Lambda$. Numerically it seems to converge above $47$. This is still somewhat larger than the largest manifold spectral gap we know, namely $\lambda^{(0)}_1 = 29.218(5)$ for the Meyerhoff manifold. 
	
	To get a bound that is valid for all closed oriented hyperbolic 3-orbifolds, we must increase $\Lambda$. Taking $\Lambda=97$ and $\omega=1/2$ gives the following bound:	\begin{theorem} \label{thm:gap-bound-orbifold}
Let $\lambda^{(0)}_1$ be the first nonzero eigenvalue of the Laplace--Beltrami operator on an oriented closed hyperbolic 3-orbifold. This satisfies the following bound:
$$
\lambda^{(0)}_1 < 400.
$$
\end{theorem}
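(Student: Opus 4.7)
The plan is to mirror the proof of Theorem~\ref{thm:gap-bound} almost verbatim, but with a functional of much higher derivative order (specifically $\Lambda=97$) in order to accommodate orbifolds whose volume may be substantially smaller than the Weeks volume. The key input is the known lower bound on the volume of closed oriented hyperbolic 3-orbifolds: by Gehring--Martin~\cite{Gehring-Martin-I} and Marshall--Martin~\cite{Marshall-Martin-II}, every such orbifold satisfies $V \geq V_{\min}$, where $V_{\min} \approx 0.0391$ is the volume of orb$(0.0391)$. We therefore only need to verify the three conditions of Proposition~\ref{prop:spectralBoundsSTF} with $\lambda_* = 400$ for all $V \geq V_{\min}$.

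First I would fix $\omega = 1/2$ and $\Lambda = 97$, and search numerically (via \texttt{SDPB} or an equivalent semidefinite-programming implementation) for a vector $\vec{\alpha}(V)$ whose components are rational functions of $V$ satisfying conditions (\ref{eq:functional})--(3) of Proposition~\ref{prop:spectralBoundsSTF} across the range $V \in [V_{\min}, V_1]$ for some cutoff $V_1$ (e.g.\ $V_1=25$, analogous to the manifold case). As in the proof of Theorem~\ref{thm:gap-bound}, I would then rationalize the numerically obtained vector, supply it explicitly in an ancillary file, and verify the three conditions rigorously: condition (1) is a normalization identity that is immediate; conditions (2) and (3) reduce to showing non-negativity, for $x \geq 0$, of certain polynomials with rational coefficients together with simple rational functions of $V$. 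Non-negativity of the polynomials can be checked with the algorithm from Appendix B of~\cite{Kravchuk:2021akc}, and the residual dependence on $V$ (which enters affinely in the normalization denominator $4\sqrt{e}\pi^{3/2}-\sqrt{2}V$) can be analyzed by hand just as in the manifold proof.

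Next I would produce a second, lower-order functional $\vec{\alpha}'(V)$ (say with $\omega=1/3$ and $\Lambda$ small) that extends the bound to all $V \geq V_1$, again checking the three conditions by reducing to a small number of cubic or quartic polynomial inequalities. Combining the two ranges then covers all orbifold volumes $V \geq V_{\min}$. The conclusion $\lambda^{(0)}_1 < 400$ follows from Proposition~\ref{prop:spectralBoundsSTF}, since $c_\gamma^2 \geq 0$ for every conjugacy class, including the elliptic ones whose presence is the only structural difference from the manifold case (the factor $\Theta(\gamma)$ in the definition of $c_\gamma^2$ is automatically non-negative, so no further adaptation is needed).

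The hard part will be \emph{finding} the functional $\vec{\alpha}(V)$ at $\Lambda = 97$: the semidefinite program has many more variables than in the manifold case, and the much smaller volumes force the functional to be delicately balanced so that the $V$-dependent piece of condition (2) remains positive on the whole range $[V_{\min}, V_1]$ simultaneously with the positivity of condition (3) on $p \geq 0$. A secondary difficulty is that rationalizing a $98$-component vector while preserving strict positivity margins may require high-precision numerics and careful rounding. Once a suitable $\vec{\alpha}(V)$ is found, however, the rigorous verification is entirely mechanical and proceeds exactly as in the proof of Theorem~\ref{thm:gap-bound}.
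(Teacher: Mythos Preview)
Your proposal is correct and follows essentially the same approach as the paper's proof: the paper also uses $\omega=1/2$, $\Lambda=97$, a rational lower bound $V>18/461$ on the orbifold volume, and an explicit rationalized vector $\vec{\alpha}(V)$ verified via the polynomial positivity algorithm of~\cite{Kravchuk:2021akc}. The only minor shortcut the paper takes is that, for the large-volume regime $V>25$, it does not construct a new low-order functional but simply invokes the functional $\vec{\alpha}'(V)$ already exhibited in the proof of Theorem~\ref{thm:gap-bound}, since the bound $\lambda^*=193785/4096$ established there is already smaller than $400$.
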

\begin{proof}
The proof is similar to the proof of Theorem~\ref{thm:gap-bound}. The smallest oriented closed hyperbolic 3-orbifold is the order-2 quotient of the tetrahedral orbifold $\Gamma^+(T_2) \backslash \hh$~\cite{Gehring-Martin-I, Marshall-Martin-II}, as described in Appendix~\ref{app:tetrahedra}, which has volume $V= 0.039050\ldots >18/461$. It is therefore sufficient to verify the upper bound on the spectral gap for orbifolds with $V>18/461$. To this end, consider the  vector $\vec{\alpha}(V)$ defined in the ancillary file ``alpha-orbifold.txt," which is too large to display here.
This vector satisfies the three conditions in Proposition~\ref{prop:spectralBoundsSTF} with $\omega=1/2$, $\Lambda=97$, and $\lambda^*=400$ for  $18/461  \leqslant V \leqslant 25$. This then implies that all orbifolds with $V\leqslant 25$ satisfy $\lambda^{(0)}_1 < 400$ by Proposition~\ref{prop:spectralBoundsSTF}. The condition $ \vec{\alpha}(V) \cdot \vec{N}_{1/2, 97}(V)=1$ again follows from the normalization of $\vec{\alpha}(V)$. To verify the other two conditions, we define the following two order-97 polynomials with rational coefficients:
$$
\begin{aligned}
P'_1(x) &  \coloneqq \vec{\alpha}(V) \cdot \vec{S}_{97}(400 +x ) -\vec{\alpha}(V) \cdot \vec{S}_{97}(400  ), \\
P'_2(x) &  \coloneqq \sqrt{2 \pi e} \left[ \vec{\alpha}(18/461)  \cdot \vec{T}_{1/2, 97}(x )-\vec{\alpha}(18/461)  \cdot \vec{T}_{1/2, 97}(0 )\right]+92641675559.
\end{aligned}
$$
It can be checked rigorously that $P'_1(x) \geq 0$ and $P'_2(x) \geq 0$ for $x \geq 0$ using the algorithm from Appendix B of~\cite{Kravchuk:2021akc}.
We can also check that the following inequalities hold for $18/461  \leqslant V \leqslant 25$:
$$
\begin{aligned}
&\vec{\alpha}(V) \cdot \vec{S}_{97}(400   )   \geq 0 , \\
&\sqrt{2 \pi e} \left[ \vec{\alpha}(V)-\vec{\alpha}(18/461) \right] \cdot \vec{T}_{1/2, 97}(p )   \geq 0, \\
&\sqrt{2 \pi e}  \, \vec{\alpha}(18/461)  \cdot \vec{T}_{1/2, 97}(0 )-92641675559  \geq 0,
\end{aligned}
$$
since these are simple rational functions of $V$.
By combining these inequalities with the non-negativity of the polynomials $P'_1(x)$ and $P'_2(x)$ for $x \geq 0$, we deduce that for $18/461 \leqslant V \leqslant 25$ and $ x \geq 0$ we have
$$
 \vec{\alpha}(V) \cdot \vec{S}_{97}(400 +x ) \geq 0 \quad  \text{and}  \quad\vec{\alpha}(V) \cdot \vec{T}_{1/2, 97}(x ) \geq 0 . \nn
$$
To complete the proof, we need to show that orbifolds with volume  $V>25$ also satisfy $\lambda^{(0)}_1 < 400$, but this follows from the proof of Theorem~\ref{thm:gap-bound}.
\end{proof} 

\subsection{Analytical bounds on $\lambda^{(0)}_1$ at large volume}
	In this subsection, we use the Selberg trace formula to derive an upper bound on $\lambda^{(0)}_1$ for large volume, following the approach used by~\cite{bourque2023linear} for hyperbolic surfaces. This leads to Theorem~\ref{thm:lambda1As}, which we restate here:
\begin{theoremAs}
For any $\varepsilon>0,$  there exists $V_*>0$ such that for all oriented closed hyperbolic 3-orbifolds with volume $V>V_{*}$, we have
\begin{equation*}\label{eq:3}
\lambda^{(0)}_1< 1+ \frac{54\left(1+\varepsilon\right)}{\left[ \log V\right]^2}\,.
\end{equation*}
\end{theoremAs}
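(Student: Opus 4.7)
The plan is to adapt the linear-programming strategy of Fortier Bourque and Petri~\cite{bourque2023linear}, developed for hyperbolic surfaces, to the setting of hyperbolic 3-orbifolds via the even Selberg trace formula~\eqref{eq:STF}. Fix $\varepsilon>0$ and a cocompact lattice $\Gamma\subset G$ of covolume $V$, and assume for contradiction that $\lambda^{(0)}_1\geq 1+\delta$ with $\delta:=54(1+\varepsilon)/(\log V)^2$. Since the statement is vacuous when $\lambda^{(0)}_1<1$, we may assume every non-trivial $t^{(0)}_j$ is real with $(t^{(0)}_j)^2\geq\delta$. The strategy is to construct an even, compactly supported test function $H$ whose Fourier transform satisfies $\hat H(t)\leq 0$ on $\{t\in\mathbb{R}:t^2\geq\delta\}$ and $\hat H(i)>0$, so that substitution into~\eqref{eq:STF} combined with $\sum_{j\geq 1}\hat H(t^{(0)}_j)\leq 0$ produces
$$
\hat H(i)\;\geq\;-\frac{V}{2\pi}\,H''(0)\;+\;\sum_{[\gamma]\neq 1}\Theta(\gamma)\,\mathrm{vol}(\Gamma_\gamma\backslash G_\gamma)\,\frac{H(\ell(\gamma))}{2(\cosh\ell(\gamma)-\cos\phi(\gamma))}\,.
$$
The contradiction will come from showing that this inequality in fact fails for a judicious choice of $H$ and scale $L=L(V)$ of order $\log V$.

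A canonical ansatz is $H(x)=\delta\,G(x)+G''(x)$ with $G=g\ast g$ for an even, $C^1$ function $g$ supported on $[-L/2,L/2]$, the prototype being $g(x)=\cos^2(\pi x/L)\,\chi_{[-L/2,L/2]}(x)$. Then $\hat H(t)=(\delta-t^2)\hat G(t)$ with $\hat G=\hat g^2\geq 0$, so the required sign conditions hold, while $\hat H(i)=(1+\delta)\hat g(i)^2$ is exponentially large in $L$. A direct computation yields $G''(0)=-\|g'\|^2$ and $G''''(0)=\|g''\|^2$, whence $H''(0)=-\delta\|g'\|^2+\|g''\|^2$; the requirement $H''(0)<0$ (so that the volume term on the right is positive) forces $\delta>\|g''\|^2/\|g'\|^2$, a Sobolev-type ratio whose minimum over admissible $g$ controls the leading constant. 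Taking $L$ slightly larger than $\tfrac12\log V$ with the optimal $g$, and balancing $\hat H(i)$ against $V|H''(0)|/(2\pi)$, produces the sharp constraint $\delta(\log V)^2\leq 54(1+\varepsilon)$ asymptotically.

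The remaining task is to bound the geodesic sum so that it stays subdominant to the volume term $V|H''(0)|/(2\pi)$. For regular hyperbolic conjugacy classes one uses $\cosh\ell-\cos\phi\geq\tfrac12(e^\ell-1)$ together with a uniform version of the prime geodesic theorem for cocompact lattices in $\mathrm{PSL}_2(\mathbb{C})$ (in the spirit of~\cite{Sarnak1983}) to bound that contribution by $O(\|H\|_\infty\,e^L/L)$, which is of order $V^{1/2+o(1)}$ and thus negligible compared to $V/L$. The elliptic and bad-hyperbolic contributions, which are specific to the orbifold setting and have no analogue in~\cite{bourque2023linear}, require more care: using the covolume formulas~\eqref{eq:covol-hyperbolic}--\eqref{eq:bad-covolume} together with a bound on the total length of the singular locus by a multiple of $V$, their combined contribution is shown to be $O(V\|H\|_\infty/L^{2})$, again subdominant.

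The main obstacle I anticipate is precisely this orbifold-specific bookkeeping for elliptic and bad-hyperbolic classes, together with the optimisation identifying the constant $54$ as the correct trade-off between the exponential growth of $\hat G(i)$ and the polynomial decay of $|H''(0)|$ under the admissibility condition $\delta\|g'\|^2\geq\|g''\|^2$. Once these estimates are in place, taking $V\to\infty$ along any sequence violating the claimed bound yields a contradiction with the fundamental inequality above, completing the proof.
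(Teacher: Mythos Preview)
Your ansatz $\hat H(t)=(\delta-t^2)\hat G(t)$ with $\hat G\geq 0$ is exactly the shape the paper uses (their $\hat f(t)=(1-t^2/v^2)\,\mathrm{sinc}^6(\pi t/3)$, followed by the rescaling $H_R(x)=f(x/R)/R$ with $R=\log V/(2\pi)$). But you have missed the one step that makes the argument go through: the paper \emph{verifies that $f(x)\geq 0$ for all $x\in\mathbb{R}$} (their condition (a) in Lemma~\ref{lemma:1}). Once $H\geq 0$ pointwise, every term in the geodesic sum on the right of~\eqref{eq:STF} is non-negative---hyperbolic, elliptic, bad hyperbolic alike---and can simply be dropped. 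The contradiction then reduces to the single scalar inequality $\hat f(Ri)<-\frac{V}{2\pi R^3}f''(0)$, which is elementary asymptotics. No prime geodesic theorem, no singular-locus bookkeeping.

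Your route, by contrast, does not check $H\geq 0$ and instead tries to bound the geodesic sum directly. That is where the argument breaks. The estimate you state, $\sum_{[\gamma]}(\cdots)=O(\|H\|_\infty\,e^L/L)$ ``uniformly'' in $\Gamma$, is false: for fixed $\Gamma$ the asymptotic $\#\{\ell(\gamma)\leq T\}\sim e^{2T}/(2T)$ holds, but its range of validity and error term depend on the spectrum of $\Gamma\backslash\hh$; the only bound uniform over lattices of covolume $V$ carries an additional factor of $V$ (think of towers of covers, where every short geodesic downstairs lifts to up to $[\Gamma:\Gamma']$ geodesics of the same length). With your scale $L$ of order $\log V$ this gives a geodesic contribution of order $V e^L$, which swamps the volume term $\tfrac{V}{2\pi}|H''(0)|\sim V/L^3$ rather than being subdominant to it. The same issue afflicts your elliptic/bad-hyperbolic estimate.

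The fix is to replace the entire third paragraph by a verification that your test function is pointwise non-negative. In the paper this is done for the sixfold convolution of the indicator (so that $f$ is a piecewise quintic) by checking each polynomial piece; the positivity holds for $v^2\geq 45/(11\pi^2)$, while the constant $54$ is produced by the \emph{tighter} threshold $v^2>27/(2\pi^2)$ coming from the requirement $f''(0)<0$, i.e.\ $h''''(0)/|h''(0)|=27/(2\pi^2)$ for this particular $h$. Your Sobolev ratio $\|g''\|^2/\|g'\|^2$ is exactly this quantity for $G=g*g$, so you had the right mechanism for the constant---you just need the positivity of $H$ in place of the geodesic-counting detour.
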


\begin{remark}
One can refine the above theorem to give a precise value of $V_*$ at the expense of introducing a parameter $\alpha$.  The refined version reads: for any $\alpha>1$ and $\varepsilon>0,$  we have
\begin{equation*}\label{eq:4}
\lambda^{(0)}_1\leqslant 1+ \frac{54\left(1+\varepsilon\alpha\right)}{\left[ \log V\right]^2}\quad \text{for all}\quad V>\max \left\{\exp\left(\frac{\pi}{\varepsilon}\right),\exp\left(\sqrt{\frac{54(1+\varepsilon\alpha)}{(\alpha-1)}}\right)\right\}\,.
\end{equation*}
\end{remark}

The rest of this subsection is devoted to proving Theorem~\ref{thm:lambda1As}.  For that purpose, the following key lemma is instrumental:
\begin{lemma}\label{lemma:1}
Given an oriented closed hyperbolic 3-orbifold with volume $V$,  if we can find an admissible even function $f: \mathbb{R} \rightarrow \mathbb{R}$ such that
\begin{equation}
\text{(a)}\ f(x)\geqslant 0 \quad \forall x\in\mathbb{R}\,,\quad \quad \text{(b)}\ \widehat{f}(t) \leqslant 0\quad \forall t \geq v\,,  \quad \quad \text{(c)}\ \widehat{f}(Ri) <-\frac{V}{2\pi R^3} f''(0)\,,
\end{equation}
then we have
$\lambda^{(0)}_1 < 1+ \frac{v^2}{R^2}\,,$
where $\lambda^{(0)}_1$ is the first non-zero eigenvalue of the Laplace--Beltrami operator.
\end{lemma}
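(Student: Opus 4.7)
The plan is to apply the even Selberg trace formula of Theorem \ref{thm:trace-even} with a rescaled version of $f$. Concretely, I would set $H(x) \coloneqq f(x/R)$, so that $\widehat{H}(t) = R\,\widehat{f}(Rt)$ and $H''(0) = f''(0)/R^2$. Plugging $H$ into the $J=0$ trace formula and dividing through by $R$ gives
\[
\widehat{f}(Ri) \;+\; \sum_{j=1}^{\infty} \widehat{f}\bigl(R\,t_j^{(0)}\bigr) \;=\; -\,\frac{V}{2\pi R^3}\,f''(0) \;+\; \frac{1}{R}\,\bigl(\text{geometric sum over }[\gamma]\neq 1\bigr).
\]

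Next I would argue the geometric sum is non-negative. For any non-identity $\gamma\in\Gamma$ the denominator $2(\cosh\ell(\gamma)-\cos\phi(\gamma))$ is strictly positive: for hyperbolic $\gamma$ because $\cosh\ell(\gamma)>1\geq \cos\phi(\gamma)$, and for elliptic $\gamma$ (where $\ell(\gamma)=0$) because $\phi(\gamma)\neq 0$. Combined with hypothesis (a), $f\geq 0$, every term in the geometric sum is non-negative. Hence
\[
\widehat{f}(Ri) \;+\; \sum_{j=1}^{\infty} \widehat{f}\bigl(R\,t_j^{(0)}\bigr) \;\geq\; -\,\frac{V}{2\pi R^3}\,f''(0).
\]
Hypothesis (c) now forces $\sum_{j\geq 1}\widehat{f}(R\,t_j^{(0)})>0$, so there is some $j_0\geq 1$ with $\widehat{f}(R\,t_{j_0}^{(0)})>0$.

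The final step converts this strict positivity into the claimed eigenvalue bound using that $t_j^{(0)}\in\mathbb{R}\cup i(0,1]$. If $t_{j_0}^{(0)}\in i(0,1]$, then $\lambda_{j_0}^{(0)}=(t_{j_0}^{(0)})^2+1\in[0,1)$, which is automatically less than $1+v^2/R^2$. Otherwise $t_{j_0}^{(0)}\in\mathbb{R}$; since $f$ is even so is $\widehat{f}$, and hypothesis (b) then implies $\widehat{f}(t)\leq 0$ whenever $|t|\geq v$, so the condition $\widehat{f}(R\,t_{j_0}^{(0)})>0$ rules out $|R\,t_{j_0}^{(0)}|\geq v$. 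Thus $\lambda_{j_0}^{(0)}<1+v^2/R^2$, and since $\lambda_1^{(0)}\leq\lambda_{j_0}^{(0)}$ the lemma follows.

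There is no real obstacle here: this is a direct Cohn--Elkies style application of the trace formula, where (a) makes the geometric sum favorable, (b) localizes the spectral side, and (c) supplies the strict contradiction that forces a low-lying eigenvalue. The only point requiring care is making sure $H(x)=f(x/R)$ is admissible for Theorem \ref{thm:trace-even}, which is precisely the role of the word ``admissible'' in the hypotheses of the lemma.
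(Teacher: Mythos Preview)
Your proof is correct and follows essentially the same approach as the paper: rescale $f$ to $H(x)=f(x/R)$ (the paper uses $H_R(x)=f(x/R)/R$, a cosmetic difference), apply the even trace formula, and use (a) to make the geometric side favorable, (c) to force a strictly positive spectral term, and (b) to localize that term below $v/R$. You are in fact slightly more careful than the paper in explicitly handling the complementary-series case $t_{j_0}^{(0)}\in i(0,1]$.
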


\begin{proof}
Given the admissible even function $f: \mathbb{R} \rightarrow \mathbb{R}$,  we define $H_R(x):=f(x/R)/R$. $H_R$ is admissible to use in the even trace formula since $f$ is admissible. Using the behaviour of the Fourier transform under scaling, we have
\begin{enumerate}
\item $H_R(x)\geqslant 0$ \, for all \ $x\in\mathbb{R}$,
\item $\widehat{H}_R(t) \leqslant 0$ \, for all \ $t\geq v/R$,
\item $\widehat{H}_R(i) <-\frac{V}{2\pi} H_R''(0)$.
\end{enumerate}
We then consider the Selberg trace formula~\eqref{eq:STF} with $H=H_R$. It follows that
$$
\sum_{n=1}^{\infty} \widehat{H}_R\left(t^{(0)}_n\right)= \left[-\frac{V}{2\pi}H_R''(0)-\widehat{H}_R\left(i\right)\right]+ \sum_{[\gamma] \neq 1}  \Theta(\gamma) {\rm vol}(\Gamma_{\gamma} \backslash G_{\gamma} ) \frac{H_R(\ell(\gamma))}{2(\cosh \ell(\gamma)-\cos \phi(\gamma))}\,.
$$
The right-hand side of this equation is positive by the first and third conditions stated above, so the spectral sum on the left-hand side is also positive.
Since $\widehat{H}_R(t) \leqslant 0$ for all $t \geq v/R$, there must be a contribution to the spectral sum with $\lambda^{(0)}_1 < 1+ \frac{v^2}{R^2}$ and the lemma follows. 
\end{proof}

Now we come back to the proof of the Theorem~\ref{thm:lambda1As}.  The idea is to explicitly construct a function $f$ satisfying the conditions of Lemma~\ref{lemma:1}.
\begin{proof}[Proof of Theorem~\ref{thm:lambda1As}]
Given $\varepsilon>0$,  we make the following choices of $v$ and $R$:
\begin{equation}\label{choice}
v^2=\frac{27}{2\pi^2}\left(1+\varepsilon\right)\,,\quad  R=\frac{\log (V)}{2\pi}\,.
\end{equation}
For these choices, we  consider the admissible even function $f= h + v^{-2} h''$, where $h$ is the function defined in~\eqref{eq:h-basic} with $m=6$ and $\delta = \pi/3$, so that 
\begin{equation}\label{func}
\widehat{f}(t):=\left(1-\frac{t^2}{v^2}\right)\left(\frac{\sin\frac{\pi t}{3}}{\frac{\pi t}{3}}\right)^{6}.
\end{equation}
The function $f$ can be written as a piecewise quintic polynomial on the interval $[-2 \pi, 2 \pi]$ and it vanishes elsewhere. Using the algorithm from Appendix~B of~\cite{Kravchuk:2021akc}, we verify that each quintic polynomial is non-negative on its domain if $v^2\geq \frac{45}{11\pi^2}$. Our choice of $v^2$ satisfies $v^2>\frac{45}{11\pi^2}$, so we have $ f(x)\geqslant 0$ for all $x\in\mathbb{R}$.
From~\eqref{func}, we also see that $ \widehat{f}(t) \leqslant 0$ for all $t\geq v$. Furthermore, we have
$$
f''(0) = -\frac{27 \varepsilon}{8 \pi^3(1+\varepsilon)},
$$
so we obtain
$$\frac{\widehat{f}(Ri)}{-\frac{V}{2\pi R^3} f''(0)} \sim \frac{\pi}{\varepsilon \log(V)}$$
as $V\rightarrow\infty$. There therefore exists some $V_*>0$ such that for $V>V_*$ we have
$$
\widehat{f}(Ri) <-\frac{V}{2\pi R^3} f''(0)\,.
$$
We therefore see that all of the conditions of Lemma~\ref{lemma:1} are satisfied for $V>V_*$ and the choices of $v$ and $R$ given by~\eqref{choice}, which proves the theorem.
\end{proof}

\subsection{Bounds on the systole length}
A systole of a closed hyperbolic manifold $M$ is one of its shortest closed geodesics. The systole length of $M$, denoted $\rm{sys}(M)$, is the length of a systole. We can use the Selberg trace formula to find an upper bound on the systole length of manifolds of a given volume. Some previous systole bounds for hyperbolic 3-manifolds are discussed in~\cite{Gromov83, adams_reid_2000, White2002, Katz-Schaps-Vishne2007, Gendulphe2011, lakeland2014, Murillo-thesis}.

\begin{proposition}\label{prop:systole}
For a given $\Lambda\in \mathbb{N}$ and $p_*>0$,  if there exists a vector $\vec{\alpha}$ with components $\{\alpha_j\}_{j=0,\cdots, \Lambda}$ such that 
\begin{enumerate}
\item $\sum_{j=0}^{\Lambda}\alpha_j N_{j, \omega}(V)=1\,,$ 
\item $\sum_{j=0}^{\Lambda}\alpha_j S_j(\lambda) \geqslant 0$ \,  for all\ $\lambda\geqslant 0,$ 
\item $ \sum_{j=0}^{\Lambda} \alpha_j T_{j, \omega}(p)\geqslant 0 $ \,for all\ $p\geqslant p_*$,
\end{enumerate}
then the length of a systole of any closed hyperbolic manifold $M$ with volume $V$ satisfies $\rm{sys}(M) < \sqrt{p_*}$. 
\end{proposition}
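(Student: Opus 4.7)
The plan is to mirror the linear-programming argument of Proposition~\ref{prop:spectralBoundsSTF}, but swap the roles of the spectral and geometric positivity conditions. Following the vector notation of~\eqref{eq:vec-notation}, contract the family of identities~\eqref{eq:STFpragmatic} against $\vec{\alpha}$ and apply assumption (1) to normalize the volume contribution to $1$:
$$
1 + \sum_{i=1}^{\infty} e^{-\omega \lambda^{(0)}_i}\, \vec{\alpha}\cdot\vec{S}_{\Lambda}(\lambda^{(0)}_i) + \sum_{[\gamma]\neq 1} c_{\gamma}^{2}\, e^{-p_{\gamma}/(4\omega)}\, \vec{\alpha}\cdot\vec{T}_{\omega,\Lambda}(p_{\gamma}) = 0.
$$
Because the Laplace--Beltrami spectrum satisfies $\lambda^{(0)}_i \geq 0$ for all $i\geq1$, assumption (2) implies that every term of the spectral sum is non-negative. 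Rearranging therefore yields
$$
\sum_{[\gamma]\neq 1} c_{\gamma}^{2}\, e^{-p_{\gamma}/(4\omega)}\, \vec{\alpha}\cdot\vec{T}_{\omega,\Lambda}(p_{\gamma}) \leq -1 < 0.
$$

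The forcing step is now immediate. Since $M$ is a manifold, the cocompact lattice $\Gamma$ is torsion-free, so every non-identity $\gamma\in\Gamma$ is hyperbolic; by~\eqref{eq:covol-hyperbolic} we have $c_{\gamma}^{2} = \ell(\gamma_{0})/[2(\cosh\ell(\gamma)-\cos\phi(\gamma))] > 0$, where $\gamma_0$ is the primitive root of $\gamma$, and the exponential factor is strictly positive. Hence at least one conjugacy class $[\gamma]$ must obey $\vec{\alpha}\cdot\vec{T}_{\omega,\Lambda}(p_{\gamma}) < 0$, and assumption (3) then forces $p_{\gamma} < p_{*}$, i.e., $\ell(\gamma) < \sqrt{p_{*}}$.

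Finally, iteration multiplies lengths ($\ell(\gamma^{n}) = n\,\ell(\gamma)$), so the infimum of $\ell(\gamma)$ over non-identity conjugacy classes is attained on a primitive class and coincides with $\mathrm{sys}(M)$. The conjugacy class detected above therefore satisfies $\mathrm{sys}(M) \leq \ell(\gamma) < \sqrt{p_{*}}$, completing the argument.

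There is no serious obstacle in this proof; the only care point is the strict positivity of $c_{\gamma}^{2}$ for every non-identity $\gamma$, which is precisely what fails on orbifolds if one were to attempt the same argument there (the presence of elliptic elements with $p_{\gamma} = 0$ would yield $c_{\gamma}^{2} > 0$ but trivially $p_{\gamma} < p_{*}$, giving no information about any closed geodesic). Restricting to the manifold case removes this issue and makes the implication clean.
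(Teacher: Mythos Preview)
Your proof is correct and follows essentially the same approach as the paper: contract the trace-formula identities~\eqref{eq:STFpragmatic} against $\vec{\alpha}$, use assumptions (1) and (2) to force the geometric sum to be strictly negative, and then invoke assumption (3) to conclude some $p_\gamma < p_*$. You retain the exponential weight $e^{-p_\gamma/(4\omega)}$ that the paper's display~\eqref{eq:systole-prop} drops (harmlessly, since it is positive), and you spell out the manifold hypothesis more carefully than the paper does---your closing remark on why the argument fails for orbifolds is a useful clarification.
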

\begin{proof}
Using the notation from~\eqref{eq:vec-notation}, observe that the Selberg trace formula gives 
\begin{equation} \label{eq:systole-prop}
\sum_{[\gamma]\neq 1} c_\gamma^2 \vec{\alpha} \cdot \vec{T}_{\omega, \Lambda}(p_\gamma)=-\vec{\alpha} \cdot \vec{N}_{\omega, \Lambda}(V) -\sum_i e^{-\omega\lambda^{(0)}_i}\vec{\alpha} \cdot \vec{S}_{\Lambda}(\lambda^{(0)}_i)\,.
\end{equation}
The right-hand side of~\eqref{eq:systole-prop} is negative, by our first two assumptions, and therefore the sum over nontrivial conjugacy classes on the left-hand side is also negative.
Since $\vec{\alpha} \cdot \vec{T}_{\omega, \Lambda}(p)\geq 0$ for all $p\geqslant p_*$, by our third assumption, and $c_{\gamma}^2 \geq 0$, there must be a contribution to the sum with $p < p_*$, which proves the proposition.
\end{proof}

Taking $\Lambda=31$ gives the bound shown in Figure~\ref{fig:selberg-systole-bound}. The manifold closest to the bound is $M={\rm m}007(3,1)$, which has volume $V \approx 1.0149$ and sys$(M) \approx 0.8314$, while the $\Lambda=31$ bound for this volume is ${\rm sys} < 0.8951$. For closed hyperbolic manifolds, the systole can also be bounded from above using the fact that we can embed a ball of radius $\rm{sys}(M)/2$ (see, e.g.,~\cite{Murillo-thesis}). A ball of radius $r$ in $\hh$ has volume $\pi(\sinh(2r)-2r)$, so we get 
\be \label{eq:exact-simple-bound}
V(M) \geq \pi (\sinh({\rm sys}(M))-{\rm sys}(M)).
\ee
Numerically inverting~\eqref{eq:exact-simple-bound} gives the upper bound shown as the dashed line in Figure~\ref{fig:selberg-systole-bound}.

\begin{figure}[htbp]
\begin{center}
\epsfig{file=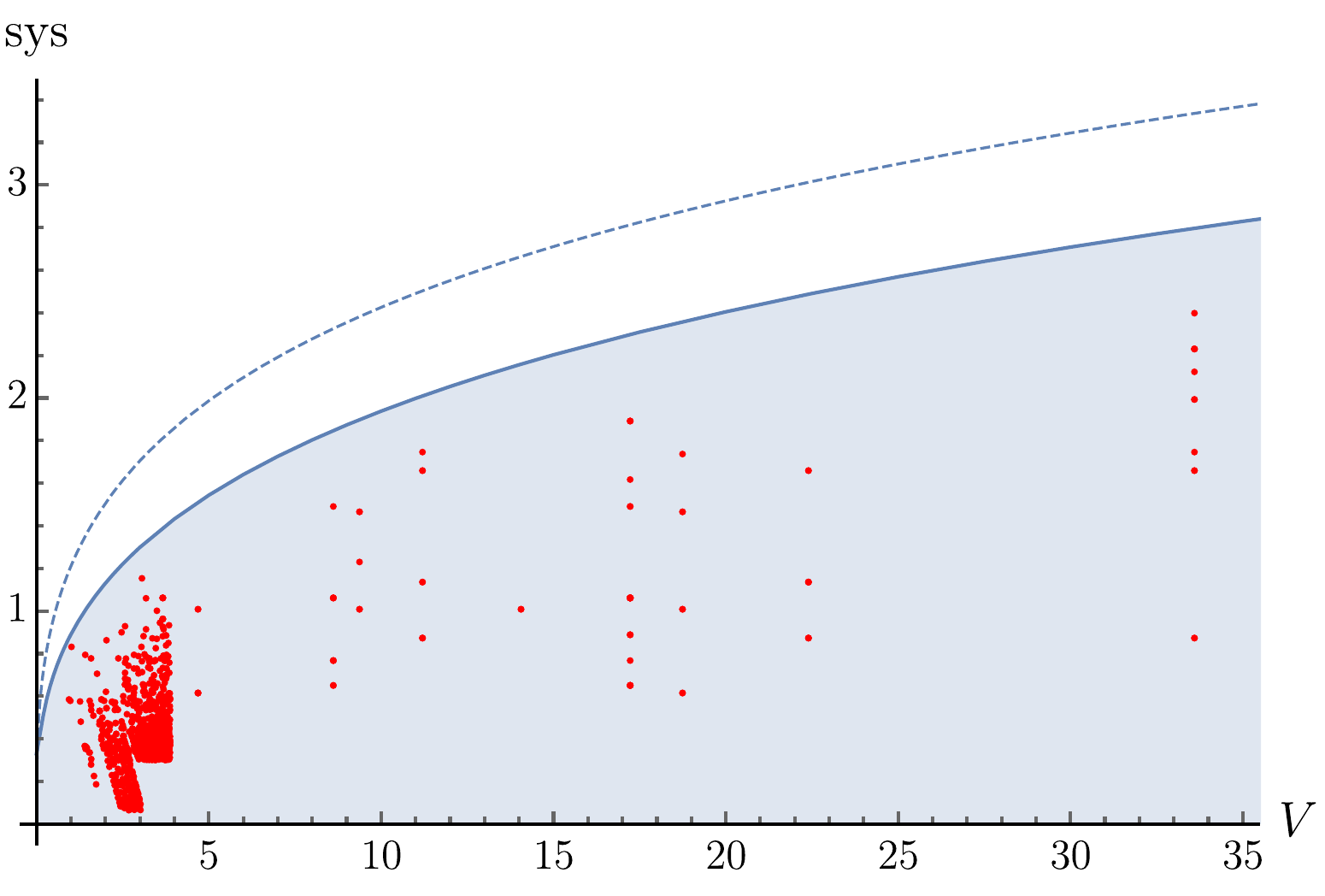,scale=.4}
\end{center}
\caption{An upper bound on the systole length of a closed hyperbolic 3-manifold from Selberg's trace formula. The blue region is allowed and the red dots correspond to example manifolds. The dashed line is the upper bound deduced from embedding a ball of radius ${\rm sys}(M)/2$, namely $f^{-1}(V)$ where $f(x) = \pi(\sinh x -x)$. }
\label{fig:selberg-systole-bound}
\end{figure}
	
 \subsection{Analytical bounds on the systole for large volume}
	In this subsection, we find an asymptotic upper bound on the systole length for closed hyperbolic 3-manifolds, following the approach used by~\cite{bourque2023linear} for hyperbolic surfaces. The best known asymptotic upper bound for non-compact finite volume hyperbolic 3-manifolds is 
$$
\mathrm{sys}(M) \leq \frac{4}{3} \log V -c,
$$ 
 for some constant $c$~\cite{lakeland2014}. 
The fact that we can embed a ball of radius $\rm{sys}(M)/2$ in a closed manifold gives the following bound for large volume:
 \begin{proposition}\label{simple-systole}
For any $\epsilon>0$, there exists $V_*>0$ such that for all oriented closed hyperbolic 3-manifolds $M$ with volume $V>V_{*}$, we have
$$
\mathrm{sys}(M)\leq\log(V)- \log \left( \frac{\pi}{2} \right)+ \epsilon \,,
$$
where $\mathrm{sys}(M)$ is the systole length of $M$.
\end{proposition}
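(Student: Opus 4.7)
The plan is to derive the proposition directly from the ball-embedding bound~\eqref{eq:exact-simple-bound}, which gives $V \geq \pi(\sinh(\mathrm{sys}(M)) - \mathrm{sys}(M))$. Setting $f(s) := \pi(\sinh s - s)$, this function satisfies $f'(s) = \pi(\cosh s - 1) > 0$ for $s>0$ and tends to infinity, hence it is a bijection from $(0,\infty)$ onto $(0,\infty)$. Inverting yields the uniform upper bound $\mathrm{sys}(M) \leq f^{-1}(V)$, valid for every closed hyperbolic 3-manifold. The task therefore reduces to the purely asymptotic statement that $f^{-1}(V) \leq \log V - \log(\pi/2) + \epsilon$ for all $V$ sufficiently large.

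To prove this asymptotic bound, I would write $s = f^{-1}(V)$ and rewrite the defining equation $V = f(s)$ as
\[
V = \frac{\pi e^{s}}{2}\bigl(1 - e^{-2s} - 2 s\, e^{-s}\bigr),
\]
then take logarithms to obtain
\[
s \;=\; \log V - \log(\pi/2) - \log\!\bigl(1 - e^{-2s} - 2 s\, e^{-s}\bigr).
\]
Because $f$ is continuous with $f(s)\to\infty$ as $s\to\infty$, we have $s=f^{-1}(V)\to\infty$ as $V\to\infty$, so the final correction term vanishes in that limit. Given $\epsilon>0$, pick $V_{*}$ large enough that for every $V>V_{*}$ the corresponding $s=f^{-1}(V)$ satisfies $\bigl|\log(1-e^{-2s}-2s\,e^{-s})\bigr|<\epsilon$; the proposition then follows.

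Since the essential geometric input is already supplied by~\eqref{eq:exact-simple-bound}, there is no real obstacle here: the argument is simply the asymptotic analysis of $f^{-1}$. If an explicit $V_{*}$ were wanted, the equivalent route is to check that $f\!\bigl(\log V - \log(\pi/2) + \epsilon\bigr) > V$ for $V$ large, which boils down to the elementary observation that $V(e^{\epsilon}-1)$ eventually dominates $\pi(\log V - \log(\pi/2) + \epsilon) + \pi^{2}e^{-\epsilon}/(4V)$.
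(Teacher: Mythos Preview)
Your proposal is correct and takes essentially the same approach as the paper: both arguments reduce to the ball-embedding inequality~\eqref{eq:exact-simple-bound} and the asymptotic $\pi(\sinh s - s)\sim \tfrac{\pi}{2}e^{s}$. The only cosmetic difference is that the paper first lower-bounds the ball volume by $\tfrac{\pi}{2}e^{2r}(1-\epsilon')$ with $\epsilon'=1-e^{-\epsilon}$ and then inverts that simpler expression, whereas you invert $f$ first and then control the correction term $-\log(1-e^{-2s}-2se^{-s})$; the content is the same.
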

\begin{proof}
A ball of radius $r$ in $\hh$ has volume 
$$
V(B_r)=\pi(\sinh(2r)-2r).
$$
Fix $\epsilon>0$ and define $\epsilon' \coloneqq 1-e^{-\epsilon}>0$. There is some $r_*$ such that  $V(B_r) > \frac{\pi}{2}e^{2r}(1-\epsilon')$ for $r>r_*$. In a closed manifold $M$ of volume $V$, we can embed a ball of radius ${\rm sys}(M)/2$, so we have
$$
V \geq  \frac{\pi}{2}e^{\mathrm{sys}(M)}(1-\epsilon') \implies \mathrm{sys}(M) \leq \log(V)-\log( \pi/2)+\epsilon,
$$
for ${\rm sys}(M)>2 r_*$. Taking $V_*=\frac{\pi}{2} e^{2 r_*}$ then gives the result. 
\end{proof}
There is no lower bound on the systole for closed hyperbolic 3-manifolds, but there are examples with systole growing like $\frac{2}{3} \log V-c'$ for constant $c'$~\cite{Katz-Schaps-Vishne2007}, so this limits the optimal upper bound. We were not able to improve upon the coefficient of the logarithm in Proposition~\ref{simple-systole}  using the Selberg trace formula, but we can improve upon the constant $\log \frac{\pi}{2} \approx 0.4516$. This gives Theorem~\ref{thm:systole}, which we restate here:
\begin{theoremSys}
There exists $V_*>0$ such that for all closed oriented hyperbolic 3-manifolds $M$ of volume $V>V_*$, we have
$$
\mathrm{sys}(M)< \log(V)- 0.8482\,.
$$
\end{theoremSys}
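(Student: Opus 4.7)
\textbf{Proof plan for Theorem~\ref{thm:systole}.} The strategy parallels the proof of Theorem~\ref{thm:lambda1As}, but uses the trace formula in a dual way: instead of ruling out small eigenvalues, we rule out long systoles. The starting point is the following analogue of Lemma~\ref{lemma:1}, tailored to detect short geodesics: given an admissible even function $H:\mathbb{R}\rightarrow \mathbb{R}$ with
\begin{enumerate}
\item[(a)] $\hat H(t)\geq 0$ for all $t\in \mathbb{R}$ and all $t=is$ with $s\in(0,1]$,
\item[(b)] $H(x)=0$ for all $|x|>L$,
\item[(c)] $\hat H(i) > -\frac{V}{2\pi}H''(0)$,
\end{enumerate}
then $\mathrm{sys}(M)\leq L$. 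Indeed, if $\mathrm{sys}(M)>L$, all hyperbolic conjugacy-class contributions in~\eqref{eq:STF} vanish by (b), so the geometric side equals just the identity term $-\frac{V}{2\pi}H''(0)$. Dropping all spectral terms except $\hat H(i)$ on the left-hand side (which is legitimate by (a)) gives $\hat H(i)\leq -\frac{V}{2\pi}H''(0)$, contradicting (c).

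Next, I would construct an explicit one-parameter family of test functions. Take $H_L(x)=L^{-1}f(x/L)$ for a fixed even function $f$ supported in $[-1,1]$, so that $H_L$ is supported in $[-L,L]$, $\hat H_L(t)=\hat f(Lt)$, and $H_L''(0)=L^{-3}f''(0)$. The key inequality (c) then reads
$$
V < -\frac{2\pi L^{3}\,\hat f(iL)}{f''(0)}\,.
$$
For large $L$ the Laplace-type integral $\hat f(iL)=\int_{-1}^{1}f(x)e^{Lx}\,dx$ is dominated by the behavior of $f$ near $x=1$, contributing a factor $\sim A(f)\,e^{L}/L^{\beta(f)}$, where $\beta(f)$ is determined by the order of vanishing of $f$ at the endpoints. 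Choosing $f$ so that $\beta(f)=3$ absorbs the prefactor $L^{3}$ and leaves the condition in the form $V<C(f)\,e^{L}$, i.e.\ $L<\log V - \log C(f)$. The numerical constant $0.8482$ in the theorem should then arise as $0.8482=\log C(f_\star)$ for an optimizer $f_\star$ in this class.

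To exhibit such an $f_\star$ concretely, I would mimic the ansatz used in the proof of Theorem~\ref{thm:lambda1As}, taking $f = h + v^{-2} h''$ with $\hat f(t) = (1-t^{2}/v^{2})\hat h(t)$ and $\hat h(t)=\bigl(\frac{\sin(\pi t/m)}{\pi t/m}\bigr)^{2m}$ for an appropriate integer $m$ so that $f$ is supported in $[-1,1]$ and sufficiently smooth; see~\eqref{eq:h-basic}. Positivity (a) on the principal series follows from the square factor $\hat h(t)\geq 0$ together with $v^{2}\geq 1$, and positivity on the complementary interval $t=is$, $s\in(0,1]$, amounts to $(1+s^{2}/v^{2})\hat h(is)\geq 0$, which is automatic. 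Condition (c) becomes an inequality of the form
$$
V<\bigl(1+L^{2}/v^{2}\bigr)\bigl(\sinh(L/m)/(L/m)\bigr)^{2m}\cdot\frac{-2\pi L^{3}}{f''(0)}\,,
$$
and the optimization reduces to balancing $v$ and $m$ so as to minimize the resulting constant in front of $e^{L}$. This is a finite-dimensional optimization that one expects to match the value $C=e^{0.8482}$ for a specific integer $m$ and rational $v$.

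The main obstacle is twofold. First, one must verify admissibility of $f$ for the Selberg trace formula, which requires $m\geq 3$ so that $f$ has two continuous derivatives (this is identical to the technical issue resolved in \cite{Lin-Lipnowski2021} and in the proof of Theorem~\ref{thm:lambda1As}). Second, and more substantially, one must rigorously verify the non-negativity statements (a) and compute the asymptotic constant with enough precision to justify the value $0.8482$. The latter can be reduced to checking positivity of a small number of explicit piecewise-polynomial functions, e.g.\ via the algorithm of Appendix~B of \cite{Kravchuk:2021akc} as is already used elsewhere in the paper. Once (a) is verified and the explicit bound on $V$ is in hand, taking $L=\log V-0.8482$ and letting $V$ grow produces the stated inequality for all $V$ larger than some explicit $V_{*}$.
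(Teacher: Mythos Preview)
Your lemma is fine (and is a special case of the paper's Lemma~\ref{lemma:3}), but the test function you propose does not satisfy your own condition~(a). With $\hat f(t)=(1-t^{2}/v^{2})\hat h(t)$ and $\hat h(t)=\bigl(\sin(\pi t/m)/(\pi t/m)\bigr)^{2m}\geq 0$, the prefactor $1-t^{2}/v^{2}$ is negative for every real $|t|>v$, so $\hat f(t)<0$ there. The condition $v^{2}\geq 1$ only gives positivity on the complementary segment $t=is$, $s\in(0,1]$; it does nothing on the principal series. This is not a detail one can patch: the ansatz $f=h+v^{-2}h''$ is exactly the one used for Theorem~\ref{thm:lambda1As}, where the desiderata are reversed (there one wants $\hat f\leq 0$ beyond a threshold and $f\geq 0$ pointwise). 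For the systole problem the sign requirements on $f$ and $\hat f$ are swapped, and your construction lands on the wrong side of both.

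The paper's proof therefore uses a genuinely different function class, following~\cite{bourque2023linear}. It takes
\[
\hat f(t)=\Bigl(\tfrac{\eta_\alpha(Rt/2)}{4^{\alpha}\Gamma(\alpha+1)}\Bigr)^{2}e^{-b^{2}t^{2}/2}\,(c-1+b^{2}t^{2}),
\]
with numerical parameters $\alpha=0.559$, $b=0.733$, $c=2.3726$. Here $\hat f\geq 0$ on $\mathbb{R}\cup i[-1,1]$ because the first factor is a square, the Gaussian is positive, and $c-1+b^{2}t^{2}>0$ both for real $t$ and for $t=is$, $s\in[-1,1]$, once $b^{2}<c-1$. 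The function $f$ is \emph{not} compactly supported; instead the lemma the paper uses only demands $f(x)\leq 0$ for $x\geq R_*$, and this weaker condition is supplied by Corollary~6.10 of~\cite{bourque2023linear}. The constant $0.8482$ then drops out of the large-$R$ asymptotics of $\hat f(i)$ versus $\int t^{2}\hat f(t)\,dt$, after choosing $R$ in terms of $\log V$. Your compact-support version of the lemma is correct but too restrictive to reach this constant, and in any case you need a replacement test function whose Fourier transform is nonnegative on the entire line.
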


The rest of this subsection is devoted to proving the above theorem.  For that purpose, the following key lemma is instrumental:

\begin{lemma}\label{lemma:3}
Let $M$ be an oriented closed hyperbolic 3-manifold with systole length $\mathrm{sys}(M)$ and volume $V$. If we can find an admissible even function $f: \mathbb{R} \rightarrow \mathbb{R}$ such that
$$
\text{(a)}\ f(x)\leqslant 0  \quad \forall x\geqslant R_*\,,\quad \quad \text{(b)}\ \widehat{f}(t) \geqslant 0 \quad \forall t\in\mathbb{R} \cup [-i,i]\,,  \quad \quad \text{(c)}\ \widehat{f}(i) >-\frac{V}{2\pi} f''(0)\,,
$$
then we have
$\mathrm{sys}(M) < R_*\,.$
\end{lemma}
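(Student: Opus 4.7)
The plan is to argue by contradiction, applying the even Selberg trace formula (Theorem~\ref{thm:trace-even}) to $H = f$ under the assumption $\mathrm{sys}(M) \geq R_*$, and showing that conditions (a)--(c) force the spectral and geometric sides to disagree. The structure mirrors the proof of Lemma~\ref{lemma:1}, but with the roles of the geodesic side and spectral side interchanged: there one controls the spectrum by requiring $\widehat{H} \leq 0$ outside a target interval in $t$, while here one wants to control the length spectrum by requiring $f \leq 0$ outside a target interval in $\ell$.

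First I would insert $H = f$ into~\eqref{eq:STF}. The spectral side equals
$$
\widehat{f}(i) + \sum_{j=1}^{\infty} \widehat{f}\bigl(t_j^{(0)}\bigr).
$$
Since $M$ is a closed hyperbolic 3-manifold, every $t_j^{(0)}$ lies in $\mathbb{R} \cup i(0,1]$, so by condition (b) each summand is non-negative, whence the spectral side is bounded below by $\widehat{f}(i)$.

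Next I would bound the geometric side from above. Since $M$ is a manifold, $\Gamma$ contains no elliptic or parabolic elements, so $\Theta(\gamma) = 1$ and $\mathrm{vol}(\Gamma_\gamma \backslash G_\gamma) = \ell(\gamma_0) > 0$ for every nontrivial $\gamma$; also $\cosh \ell(\gamma) - \cos \phi(\gamma) > 0$ since $\ell(\gamma) > 0$. Under the hypothesis $\mathrm{sys}(M) \geq R_*$, every closed geodesic length $\ell(\gamma)$ satisfies $\ell(\gamma) \geq R_*$, so by condition (a) we have $f(\ell(\gamma)) \leq 0$ for every nontrivial conjugacy class $[\gamma]$. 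Therefore each term in the geodesic sum is non-positive and the geometric side is bounded above by $-\tfrac{V}{2\pi} f''(0)$.

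Equating the two sides of the trace formula then yields
$$
\widehat{f}(i) \;\leq\; \widehat{f}(i) + \sum_{j=1}^{\infty} \widehat{f}\bigl(t_j^{(0)}\bigr) \;=\; -\frac{V}{2\pi} f''(0) + \sum_{[\gamma]\neq 1}(\text{non-positive terms}) \;\leq\; -\frac{V}{2\pi} f''(0),
$$
which directly contradicts condition (c). Hence $\mathrm{sys}(M) < R_*$. There is no real obstacle in this argument; the work is entirely in the later explicit construction of a function $f$ of the form~\eqref{func} satisfying (a)--(c) with $R_*$ as small as possible for a given volume $V$, from which the concrete constant $0.8482$ in Theorem~\ref{thm:systole} will then follow via an asymptotic analysis as $V \to \infty$.
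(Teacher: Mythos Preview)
Your proof is correct and follows essentially the same approach as the paper: both apply the trace formula~\eqref{eq:STF} to $f$, use condition (b) to make the spectral sum non-negative, condition (c) to get strict positivity, and condition (a) together with positivity of the weights $\ell(\gamma_0)/[2(\cosh\ell(\gamma)-\cos\phi(\gamma))]$ to force some $\ell(\gamma)<R_*$. The paper phrases it as a direct argument (the geodesic sum must be positive, hence some term has $f(\ell(\gamma))>0$) rather than by contradiction, but the content is identical. One small inaccuracy in your closing remark: the test function used later for Theorem~\ref{thm:systole} is not of the form~\eqref{func} but rather the Bessel-type function involving $\eta_\alpha$; \eqref{func} is the function used for the spectral gap bound in Theorem~\ref{thm:lambda1As}.
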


\begin{proof}
We apply the Selberg trace formula~\eqref{eq:STF} to the test function $f$, specializing to manifolds using~\eqref{eq:covol-hyperbolic}. This gives 
\be\label{eq:STFdd}
\sum_{[\gamma] \neq 1}   \ell(\gamma_0) \frac{f(\ell(\gamma))}{2(\cosh \ell(\gamma)-\cos \phi(\gamma))}=\sum_{n=1}^{\infty} \widehat{f}\left(t^{(0)}_n\right)+\frac{V}{2\pi}f''(0)+\widehat{f}\left(i\right).
\ee
The right-hand side of~\eqref{eq:STFdd} is positive by the second and third conditions on $f$, so the sum over conjugacy classes on the left-hand side must also be positive. Since $f(x)\leqslant 0$ for all $x\geqslant R_*$,  there must be a group element $\gamma$ with $\ell(\gamma)<R_*$. From the classification of elements of ${\rm PSL}_2( \mathbb{C})$, this group element corresponds to a closed geodesic of length $\ell(\gamma)$, which proves the lemma.
\end{proof}

To prove Theorem~\ref{thm:systole}, we apply Lemma~\ref{lemma:3} with a particular test function.

\begin{proof}[Proof of Theorem~\ref{thm:systole}]
Following~\cite{bourque2023linear},  we consider even test functions of the form
\begin{equation}
\widehat{f}(t)=\left(\frac{\eta_\alpha(Rt/2)}{4^\alpha \Gamma(\alpha+1)}\right)^2e^{-b^2t^2/2}(c-1+b^2t^2)\,,
\end{equation}
where
$$
\eta_\alpha(t):=  \Gamma(\alpha+1)\sum_{n=0}^{\infty} \frac{(-1)^n}{\Gamma(n+1)\Gamma(n+\alpha+1)}\left(\frac{t}{2}\right)^{2n}\,
$$
is an entire even function. When $\alpha$ is a non-negative integer,  we have  $\eta_\alpha(t)=2^\alpha\Gamma(\alpha+1)\frac{J_\alpha(t)}{t^{\alpha}}$, where $J_{\alpha}$ is a Bessel function of the first kind. We will use the same choice of constants $\alpha=0.559$,  $c=2.3726$ and $\kappa_0=0.1814$ as in~\cite{bourque2023linear}, but we make a different choice for $b$, namely $b=0.733$. For $\alpha>1/2$, we can write $f$ as the convolution of a continuous function supported on the interval $[-R, R]$ and the function $e^{-x^2/2b^2}(c-x^2/b^2)$~\cite{bourque2023linear}, so $f$ is admissible by Proposition D.1 of~\cite{Lin-Lipnowski2021}. 

For $\alpha=0.559$,  $b>0$, $c=2.3726$ and $\kappa_0=0.1814$,  Corollary $6.10$ of~\cite{bourque2023linear} implies that there exists $R_0>0$ such that $f(R+b\kappa)\leq 0$ for $R\geq R_0$ and $\kappa \geq \kappa_0$.  Moreover,  we also have $\widehat{f}(t) \geqslant 0$ for all $t\in\mathbb{R} \cup [-i,i]$ if $b^2<c-1$.  We therefore only need to check condition $c)$ of Lemma~\ref{lemma:3}. 

To find $f''(0)=-\frac{1}{2 \pi} \int_{-\infty}^{\infty}\mathrm{d}t\ t^2 \widehat{f}(t)$, we note that
$$
\int_{-\infty}^{\infty}\mathrm{d}t\ t^2 \widehat{f}(t)=2\int_{0}^{\infty}\mathrm{d}t\ t^2 \widehat{f}(t) = \frac{2^{-2 \alpha+\frac{1}{2}} \Gamma \left(\alpha +\frac{1}{2}\right)}{b^3} \left((c-1) I_{3}\left(-\frac{R^2}{2 b^2}\right)+3 I_{5}\left(-\frac{R^2}{2 b^2}\right)\right),
$$
where 
$$
\begin{aligned}
I_{n}(t):=\, _2\tilde{F}_2\left(\frac{n}{2},\alpha +\frac{1}{2};\alpha +1,2 \alpha +1;t\right).
\end{aligned}
$$
We further observe that 
$$
\lim_{R\to\infty} R^{2\alpha+1}\int_{-\infty}^{\infty}\mathrm{d}t\ t^2 \widehat{f}(t) =   \frac{2 \Gamma(1-\alpha)}{\pi}\left(\frac{b^2}{2}\right)^{\alpha-1}\left(c+1-2\alpha\right)\,,
$$
which leads to 
$$
\frac{\widehat{f}(i)}{\frac{V}{4\pi^2}\int_{-\infty}^{\infty}\mathrm{d}x\ t^2 \widehat{f}(t) }\sim \frac{(c-1-b^2)e^{b^2/2}e^{R-\log V}}{\frac{ \Gamma(1-\alpha)}{2\pi^2}\left(\frac{b^2}{2}\right)^{\alpha-1}\left(c+1-2\alpha\right)} 
$$
as $R\to\infty$.
For any $\rho>1$, we can choose 
$$
R= \log V + \log\rho -b^2/2 + \log\left(\frac{ \Gamma(1-\alpha)}{2\pi^2}\left(\frac{b^2}{2}\right)^{\alpha-1}\frac{c+1-2\alpha}{c-1-b^2}\right),
$$
and there exists some $V_*$ such that for $V>V_*$ we have $R \geq R_0$ and $\widehat{f}(i) >-\frac{V}{2\pi} f''(0)$.

Hence,  for $V>V_*$  the conditions of Lemma~\ref{lemma:3} are satisfied with $R_*=R+ b\kappa_0$. Taking $b=0.733$, which satisfies  $b^2<c-1$, and $\rho=1+10^{-5}$, we get
$$
R_* <\log V- 0.8482.
$$
Lemma~\ref{lemma:3} implies that $\mathrm{sys}(M) < R_*\,$ for $V>V_*$, which proves the theorem.
\end{proof}

\section{Final remarks}
\label{sec:conclusion}
In this paper, we studied the spectra of hyperbolic 3-orbifolds $M = \Gamma\backslash\hh$ using linear programming applied to spectral identities arising from a) the associativity of the product of functions on $\Gamma\backslash\mathrm{PSL}_2(\mathbb{C})$ and b) the Selberg trace formula. The first approach was directly inspired by a modern incarnation of the conformal bootstrap program~\cite{Belavin:1984vu, Rattazzi:2008pe, Rychkov:2009ij}. In fact, also the approach based on the Selberg trace formula is formally almost identical to a version of the conformal bootstrap, this time the so-called annulus modular bootstrap, used to constrain boundary conditions of two-dimensional conformal field theories~\cite{Collier:2021ngi}.

There are several natural extensions of our analysis. In the context of the approach using associativity, the following come to mind:
\begin{enumerate}
\item We only exploited spectral identities arising from four identical irreducible representations. In other words, we specialized to the case $i=j=k=\ell$ of Theorem~\ref{thm:crossingKFinite}. It can be expected that stronger bounds will follow from incorporating additional spectral identities, as has been done originally for conformal field theories in~\cite{Kos:2014bka} and more recently for hyperbolic surfaces in~\cite{Kravchuk:2021akc,Bonifacio:2021aqf}.

\item The spectral identities from associativity constrain not only the spectrum of the Laplacians, but also the integrals of triple products of the eigenfunctions. It would be very interesting to use linear programming to prove new upper bounds on the triple products. For arithmetic manifolds, such bounds would translate to bounds on values of $L$-functions, which have been extensively studied in the number theory literature~\cite{Sarnak94, Bernstein2006, Philippe-Venkatesh2010, Blomer2023}. In the context of the conformal bootstrap, the analogous task is bounding the three-point functions, for which there exist well-developed methods.

\item By replacing $\rm{PSL}_2(\mathbb{C})$ with its double cover $\rm{SL}_2(\mathbb{C})$, we can study the spectrum of the Dirac operator on hyperbolic 3-orbifolds.
\end{enumerate}

Regarding the approach based on the Selberg trace formula, there are also interesting future avenues to explore:
\begin{enumerate}
\item One should be able to bound eigenvalue multiplicities and the number of small eigenvalues ($\lambda^{(0)}_i \leq 1$), as was done for hyperbolic surfaces in~\cite{bourque2023linear}.
\item It would be desirable to prove an optimal version of the asymptotic bounds on $\lambda^{(0)}_1$ and the systole length at large volume following from the trace formula.
\end{enumerate}

An important virtue of the approach based on associativity is that all of the spectral identities follow directly from representation theory of $G=\mathrm{PSL}_2(\mathbb{C})$. As a result, this method can be generalized to various other contexts simply by replacing $\mathrm{PSL}_2(\mathbb{C})$ by a different locally compact group. For example, the case $\mathrm{SO}(n,1)$ yields hyperbolic $n$-manifolds, $\mathrm{SU}(n,1)$ the complex hyperbolic manifolds, and $\mathrm{GL}_2(\mathbb{Q}_p)$ regular graphs.

To conclude, we would like to raise the question: Just how powerful is associativity, really? It may seem surprising that nearly sharp bounds follow from a modest subset of the associativity constraints. Could it be the case that if we were to impose all of the spectral identities of Theorem~\ref{thm:crossingKFinite} for all $i,j,k,\ell$, then the bounds would have to be sharp? We can formulate this question as follows:
\begin{problem}
Describe the set of discrete spectra compatible with the spectral identities~\eqref{eq:crossing} and the constraints $\widehat{c}_{ijk}\in\mathbb{R}$. Every cocompact lattice $\Gamma\subset\mathrm{PSL}_2(\mathbb{C})$ provides such a spectrum. Does this exhaust all solutions?
\end{problem}

Why might one hope that cocompact lattices provide the only solutions with discrete spectra? The role of the spectral identities is to implement the consistency of an abelian algebra which is a representation of $G$. It is known that every abelian von Neumann algebra is the algebra of bounded functions on a measure space $X$.\footnote{We thank Petr Kravchuk for making us aware of this result.} In this case, we would like to prove the existence of such a measure space with a $G$-action. Ergodicity of this action, together with suitable assumptions on the spectrum, could guarantee that we must have $X = \Gamma\backslash G$ for a cocompact lattice $\Gamma$.
It would be interesting to see if this idea can be made rigorous.

\newpage
\appendix

\section{Some differential geometry}\label{app:difgeo}
The $D$-dimensional hyperbolic space can be realized as the locus
$$
\mathbb{H}^D = \{X\in\mathbb{R}^{1,D}:\,X_0^2-\sum_{i=1}^{D}X_{i}^2 = 1,\,X_0>0\}\,,
$$
where $\mathbb{R}^{1,D}$ is equipped with the flat metric of indefinite signature $ds^2 = -dX_0^2+\sum_{i=1}^{D}dX_i^2$. The group of orientation-preserving isometries of $\mathbb{H}^D$ is $G=\mathrm{SO}_0(1,D)$, acting on $\mathbb{R}^{1,D}$ in the obvious way.

We will make use of the Iwasawa decomposition $G=NAK$. Here $K = \mathrm{SO}(D)$ is the maximal compact subgroup
$$
K = \left\{
\left(
\begin{array}{cc}
1 & 0 \\
 0 & k
\end{array}
\right),\,k\in\mathrm{SO}(D)\right\}\,,
$$
and $N\simeq(\mathbb{R}^{D-1},+)$, $A\simeq(\mathbb{R}_{>0},\times)$ are the subgroups
$$
N = \left\{n(y),\,y\in\mathbb{R}^{D-1}\right\}\,,\quad
A = \left\{a(z),\,z\in\mathbb{R}_{>0}\right\}\,,
$$
where
$$
n(y)=
    \left(
\begin{array}{ccc}
1+\frac{y^2}{2} & y^t & -\frac{y^2}{2} \\
 y & I_{D-1} &  -y\\
 \frac{y^2}{2} & y^t &1-\frac{y^2}{2}
\end{array}
\right)\,,\quad
%%%
a(z)=\left(
    \begin{array}{ccc}
      \frac{z^2+1}{2z} & 0 & \frac{z^2-1}{2z}\\
      0 & I_{D-1} &0\\
      \frac{z^2-1}{2z}& 0& \frac{z^2+1}{2z}
    \end{array}
    \right)\,.
$$
Here $y\in\mathbb{R}^{D-1}$ is a column vector, $y^2$ is its squared norm, and $y^t$ is its transpose. $I_{D-1}$ is the identity matrix of size $D-1$. Let us assemble $y\in\mathbb{R}^{D-1}$ and $z\in\mathbb{R}_{>0}$ into a single vector $x=(y,z)\in\mathbb{R}^{D-1}\times\mathbb{R}_{>0}$ and write $\widetilde{n}(x) = n(y)a(z)$. We will parametrize a general element of $G$ as $g(x,k) = \widetilde{n}(x) k$.

Since $K$ is the stabilizer of $\widehat{X}=(1,0,\ldots,0)\in \mathbb{R}^{1,D}$, the map $G\rightarrow\mathbb{H}^{D}$ given by $g\mapsto g\cdot \widehat{X}$ yields a diffeomorphism $NA\simeq\mathbb{H}^{D}$. In terms of the coordinates $x\in\mathbb{R}^{D-1}\times\mathbb{R}_{>0}$ on $NA$ introduced above, the metric on $\mathbb{H}^D$ takes the form
$$
ds^2 = (x^{D})^{-2}\sum\limits_{a=1}^{D}(dx^a)^2\,.
$$
It will be convenient to introduce the vector fields $e_a = x^{D}\frac{\partial}{\partial x^a}$ and their dual one-forms $e^a = (x^{D})^{-1}dx^a$, so that $ds^2 = \sum_{a=1}^{D}e^ae^a$.

\begin{lemma}\label{lem:leftIwasawa}
Let $g\in G$ and $\widetilde{n}(x)\in N A$. Then $g\,\widetilde{n}(x) = \widetilde{n}(g\cdot x) r(g,x)$, where $r(g,x)\in K$ is given by
$$
r(g,x) = 
\left(
\begin{array}{cc}
1 & 0 \\
 0 & d g_x
\end{array}
\right)\,.
$$
Here $d g_x : T_x\mathbb{H}^D\rightarrow T_{g\cdot x}\mathbb{H}^D$ is the differential of the isometry $g:\mathbb{H}^{D}\rightarrow\mathbb{H}^D$, written in the basis of the vector fields $e_a$, i.e.,
$$
(dg_x)^{a}_{\phantom{a}b} = \frac{x^D}{(g\cdot x)^D}\frac{\partial (g\cdot x)^a}{\partial x^b}\,.
$$
\end{lemma}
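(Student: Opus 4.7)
The plan is to verify the two factors on the right-hand side independently by exploiting the uniqueness of the Iwasawa decomposition $G=NAK$. Since for each $g'\in G$ there is a unique way to write $g' = \widetilde n(x')k'$ with $x'\in\mathbb{R}^{D-1}\times\mathbb{R}_{>0}$ and $k'\in K$, it suffices to identify the $NA$ factor and the $K$ factor of $g\,\widetilde{n}(x)$ separately.

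For the $NA$ factor, I would evaluate both sides of the claimed identity at $\widehat X=(1,0,\ldots,0)\in\mathbb{H}^D$. Under the diffeomorphism $NA\simeq\mathbb{H}^D$ given by $\widetilde n\mapsto\widetilde n\cdot\widehat X$, the element $\widetilde n(x)$ satisfies $\widetilde n(x)\cdot\widehat X=x$, so the left-hand side sends $\widehat X$ to $g\cdot x$. Since $K$ fixes $\widehat X$, the right-hand side sends $\widehat X$ to $\widetilde n(g\cdot x)\cdot\widehat X=g\cdot x$ as well. This forces the $NA$ factor in the Iwasawa decomposition of $g\,\widetilde n(x)$ to be $\widetilde n(g\cdot x)$.

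For the $K$ factor, the idea is to compare differentials at $\widehat X$. First I would establish the auxiliary fact that the differential of the diffeomorphism $\widetilde n(x'):\mathbb{H}^D\to\mathbb{H}^D$, at $\widehat X$, is the identity when expressed in the frame $\{e_a\}$ at the source and target. This is a short direct computation in the upper-half-space coordinates, where $n(y_0)$ acts as $(y,z)\mapsto(y+y_0,z)$ and $a(z_0)$ as $(y,z)\mapsto(z_0y,z_0z)$: the coordinate differential of $\widetilde n(y_0,z_0)$ at $(0,1)$ is $z_0\cdot\mathrm{id}$, and the factor $z_0$ is exactly the rescaling $x^D|_{\text{target}}/x^D|_{\text{source}}$ that relates $\partial_{x^a}$ to $e_a$. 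Having this in hand, take the differential at $\widehat X$ of both sides of $g\,\widetilde n(x)=\widetilde n(g\cdot x)\,r(g,x)$. On the left we get $dg_x$ (in the $e_a$ frames on $T_x\mathbb{H}^D$ and $T_{g\cdot x}\mathbb{H}^D$), because $\widetilde n(x)$ contributes the identity. On the right, $\widetilde n(g\cdot x)$ contributes the identity, so the differential equals the action of $r(g,x)$ on $T_{\widehat X}\mathbb{H}^D$, which in the given matrix form is precisely the lower-right $\mathrm{SO}(D)$ block. Comparing yields the claimed formula for $r(g,x)$.

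The only mildly technical point is the auxiliary fact that $d\widetilde n(x')|_{\widehat X}$ is the identity in the $e_a$ frames; everything else is a formal consequence of uniqueness of the Iwasawa decomposition together with the fact that $K$ stabilizes $\widehat X$ and acts on $T_{\widehat X}\mathbb{H}^D$ as $\mathrm{SO}(D)$. I do not anticipate any real obstacle, only bookkeeping with the conventions.
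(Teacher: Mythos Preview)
Your proof is correct and takes a genuinely different route from the paper's. The paper argues indirectly: it first observes that the putative $K$-factor satisfies the cocycle identity $r(g_1g_2,x)=r(g_1,g_2\cdot x)\,r(g_2,x)$ (by the chain rule for $dg_x$), and therefore reduces to verifying the claim on a generating set of $G$, namely on translations $n(y)$ and on the involution $\tau=\mathrm{diag}(1,\ldots,1,-1)$, which it then checks by direct computation. Your approach is more direct and conceptual: you read off the $NA$-factor by evaluating at $\widehat X$ and identify the $K$-factor by comparing differentials at $\widehat X$, using that $d\widetilde n(x')_{\widehat X}$ is the identity in the $e_a$ frames. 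The trade-off is that the paper's argument is purely algebraic once the generators are chosen, whereas yours requires one more small verification that you leave implicit: that the frame $\{e_a\}$ at $\widehat X$ coincides with the standard basis of $T_{\widehat X}\mathbb{H}^D\cong\{0\}\times\mathbb{R}^D$ on which $K=\mathrm{SO}(D)$ acts by its defining block. This is a one-line check (differentiate $n(y)\widehat X$ and $a(z)\widehat X$ at $(y,z)=(0,1)$), but you should state it explicitly.
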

\begin{proof}
Let $g_{1},g_2\in G$. Since $(dg_1g_2)_x = (dg_1)_{g_2\cdot x}\circ (dg_2)_x$, it follows that
$$
r(g_1g_2,x) = r(g_1,g_2\cdot x) r(g_2,x)\,.
$$
Therefore, it suffices to prove the Lemma for all $g$ in a set which generates $G$. $G$ is generated by $n(y)$ and $\tau n(y)\tau$, where $\tau = \mathrm{diag}(1,\ldots,1,-1)$. That the Lemma holds for $n(y)$ and for $\tau$ can be checked by a direct computation.
\end{proof}

Let $\Gamma$ be a discrete subgroup of $G$ so that $\Gamma\backslash \mathbb{H}^D$ is a hyperbolic $D$-orbifold. We will now describe the correspondence between subspaces of $C^{\infty}(\Gamma \backslash G)$ transforming irreducibly under right $K$-multiplication and smooth sections of powers of the cotangent bundle $T^*(\Gamma\backslash\mathbb{H}^D)$. For simplicity, we will focus on the symmetric traceless tensor representations, but the same construction works also for general irreducible representations.

Let $(V_{s},\rho_{s})$ be the irreducible representation of $K=SO(D)$ consisting of symmetric traceless tensors of rank $s$, with $V_{s}$ the underlying vector space and $\rho_{s}: K\rightarrow \mathrm{GL}(V_{s})$ a group homomorphism. Let $V_{s}^{*}$ be the dual of $V_s$. There is an obvious identification between maps $\varphi:\mathbb{H}^D\rightarrow V_s^{*}$ and traceless sections of the symmetric tensor power of the cotangent bundle $\mathrm{Sym}^s(T^{*}\mathbb{H}^D)$, obtained by contracting $\varphi$ with the one forms $e^a$, so that $\varphi$ corresponds to the tensor field $\varphi_{a_1\ldots a_s}(x)e^{a_1}(x)\ldots e^{a_s}(x)$, with repeated indices summed over. The main result we record in this appendix is

\begin{proposition}\label{prop:tensorFieldsApp}
Let $\theta : V_{s}\rightarrow C^{\infty}(\Gamma\backslash G)$ be a $K$-linear map. Then there exists a smooth traceless section $\varphi_{\theta}$ of $\mathrm{Sym}^s(T^{*}\Gamma\backslash\mathbb{H}^D)$ such that
\be
\theta(v)(g(x,k)) = \varphi_{\theta}(x)(\rho_{s}(k)v)\,,
\label{eq:thetaCorrespondence}
\ee
Furthermore, the correspondence between $\theta$ and $\varphi_{\theta}$ is one-to-one.
\end{proposition}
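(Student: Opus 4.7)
The plan is to define $\varphi_\theta$ by specializing the desired identity \eqref{eq:thetaCorrespondence} to $k = 1 \in K$. Concretely, set
$$
\varphi_\theta(x)(v) := \theta(v)(\widetilde{n}(x))\,, \qquad v \in V_s\,,
$$
so that $\varphi_\theta(x)$ is a linear functional on $V_s$. Via the identification $V_s^* \cong V_s$ furnished by the standard inner product on traceless symmetric tensors, together with the orthonormal coframe $\{e^a(x)\}$, this functional corresponds canonically to a traceless symmetric $s$-tensor at $x$. Tracelessness is automatic since only the traceless part of any input is seen by a functional on $V_s$, and smoothness of $\varphi_\theta$ follows from smoothness of $\theta(v)$ for each $v\in V_s$ together with that of the Iwasawa parametrization $x \mapsto \widetilde{n}(x)$.

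Next I would verify identity \eqref{eq:thetaCorrespondence}. Writing $g(x,k) = \widetilde{n}(x)k$ and using the $K$-linearity of $\theta$, which amounts to $\theta(\rho_s(k)v)(g) = \theta(v)(gk)$ for all $g\in G$, $k\in K$, $v\in V_s$,
$$
\theta(v)(g(x,k)) = \theta(v)(\widetilde{n}(x)k) = \theta(\rho_s(k)v)(\widetilde{n}(x)) = \varphi_\theta(x)(\rho_s(k)v)\,.
$$
To show that $\varphi_\theta$ descends to a section on $\Gamma \backslash \mathbb{H}^D$, I would apply Lemma~\ref{lem:leftIwasawa}: for $\gamma \in \Gamma$, one has $\gamma \widetilde{n}(x) = \widetilde{n}(\gamma \cdot x)\, r(\gamma, x)$ with $r(\gamma,x) \in K$ corresponding to the differential $d\gamma_x$ expressed in the frame $\{e_a\}$. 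Combining left-$\Gamma$-invariance of $\theta(v) \in C^\infty(\Gamma\backslash G)$ with the just-established \eqref{eq:thetaCorrespondence},
$$
\varphi_\theta(x)(v) = \theta(v)(\widetilde{n}(x)) = \theta(v)(\gamma \widetilde{n}(x)) = \varphi_\theta(\gamma\cdot x)(\rho_s(d\gamma_x)v)\,,
$$
which is precisely the transformation law $\gamma^*\varphi_\theta = \varphi_\theta$.

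For the converse direction, given a smooth traceless section $\varphi$ of $\mathrm{Sym}^s(T^*\Gamma\backslash\mathbb{H}^D)$, lift it to a $\Gamma$-invariant section on $\mathbb{H}^D$ and define $\theta(v)(g(x,k)) := \varphi(x)(\rho_s(k)v)$. $K$-linearity is immediate from $\rho_s(kk') = \rho_s(k)\rho_s(k')$; $\Gamma$-invariance of $\theta(v)$ follows by running the Lemma~\ref{lem:leftIwasawa} calculation in reverse and invoking $\Gamma$-invariance of $\varphi$; smoothness of $\theta(v)$ follows from that of $\varphi$. The two constructions are mutual inverses by construction. The main obstacle is really the bookkeeping: one must keep careful track of the frame-dependent identification between the $K$-factor $r(\gamma,x)$ arising from the left-Iwasawa decomposition, the differential $d\gamma_x$ acting on tangent vectors, and the action $\rho_s$ on $V_s$. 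Once Lemma~\ref{lem:leftIwasawa} supplies this compatibility, the proof is essentially formal.
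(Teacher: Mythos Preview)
Your proposal is correct and follows essentially the same route as the paper: obtain $\varphi_\theta$ from $K$-linearity, then use Lemma~\ref{lem:leftIwasawa} to translate left $\Gamma$-invariance into the pullback condition on $\varphi_\theta$, and finally reverse the construction for bijectivity. The only difference is that you define $\varphi_\theta$ directly by evaluating at $k=1$, whereas the paper phrases this step as an appeal to the Peter--Weyl theorem; your formulation is the more elementary one and yields the same object.
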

\begin{proof}
$K$-linearity of $\theta$ means that $\forall \widetilde{k}\in K$, we have $\theta(\rho_{s}(\widetilde{k})v)(g(x,k)) = \theta(v)(g(x,k\widetilde{k}))$. It follows from the Peter--Weyl theorem for $K$ at fixed $x$ that $\theta(v)(g(x,k)) = \varphi_{\theta}(x)(\rho_{s}(k)v)$ for some smooth symmetric traceless tensor field $\varphi_{\theta}:\mathbb{H}^D\rightarrow V_s^{*}$. It remains to implement left $\Gamma$-invariance. From Lemma~\ref{lem:leftIwasawa}, we have $\forall\gamma\in\Gamma$
$$
\theta(v)(g(x,k))=\theta(v)(\gamma g(x,k)) = \theta(v)(g(\gamma\cdot x,r(\gamma,x)k))
$$
and hence
$$
\varphi_{\theta}(x)(v) = \varphi_{\theta}(\gamma\cdot x)(\rho_{s}(r(\gamma,x))v)\,.
$$
By Lemma~\ref{lem:leftIwasawa}, the right-hand side is the pullback of the tensor field $\varphi_{\theta}$ by the isometry $\gamma\in\Gamma$, and hence $\varphi_{\theta}$ is a section of $\mathrm{Sym}^s(T^{*}\Gamma\backslash\mathbb{H}^D)$. Conversely, any $\varphi$ gives rise to $\theta$ through~\eqref{eq:thetaCorrespondence}, and so the correspondence is one-to-one.
\end{proof}
Note that the proof extends to reducible representations by writing them as a direct sum of irreducibles and applying the Peter--Weyl theorem to each irreducible separately. In particular, we get a correspondence between $K$-linear maps $V_1^{\otimes s}\rightarrow C^{\infty}(\Gamma\backslash G)$ and smooth sections of $(T^{*}\Gamma\backslash\mathbb{H}^D)^{\otimes s}$.

Having discussed the action of $K$ on $C^{\infty}(\Gamma\backslash G)$, it remains to understand the action of the noncompact generators of $G$. Let $\mathfrak{g}$ and $\mathfrak{k}$ be the Lie algebras of $G$ and $K$. We have $\mathfrak{g} = \mathfrak{k}\oplus \mathfrak{b}$, where
$$
\mathfrak{b} = \{B(v): v\in\mathbb{R}^D\}\,,\qquad
B(v) =
\left(
\begin{array}{cc}
0 & v^{t} \\
v & 0_{D}
\end{array}
\right)\,.
$$
Here $v$ is a column vector in $\mathbb{R}^{D}$ and $0_{D}$ is the zero $D$ by $D$ matrix. $\mathfrak{g}$ acts on elements of $C^{\infty}(\Gamma\backslash G)$ by differential operators. In particular, given a $K$-linear map $\theta: V_1^{\otimes s}\rightarrow C^{\infty}(\Gamma\backslash G)$, we can define a new $K$-linear map $d\theta: V_1^{\otimes (s+1)}\rightarrow C^{\infty}(\Gamma\backslash G)$ by acting with the Lie algebra elements $B(u)$
$$
\forall u\in V_1,\, v\in V_1^{\otimes s}\,:\qquad d\theta(u\otimes v)(g(x,k)) := \left.\frac{d}{dt}\right|_{t=0}\theta(v)(g(x,k)\mathrm{exp}(t B(u)))\,.
$$

\begin{proposition}\label{prop:nabla}
Let $\varphi_{\theta}$ be the smooth section of $(T^{*}\Gamma\backslash\mathbb{H}^D)^{\otimes s}$ corresponding to the map $\theta$ as in Proposition~\ref{prop:tensorFields}. Similarly, let $\varphi_{d\theta}$ be the smooth section of $(T^{*}\Gamma\backslash\mathbb{H}^D)^{\otimes (s+1)}$ corresponding to the map $d\theta$. Then
$$
\varphi_{d\theta} = \nabla \varphi_{\theta}\,,
$$
where $\nabla: \Gamma^{\infty}((T^{*}\Gamma\backslash\mathbb{H}^D)^{\otimes s})\rightarrow\Gamma^{\infty}((T^{*}\Gamma\backslash\mathbb{H}^D)^{\otimes (s+1)})$ is the covariant derivative. Here $\Gamma^{\infty}(Y)$ stands for the space of smooth sections of bundle $Y$.
\end{proposition}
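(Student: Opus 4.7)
The plan is to reduce the identity $\varphi_{d\theta}=\nabla \varphi_\theta$ to a pointwise computation at the base point $\hat X=(0,\dots,0,1)\in\mathbb{H}^D$, where the action of $B(u)\in\mathfrak{b}$ can be read off explicitly from the Iwasawa decomposition. Since both $\varphi_{d\theta}$ and $\nabla\varphi_\theta$ are smooth sections of $(T^{*}\Gamma\backslash\mathbb{H}^D)^{\otimes(s+1)}$ with symmetric traceless last $s$ entries, and since the isometry group $G$ acts transitively on $\mathbb{H}^D$ preserving both $\nabla$ and the correspondence of Proposition~\ref{prop:tensorFieldsApp}, it suffices to verify the equality at the single point $(x,k)=(\hat X, 1)\in G$.

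\textbf{Base-point computation.} I would decompose $\exp(tB(u))$ to first order in $t$ as $\tilde n(x(t))\,k_r(t)$ with $x(0)=\hat X$ and $k_r(0)=1$. Writing $u=(u',u_D)$ with $u'\in\mathbb{R}^{D-1}$, $u_D\in\mathbb{R}$, and computing $\dot{\tilde n}(0)\in\mathfrak{n}\oplus\mathfrak{a}$ and $\dot k_r(0)\in\mathfrak{k}$ from the direct-sum decomposition
$$
B(u) \;=\; \bigl[N(u') + u_D J\bigr]\;+\;\bigl(-K(u')\bigr),\qquad N(u')+u_D J\in\mathfrak{n}\oplus\mathfrak{a},\ \ -K(u')\in\mathfrak{k},
$$
one finds $\dot x^a(0)=u^a$ (so the velocity in the orthonormal frame $e_a=x^D\partial_{x^a}$ is $u^a e_a|_{\hat X}$) and $\dot k_r(0)=-K(u')$, the infinitesimal rotation in the plane of $e_D$ and $u'$. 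Differentiating $\theta(v)(\tilde n(x(t))k_r(t))=\varphi_\theta(x(t))(\rho_s(k_r(t))v)$ in $t$ at $0$ then gives
$$
d\theta(u\otimes v)(1) \;=\; u^c\,\partial_{x^c}\varphi_\theta(\hat X)(v) \;+\; \varphi_\theta(\hat X)\bigl(d\rho_s(-K(u'))v\bigr).
$$

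\textbf{Matching the spin connection.} The orthonormal coframe $e^a=(x^D)^{-1}dx^a$ on $\mathbb{H}^D$ satisfies $de^a+\omega^a_{\ b}\wedge e^b=0$ with $\omega^a_{\ D}=-e^a,\ \omega^D_{\ a}=e^a$ and $\omega^a_{\ b}=0$ otherwise (for $a,b\neq D$). Evaluating at $\hat X$ gives
$$
u^c(\nabla_c\varphi_\theta)_{a_1\cdots a_s} \;=\; u^c\,\partial_{x^c}\varphi_{a_1\cdots a_s} \;+\; \sum_{i=1}^{s}u^c\,\delta_{a_i D}\,\varphi_{a_1\cdots c\cdots a_s} \;-\; \sum_{i=1}^{s}u^{a_i}\,\varphi_{a_1\cdots D\cdots a_s}.
$$
A direct comparison shows that the second and third terms agree exactly with the coadjoint action $d\rho^{*}_s(-K(u'))\varphi_\theta$ on the components of $\varphi_\theta$. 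Thus the local computation gives $\varphi_{d\theta}(\hat X)=(\nabla\varphi_\theta)(\hat X)$ as elements of $(T^{*}_{\hat X}\mathbb{H}^D)^{\otimes(s+1)}$.

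\textbf{Globalization and main obstacle.} To pass from $(\hat X,1)$ to a general $(x,k)$, I would use the identity $g(x,k)\exp(tB(u))=\tilde n(x)\exp(tB(ku))\,\tilde k$ obtained from $\mathrm{Ad}(\tilde k)B(u)=B(ku)$, and then translate the whole configuration back to $\hat X$ via the isometry $\tilde n(x)^{-1}$. Because $\tilde n(x)$ preserves the frame $e_a$ and because the Levi--Civita connection is natural under isometries, the base-point identity propagates to every point of $G$, and the result descends to $\Gamma\backslash G$. The main obstacle is the step where the $\mathfrak{k}$-component $-K(u')$ of $B(u)$ under the Iwasawa splitting must be matched, term by term and for arbitrary tensor rank $s$, with the Levi--Civita spin connection of $\mathbb{H}^D$; once verified in rank one (acting on $V_1$), the general case follows because $d\rho_s$ acts as a derivation on tensor products and $\nabla$ is a derivation on the tensor algebra.
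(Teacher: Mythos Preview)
Your proposal is correct and follows essentially the same approach as the paper: both compute the first-order Iwasawa decomposition of $g(x,k)\exp(tB(u))$ and identify the $NA$-component with the directional derivative along the frame $e_a$ and the $K$-component (the paper's $Q(ku)$, your $-K(u')$) with the spin connection. The paper carries this out directly at a general point $(x,k)$ rather than first reducing to $(\hat X,1)$ and then globalizing by equivariance; your explicit matching of the $\mathfrak{k}$-component with the Levi--Civita connection is in fact more detailed than the paper's proof, which simply asserts that the identification can be checked against the standard definition of the covariant derivative.
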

\begin{proof}
$\varphi_{d\theta}$ is defined by
$$
\varphi_{d\theta}(x)(ku\otimes kv) = d\theta(u\otimes v)(g(x,k)) =  \left.\frac{d}{dt}\right|_{t=0}\theta(v)(g(x,k)\mathrm{exp}(t B(u)))\,.
$$
In order to rewrite the right-hand side in terms of $\varphi_{\theta}$, we can use
$$
g(x,k)\mathrm{exp}(t B(u)) = g(x',k')\,,
$$
where
\ba
x'^a &= x^a + x^{D}(ku)^{a} t+ O(t^2), \\
k' &= \mathrm{exp}(t Q(ku))k+O(t^2)\,.
\label{eq:deformation}
\ea
Here $Q(u)\in\mathfrak{k}$ is given by
$$
Q(u) =
\left(
\begin{array}{cc}
0_{D-1} & \hat{u} \\
-\hat{u}^t &0 \\
\end{array}
\right)\,,
$$
and $\hat{u}\in\mathbb{R}^{D-1}$ is the truncation of $u$ to the first $D-1$ components. It follows that
$$
\varphi_{d\theta}(x)(ku\otimes kv) =
\left.\frac{d}{dt}\right|_{t=0}\varphi_{\theta}(x')(k' v)\,.
$$
That the right-hand side equals the covariant derivative of $\varphi_{\theta}$ applied to $ku\otimes kv$ can now be checked directly using~\eqref{eq:deformation} by comparing it with the standard definition of the covariant derivative.
\end{proof}

\section{$\mathrm{SO}(3)$ Haar integrals}\label{app:KHaar}
As in the main text, we will normalize the Haar measure on $K=\mathrm{SO}(3)$ as $\int_K dk = 1$. Elements of $K$ are $3\times 3$ matrices with components $k^{a}_{\phantom{a} b}$. Recall that for $w\in V_n$, we have $(k w)^{a_1\ldots a_n} = k^{a_1}_{\phantom{a_1}b_1}\ldots k^{a_n}_{\phantom{a_n}b_n}w^{b_1\ldots b_n}$, and that for $w_1,\,w_2\in V_n$, we defined $w_1\cdot w_2 := w_1^{a_1\ldots a_n}w_2^{a_1\ldots a_n}$. The objective of this appendix is to record two lemmas about Haar integrals of products of matrix elements.
\begin{lemma}\label{lem:KHaar2}
Let $w_{1},\,w_{2},\,\widetilde{w}_1,\widetilde{w}_2\in V_n$. Then
$$
\int\limits_{K}
(w_1\cdot k \widetilde{w}_1)\, (w_2\cdot k\widetilde{w}_2)dk
= \frac{(w_1\cdot w_2) (\widetilde{w}_1\cdot \widetilde{w}_2)}{2n+1}\,.
$$
\end{lemma}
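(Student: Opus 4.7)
The plan is to use Schur orthogonality to reduce the integral to a pairing of invariants. First, I would observe that, viewed as a function of $(w_1, w_2, \widetilde w_1, \widetilde w_2)$, the left-hand side is a quadrilinear form on $V_n^{\otimes 4}$ that is invariant under $K$ acting diagonally on the pair $(w_1,w_2)$ (via the change of variable $k\mapsto kk_0^{-1}$, using that $k_0\in\mathrm{SO}(3)$ preserves the dot product) and independently on $(\widetilde w_1, \widetilde w_2)$ (via $k\mapsto k_0 k$). Since $V_n$ is a real irreducible representation of $K$ whose inner product $\cdot$ identifies $V_n$ with $V_n^*$, Schur's lemma implies that the space of $K$-invariant bilinear forms on $V_n\otimes V_n$ is one-dimensional and spanned by the dot product. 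Hence there exists a constant $c_n$, depending only on $n$, such that
$$
\int_K (w_1\cdot k\widetilde w_1)(w_2\cdot k\widetilde w_2)\,dk = c_n\,(w_1\cdot w_2)(\widetilde w_1\cdot \widetilde w_2).
$$

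Next, I would determine $c_n$ by taking a trace over an orthonormal basis. Let $\{e_I\}$ be an orthonormal basis of $V_n$, so that $e_I\cdot e_J=\delta_{IJ}$. Setting $w_1=\widetilde w_1=e_I$ and $w_2=\widetilde w_2=e_J$ and summing over $I,J$, the left-hand side becomes
$$
\int_K \chi_n(k)^2\,dk,\qquad\text{where}\qquad \chi_n(k) := \sum_I (e_I\cdot ke_I)
$$
is the character of the irreducible representation $V_n$. By Schur orthogonality of characters (and since the Haar measure on $K$ has total mass one), this equals $1$. The right-hand side becomes $c_n\sum_{I,J}\delta_{IJ}^2 = c_n\,\dim V_n = c_n(2n+1)$. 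Comparing gives $c_n=1/(2n+1)$, which is the claimed normalization.

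The only nontrivial point is the reduction via Schur's lemma; the normalization step is a one-line character calculation. The expected obstacle, should one arise, is merely verifying cleanly that the $K$-invariance of the integrand under the two independent $K$-actions holds with the correct orientation, which is immediate from orthogonality of elements of $\mathrm{SO}(3)$.
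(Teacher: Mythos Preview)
Your proof is correct and follows essentially the same approach as the paper: both invoke Schur's lemma to reduce the integral to a single constant and then fix that constant by a trace. The only cosmetic difference is that the paper packages the argument as a $K$-equivariant endomorphism $\alpha:V_n\to V_n$, $t\mapsto\int_K(w_1\cdot kt)\,k^{-1}w_2\,dk$, and computes $\mathrm{tr}\,\alpha=w_1\cdot w_2$ directly, whereas you use bi-invariance and character orthogonality; the content is the same.
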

\begin{proof}
Consider the map $\alpha: V_n\rightarrow V_n$ defined by
$$
t_1 \mapsto \int\limits_{K}
(w_1\cdot kt_1)\, k^{-1}w_2\,dk\,.
$$
This map is $K$-invariant, and so by Schur's lemma $\alpha = c\,\mathrm{id}$. The constant of proportionality follows by taking the trace $\mathrm{dim}(V_n)\,c = \mathrm{tr}(\alpha) = w_1\cdot w_2$, and the claim follows.
\end{proof}

\begin{lemma}\label{lem:KHaar3}
Let $w_{1},\,\widetilde{w}_1 \in V_{n_1}$,  $w_{2},\,\widetilde{w}_2 \in V_{n_2}$ and  $w_{3},\,\widetilde{w}_3 \in V_{n_3}$. Then
$$
\int\limits_{K}
(w_1\cdot k\widetilde{w}_1)\, (w_2\cdot k\widetilde{w}_2)\, (w_3\cdot k\widetilde{w}_3)dk
= \frac{T_{n_1,n_2,n_3}(w_1,w_2,w_3)\,T_{n_1,n_2,n_3}(\widetilde{w}_1,\widetilde{w}_2,\widetilde{w}_3)}{q(n_1,n_2,n_3)}\,,
$$
where $q(n_1,n_2,n_3)$ is given in~\eqref{eq:qDefinition} and $T_{n_1, n_2, n_3}$ is the $K$-invariant trilinear form defined in Definition~\ref{def:Trilinear}.
\end{lemma}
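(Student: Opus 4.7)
The plan is to adapt the Schur-lemma argument from Lemma~\ref{lem:KHaar2} to the triple product setting. Denote the left-hand side of the lemma by $\beta(w_1,w_2,w_3;\widetilde w_1,\widetilde w_2,\widetilde w_3)$. Since the Haar measure on $K$ is bi-invariant and the bilinear form $w\cdot w'$ is itself $K$-invariant, $\beta$ is invariant separately under the diagonal action of $K$ on the first three entries $(w_1,w_2,w_3)$ and on the last three entries $(\widetilde w_1,\widetilde w_2,\widetilde w_3)$; the latter follows from the change of variables $k\mapsto kk_0^{-1}$ in the integral.

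If the triangle inequalities fail, both sides of the identity vanish: the right-hand side by the convention $T_{n_1,n_2,n_3}=0$, and the left-hand side by the zero-dimensionality of the space of $K$-invariant trilinear forms on $V_{n_1}\otimes V_{n_2}\otimes V_{n_3}$ recalled before Definition~\ref{def:Trilinear}. Assume therefore the triangle inequalities hold. By the same uniqueness statement, applied to both the unprimed and primed triples separately, there exists a universal constant $c=c(n_1,n_2,n_3)\in\mathbb{C}$ with
\[
\beta(w_1,w_2,w_3;\widetilde w_1,\widetilde w_2,\widetilde w_3) \;=\; c\,T_{n_1,n_2,n_3}(w_1,w_2,w_3)\,T_{n_1,n_2,n_3}(\widetilde w_1,\widetilde w_2,\widetilde w_3).
\]

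To pin down $c$, I would contract the first three arguments of $\beta$ against the tensor
\[
\tau \;:=\; (b_{n_1}\otimes b_{n_2}\otimes b_{n_3})^{-1}\circ T^{*}_{n_1,n_2,n_3}(1) \;\in\; V_{n_1}\otimes V_{n_2}\otimes V_{n_3},
\]
which is the tensor dual to $T_{n_1,n_2,n_3}$ under the bilinear forms $b_n$, so that $T_{n_1,n_2,n_3}(w_1,w_2,w_3)$ equals the componentwise contraction of $w_1\otimes w_2\otimes w_3$ with $\tau$. Contracting the left-hand side against $\tau$ and commuting the (finite) sum past the integral produces $\int_K T_{n_1,n_2,n_3}(k\widetilde w_1,k\widetilde w_2,k\widetilde w_3)\,dk = T_{n_1,n_2,n_3}(\widetilde w_1,\widetilde w_2,\widetilde w_3)$, by the $K$-invariance of $T_{n_1,n_2,n_3}$ and the normalization of the Haar measure, while the same contraction on the right-hand side yields $c\,T_{n_1,n_2,n_3}(\tau)\,T_{n_1,n_2,n_3}(\widetilde w_1,\widetilde w_2,\widetilde w_3)$. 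Lemma~\ref{lem:qDefinition} identifies $T_{n_1,n_2,n_3}(\tau) = q(n_1,n_2,n_3)$ directly, so comparing the two expressions gives $c = 1/q(n_1,n_2,n_3)$.

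There is no substantive obstacle here; the only delicate point is a notational one, namely carefully performing the contraction of $\beta$ with $\tau$ in components by writing $\tau=\sum_\alpha \tau_1^\alpha\otimes\tau_2^\alpha\otimes\tau_3^\alpha$ and recognizing the resulting triple sum inside the integral as $T_{n_1,n_2,n_3}(k\widetilde w_1,k\widetilde w_2,k\widetilde w_3)$.
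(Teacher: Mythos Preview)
Your Schur-lemma factorization and the contraction trick with $\tau$ are both correct, and the argument is cleaner than what the paper does. There is, however, a circularity you should be aware of: in this paper, Lemma~\ref{lem:qDefinition} is \emph{proved using} Lemma~\ref{lem:KHaar3}. Indeed, the paper's proof of Lemma~\ref{lem:qDefinition} (placed in Appendix~\ref{app:KHaar} right after the proof of Lemma~\ref{lem:KHaar3}) starts from the uncontracted integral $\int_K (kw_1)\otimes(kw_2)\otimes(kw_3)\,dk$, invokes Lemma~\ref{lem:KHaar3} to write it as $q^{-1}T(w_1,w_2,w_3)\,\tau$, and then applies $T$ to both sides---which is precisely your contraction argument run in reverse. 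So within the paper's logical order you cannot cite Lemma~\ref{lem:qDefinition} here.

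The paper's proof of Lemma~\ref{lem:KHaar3} therefore determines the constant by a direct calculation: it passes to the double cover $\mathrm{SU}(2)$, chooses specific factorized test vectors $u_i^{m_i}$ in the spinor realization, parametrizes $\mathrm{SU}(2)$ by Euler-type angles, and evaluates the resulting trigonometric integral to recover the gamma-function expression~\eqref{eq:qDefinition}. Your route would be a genuine alternative if you supplied an independent proof of Lemma~\ref{lem:qDefinition}; since that statement is purely algebraic (the squared norm of the normalized invariant trilinear form), this is possible via standard Clebsch--Gordan combinatorics, but that computation then carries the same load as the paper's explicit integral. Either way, some concrete evaluation is unavoidable---your argument relocates it rather than eliminates it.
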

\begin{proof}
The trilinear maps
$$
(w_1,w_2,w_3)\mapsto \int\limits_{K}
(w_1\cdot k\widetilde{w}_1)\, (w_2\cdot k\widetilde{w}_2)\, (w_3\cdot k\widetilde{w}_3) \, dk
$$
and
$$
(\widetilde{w}_1,\widetilde{w}_2,\widetilde{w}_3)\mapsto \int\limits_{K}
(w_1\cdot k\widetilde{w}_1)\, (w_2\cdot k\widetilde{w}_2)\, (w_3\cdot k\widetilde{w}_3) \, dk
$$
are $K$-invariant and therefore the integral must be proportional to
$$
T_{n_1,n_2,n_3}(w_1,w_2,w_3)\,T_{n_1,n_2,n_3}(\widetilde{w}_1,\widetilde{w}_2,\widetilde{w}_3)\,.
$$
It remains to determine the constant of proportionality. To do that, we evalute the integral over $K$ in the statement of the lemma for a special choice of $w_{1,2,3}$ and $\widetilde{w}_{1,2,3}$.

It is more convenient to replace $\mathrm{SO}(3)$ with its double cover $\mathrm{SU}(2)$. The latter has a two-dimensional fundamental representation $U_1$. The complete list of irreducible representations consists of $U_{m}=\mathrm{Sym}^m(U_1)$ of dimension $m+1$ with $m\in\mathbb{Z}_{\geq 0}$. The elements of $U_m$ are completely symmetric tensors $t^{\alpha_{1}\ldots \alpha_{m}}$. We have $V_n\simeq U_{2n}$. $U_m$ admits an invariant bilinear form $\langle t_1,t_2\rangle_m := \epsilon_{\alpha_1\beta_1}\ldots \epsilon_{\alpha_m\beta_m} t_1^{\alpha_1\ldots\alpha_m}t_2^{\beta_1\ldots\beta_m}$, where $\epsilon_{11}=\epsilon_{22}=0$ and $\epsilon_{12}=-\epsilon_{21}=1$. The space of invariant trilinear forms $U_{m_1}\otimes U_{m_2}\otimes U_{m_3}\rightarrow\mathbb{C}$ is one-dimensional if $m_1$, $m_2$, $m_3$ satisfy the triangle inequalities and $m_1+m_2+m_3$ is even, and zero-dimensional otherwise. We will normalize the invariant form $\widehat{T}_{m_1,m_2,m_3}: U_{m_1}\otimes U_{m_2}\otimes U_{m_3}\rightarrow\mathbb{C}$ as follows:
$$
\widehat{T}_{m_1,m_2,m_3}(u_1^{m_1},u_2^{m_2},u_3^{m_3}) = 
\langle u_1,u_2\rangle_{1}^{\frac{m_1+m_2-m_3}{2}}
\langle u_1,u_3\rangle_{1}^{\frac{m_1+m_3-m_2}{2}}
\langle u_2,u_3\rangle_{1}^{\frac{m_2+m_3-m_1}{2}}\,.
$$
Here $u_1,\,u_2,\,u_3\in U_1$ and $u^m\in U_m$ stands for the factorized symmetric tensor with components $u^{\alpha_1}\ldots u^{\alpha_m}$.

With this preparation, the statement of Lemma~\ref{lem:KHaar3} is equivalent to
$$
\int\limits_{\mathrm{SU}(2)}
\langle t_1, k\widetilde{t}_1\rangle_{m_1}\,
\langle t_2, k\widetilde{t}_2\rangle_{m_2}\,
\langle t_3, k\widetilde{t}_3\rangle_{m_3}
dk
= \frac{\widehat{T}_{m_1,m_2,m_3}(t_1,t_2,t_3)\,\widehat{T}_{m_1,m_2,m_3}(\widetilde{t}_1,\widetilde{t}_2,\widetilde{t}_3)}{q(m_1/2,m_2/2,m_3/3)}
$$
for all $t_1,\,\widetilde{t}_1\in U_{m_1}$, $t_2,\,\widetilde{t}_2\in U_{m_2}$, $t_3,\,\widetilde{t}_3\in U_{m_3}$, where the Haar measure on $\mathrm{SU}(2)$ is normalized as $\int_{\mathrm{SU}(2)}1dk = 1$. To find the overall constant, we parametrize $k\in\mathrm{SU}(2)$ as
$$
k(\alpha,\beta,\theta) =
\begin{pmatrix}
\cos\theta e^{i\alpha} & \sin\theta e^{i\beta} \\
-\sin\theta e^{-i\beta}& \cos\theta e^{-i\alpha}
\end{pmatrix}\,,
$$
with $\alpha,\beta\in[0,2\pi)$ and $\theta\in[0,\pi/2]$. The measure then takes the form
$$
dk = \frac{\sin(2\theta)d\alpha d\beta d\theta}{(2\pi)^2}\,.
$$
Finally, we set $t_1 = u_1^{m_1}$, $\widetilde{t}_1 = \widetilde{u}_1^{m_1}$, $t_2 = u_2^{m_2}$, $\widetilde{t}_2 = \widetilde{u}_2^{m_2}$, $t_3 = u_3^{m_3}$, $\widetilde{t}_3 = \widetilde{u}_3^{m_3}$, where
$$
u_1 = \begin{pmatrix}0\\-1\end{pmatrix},\quad
u_2 = \begin{pmatrix}1\\0\end{pmatrix},\quad
u_3 = \begin{pmatrix}1\\-1\end{pmatrix},\quad
\widetilde{u}_1 = \begin{pmatrix}1\\0\end{pmatrix},\quad
\widetilde{u}_2 = \begin{pmatrix}0\\1\end{pmatrix},\quad
\widetilde{u}_3 = \begin{pmatrix}1\\1\end{pmatrix}\,.
$$
With this choice, the lemma becomes equivalent to the following integral identity:
$$
\begin{aligned}
\int\limits_{0}^{\frac{\pi}{2}}\int\limits_{0}^{2\pi}\int\limits_{0}^{2\pi}
e^{i(m_1-m_2)\alpha}(\cos\theta)^{m_1+m_2}
&\left[2 \cos \alpha  \cos \theta +2 i \sin \beta  \sin \theta \right]^{m_3}
\frac{\sin(2\theta)d\alpha d\beta d\theta}{(2\pi)^2} \\
&= \frac{1}{q(m_1/2,m_2/2,m_3/3)}\,.
\end{aligned}
$$
Since the integrand is a polynomial in $e^{\pm i\alpha}$, $e^{\pm i\beta}$, $e^{\pm i\theta}$, performing the integral is elementary after using the binomial formula on the last factor, which completes the proof.
\end{proof}
\begin{proof}[Proof of Lemma~\ref{lem:qDefinition}]
Consider the following integral, which produces an element of $V_{n_1}\otimes V_{n_2}\otimes V_{n_3}$:
$$
\begin{aligned}
\int\limits_{K}
&(kw_1)^{a_1\ldots a_{n_1}}\, (kw_2)^{b_1\ldots b_{n_2}}\, (kw_3)^{c_1\ldots c_{n_3}} dk\\
&=  \frac{T_{n_1,n_2,n_3}(w_1,w_2,w_3)}{q(n_1,n_2,n_3)}
((b_{n_1}\otimes b_{n_2}\otimes b_{n_3})^{-1} \circ T^*_{n_1,n_2,n_3}(1))
^{a_1\ldots a_{n_1},b_1\ldots b_{n_2},c_1\ldots c_{n_3}}\,,
\end{aligned}
$$
where the right-hand side follows from Lemma~\ref{lem:KHaar3}. Let us apply $T_{n_1,n_2,n_3}$ to both sides. Since $T_{n_1,n_2,n_3}$ is $K$-linear, the left-hand side equals $T_{n_1,n_2,n_3}(w_1,w_2,w_3)$. On the other hand, the right hand side equals
$$
\frac{T_{n_1,n_2,n_3}(w_1,w_2,w_3)}{q(n_1,n_2,n_3)} \,
T_{n_1,n_2,n_3}\circ (b_{n_1}\otimes b_{n_2}\otimes b_{n_3})^{-1} \circ T^*_{n_1,n_2,n_3}(1)\,,
$$
which completes the proof.
\end{proof}

\section{Computations of trilinear functionals}\label{app:trilinear}
\begin{lemma}\label{lem:integral1}
Suppose $J_1+J_2\leqslant J_3$ and let $n_1\geq J_1$, $n_2\geq J_2$ be such that $n_1+n_2=J_3$. Then
$$
\begin{aligned}
&\alpha_{n_1,n_2,J_3}(\Delta_1,J_1;\Delta_2,J_2;\Delta_3,J_3) = \\
&=
(-1)^{n_1+J_2+J_3}\frac{(\sqrt{2}i)^{J_3-J_1-J_2}(h_3-h_1-h_2+1)_{J_3-J_1-J_2}q(-h_1,-h_2,-h_3)}{(1-2h_1)(1-2h_2)(1-2h_3)(2J_3+1)}\,.
\end{aligned}
$$
\end{lemma}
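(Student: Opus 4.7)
The plan is to evaluate the defining formula~\eqref{eq:alphaDef} of $\alpha_{n_1,n_2,J_3}$ on a judiciously chosen triple of test vectors for which both the numerator (the triple integral defining $\mathcal{T}$) and the denominator (the normalized trilinear form $T_{n_1,n_2,J_3}$) can be computed in closed form. The natural choice is the extreme-weight vectors $w_1 = z_+^{n_1}$, $w_2 = z_+^{n_2}$, and $w_3 = z_-^{J_3}$, because the corresponding $\kappa$-images given by Lemma~\ref{lem:fPlusMinus} are simple monomials in $\bar{x}$ (respectively $x$) times an $\mathrm{SU}(2)$-covariant factor $(|x|^2+1)^{\Delta-n-2}$, and because $z_\pm$ are null with $z_+\cdot z_- = 1$.

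With this choice, the denominator is immediate: since $n_1+n_2 = J_3$ saturates the triangle inequality, Definition~\ref{def:Trilinear} gives $T_{n_1,n_2,J_3}(z_+^{n_1},z_+^{n_2},z_-^{J_3}) = (z_+\cdot z_+)^0 (z_+\cdot z_-)^{n_1}(z_+\cdot z_-)^{n_2} = 1$, where the vanishing exponent on the null factor $z_+\cdot z_+$ is what makes the computation tractable. Consequently $\alpha_{n_1,n_2,J_3}$ equals the triple integral itself. Substituting Lemma~\ref{lem:fPlusMinus} produces the prefactor
$$
\frac{i^{J_1+J_2}(-i)^{J_3} 2^{(n_1+n_2-J_1-J_2)/2}(2+J_1-\Delta_1)_{n_1-J_1}(2+J_2-\Delta_2)_{n_2-J_2}}{\pi^3}
$$
multiplying an integral of $\bar{x}_1^{n_1+J_1}\bar{x}_2^{n_2+J_2}$ times powers of $(|x_i|^2+1)$ and the standard three-point kernel $\tau_{\Delta_1,J_1;\Delta_2,J_2;\Delta_3,J_3}(x_1,x_2,x_3)$.

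To evaluate this integral, I would exploit the $\mathrm{PSL}_2(\mathbb{C})$-invariance of $\mathcal{T}$ to send $x_3 \to \infty$, which removes the $x_{13}$ and $x_{23}$ factors from $\tau$ at the cost of an explicit transformation factor. One is then left with a two-dimensional integral over $x_1,x_2 \in \mathbb{C}$ whose holomorphic and antiholomorphic parts decouple after a suitable change of variables (e.g. $x_2 = x_1 u$), reducing each to a one-dimensional complex beta-type integral computable via the standard Mellin--Barnes evaluation
$$
\int_{\mathbb{C}} |x|^{2a}|1-x|^{2b}\,dx = \pi\frac{\Gamma(1+a)\Gamma(1+b)\Gamma(-1-a-b)}{\Gamma(-a)\Gamma(-b)\Gamma(2+a+b)}.
$$
The resulting product of gamma functions reassembles, after using the identity $(a)_n=\Gamma(a+n)/\Gamma(a)$, into the ratio $q(-h_1,-h_2,-h_3)$ defined in~\eqref{eq:qDefinition}, together with a Pochhammer $(h_3-h_1-h_2+1)_{J_3-J_1-J_2}$ coming from the mismatch between the powers $\bar{x}_i^{n_i+J_i}$ supplied by the test vectors and the powers of $\bar{x}_{12}$ in $\tau$.

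The principal obstacle is bookkeeping rather than conceptual: one must combine the prefactor above, the transformation factor at infinity, and the gamma functions from the evaluated integral, and reorganize them into the claimed form with $(\sqrt{2}\,i)^{J_3-J_1-J_2}$ and the overall sign $(-1)^{n_1+J_2+J_3}$. The factor of $\sqrt{2}$ arises from the $2^{(n_i-J_i)/2}$ normalizations in Lemma~\ref{lem:fPlusMinus} combined with the shift by $J_3-J_1-J_2$ between the $\bar{x}$-powers of the test vectors and those of $\tau$; the factor of $i$ accumulates from the explicit phases $i^{J_i}(-i)^{J_3}$; and the overall sign $(-1)^{n_1+J_2+J_3}$ is produced when the antiholomorphic integral is reexpressed using the reflection $\Gamma(a)\Gamma(1-a) = \pi/\sin(\pi a)$. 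Verifying these sign and phase manipulations, together with the Pochhammer combinatorics $(2+J_i-\Delta_i)_{n_i-J_i}\cdot(\text{gamma factors}) \mapsto (h_3-h_1-h_2+1)_{J_3-J_1-J_2}$, constitutes the technical heart of the proof.
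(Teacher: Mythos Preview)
Your choice of test vectors $w_1=z_+^{n_1}$, $w_2=z_+^{n_2}$, $w_3=z_-^{J_3}$ and the computation $T_{n_1,n_2,J_3}(z_+^{n_1},z_+^{n_2},z_-^{J_3})=1$ match the paper exactly. The gap is in your reduction of the triple integral. The phrase ``exploit the $\mathrm{PSL}_2(\mathbb{C})$-invariance of $\mathcal{T}$ to send $x_3\to\infty$'' is not well-defined here: $x_3$ is an integration variable, not a fixed insertion point. Invariance of $\mathcal{T}$ asserts $\mathcal{T}(gf_1,gf_2,gf_3)=\mathcal{T}(f_1,f_2,f_3)$ for all $g\in G$, which neither fixes any $x_i$ nor drops the $x_3$-integral. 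The CFT trick of sending a point to infinity applies to the kernel $\tau(x_1,x_2,x_3)$ evaluated at prescribed points, not to $\tau$ integrated against three test functions.

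The paper eliminates the $x_3$-integration by a representation-theoretic argument rather than by a symmetry of the integral. The partial map $(f_1,f_2)\mapsto\int_{\mathbb{C}^2}f_1(x_1)f_2(x_2)\,\tau(x_1,x_2,\,\cdot\,)\,dx_1\,dx_2$ is $G$-linear into $R^{\infty}_{2-\Delta_3,-J_3}$. Because $f_1$ and $f_2$ are $K$-highest-weight vectors of weights $n_1$ and $n_2$, their image has weight $n_1+n_2=J_3$, which is the minimal $K$-type of $R_{2-\Delta_3,-J_3}$; hence the image is a scalar multiple of $\kappa^{(J_3)}_{2-\Delta_3,-J_3}(z_+^{J_3})(x_3)\propto(|x_3|^2+1)^{-\Delta_3-J_3}$. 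This fixes the $x_3$-dependence a priori, and the scalar is read off by setting $x_3=0$ in the $x_1,x_2$-integral. Only after that does one invert, $x_i=1/y_i$, obtaining an integral of $\widetilde f_1(y_1)\widetilde f_2(y_2)$ against the convolution kernel $(y_2-y_1)^{-h_1-h_2+h_3}(\bar y_2-\bar y_1)^{-\hb_1-\hb_2+\hb_3}$. Since each $\widetilde f_i$ still carries a factor $(|y_i|^2+1)^{\mathrm{power}}$ rather than a pure monomial, this is not a product of the complex beta integrals you quote; the paper evaluates it by inserting the Fourier representations of $\widetilde f_1,\widetilde f_2$ instead. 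The residual $x_3$-integral, of $f_3$ against the known profile $(|x_3|^2+1)^{-\Delta_3-J_3}$, is elementary.
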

\begin{proof}
Let
$$
f_1 = \kappa^{(n_1)}_{\Delta_1,J_1}(z_{+}^{n_1}),\quad
f_2 = \kappa^{(n_2)}_{\Delta_2,J_2}(z_{+}^{n_2}),\quad
f_3 = \kappa^{(J_3)}_{\Delta_3,J_3}(z_{-}^{J_3})\,,
$$
so that $\alpha_{n_1,n_2,J_3}(\Delta_1,J_1;\Delta_2,J_2;\Delta_3,J_3)=\mathcal{T}_{\Delta_1,J_1;\Delta_2,J_2;\Delta_3,J_3}(f_1,f_2,f_3)$. In order to evaluate the right-hand side, we proceed as follows. Note that
\be\label{eq:mapTau}
(f_1,f_2)\mapsto
\int\limits_{\mathbb{C}^2}\!\, f_1(x_1)f_2(x_2)
\tau_{\Delta_1,J_1;\Delta_2,J_2;\Delta_3,J_3}(x_1,x_2,x_3)dx_1dx_2
\ee
is a $G$-linear map $R_{\Delta_1,J_1}^{\infty}\times R_{\Delta_2,J_2}^{\infty}\rightarrow R_{2-\Delta_3,-J_3}^{\infty}$. Therefore, for the special choice of $f_1$, $f_2$ above,~\eqref{eq:mapTau} must be proportional to the vector
$$
\kappa_{2-\Delta_3,-J_3}^{(J_3)}(z_{+}^{J_3})(x_3) \propto (|x_3|^2+1)^{-\Delta_3-J_3}\,.
$$
The constant of proportionality can be found by setting $x_3 = 0$ inside the $x_1$, $x_2$ integrals in~\eqref{eq:mapTau}. The resulting integral over $x_1$, $x_2$ can be evaluated by changing variables as $x_{1}=1/y_{1}$, $x_{2}=1/y_{2}$ to bring it to the form
$$
\int\limits_{\mathbb{C}^2}\!\, \widetilde{f}_1(y_1)\widetilde{f}_2(y_2)
(y_2-y_1)^{-h_1-h_2+h_3}(\bar{y}_2-\bar{y}_1)^{-\hb_1-\hb_2+\hb_3}dy_1dy_2\,,
$$
at which point we insert the Fourier representation of $\widetilde{f}_1$, $\widetilde{f}_2$. Finally, the integral over $x_3$ is elementary.
\end{proof}

\begin{lemma}\label{lem:integral2}
Suppose $J_1+J_2\geq J_3$. Then
$$
\begin{aligned}
&\alpha_{J_1,J_2,J_1+J_2}(\Delta_1,J_1;\Delta_2,J_2;\Delta_3,J_3) = 
\frac{q(-h_1,-h_2,-h_3)}{(1-2h_1)(1-2h_2)(1-2h_3)}\frac{(J_1+J_2+J_3)!}{(2 J_1+2 J_2+1)!}
\times\\
&\qquad\times(-i \sqrt{2})^{J_1+J_2-J_3}(1-h_1+h_2-h_3)_{J_1+J_2-J_3} (-\hb_1+\hb_2+\hb_3)_{J_1+J_2-J_3}
\times
\\
&\qquad\times\pFq{3}{2}{J_3-J_1-J_2,h_1-h_2-h_3+1,\hb_1-\hb_2+\hb_3}{h_1-h_2-h_3-2 J_2+1,\hb_1-\hb_2+\hb_3-2J_1}{1}\,.
\end{aligned}
$$
\end{lemma}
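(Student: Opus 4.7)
My plan is to adapt the proof of Lemma~\ref{lem:integral1} to the regime $J_1+J_2 \geq J_3$. First I would pick the test vectors $w_1 = z_+^{J_1}$, $w_2 = z_+^{J_2}$, $w_3 = z_-^{J_1+J_2}$. Since $z_+\cdot z_-=1$, Definition~\ref{def:Trilinear} gives $T_{J_1,J_2,J_1+J_2}(w_1,w_2,w_3) = 1$, so~\eqref{eq:alphaDef} reduces the problem to evaluating a single triple integral: $\alpha_{J_1,J_2,J_1+J_2} = \mathcal{T}_{\Delta_1,J_1;\Delta_2,J_2;\Delta_3,J_3}(f_1,f_2,f_3)$ with $f_i = \kappa^{(n_i)}_{\Delta_i,J_i}(w_i)$ given explicitly by Lemma~\ref{lem:fPlusMinus}. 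The inner integral $I(x_3) := \int f_1(x_1) f_2(x_2) \tau(x_1,x_2,x_3)\, dx_1 dx_2$ is a $G$-equivariant map into $R^{\infty}_{2-\Delta_3,-J_3}$. Because $f_1,f_2$ have $K$-weights $J_1, J_2$, the image sits in the weight-$(J_1+J_2)$ subspace, which by Lemma~\ref{lem:fPlusMinus} is one-dimensional. Therefore $I(x_3) = C \cdot \kappa^{(J_1+J_2)}_{2-\Delta_3,-J_3}(z_+^{J_1+J_2})(x_3)$ for a constant $C$ that I still need to determine.

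The principal new feature compared with Lemma~\ref{lem:integral1} is that this highest-weight vector is proportional to $\bar{x}_3^{J_1+J_2-J_3}(|x_3|^2+1)^{-\Delta_3-J_1-J_2}$, which vanishes at $x_3=0$ whenever $J_1+J_2>J_3$, so $C$ cannot be read off by evaluating at the origin as was done in the previous lemma. Instead, I would compare the coefficient of $\bar{x}_3^{J_1+J_2-J_3}$ at $x_3^0$ on both sides of the proportionality. Expanding each factor $(x_i-x_3)^{\alpha_i} = x_i^{\alpha_i}\sum_{k_i\geq 0}\binom{\alpha_i}{k_i}(-x_3/x_i)^{k_i}$ of $\tau$ and its antiholomorphic counterpart, the $x_3^0$ condition forces $k_1 = k_2 = 0$, and the extraction of $\bar{x}_3^{J_1+J_2-J_3}$ becomes a finite sum over $\bar{k}_1 + \bar{k}_2 = J_1+J_2-J_3$. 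For each summand the residual $x_1, x_2$ integral has exactly the same structure as the integral evaluated in Lemma~\ref{lem:integral1}: after the inversion $y_i = 1/x_i$ it becomes a Fourier-type convolution integral that can be computed by inserting the Fourier representation of $(y_1-y_2)^a \overline{(y_1-y_2)}^b$. Once $C$ is known, the outer integral over $x_3$ becomes the canonical $G$-invariant pairing of $\kappa^{(J_1+J_2)}_{2-\Delta_3,-J_3}(z_+^{J_1+J_2})$ with $\kappa^{(J_1+J_2)}_{\Delta_3,J_3}(z_-^{J_1+J_2})$, which reduces to a Beta function via polar coordinates.

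The main obstacle will be repackaging the finite $\bar{k}_1$-sum into the specific ${}_3F_2$ series appearing in the lemma. After evaluating the $x_1, x_2$ integrals, each summand is a product of Gamma function ratios depending linearly on $\bar{k}_1$; using the identity $\binom{\alpha}{k} = (-1)^k(-\alpha)_k/k!$ and collecting the resulting Pochhammer shifts should produce the upper parameters $(J_3-J_1-J_2, h_1-h_2-h_3+1, \hb_1-\hb_2+\hb_3)$ and lower parameters $(h_1-h_2-h_3-2J_2+1, \hb_1-\hb_2+\hb_3-2J_1)$ of the displayed hypergeometric. The overall $(-i\sqrt{2})^{J_1+J_2-J_3}$ prefactor should assemble from the $i^{J_i}$ normalization constants in Lemma~\ref{lem:fPlusMinus}, the $\sqrt{2}$ factors in $z_\pm = (1,\pm i, 0)/\sqrt{2}$, and the signs $(-1)^{\bar{k}_1+\bar{k}_2}$ from the binomial expansion. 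The two Pochhammer factors $(1-h_1+h_2-h_3)_{J_1+J_2-J_3}$ and $(-\hb_1+\hb_2+\hb_3)_{J_1+J_2-J_3}$ should emerge from the shifted Gamma functions in the Beta-function evaluation of the outer integral, while the remaining Gamma products collapse to $q(-h_1,-h_2,-h_3)/[(1-2h_1)(1-2h_2)(1-2h_3)] \cdot (J_1+J_2+J_3)!/(2J_1+2J_2+1)!$ using standard reflection and duplication identities. Bookkeeping these signs and Gamma-function shifts consistently is the delicate part of the argument.
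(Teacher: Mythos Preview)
Your proposal is correct and follows essentially the same route as the paper's proof: the same choice of test vectors $z_+^{J_1},z_+^{J_2},z_-^{J_1+J_2}$, the same recognition that the inner integral is proportional to $\kappa_{2-\Delta_3,-J_3}^{(J_1+J_2)}(z_+^{J_1+J_2})$, and the same key observation that one must extract the coefficient of $\bar{x}_3^{J_1+J_2-J_3}$ near $x_3=0$ rather than evaluate at the origin, after which the $x_1,x_2$ integrals go through via Fourier and the $x_3$ integral is elementary. Your write-up is in fact more explicit than the paper's about how the finite $\bar{k}$-sum assembles into the terminating ${}_3F_2$ and where the various prefactors originate.
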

\begin{proof}
We proceed analogously to the proof of Lemma~\ref{lem:integral1}, this time setting
$$
f_1 = \kappa^{(J_1)}_{\Delta_1,J_1}(z_{+}^{J_1}),\quad
f_2 = \kappa^{(J_2)}_{\Delta_2,J_2}(z_{+}^{J_2}),\quad
f_3 = \kappa^{(J_1+J_2)}_{\Delta_3,J_3}(z_{-}^{J_1+J_2})\,,
$$
so that $\alpha_{J_1,J_2,J_1+J_2}(\Delta_1,J_1;\Delta_2,J_2;\Delta_3,J_3)=\mathcal{T}_{\Delta_1,J_1;\Delta_2,J_2;\Delta_3,J_3}(f_1,f_2,f_3)$. For these choices of $f_1$, $f_2$,~\eqref{eq:mapTau} must be proportional to the vector
$$
\kappa_{2-\Delta_3,-J_3}^{(J_1+J_2)}(z_{+}^{J_1+J_2})(x_3) \propto \overline{x}_3^{J_1+J_2-J_3}(|x_3|^2+1)^{-J_1-J_2-\Delta_3}\,.
$$
The constant of proportionality can be found by expanding~\eqref{eq:mapTau} around $x_3=0$ to extract the coefficient of $\xb_3^{J_1+J_2-J_3}$. The resulting double integral over $x_1$, $x_2$ can again be evaluated by going to Fourier space and the final integral over $x_3$ is elementary.
\end{proof}

\begin{lemma}\label{lem:integral3}
Suppose $J_1$, $J_2$, $J_3$ satisfy the triangle inequalities. Then
$$
\alpha_{J_1,J_2,J_3}(\Delta_1,J_1;\Delta_2,J_2;\Delta_3,J_3) = 
\frac{q(-h_1,-h_2,-h_3)}{(1-2h_1)(1-2h_2)(1-2h_3)q(J_1,J_2,J_3)}
$$
\end{lemma}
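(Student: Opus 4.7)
The strategy mirrors the proofs of Lemmas~\ref{lem:integral1} and~\ref{lem:integral2}: evaluate $\mathcal{T}_{\Delta_1,J_1;\Delta_2,J_2;\Delta_3,J_3}$ on specific test vectors $f_i = \kappa^{(J_i)}_{\Delta_i,J_i}(w_i)$ and read off $\alpha_{J_1,J_2,J_3}$ from~\eqref{eq:alphaDef} by dividing by $T_{J_1,J_2,J_3}(w_1,w_2,w_3)$. The essential difference from the earlier lemmas is that in the triangle-inequality regime, the invariant tensor $T_{J_1,J_2,J_3}(v_1^{J_1},v_2^{J_2},v_3^{J_3})$ is a monomial in all three pairings $v_i\cdot v_j$ with strictly positive exponents $(J_i+J_j-J_k)/2$, so the degenerate choices $(z_+^{J_1},z_+^{J_2},z_-^{J_3})$ used in C.1 and C.2 make $T$ vanish and must be replaced by a genuinely three-null configuration.

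Concretely, I would take $v_1=z_+$, $v_2=z_-$, and $v_3$ a third null vector with both $z_+\cdot v_3$ and $z_-\cdot v_3$ nonzero (for instance $v_3=(1,0,i)$). The tensors $\kappa^{(J_i)}_{\Delta_i,J_i}(v_i^{J_i})$ for $i=1,2$ are then given directly by Lemma~\ref{lem:fPlusMinus}, while $\kappa^{(J_3)}_{\Delta_3,J_3}(v_3^{J_3})$ is obtained from $\kappa^{(J_3)}_{\Delta_3,J_3}(z_+^{J_3})$ by applying the $K$-rotation sending $z_+$ to $v_3$, and is therefore explicit via~\eqref{eq:principalAction}. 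Substituting into the defining triple integral for $\mathcal{T}$ and using $G$-invariance of the three-point structure (Lemma~\ref{lem:3PtInvariance}) to send one of the three insertion points to infinity reduces the computation to a two-variable convolution of exactly the same type that arose in the proof of Lemma~\ref{lem:integral1}, which is evaluated by inserting the Fourier representation of $|x|^{-2h}\bar{x}^{-J}x^{J}$.

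The main obstacle will be algebraic rather than conceptual: the raw output of this Fourier calculation is a product of Gamma functions and Pochhammer symbols that must be reorganized into the compact ratio $q(-h_1,-h_2,-h_3)/q(J_1,J_2,J_3)$. Two consistency checks guide the simplification: the answer must be totally symmetric under permutations of the labels $(\Delta_i,J_i)$, which is manifest on the right-hand side thanks to the total symmetry of $q$; and it must agree with Lemmas~\ref{lem:integral1} and~\ref{lem:integral2} when $J_3=J_1+J_2$, which is the common boundary of the three regimes. As an alternative to the direct Fourier computation, one can derive the formula purely algebraically by propagating the known value $\alpha_{J_1,J_2,J_1+J_2}$ of Lemma~\ref{lem:integral2} down through decreasing $n_3$ using the recursion relations of Lemma~\ref{lem:recursion}; this sidesteps the integral at the cost of a finite but somewhat tedious verification of a rational-function identity in $\Delta_1,\Delta_2,\Delta_3$.
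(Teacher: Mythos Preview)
The paper takes precisely what you call the ``alternative'' route: it does not attempt the direct integral with three generic null vectors, but instead relates $\alpha_{J_1,J_2,J_3}$ to the boundary value $\alpha_{J_1,J_2,J_1+J_2}$ (known from Lemma~\ref{lem:integral2}) via the recursion of Lemma~\ref{lem:recursion}. The key simplification is that with $n_1=J_1$, $n_2=J_2$ fixed at their minimal values, the terms in the recursion that would lower $n_1$ or $n_2$ vanish, and a suitable linear combination of the three recursion relations collapses to a clean three-term relation in $n_3$ alone. Working with $\widetilde{\alpha}_{n_1,n_2,n_3}=q(n_1,n_2,n_3)\alpha_{n_1,n_2,n_3}$, one propagates from $n_3=J_1+J_2$ down to $n_3=J_3$ and matches against Lemma~\ref{lem:integral2}.

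Your primary proposal is plausible but likely more laborious than you suggest. The function $\kappa^{(J_3)}_{\Delta_3,J_3}(v_3^{J_3})$ for a null $v_3$ distinct from $z_\pm$ is obtained from the highest-weight vector by a $K$-rotation acting via~\eqref{eq:principalAction}, i.e., a M\"obius transformation of $x_3$; the result is no longer a monomial in $x_3,\bar{x}_3$ times a simple power of $|x_3|^2+1$, so the reduction to ``a two-variable convolution of exactly the same type as in Lemma~\ref{lem:integral1}'' is not immediate. One can of course use $G$-invariance to move the third null vector back to $z_-$, but this rotates the other two insertions away from $z_\pm$ and the difficulty just migrates. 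The recursion approach avoids this entirely: it trades the genuinely three-variable integral for the already-computed two-variable one at the edge $n_3=J_1+J_2$ plus a finite algebraic propagation, and the ${}_3F_2$ appearing in Lemma~\ref{lem:integral2} ultimately collapses to the clean ratio of $q$'s.
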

\begin{proof}
We will relate $\alpha_{J_1,J_2,J_3}$ and $\alpha_{J_1,J_2,J_1+J_2}$ using the recursion relation of Lemma~\ref{lem:recursion} and apply Lemma~\ref{lem:integral2}. Set $\widetilde{\alpha}_{n_1,n_2,n_3} = q(n_1,n_2,n_3)\,\alpha_{n_1,n_2,n_3}$. In the special case $n_1=J_1$, $n_2=J_2$, the recursion relation reduces to
$$
\begin{aligned}
\widetilde{\alpha}_{J_1,J_2,n_3+1} &= 
\tfrac{i [n_3 (n_3+1) (\Delta_1-\Delta_2)-(\Delta_3-1) J_3 (J_1-J_2)]}{\sqrt{2} n_3 (n_3+1)}\widetilde{\alpha}_{J_1,J_2,n_3}+\\
&\quad+\tfrac{\left(J_1-J_2-n_3\right) \left(J_1-J_2+n_3\right) \left(n_3-J_3\right) \left(J_3+n_3\right) \left(-\Delta _3+n_3+1\right) \left(\Delta _3+n_3-1\right)}{2 n_3^2 \left(2 n_3-1\right) \left(2 n_3+1\right)}\widetilde{\alpha}_{J_1,J_2,n_3-1}\,.
\end{aligned}
$$
We can use this recursion to solve for all $\widetilde{\alpha}_{J_1,J_2,n_3}$ with $J_3\leqslant n_3\leqslant J_1+J_2$ in terms of $\widetilde{\alpha}_{J_1,J_2,J_3}$. By comparing the solution with Lemma~\ref{lem:integral2}, we arrive at the simple result for $\alpha_{J_1,J_2,J_3}$.
\end{proof}

\section{The conformal partial waves}\label{app:partialwave}
In this appendix, we give the explicit form of the conformal partial waves from Definition~\ref{def:CPW}. They can be written as
$$
\begin{aligned}
&\psi^{\Delta_1,J_1;\Delta_2,J_2;\Delta_3,J_3;\Delta_4,J_4}_{\Delta_5,J_5}(x_1,x_2,x_3,x_4) =\\
&\frac{1}{x_{12}^{h_1+h_2}x_{34}^{h_3+h_4}}\left(\frac{x_{24}}{x_{14}}\right)^{h_1-h_2}\left(\frac{x_{14}}{x_{13}}\right)^{h_3-h_4}
\times\frac{1}{\xb_{12}^{\hb_1+\hb_2}\xb_{34}^{\hb_3+\hb_4}}\left(\frac{\xb_{24}}{\xb_{14}}\right)^{\hb_1-\hb_2}\left(\frac{\xb_{14}}{\xb_{13}}\right)^{\hb_3-\hb_4}\frac{\psi(z,\bar{z})}{1-2h_5},
\end{aligned}
$$
where
$$
z = \frac{x_{12}x_{34}}{x_{13}x_{24}}\,,\quad
\bar{z} = \frac{\xb_{12}\xb_{34}}{\xb_{13}\xb_{24}},
$$
and
$$
\psi(z,\bar{z}) =
k^{h_1,h_2,h_3,h_4}_{h_5}(z)k^{\hb_1,\hb_2,\hb_3,\hb_4}_{\hb_5}(\bar{z})
+\tfrac{S(\Delta_3,J_3;\Delta_4,J_4;2-\Delta_5,-J_5)}{S(\Delta_1,J_1;\Delta_2,J_2|\Delta_5,J_5)}
k^{h_1,h_2,h_3,h_4}_{1-h_5}(z)k^{\hb_1,\hb_2,\hb_3,\hb_4}_{1-\hb_5}(\bar{z})\,.
$$
Here
$$
k^{h_1,h_2,h_3,h_4}_{h_5}(z) = z^{h_5}{}_2F_1(h_5-h_1+h_2,h_5+h_3-h_4;2h_5;z)\,.
$$

\section{Recursion relation for the triple products}\label{app:recursionProof}
The main purpose of this appendix is to supply a proof of the recursion relation for the triple products $\alpha_{n_1,n_2,n_3}$, recorded in Lemma~\ref{lem:recursion}. As an intermediate step, we will describe the action of the $\mathfrak{so}(1,3)$ Lie algebra on the complete set of $K$-finite vectors $\kappa^{(n)}_{\Delta,J}(w)$ in $R_{\Delta,J}$, which is the topic of the following subsection.

\subsection{Action of the Lie algebra on $K$-finite vectors}
Let us start by recalling the $K$-linear maps $T_{n_1,n_2,n_3}^{\vee}: V_{n_1}\otimes V_{n_2}\rightarrow V_{n_3}$, introduced just before Corollary~\ref{cor:opeKFinite}. We will need an explicit form of these maps for $n_1=1$. The following lemma easily follows from Definition~\ref{def:Trilinear}.
\begin{lemma}\label{lem:vectorOps}
Let $v\in V_1$, $w\in V_n$ for any $n\geq 1$. Then
$$
\begin{aligned}
&T^{\vee}_{1,n,n+1}(v,w)^{a_1\ldots a_{n+1}} = \tfrac{1}{n+1}\left[\sum\limits_{j=1}^{n+1}v^{a_j} w^{a_1\ldots\widehat{a}_j\ldots a_{n+1}}-
\tfrac{2}{2n+1}\sum\limits_{1\leqslant j<k\leqslant n+1}\delta^{a_j a_k}v^{b}w^{b a_1\ldots \widehat{a}_j\ldots \widehat{a}_k\ldots a_{n+1}}\right]\,,\\
&T^{\vee}_{1,n,n}(v,w)^{a_1\ldots a_{n}} = \tfrac{1}{\sqrt{2}\,n}\sum\limits_{j=1}^{n}\epsilon^{a_j b c}v^b w^{c a_1\ldots\widehat{a}_j\ldots a_{n}}\,,\\
&T^{\vee}_{1,n,n-1}(v,w)^{a_1\ldots a_{n-1}} = v^b w^{b a_1\ldots a_{n-1}}\,,
\end{aligned}
$$
where identical $\mathrm{SO}(3)$ indices are summed over, following the Einstein summation convention.
\end{lemma}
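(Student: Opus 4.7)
The plan is to verify each of the three formulas by invoking the defining property of $T^{\vee}$ from Definition \ref{def:Tcheck}: the tensor $T^{\vee}_{1,n,n_3}(v,w)\in V_{n_3}$ is the unique symmetric traceless tensor whose contraction with any $w_3\in V_{n_3}$ equals $T_{1,n,n_3}(v,w,w_3)$. By the argument in the footnote preceding Lemma \ref{lem:quadrilinear}, any symmetric traceless tensor in $V_{n_3}$ is determined by its values on factorized powers $v_3^{n_3}$ with $v_3\in V_1$ null; and since both sides of each identity are linear in $w$, it further suffices to restrict to $w=v_2^n$ with $v_2\in V_1$ null. Thus for each of the three cases $n_3\in\{n+1,n,n-1\}$ I would (a) check that the proposed right-hand side is symmetric and traceless, hence lies in $V_{n_3}$, and (b) check that its contraction with $v_3^{n_3}$ reproduces the corresponding value of $T_{1,n,n_3}(v,v_2^n,v_3^{n_3})$ from Definition \ref{def:Trilinear}.

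For step (b), the parity of $n_1+n_2+n_3$ selects the appropriate branch of Definition \ref{def:Trilinear}. For $n_3=n+1$ (even parity) the target is $(v\cdot v_3)(v_2\cdot v_3)^n$: the first sum, after setting $w=v_2^n$ and contracting with $v_3^{n+1}$, produces $n+1$ equal terms $(v\cdot v_3)(v_2\cdot v_3)^n$ that combine with the prefactor $1/(n+1)$ to give the target, while the trace-subtraction term vanishes because $\delta^{a_j a_k}v_3^{a_j}v_3^{a_k}=v_3\cdot v_3=0$ for $v_3$ null. For $n_3=n$ (odd parity) the target is $[v,v_2,v_3](v_2\cdot v_3)^{n-1}/\sqrt{2}$: each of the $n$ terms yields $\epsilon^{abc}v_3^a v^b v_2^c (v_2\cdot v_3)^{n-1}=[v_3,v,v_2](v_2\cdot v_3)^{n-1}$, which combines via cyclicity of the bracket. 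For $n_3=n-1$ (even parity) the target is $(v\cdot v_2)(v_2\cdot v_3)^{n-1}$, matched directly.

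Symmetry of each right-hand side is manifest. For tracelessness, the $n_3=n-1$ case is immediate from tracelessness of $w$. The $n_3=n$ case reduces to the identity $\sum_{a,c}\epsilon^{abc}w^{ca\cdots}=0$, obtained by renaming the dummy indices $a\leftrightarrow c$ and invoking antisymmetry of $\epsilon$ together with symmetry of $w$ in its first two slots. For $n_3=n+1$ the coefficient $-2/(2n+1)$ of the trace-subtraction term is uniquely fixed by the requirement that contracting any pair of free indices with $\delta$ annihilates the expression; the verification is a combinatorial check splitting the pair-sum over $(j,k)$ according to whether $\{j,k\}$ meets the two traced slots in $0$, $1$, or $2$ positions, and repeatedly invoking symmetry and tracelessness of $w$.

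The main obstacle I anticipate is the combinatorial bookkeeping for the $n_3=n+1$ tracelessness check. Contracting two free indices, say $a_1$ and $a_2$, with $\delta$, one has to sort the $(j,k)$ terms in the trace-subtraction part according to whether $\{j,k\}\cap\{1,2\}$ has size $2$, $1$, or $0$, track which produce a factor $\delta^{aa}=3$ versus an ordinary contraction, discard the terms killed by tracelessness of $w$, and verify that the survivors combine with the trace-contracted first sum to yield zero precisely when the prefactor is $2/(2n+1)$. Once this bookkeeping is handled, the remaining evaluations on null polarizations in step (b) reduce to direct substitutions.
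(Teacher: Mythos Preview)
Your proposal is correct and is precisely the verification the paper has in mind: the paper's own proof consists of the single sentence ``The following lemma easily follows from Definition~\ref{def:Trilinear},'' and your plan of checking (a) that each right-hand side is symmetric and traceless, and (b) that its contraction with $v_3^{n_3}$ for null $v_3$ reproduces the value of $T_{1,n,n_3}$ prescribed by Definition~\ref{def:Trilinear}, is exactly how one unpacks that sentence. The computations you sketch are all accurate, including the combinatorial tracelessness check for $n_3=n+1$.
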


To describe the action of the Lie algebra elements $A_a$ on $\kappa^{(n)}_{\Delta,J}(w)$, recall that for $\widetilde{v}\in V_1$, we have $(A_a \widetilde{v})^c = \epsilon_{abc}\widetilde{v}^b$. The following lemma then follows directly from the second line of Lemma~\ref{lem:vectorOps}:
\begin{lemma}\label{lem:AAction}
Let $v\in V_1$ and let $A(v) = \sum_{a=1}^{3} v^a A_a$. For any $w\in V_n$, we have
$$
A(v)\kappa^{(n)}_{\Delta,J}(w) = \sqrt{2}n\, \kappa^{(n)}_{\Delta,J}(T^{\vee}_{1,n,n}(v,w))\,.
$$
\end{lemma}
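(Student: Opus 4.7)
\textbf{Proof plan for Lemma~\ref{lem:AAction}.} Since $A(v)\in\mathfrak{k}=\mathfrak{so}(3)$ and $\kappa^{(n)}_{\Delta,J}\colon V_n\to R_{\Delta,J}$ is $K$-linear by construction (Definition~\ref{def:kappaMaps}), differentiating $K$-equivariance yields $\mathfrak{k}$-equivariance, so
$$
A(v)\,\kappa^{(n)}_{\Delta,J}(w)=\kappa^{(n)}_{\Delta,J}\bigl(A(v)\cdot w\bigr),
$$
where on the right $A(v)$ denotes the infinitesimal action of $\mathfrak{so}(3)$ on $V_n$. It therefore suffices to verify the purely $K$-theoretic identity
$$
A(v)\cdot w=\sqrt{2}\,n\,T^{\vee}_{1,n,n}(v,w)\qquad\text{in } V_n.
$$

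To compute the left-hand side, I would view $V_n\subset\mathrm{Sym}^n(V_1)$ and extend the action on $V_1$, namely $(A_a\tilde v)^c=\epsilon_{abc}\tilde v^b$, to $\mathrm{Sym}^n(V_1)$ as a derivation. Contracting with $v^a$ this gives
$$
(A(v)\cdot w)^{a_1\ldots a_n}=\sum_{j=1}^{n}\epsilon_{a_j b c}\,v^b\,w^{a_1\ldots a_{j-1}\,c\,a_{j+1}\ldots a_n}.
$$
Using the total symmetry of $w$ to move the index $c$ into position~$1$ and the cyclicity $\epsilon_{a_jbc}=\epsilon^{a_jbc}$, the right-hand side is manifestly $\sqrt{2}n$ times the expression
$$
T^{\vee}_{1,n,n}(v,w)^{a_1\ldots a_n}=\frac{1}{\sqrt{2}\,n}\sum_{j=1}^{n}\epsilon^{a_j b c}v^b\,w^{c\,a_1\ldots\widehat{a}_j\ldots a_n}
$$
recorded in the second line of Lemma~\ref{lem:vectorOps}, establishing the identity.

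The only auxiliary point is to check that the derivation formula above indeed lands in the symmetric traceless subspace $V_n$, so that the equation takes place inside $V_n$ as intended. Symmetry in $a_1,\ldots,a_n$ is immediate from the sum over $j$. For tracelessness, contract $\delta^{a_1 a_2}$: the terms with $j\geq 3$ vanish because $w$ is itself traceless, while the $j=1$ and $j=2$ terms combine, by the symmetry of $w$ in its first two slots, into $2\epsilon_{abc}v^b w^{ac\,a_3\ldots a_n}$, which vanishes because $\epsilon_{abc}$ is antisymmetric in $(a,c)$ and $w$ is symmetric in $(a,c)$. There is no real obstacle here; the lemma is a direct consequence of $K$-equivariance of $\kappa^{(n)}_{\Delta,J}$ combined with matching the two explicit combinatorial formulas.
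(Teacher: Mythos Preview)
Your proof is correct and follows exactly the approach the paper indicates: use $K$-linearity of $\kappa^{(n)}_{\Delta,J}$ to reduce to the action of $A(v)$ on $V_n$, then match the resulting derivation formula against the explicit expression for $T^{\vee}_{1,n,n}$ in Lemma~\ref{lem:vectorOps}. The paper states this as a one-line consequence, so your additional verification of symmetry and tracelessness simply fills in detail the paper leaves implicit.
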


While $A_a$ maps the image of $\kappa^{(n)}_{\Delta,J}$ into itself, the Lie algebra generators $B_a$ can change the value of $n$ by up to one unit. The main result of this subsection is the following explicit form of their action:
\begin{lemma}\label{lem:BActionKfinite}
Let $v\in V_1$, $w\in V_n$ and let $B(v) = \sum_{a=1}^{3}v^aB_a$. Then
\be\label{eq:BActionGeneral}
B(v) \kappa^{(n)}_{\Delta,J}(w) = \sum\limits_{m=n-1}^{n+1}
\beta_{n,m}(\Delta,J)\,\kappa^{(m)}_{\Delta,J}(T^{\vee}_{1,n,m}(v,w))\,,
\ee
where
$$
\beta_{n,n+1}(\Delta,J) = 1\,,\quad
\beta_{n,n}(\Delta,J) = -\tfrac{i \sqrt{2}(\Delta -1) J}{n+1}\,,\quad
\beta_{n,n-1}(\Delta,J) = \tfrac{(n^2-J^2)[(\Delta-1)^2-n^2]}{n (2 n+1)}\,.
$$
\end{lemma}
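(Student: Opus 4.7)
The ansatz \eqref{eq:BActionGeneral} is a consequence of $K$-equivariance. The subspace $\mathfrak{b} \subset \mathfrak{g}$ is a $K$-submodule (under the adjoint action) isomorphic to $V_1$, so $v \otimes w \mapsto B(v)\kappa^{(n)}_{\Delta,J}(w)$ defines a $K$-equivariant map from $V_1 \otimes V_n = V_{n-1} \oplus V_n \oplus V_{n+1}$ into $R_{\Delta,J}$. Its image must therefore lie in the $K$-isotypic components $V_{n-1}$, $V_n$, $V_{n+1}$ of $R_{\Delta,J}$, and on each of these Schur's lemma identifies the map, up to a scalar $\beta_{n,m}(\Delta,J)$, with $\kappa^{(m)} \circ T^{\vee}_{1,n,m}$.

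To fix the three coefficients, I would evaluate both sides of \eqref{eq:BActionGeneral} on the highest-weight vector $w = z_+^n$ with several choices of $v \in V_1$. First, for $v = z_+$, the identity $z_+ \cdot z_+ = 0$ gives $T^{\vee}_{1,n,n}(z_+, z_+^n) = T^{\vee}_{1,n,n-1}(z_+, z_+^n) = 0$ via Lemma~\ref{lem:vectorOps}, while $T^{\vee}_{1,n,n+1}(z_+, z_+^n) = z_+^{n+1}$. Using the explicit formula for $\kappa^{(n)}_{\Delta,J}(z_+^n)$ from Lemma~\ref{lem:fPlusMinus} and the differential operator $(B_1+iB_2)/\sqrt{2}$ from~\eqref{eq:difGens}, a routine computation gives $B(z_+)\kappa^{(n)}_{\Delta,J}(z_+^n) = \kappa^{(n+1)}_{\Delta,J}(z_+^{n+1})$, pinning down $\beta_{n,n+1} = 1$.

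The remaining two coefficients are most cleanly extracted using the Casimir identities of Lemma~\ref{lem:casimirs}. The key point is that for each $m \in \{n-1, n, n+1\}$, the composition $w \mapsto \sum_{a} T^{\vee}_{1,m,n}(e_a, T^{\vee}_{1,n,m}(e_a, w))$ is a $K$-equivariant map $V_n \to V_n$, hence proportional to the identity by Schur's lemma, with an explicit scalar that can be computed by evaluation on $z_+^n$. Applying $C_2' = \sum_a A_a B_a$ to $\kappa^{(n)}_{\Delta,J}(w)$ and using Lemma~\ref{lem:AAction} together with the ansatz, only the $m = n$ term can survive (since $C_2'$ returns the scalar $i(\Delta-1)J$ times $\kappa^{(n)}(w)$), yielding a single linear equation that fixes $\beta_{n,n} = -i\sqrt{2}(\Delta-1)J/(n+1)$. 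Finally, applying $C_2 = \sum_a (A_a A_a - B_a B_a)$ gives $\sum_a B_a B_a\,\kappa^{(n)}(w) = [J^2 - n(n+1) - \Delta(2-\Delta)]\,\kappa^{(n)}(w)$, and expanding the left-hand side via two iterations of the ansatz produces a quadratic identity in the $\beta_{n,m}$'s. Combined with $\beta_{n,n+1} = \beta_{n-1,n} = 1$ and the value of $\beta_{n,n}$ just found, this equation is linear in the remaining unknown and solves to give the claimed $\beta_{n,n-1} = (n^2-J^2)[(\Delta-1)^2-n^2]/[n(2n+1)]$.

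The main obstacle is the bookkeeping of the Schur scalars arising from the double contractions $\sum_a T^{\vee}_{1,m,n}(e_a, T^{\vee}_{1,n,m}(e_a, \cdot))$, which must be computed carefully using Lemma~\ref{lem:vectorOps} (with attention to the parity distinction in Definition~\ref{def:Trilinear} that governs whether $\epsilon_{abc}$ enters the formula for $T^{\vee}_{1,n,n}$). A useful consistency check is the boundary case $n=J$: the factor $(n^2-J^2)$ correctly forces $\beta_{J,J-1}=0$, in agreement with the absence of $V_{J-1}$ in $R_{\Delta,J}$.
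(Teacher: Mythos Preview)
Your approach mirrors the paper's proof almost exactly through the determination of $\beta_{n,n+1}$ and $\beta_{n,n}$: the $K$-equivariance argument for the ansatz, the evaluation on $v=z_+$, $w=z_+^n$ to fix $\beta_{n,n+1}=1$, and the use of $C_2'$ to isolate $\beta_{n,n}$ are all the same.

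The gap is in your final step. When you apply $\sum_a B_a B_a$ to $\kappa^{(n)}(w)$ and iterate the ansatz, the paths that return to the $V_n$-isotypic component are $n\to n-1\to n$, $n\to n\to n$, and $n\to n+1\to n$. The first contributes $\beta_{n,n-1}\beta_{n-1,n}=\beta_{n,n-1}$, the second $(\beta_{n,n})^2$, and the third $\beta_{n,n+1}\beta_{n+1,n}=\beta_{n+1,n}$. Thus the Casimir identity reads
\[
\frac{2n+3}{2n+1}\,\beta_{n+1,n}-\frac{n+1}{2n}\,(\beta_{n,n})^2+\beta_{n,n-1}
=\Delta(\Delta-2)+J^2-n(n+1),
\]
which involves \emph{two} unknown ``down'' coefficients, $\beta_{n,n-1}$ and $\beta_{n+1,n}$. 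You list only $\beta_{n,n+1}$, $\beta_{n-1,n}$ and $\beta_{n,n}$ as inputs, so your claim that the equation is linear in a single remaining unknown overlooks $\beta_{n+1,n}$. The paper resolves this by treating the identity as a first-order recurrence in $n$ for $\beta_{n,n-1}$ and invoking the boundary condition $\beta_{J,J-1}=0$ (absence of $V_{J-1}$ in $R_{\Delta,J}$) to pin down the solution. What you present as a ``consistency check'' is in fact the missing input needed to close the argument.
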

\begin{proof}
Consider the $K$-linear map $\mu_n:V_1\otimes V_n\rightarrow R_{\Delta,J}$ given by $\mu_n(v\otimes w) = B(v)\kappa^{(n)}_{\Delta,J}(w)$. The general form~\eqref{eq:BActionGeneral} follows from applying $\mu_n$ to the decomposition $V_1\otimes V_n = V_{n-1}\oplus V_{n}\oplus V_{n+1}$ and Schur's lemma.

It remains to determine the constants $\beta_{n,m}(\Delta,J)$ for $m\in\{n-1,n,n+1\}$. By setting $v = z_{+}$ and $w=z_{+}^{n}$, we immediately get $\beta_{n,n+1}(\Delta,J)=1$.

To find $\beta_{n,n}(\Delta,J)$, we will act on $\kappa^{(n)}_{\Delta,J}(w)$ with the Casimir $C_2' = \sum_{a=1}^{3}A_aB_a$. By combining~\eqref{eq:BActionGeneral} with Lemma~\ref{lem:AAction}, we find
$$
A(v_1)B(v_2)\kappa^{(n)}_{\Delta,J}(w) = 
\sqrt{2}\sum\limits_{m=n-1}^{n+1}m\,
\beta_{n,m}(\Delta,J)\,\kappa^{(m)}_{\Delta,J}(T^{\vee}_{1,m,m}(v_1,T^{\vee}_{1,n,m}(v_2,w)))\,.
$$
By taking this equation and replacing the two index tensor $v_1^av_2^b$ by the Kronecker delta $\delta^{ab}$, we get
$$
C_2'\,\kappa^{(n)}_{\Delta,J}(w) = -\frac{n+1}{\sqrt{2}}\beta_{n,n}(\Delta,J)\, \kappa^{(n)}_{\Delta,J}(w)\,.
$$
Since $C_2'$ acts on $R_{\Delta,J}$ by multiplication by the constant $i(\Delta-1)J$, we obtain
$$
\beta_{n,n}(\Delta,J) = -\frac{\sqrt{2}i(\Delta-1)J}{n+1}\,.
$$
Similarly, to find $\beta_{n,n-1}(\Delta,J)$, we will act on $\kappa^{(n)}_{\Delta,J}(w)$ with the Casimir $C_2 = \sum_{a=1}^{3}(A_aA_a-B_aB_a)$, which acts on $R_{\Delta,J}$ by multiplication by $-\Delta(\Delta-2)-J^2$. Note that $\sum_{a=1}^{3}A_aA_a$ is the quadratic Casimir of the $\mathfrak{so}(3)$ algebra, so that
$$
\sum_{a=1}^{3}A_aA_a \kappa^{(n)}_{\Delta,J}(w) = -n(n+1) \kappa^{(n)}_{\Delta,J}(w)\,.
$$
Thus it must be that
\be\label{eq:BBCasimir}
\sum_{a=1}^{3}B_aB_a\, \kappa^{(n)}_{\Delta,J}(w) = [\Delta(\Delta-2)+J^2-n(n+1)] \kappa^{(n)}_{\Delta,J}(w)\,.
\ee
Let us compare the right-hand side with the result of applying~\eqref{eq:BActionGeneral} twice in succession:
$$
B(v_1)B(v_2) \kappa^{(n)}_{\Delta,J}(w) = 
 \sum\limits_{m=n-1}^{n+1}\sum\limits_{m'=m-1}^{m+1}
\beta_{n,m}(\Delta,J)\beta_{m,m'}(\Delta,J)\,\kappa^{(m')}_{\Delta,J}(T^{\vee}_{1,m,m'}(v_1,T^{\vee}_{1,n,m}(v_2,w)))\,.
$$
Replacing $v_1^av_2^b\rightarrow \delta^{ab}$ and comparing with~\eqref{eq:BBCasimir} leads to
$$
\frac{2n+3}{2n+1}\beta_{n+1,n}(\Delta,J)-\frac{n+1}{2n}(\beta_{n,n}(\Delta,J))^2+\beta_{n,n-1}(\Delta,J) = \Delta(\Delta-2)+J^2-n(n+1)\,.
$$
Since $\beta_{n,n}(\Delta,J)$ is known, this equation yields a recurrence relation for $\beta_{n,n-1}(\Delta,J)$. Furthermore, since $V_{J-1}$ does not appear in $R_{\Delta,J}$, we have the initial condition $\beta_{J,J-1}(\Delta,J) = 0$. The unique solution of the recurrence with this initial condition takes the form
$$
\beta_{n,n-1}(\Delta,J) = \frac{(n^2-J^2)[(\Delta-1)^2-n^2]}{n (2 n+1)}\,,
$$
which completes the proof.
\end{proof}

\subsection{Recursion relation for triple products}
We are now ready to show that the triple product coefficients $\alpha_{n_1,n_2,n_3}$, defined by Proposition~\ref{prop:threePtKFinite}, satisfy the recursion relation stated in Lemma~\ref{lem:recursion}.
\begin{proof}[Proof of Lemma~\ref{lem:recursion}]
The proof will follow from an infinitesimal form of $G$-invariance of the three-point correlations. Let $\mathcal{T}:R^{\infty}_{\Delta_1,J_1}\times R^{\infty}_{\Delta_2,J_2}\times R^{\infty}_{\Delta_3,J_3}\rightarrow\mathbb{C}$ be an invariant functional. For any $n_1\geq J_1$, $n_2\geq J_2$, $n_3\geq J_3$ and $w_1\in V_{n_1}$, $w_2\in V_{n_2}$, $w_3\in V_{n_3}$, we have from $K$-invariance
\be\label{eq:tripleGeneralApp}
\mathcal{T}(\kappa^{(n_1)}_{\Delta_1,J_1}(w_1),\kappa^{(n_2)}_{\Delta_2,J_2}(w_2),\kappa^{(n_3)}_{\Delta_3,J_3}(w_3)) = \alpha_{n_1,n_2,n_3}\, T_{n_1,n_2,n_3}(w_1,w_2,w_3)\,,
\ee
where $T_{n_1,n_2,n_3}:V_{n_1}\otimes V_{n_2}\otimes V_{n_3}\rightarrow\mathbb{C}$ is the $K$-invariant functional from Definition~\ref{def:Trilinear}. $G$-invariance of $\mathcal{T}$ reads
$$
\mathcal{T}(g\cdot\kappa^{(n_1)}_{\Delta_1,J_1}(w_1),g\cdot\kappa^{(n_2)}_{\Delta_2,J_2}(w_2),g\cdot\kappa^{(n_3)}_{\Delta_3,J_3}(w_3)) = 
\mathcal{T}(\kappa^{(n_1)}_{\Delta_1,J_1}(w_1),\kappa^{(n_2)}_{\Delta_2,J_2}(w_2),\kappa^{(n_3)}_{\Delta_3,J_3}(w_3))\,,
$$
where $g\in G$ acts in representations $R_{\Delta_i,J_i}$ with $i=1,2,3$. Setting $g = \exp(t B(v))$, and expanding the above to the linear order in $t$ gives
$$
\begin{aligned}
&\mathcal{T}(B(v)\kappa^{(n_1)}_{\Delta_1,J_1}(w_1),\kappa^{(n_2)}_{\Delta_2,J_2}(w_2),\kappa^{(n_3)}_{\Delta_3,J_3}(w_3))  \\
&\qquad+\mathcal{T}(\kappa^{(n_1)}_{\Delta_1,J_1}(w_1),B(v)\kappa^{(n_2)}_{\Delta_2,J_2}(w_2),\kappa^{(n_3)}_{\Delta_3,J_3}(w_3))\\
&\qquad\qquad+\mathcal{T}(\kappa^{(n_1)}_{\Delta_1,J_1}(w_1),\kappa^{(n_2)}_{\Delta_2,J_2}(w_2),B(v)\kappa^{(n_3)}_{\Delta_3,J_3}(w_3)) = 0\,.
\end{aligned}
$$
Using Lemma~\ref{lem:BActionKfinite}, and expression~\eqref{eq:tripleGeneralApp}, this becomes
\be\label{eq:recurrenceDer}
\begin{aligned}
&\sum\limits_{m=n_1-1}^{n_1+1}\alpha_{m,n_2,n_3}\,
\beta_{n_1,m}(\Delta_1,J_1)\,
T^{\vee}_{1,n_1,m}(v,w_1)\cdot 
T^{\vee}_{n_2,n_3,m}(w_2,w_3)\\
&\qquad+
\sum\limits_{m=n_2-1}^{n_2+1}\alpha_{n_1,m,n_3}\,
\beta_{n_2,m}(\Delta_2,J_2)\,
T^{\vee}_{1,n_2,m}(v,w_2)\cdot T^{\vee}_{n_3,n_1,m}(w_3,w_1)\\
&\qquad\qquad+
\sum\limits_{m=n_3-1}^{n_3+1}\alpha_{n_1,n_2,m}\,
\beta_{n_3,m}(\Delta_3,J_3)\,
T^{\vee}_{1,n_3,m}(v,w_3)\cdot T^{\vee}_{n_1,n_2,m}(w_1,w_2)=0\,.
\end{aligned}
\ee
This equation must hold for all $v\in V_1$ and all $w_1\in V_{n_1}$, $w_2\in V_{n_2}$, $w_3\in V_{n_3}$. Each of the nine terms on the right-hand side is an invariant $K$-linear map $V_1\otimes V_{n_1}\otimes V_{n_2}\otimes V_{n_3}\rightarrow \mathbb{C}$. The space of such maps is at most three-dimensional. The linear dependences between the nine maps of the form $T^{\vee}_{1,\bullet,m}(v,\bullet)\cdot T^{\vee}_{\bullet,\bullet,m}(\bullet,\bullet)$ can be worked out using Lemma~\ref{lem:quadrilinear}. In practice, we set $w_1 = z_1^{n_1}$, $w_2 = z_2^{n_2}$, $w_3 = z_2^{n_3}$ and $v = z_4$ with null vectors $z_i$, and substitute into~\eqref{eq:recurrenceDer} from Lemma~\ref{lem:quadrilinear}. This substitution turns the left-hand side of~\eqref{eq:recurrenceDer} into a quadratic polynomial in the ratio $r(z_2,z_3,z_4,z_1)$ (up to an overall $\alpha$-independent prefactor). The three coefficients of this polynomial give three linearly independent equations for $\alpha_{n_1,n_2,n_3}$ and its shifts. These linear equations are precisely the recurrence relations of Lemma~\ref{lem:recursion}.
\end{proof}

\section{Compact tetrahedral orbifolds}
\label{app:tetrahedra}

In this appendix we discuss compact orientable hyperbolic tetrahedral orbifolds and their quotients. Useful references for this subject include~\cite{Thurston-notes, Brunner1985, MacLachlan-Reid, Elstrodt1997}. 

Consider the compact hyperbolic tetrahedron $T= T(\lambda_1, \lambda_2, \lambda_3; \mu_1, \mu_2, \mu_3)$ with dihedral angles $\pi/\lambda_i$ and $\pi/\mu_i$, as represented schematically in Fig.~\ref{fig:tetrahedron}. There are nine such tetrahedra. Let $\Gamma(T)$ be the group generated by reflections in the faces of the tetrahedron $T$ and let $\Gamma^+(T)$ be the index-2 subgroup consisting of the orientation-preserving isometries. $\Gamma^+(T)$ is called a tetrahedral group and has the following presentation:
$$
\Gamma^+\left(T\right) = \langle a, b, c \, | \, a^{\lambda_1}=b^{\lambda_2} = c^{\lambda_3} = (b c)^{\mu_1} = (ca)^{\mu_2} = (ab)^{\mu_3} =1 \rangle ,
$$
where the generators correspond to rotations around the edges as indicated in Fig.~\ref{fig:tetrahedron}. 
\begin{figure}
\centering
{\resizebox{7cm}{!}{%
\begin{tikzpicture}[line join = round, line cap = round]
\pgfmathsetmacro{\factor}{2};
\coordinate [label=above:A] (A) at (0.25*\factor,2*\factor,1*\factor);
\coordinate [label=left:B] (B) at (-2*\factor,-1*\factor,-1*\factor);
\coordinate [label=below:C] (C) at (0,-2*\factor,1*\factor);
\coordinate [label=right:D] (D) at (2*\factor,-1*\factor,-1*\factor);
\coordinate [label=below left:${a, \, l_1, \, \lambda_1}$] (BC) at (-1*\factor,-3/2*\factor,0);
\coordinate [label=above left:${b, \, l_2, \, \lambda_2}$] (AC) at (.125*\factor, 0,1*\factor);
\coordinate [label=left:${c, \, l_3, \, \lambda_3}$] (BC) at (-.875*\factor,1/2*\factor,0);
\coordinate [label=above right:${bc, \, l_4, \, \mu_1}$] (DA) at (1.125*\factor,1/2*\factor,0);
\coordinate [label=above:${ca, \, l_5,\, \mu_2}$] (DB) at (0,-1*\factor,-1*\factor);
\coordinate [label=below right:${a b, \, l_6,\, \mu_3}$] (DC) at (1*\factor,-3/2*\factor,0);

\foreach \i in {A,B,C,D}
\draw[thick, opacity=1] (A)--(C)--(B)--cycle;
\draw[dashed, opacity=1] (B)--(D);
\draw[thick, opacity=1] (A)--(D)--(C);
\draw[thick, opacity=1] (B)--(C)--(D);
\end{tikzpicture}
}}
\caption{Schematic representation of a compact hyperbolic tetrahedron with edge lengths $l_j$, $j=1, \dots, 6$, and dihedral angles $\pi/\lambda_i$ and $\pi/\mu_i$, $i=1,2,3$. The rotation generators for each edge are also shown.}
\label{fig:tetrahedron}
\end{figure}
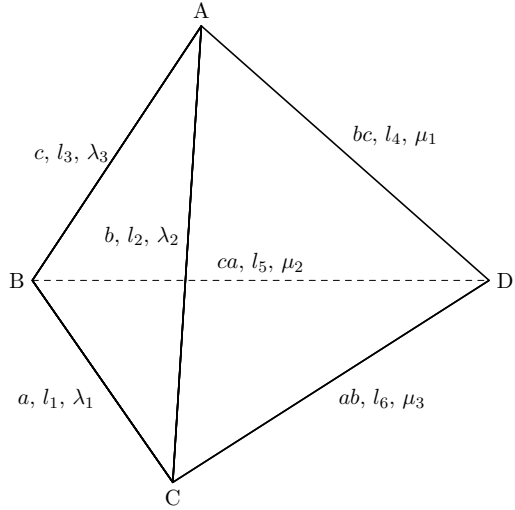
The fundamental domain of the orbifold $\Gamma^+(T) \backslash \mathbb{H}^3$ consists of two copies of the tetrahedron. The  nine compact tetrahedra $T_i$, $i=1, \dots 9$, and the approximate volumes of the corresponding orientable orbifolds are given in Table~\ref{tab:tetrahedral-data}.  The tetrahedral group $\Gamma^+(T_i)$ is arithmetic for $i \neq 8$. Schematic representations of each tetrahedron including approximate edge lengths and the isotropy groups of the vertices are shown in Fig.~\ref{fig:all-tetrahedra}. In general, the isotropy group of a vertex at which edges of orders $(p,q,r)$ meet is a finite spherical symmetry group.
\begin{table}[H]
\centering
  \begin{tabular}{  |c |c |}
  \thickhline
   $(p,q,r)$ & Isotropy group   \\ \thickhline
   $(2,2,n)$ & $D_{2n}, |D_{2n}| = 2n$ (dihedral symmetry)  \\ \hline
  $(2,3,3)$ & $ A_4 $ (tetrahedral symmetry)  \\ \hline
    $(2,3,4)$ & $ S_4 $ (octahedral symmetry)  \\ \hline
      $(2,3,5)$ & $ A_5 $ (icosahedral symmetry)  \\ 
 \thickhline
\end{tabular}
\end{table}

\begin{table}
\centering
  \begin{tabular}{  | c |c | c| }
  \thickhline
   Tetrahedron & $(\lambda_1, \lambda_2, \lambda_3; \mu_1, \mu_2, \mu_3)$ & $V\left(\Gamma^+(T_i) \backslash \mathbb{H}^3\right)$   \\ \thickhline
   $T_1$ & $(2, 2, 3; 3, 5, 2)$ & $0.1435$  \\ \hline
   $T_2$ & $(2,2,3;2,5,3)$ & $0.0781$  \\ \hline
   $T_3$ & $(2,2,4;2,3,5)$ & $0.0718$  \\ \hline
   $T_4$ & $(2,2,5;2,3,5)$ & $0.1867$  \\ \hline
   $T_5$ & $(2,3,3;2,3,4)$ & $0.1715$  \\ \hline
   $T_6$ & $(2,3,4; 2,3,4)$ & $0.4445$  \\ \hline
   $T_7$ & $(2,3,3;2,3,5)$ & $0.4106$  \\ \hline
   $T_8$ & $(2,3,4;2,3,5)$ & $0.7173$  \\ \hline
   $T_9$ & $(2,3,5;2,3,5)$ & $1.0043$  \\ \hline
 \thickhline
\end{tabular}
\caption{Compact orientable hyperbolic tetrahedral orbifolds.}
\label{tab:tetrahedral-data}
\end{table}

To evaluate the geometric side of the trace formula, we need the length spectrum of each tetrahedral orbifold, as well as information about the elliptic and bad hyperbolic conjugacy classes.  We can use \texttt{SnapPy} to find the primitive length spectrum up to some cutoff by inputting explicit matrix generators for the group to construct a Dirichlet domain~\cite{SnapPy, Hodgson-Weeks1994}.\footnote{For arithmetic orbifolds, one can also use the approach described in~\cite{Lin-Lipnowski2020}.} Explicit matrix generators for all tetrahedral groups are given in an unpublished note by Lakeland~\cite{Lakeland-note}. We also need data on the elliptic conjugacy classes and the bad hyperbolic elements. We summarize this data for the tetrahedral orbifolds, as well as some quotients, in tables below; the cases $T_2$ and $T_4$ can be found in~\cite{Lin-Lipnowski2020} and $T_8$ can be found in~\cite{Aurich-Marklof}.  To obtain and check this data we used a combination of geometry and group theory, following~\cite{Lin-Lipnowski2020}. The covolumes of the centralizers  and lengths are written in terms of the edge lengths $l_j$, which can be computed explicitly from the dihedral angles (see, e.g., Eq.~(3.3) of~\cite{Murakami2005}). 

\begin{figure}
\begin{tabular}{c  c c}
%%%%
\subfloat[$T_1$]{ {\resizebox{5cm}{!}{%
\begin{tikzpicture}[line join = round, line cap = round]
\pgfmathsetmacro{\factor}{1.5};
\coordinate [label=right:$A_5$] (A) at (2*\factor,-1*\factor,-1*\factor);
\coordinate [label=left:$A_5$] (B) at (-2*\factor,-1*\factor,-1*\factor);
\coordinate [label=above:$A_4$] (C) at (0.25*\factor,2*\factor,1*\factor);
\coordinate [label=below:$D_4$] (D) at (0,-2*\factor,1*\factor);
\coordinate [label=below right:${5, \, l_5 \approx1.617}$] (AB) at (0,-1*\factor,-1*\factor);
\coordinate [label=below right:${2, \, l_6 \approx 1.061}$] (AD) at (1*\factor,-3/2*\factor,0);
\coordinate [label=below left:${2, \, l_1 \approx 1.061}$] (BD) at (-1*\factor,-3/2*\factor,0);
\coordinate [label=above right:${3, \, l_4 \approx 1.226}$] (AC) at (1.125*\factor,1/2*\factor,0);
\coordinate [label=left:${3, \, l_3 \approx 1.226}$] (BC) at (-.875*\factor,1/2*\factor,0);
\coordinate [label=above right:${2, \, l_2 \approx 0.5306}$] (CD) at (.125*\factor, 0,1*\factor);

\foreach \i in {A,B,C,D}
\draw[thick, opacity=1] (A)--(D)--(B);
\draw[dashed, opacity=1] (A)--(B);
\draw[thick, opacity=1] (A) --(D)--(C)--cycle;
\draw[thick, opacity=1] (B)--(D)--(C)--cycle;
\end{tikzpicture}
}}
} &
%%%%
\subfloat[$T_2$]{  {\resizebox{5cm}{!}{%
\begin{tikzpicture}[line join = round, line cap = round]
\pgfmathsetmacro{\factor}{1.5};
\coordinate [label=right:$A_5$] (A) at (2*\factor,-1*\factor,-1*\factor);
\coordinate [label=left:$A_5$] (B) at (-2*\factor,-1*\factor,-1*\factor);
\coordinate [label=above:$D_6$] (C) at (0.25*\factor,2*\factor,1*\factor);
\coordinate [label=below:$D_6$] (D) at (0,-2*\factor,1*\factor);
\coordinate [label=below right:${5, \, l_5 \approx1.383}$] (AB) at (0,-1*\factor,-1*\factor);
\coordinate [label=below right:${3, \, l_6 \approx .8683}$] (AD) at (1*\factor,-3/2*\factor,0);
\coordinate [label=below left:${2, \, l_1 \approx .9727}$] (BD) at (-1*\factor,-3/2*\factor,0);
\coordinate [label=above right:${2, \, l_4 \approx .9727}$] (AC) at (1.125*\factor,1/2*\factor,0);
\coordinate [label=left:${3, \, l_3 \approx .8683}$] (BC) at (-.8683*\factor,1/2*\factor,0);
\coordinate [label=above right:${2, \, l_2 \approx .3942}$] (CD) at (.125*\factor, 0,1*\factor);

\foreach \i in {A,B,C,D}
\draw[thick, opacity=1] (A)--(D)--(B);
\draw[dashed, opacity=1] (A)--(B);
\draw[thick, opacity=1] (A) --(D)--(C)--cycle;
\draw[thick, opacity=1] (B)--(D)--(C)--cycle;
\end{tikzpicture}
}}
 } &
%%%%
\subfloat[$T_3$]{  {\resizebox{5cm}{!}{%
\begin{tikzpicture}[line join = round, line cap = round]
\pgfmathsetmacro{\factor}{1.5};
\coordinate [label=below:$D_{10}$] (A) at (0,-2*\factor,1*\factor);
\coordinate [label=left:$S_4$] (B) at (-2*\factor,-1*\factor,-1*\factor);
\coordinate [label=above:$D_8$] (C) at (0.25*\factor,2*\factor,1*\factor);
\coordinate [label=right:$A_5$] (D) at (2*\factor,-1*\factor,-1*\factor);
\coordinate [label=below left:${2, \, l_1 \approx .8425}$] (BA) at (-1*\factor,-3/2*\factor,0);
\coordinate [label=above right:${2, \, l_2 \approx .6269}$] (CA) at (.125*\factor, 0,1*\factor);
\coordinate [label=left:${4, \, l_3 \approx .5306}$] (BC) at (-.875*\factor,1/2*\factor,0);
\coordinate [label=above right:${2, \, l_4 \approx 1.061}$] (DC) at (1.125*\factor,1/2*\factor,0);
\coordinate [label=below right:${3, \, l_5 \approx 1.226}$] (DB) at (0,-1*\factor,-1*\factor);
\coordinate [label=below right:${5, \, l_6 \approx .8085}$] (AD) at (1*\factor,-3/2*\factor,0);

\foreach \i in {A,B,C,D}
\draw[thick, opacity=1] (D)--(A)--(B);
\draw[dashed, opacity=1] (D)--(B);
\draw[thick, opacity=1] (D) --(A)--(C)--cycle;
\draw[thick, opacity=1] (B)--(A)--(C)--cycle;
\end{tikzpicture}
}}
}  \\
%%%%
\subfloat[$T_4$]{ {\resizebox{5cm}{!}{%
\begin{tikzpicture}[line join = round, line cap = round]
\pgfmathsetmacro{\factor}{1.5};
\coordinate [label=below:$D_{10}$] (A) at (0,-2*\factor,1*\factor);
\coordinate [label=left:$A_5$] (B) at (-2*\factor,-1*\factor,-1*\factor);
\coordinate [label=above:$D_{10}$] (C) at (0.25*\factor,2*\factor,1*\factor);
\coordinate [label=right:$A_5$] (D) at (2*\factor,-1*\factor,-1*\factor);
\coordinate [label=below left:${2, \, l_1 \approx 1.439}$] (BA) at (-1*\factor,-3/2*\factor,0);
\coordinate [label=above right:${2, \, l_2 \approx .9136}$] (CA) at (.125*\factor, 0,1*\factor);
\coordinate [label=left:${5, \, l_3 \approx .9964}$] (BC) at (-.875*\factor,1/2*\factor,0);
\coordinate [label=above right:${2, \, l_4 \approx 1.439}$] (DC) at (1.125*\factor,1/2*\factor,0);
\coordinate [label=below right:${3, \, l_5 \approx1.903}$] (DB) at (0,-1*\factor,-1*\factor);
\coordinate [label=below right:${5, \, l_6 \approx .9964}$] (AD) at (1*\factor,-3/2*\factor,0);

\foreach \i in {A,B,C,D}
\draw[thick, opacity=1] (D)--(A)--(B);
\draw[dashed, opacity=1] (D)--(B);
\draw[thick, opacity=1] (D) --(A)--(C)--cycle;
\draw[thick, opacity=1] (B)--(A)--(C)--cycle;
\end{tikzpicture}
}}
 } &
%%%%
\subfloat[$T_5$]{  {\resizebox{5cm}{!}{%
\begin{tikzpicture}[line join = round, line cap = round]
\pgfmathsetmacro{\factor}{1.5};
\coordinate [label=below:$S_{4}$] (A) at (0,-2*\factor,1*\factor);
\coordinate [label=left:$A_4$] (B) at (-2*\factor,-1*\factor,-1*\factor);
\coordinate [label=above:$A_{4}$] (C) at (0.25*\factor,2*\factor,1*\factor);
\coordinate [label=right:$S_4$] (D) at (2*\factor,-1*\factor,-1*\factor);
\coordinate [label=below left:${2, \, l_1 \approx 1.128}$] (BA) at (-1*\factor,-3/2*\factor,0);
\coordinate [label=above right:${3, \, l_2 \approx 1.015}$] (CA) at (.125*\factor, 0,1*\factor);
\coordinate [label=left:${3, \, l_3 \approx 0.7691}$] (BC) at (-.875*\factor,1/2*\factor,0);
\coordinate [label=above right:${2, \, l_4 \approx 1.128}$] (DC) at (1.125*\factor,1/2*\factor,0);
\coordinate [label=below right:${3, \, l_5 \approx 1.015}$] (DB) at (0,-1*\factor,-1*\factor);
\coordinate [label=below right:${4, \, l_6 \approx 1.128}$] (AD) at (1*\factor,-3/2*\factor,0);

\foreach \i in {A,B,C,D}
\draw[thick, opacity=1] (D)--(A)--(B);
\draw[dashed, opacity=1] (D)--(B);
\draw[thick, opacity=1] (D) --(A)--(C)--cycle;
\draw[thick, opacity=1] (B)--(A)--(C)--cycle;
\end{tikzpicture}
}}
} &
%%%%
\subfloat[$T_6$]{ {\resizebox{5cm}{!}{%
\begin{tikzpicture}[line join = round, line cap = round]
\pgfmathsetmacro{\factor}{1.5};
\coordinate [label=below:$S_{4}$] (A) at (0,-2*\factor,1*\factor);
\coordinate [label=left:$S_4$] (B) at (-2*\factor,-1*\factor,-1*\factor);
\coordinate [label=above:$S_{4}$] (C) at (0.25*\factor,2*\factor,1*\factor);
\coordinate [label=right:$S_4$] (D) at (2*\factor,-1*\factor,-1*\factor);
\coordinate [label=below left:${2, \, l_1 \approx 1.700}$] (BA) at (-1*\factor,-3/2*\factor,0);
\coordinate [label=above right:${3, \, l_2 \approx 1.567}$] (CA) at (.125*\factor, 0,1*\factor);
\coordinate [label=left:${4, \, l_3 \approx 1.384}$] (BC) at (-.875*\factor,1/2*\factor,0);
\coordinate [label=above right:${2, \, l_4 \approx 1.700}$] (DC) at (1.125*\factor,1/2*\factor,0);
\coordinate [label=below right:${3, \, l_5 \approx 1.567}$] (DB) at (0,-1*\factor,-1*\factor);
\coordinate [label=below right:${4, \, l_6 \approx 1.384}$] (AD) at (1*\factor,-3/2*\factor,0);

\foreach \i in {A,B,C,D}
\draw[thick, opacity=1] (D)--(A)--(B);
\draw[dashed, opacity=1] (D)--(B);
\draw[thick, opacity=1] (D) --(A)--(C)--cycle;
\draw[thick, opacity=1] (B)--(A)--(C)--cycle;
\end{tikzpicture}
}}
 } \\
%%%%
\subfloat[$T_7$]{  {\resizebox{5cm}{!}{%
\begin{tikzpicture}[line join = round, line cap = round]
\pgfmathsetmacro{\factor}{1.5};
\coordinate [label=below:$A_{5}$] (A) at (0,-2*\factor,1*\factor);
\coordinate [label=left:$A_4$] (B) at (-2*\factor,-1*\factor,-1*\factor);
\coordinate [label=above:$A_{4}$] (C) at (0.25*\factor,2*\factor,1*\factor);
\coordinate [label=right:$A_5$] (D) at (2*\factor,-1*\factor,-1*\factor);
\coordinate [label=below left:${2, \, l_1 \approx 1.761}$] (BA) at (-1*\factor,-3/2*\factor,0);
\coordinate [label=above right:${3, \, l_2 \approx 1.627}$] (CA) at (.125*\factor, 0,1*\factor);
\coordinate [label=left:${3, \, l_3 \approx 0.9291}$] (BC) at (-.875*\factor,1/2*\factor,0);
\coordinate [label=above right:${2, \, l_4 \approx 1.761}$] (DC) at (1.125*\factor,1/2*\factor,0);
\coordinate [label=below right:${3, \, l_5 \approx 1.627}$] (DB) at (0,-1*\factor,-1*\factor);
\coordinate [label=below right:${5, \, l_6 \approx 2.044}$] (AD) at (1*\factor,-3/2*\factor,0);

\foreach \i in {A,B,C,D}
\draw[thick, opacity=1] (D)--(A)--(B);
\draw[dashed, opacity=1] (D)--(B);
\draw[thick, opacity=1] (D) --(A)--(C)--cycle;
\draw[thick, opacity=1] (B)--(A)--(C)--cycle;
\end{tikzpicture}
}}
} &
%%%%
\subfloat[$T_8$]{ {\resizebox{5cm}{!}{%
\begin{tikzpicture}[line join = round, line cap = round]
\pgfmathsetmacro{\factor}{1.5};
\coordinate [label=below:$A_{5}$] (A) at (0,-2*\factor,1*\factor);
\coordinate [label=left:$S_4$] (B) at (-2*\factor,-1*\factor,-1*\factor);
\coordinate [label=above:$S_{4}$] (C) at (0.25*\factor,2*\factor,1*\factor);
\coordinate [label=right:$A_5$] (D) at (2*\factor,-1*\factor,-1*\factor);
\coordinate [label=below left:${2, \, l_1 \approx 2.273}$] (BA) at (-1*\factor,-3/2*\factor,0);
\coordinate [label=above right:${3, \, l_2 \approx 2.133}$] (CA) at (.125*\factor, 0,1*\factor);
\coordinate [label=left:${4, \, l_3 \approx 1.487}$] (BC) at (-.875*\factor,1/2*\factor,0);
\coordinate [label=above right:${2, \, l_4 \approx 2.273}$] (DC) at (1.125*\factor,1/2*\factor,0);
\coordinate [label=below right:${3, \, l_5 \approx 2.133}$] (DB) at (0,-1*\factor,-1*\factor);
\coordinate [label=below right:${5, \, l_6 \approx 2.224}$] (AD) at (1*\factor,-3/2*\factor,0);

\foreach \i in {A,B,C,D}
\draw[thick, opacity=1] (D)--(A)--(B);
\draw[dashed, opacity=1] (D)--(B);
\draw[thick, opacity=1] (D) --(A)--(C)--cycle;
\draw[thick, opacity=1] (B)--(A)--(C)--cycle;
\end{tikzpicture}
}}
 } &
%%%%
\subfloat[$T_9$]{  {\resizebox{5cm}{!}{%
\begin{tikzpicture}[line join = round, line cap = round]
\pgfmathsetmacro{\factor}{1.5};
\coordinate [label=below:$A_{5}$] (A) at (0,-2*\factor,1*\factor);
\coordinate [label=left:$A_5$] (B) at (-2*\factor,-1*\factor,-1*\factor);
\coordinate [label=above:$A_{5}$] (C) at (0.25*\factor,2*\factor,1*\factor);
\coordinate [label=right:$A_5$] (D) at (2*\factor,-1*\factor,-1*\factor);
\coordinate [label=below left:${2, \, l_1 \approx 2.826}$] (BA) at (-1*\factor,-3/2*\factor,0);
\coordinate [label=above right:${3, \, l_2 \approx 2.684}$] (CA) at (.125*\factor, 0,1*\factor);
\coordinate [label=left:${5, \, l_3 \approx 2.302}$] (BC) at (-.875*\factor,1/2*\factor,0);
\coordinate [label=above right:${2, \, l_4 \approx 2.826}$] (DC) at (1.125*\factor,1/2*\factor,0);
\coordinate [label=below right:${3, \, l_5 \approx 2.684}$] (DB) at (0,-1*\factor,-1*\factor);
\coordinate [label=below right:${5, \, l_6 \approx 2.302}$] (AD) at (1*\factor,-3/2*\factor,0);

\foreach \i in {A,B,C,D}
\draw[thick, opacity=1] (D)--(A)--(B);
\draw[dashed, opacity=1] (D)--(B);
\draw[thick, opacity=1] (D) --(A)--(C)--cycle;
\draw[thick, opacity=1] (B)--(A)--(C)--cycle;
\end{tikzpicture}
}}
} 
\end{tabular}
\caption{Schematic representations of the tetrahedra $T_1, \dots, T_9$ indicating their approximate edge lengths and the isotropy groups of their vertices. The angles and relative lengths are not intended to be accurate.}
\label{fig:all-tetrahedra}
\end{figure}
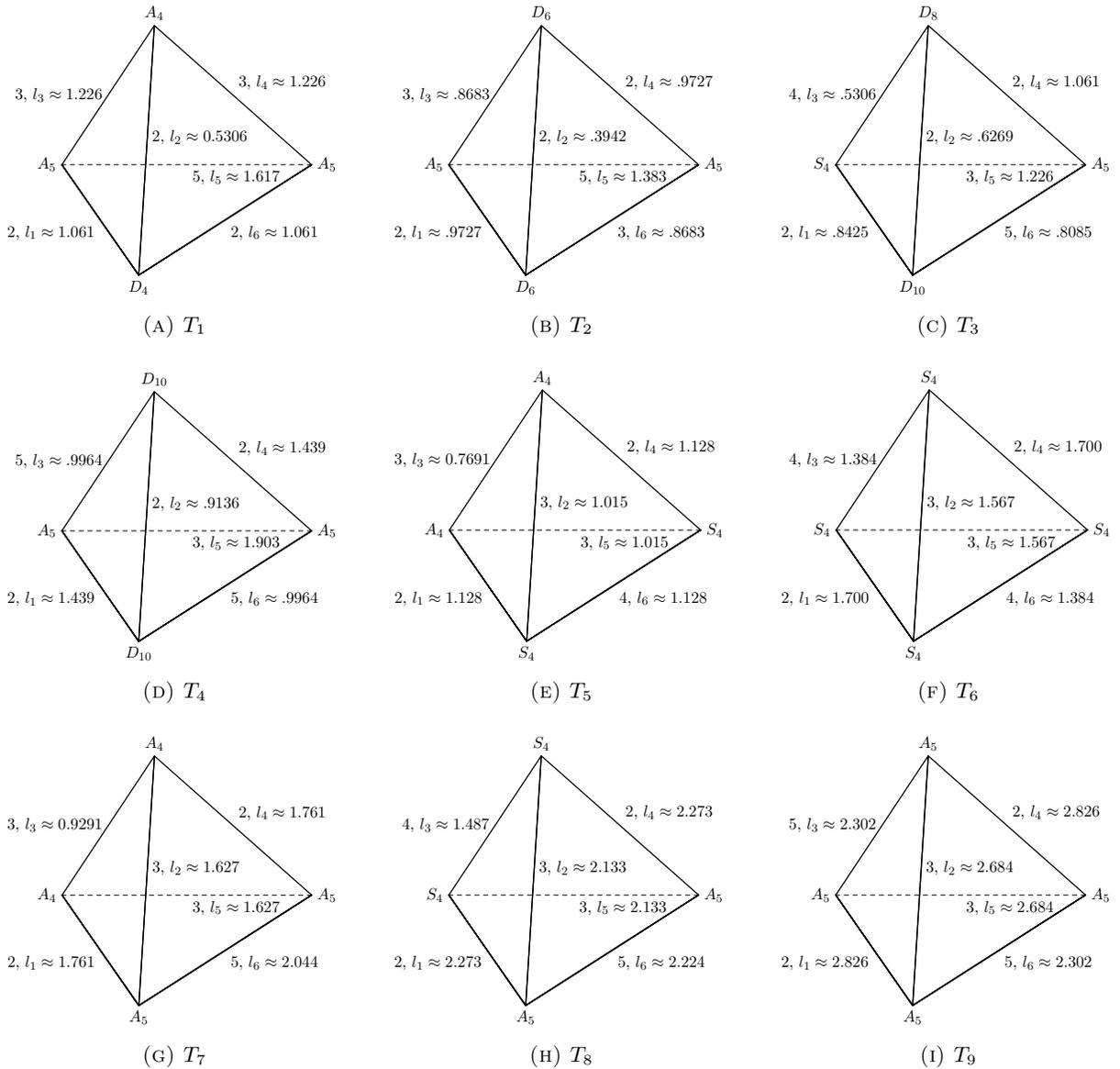

\subsection{$T_1$}
A schematic representation of the tetrahedron $T_1$ is shown in Fig.~\ref{fig:all-tetrahedra}. The exact edge lengths of $T_1$ are
$$
\begin{aligned}
l_1& =l_6=2l_2=\cosh^{-1} \left[ \frac{1}{2}(1+ \sqrt{5})  \right],\\ 
l_3& = l_4= \cosh^{-1} \left[  \frac{1+\sqrt{5}}{2\sqrt{3-\sqrt{5}}}  \right], \\ 
l_5& = \cosh^{-1} \left[  \frac{1}{2} (3+\sqrt{5})\right] .
\end{aligned}
$$

In Table~\ref{tab:T1-data} we list data for the elliptic and bad hyperbolic conjugacy classes of $\Gamma^+(T_1)$.  Each row in the table corresponds to a distinct conjugacy class of elliptic elements (they share an axis if the rows are not separated by a horizontal line) and each elliptic conjugacy class is represented. The first column gives the order of the elliptic element; the second column gives the rotation angle; the third column gives the covolume of the centralizer,   ${\rm vol}(\Gamma_{\gamma_e} \backslash G_{\gamma_e})$; and the fourth column gives, for each primitive elliptic element $\gamma_e$, the complex length of a hyperbolic element $\gamma_h$ of minimal length that commutes with $\gamma_e$. The geodesics in the last column of this table, and in the tables that follow, are topologically mirrored intervals unless otherwise stated.
\begin{table}
\centering
  \begin{tabular}{  | c |c | c| c |}
  \thickhline
   Order & Rotation angle & Covolume  & Complex length  \\ \thickhline
   2 & $\pi$ & $l_1$  & $2 l_1$ \\ \hline
   2 & $\pi$ & $l_2$   &  $2l_2+ \frac{i\pi }{2} $\\ \hline
2 & $\pi$ & $l_6$   & $2 l_6 $\\ \hline
3 & $2 \pi/3$ & $2(l_3+l_4)/3$ & $2 (l_3+l_4) $ \\  \hline
5 & $2 \pi/5$ & $2l_5/5$  & $2l_5 $  \\ 
5 & $4 \pi/5$ & $2l_5/5$   & --- \\ 
 \thickhline
\end{tabular}
\caption{Elliptic conjugacy classes of  $\Gamma^+(T_1)$.}
\label{tab:T1-data}
\end{table}

 Let us explain how to find the elliptic conjugacy classes for this example:
 \begin{enumerate}
 \item There is one edge in $T_1$ with dihedral angle $\pi/5$, so there can be at most four order-5 elliptic elements, corresponding to the nontrivial rotations around this edge. This edge joins vertices with isotropy groups $A_5$. Since every order-5 element in $A_5$ is conjugate to its inverse, there are two order-5 elliptic conjugacy classes in $\Gamma^+(T_1)$, corresponding to rotations by $2 \pi/5$ and $4\pi /5$. 
  \item There are two edges in $T_1$ with dihedral angle $\pi/3$, joining vertices with isotropy groups $A_4$ and $A_5$, as shown in Fig.~\ref{fig:all-tetrahedra}. In $A_4$ there are two conjugacy classes of order-$3$ elements, with each order-3 element in a separate conjugacy class to its inverse, whereas  in $A_5$ every order-3 element is conjugate to its inverse. There is therefore one order-3 elliptic conjugacy class in $\Gamma^+(T_1)$. 
\item The three order-2 elements of $D_4 = \mathbb{Z}_2 \times \mathbb{Z}_2$ are in separate conjugacy classes and commute. They can be realized by the following $\rm{SL}_2(\mathbb{C})$ matrices~\cite{Lakeland-note}:
$$
\begin{pmatrix}
0 & i \\i & 0
\end{pmatrix}, \quad 
\begin{pmatrix}
i & 0 \\ 0 & -i
\end{pmatrix}, \quad
\begin{pmatrix}
0 & 1 \\ -1 & 0
\end{pmatrix},
$$
corresponding to the elements $a$, $b$, and $ab$, respectively. The centralizers of these elements in $\Gamma^+(T_1)$ contain hyperbolic elements of length $2l_1$, $2l_2$, and $2l_6$, respectively, which shows that they are in separate conjugacy classes.
\end{enumerate}

The tetrahedron $T_1$ consists of two copies of $T_3$~\cite{MaclachlanReid1989}. We can extend $\Gamma^+(T_1)$ by adding an order-2 rotation $d$ with the relations $d bd = c^{-1} bc$, $d ac d = (ac)^{-1}$,  and $dcd=(bc)^{-1}$. This extended group is isomorphic to the tetrahedral group $\Gamma^+(T_3)$, which is described below.

\subsection{$T_2$}

Table~\ref{tab:T2-data} contains information about the elliptic conjugacy classes of the tetrahedral group $\Gamma^+(T_2)$. The Selberg trace formula for coexact 1-forms was applied to this example in~\cite{Lin-Lipnowski2020}.
\begin{table}
\centering
  \begin{tabular}{  | c |c | c| c| c| }
  \thickhline
   Order & Rotation angle & Covolume   & Complex length \\ \thickhline
2 & $\pi$ & $l_1+l_2+l_4$ & $2(l_1+l_2+l_4) $ \\ \hline
3 & $2 \pi/3$ & $2l_3/3$  & $2 l_3+i \pi $  \\ \hline
3 & $2 \pi/3$ & $2l_6/3$   &$2 l_6+i \pi $   \\ \hline
5 & $2 \pi/5$ & $2l_5/5$  & $2l_5 $   \\ 
5 & $4 \pi/5$ & $2l_5/5$   & --- \\ 
 \thickhline
\end{tabular}
\caption{Elliptic conjugacy classes of  $\Gamma^+(T_2) $.}
\label{tab:T2-data}
\end{table}

The tetrahedron $T_2$ admits an order-2 rotational symmetry about the geodesic which is the perpendicular bisector of edges 2 and 5. This geodesic has length $2 l'$ where $l' \approx1.06128$.
We can extend $\Gamma^+(T_2)$ by this rotation to get a larger group. Denoting the rotation by $d$, this extended group has the presentation~\cite{MacLachlan-Reid}
\ba \label{eq:T2-extension}
\langle a, b, c, d \, | \, & a^{2}=b^{2} = c^{3} = d^2= (b c)^{2} = (ca)^{5} = (ab)^{3} =(db)^2=1, \, \nonumber \\
& d a d = bc, \, dcd=ba \rangle.
\ea
Table~\ref{tab:T2-Z2-data} contains information about the elliptic conjugacy classes of this group. The corresponding orbifold is the smallest compact orientable hyperbolic 3-orbifold~\cite{Marshall-Martin-II}, with volume $V \approx 0.03905$. 
\begin{table}
\centering
  \begin{tabular}{  | c |c | c| c| c| }
  \thickhline
   Order & Rotation angle & Covolume  & Complex length \\ \thickhline
   2 & $\pi$ & $l'$  & $2l' $  \\ \hline
2 & $\pi$ & $(l_1+l_2+l_4)/2$ & $l_1+l_2+l_4 + \frac{i \pi}{2} $  \\ \hline
3 & $2 \pi/3$ & $2l_3/3$  & $2 l_3+i \pi $  \\  \hline
5 & $2 \pi/5$ & $l_5/5$  &  $l_5 $ \\ 
5 & $4 \pi/5$ & $l_5/5$ & --- \\ 
 \thickhline
\end{tabular}
\caption{Elliptic conjugacy classes of the extension of $\Gamma^+(T_2)$ in~\eqref{eq:T2-extension}.}
\label{tab:T2-Z2-data}
\end{table}

\subsection{$T_3$}

Table~\ref{tab:T3-data} contains information about the elliptic conjugacy classes of $\Gamma^+(T_3)$. The orbifold $\Gamma^+(T_3) \backslash \mathbb{H}^3$ achieves the minimal volume in its commensurability class~\cite{MaclachlanReid1989}, so there are no quotients to consider.
\begin{table}
\centering
  \begin{tabular}{  | c |c | c| c| c| }
  \thickhline
   Order & Rotation angle & Covolume   & Complex lengths \\ \thickhline
2 & $\pi$ & $l_1+l_2$   & $2(l_1+l_2) $  \\ \hline
2 & $\pi$ & $l_4$ & $2l_4 $   \\ \hline
 4 & $\pi/2$ & $l_3/2$ & $2 l_3 $    \\  
 2 & $\pi$ & $l_3/2$ & ---  \\ \hline
3 & $2 \pi/3$ & $2l_5/3$ & $2 l_5 $   \\ \hline
5 & $2 \pi/5$ & $2l_6/5$ & $2l_6+i \pi $   \\ 
5 & $4 \pi/5$ & $2l_6/5$ &  --- \\ 
 \thickhline
\end{tabular}
\caption{Elliptic conjugacy classes of  $\Gamma^+(T_3) $.}
\label{tab:T3-data}
\end{table}

\subsection{$T_4$}

Table~\ref{tab:T4-data} contains information about the elliptic conjugacy classes of $\Gamma^+(T_4)$, as determined in~\cite{Lin-Lipnowski2020}.
\begin{table}
\centering
  \begin{tabular}{  | c |c | c| c| }
  \thickhline
   Order & Rotation angle & Covolume & Complex length   \\ \thickhline
2 & $\pi$ & $l_1+l_2+l_4$  & $2(l_1+l_2+l_4) $   \\ \hline
3 & $2 \pi/3$ & $2l_5/3$  & $2 l_5 $ \\  \hline
5 & $2 \pi/5$ & $2l_3/5$  & $2 l_3+ i \pi $ \\ 
5 & $4 \pi/5$ & $2l_3/5$  & --- \\ \hline
5 & $2 \pi/5$ & $2l_6/5$  & $2 l_6+ i \pi  $  \\ 
5 & $4 \pi/5$ & $2l_6/5$  & ---  \\  
 \thickhline
\end{tabular}
\caption{Elliptic conjugacy classes of  $\Gamma^+(T_4) $.}
\label{tab:T4-data}
\end{table}

We can extend $\Gamma^+(T_4)$ by the order-2 rotation about the perpendicular bisector of edges 2 and 5. This geodesic has length $2 l'$ with $l' \approx1.06128$. Denoting the extra rotation by $d$, the extended group has the presentation
\ba \label{eq:T4-extension}
\langle a, b, c, d \, | \, & a^{2}=b^{2} = c^{5} = d^2= (b c)^{2} = (ca)^{3} = (ab)^{5} =(db)^2=1, \\
& d a d = bc, \, dcd=ba \rangle.
\ea
Table~\ref{tab:T4-Z2-data} contains information about the elliptic conjugacy classes of this group. The corresponding orbifold has volume $V \approx 0.0933$.
\begin{table}
\centering
  \begin{tabular}{  | c |c | c| c| }
  \thickhline
   Order & Rotation angle & Covolume  & Complex length  \\ \thickhline
   2 & $\pi$ & $l'$ & $2l' $  \\ \hline
2 & $\pi$ & $(l_1+l_2+l_4)/2$  & $l_1+l_2+l_4 + \frac{i \pi}{2} $   \\ \hline
3 & $2 \pi/3$ & $l_5/3$   & $ l_5 $ \\  \hline
5 & $2 \pi/5$ & $2l_3/5$  & $2 l_3+ i \pi $  \\ 
5 & $4 \pi/5$ & $2l_3/5$ & --- \\ 
 \thickhline
\end{tabular}
\caption{Elliptic conjugacy classes of the extension of $\Gamma^+(T_4)$  in~\eqref{eq:T4-extension}.}
\label{tab:T4-Z2-data}
\end{table}

\subsection{$T_5$}

Table~\ref{tab:T5-data} contains information about the elliptic conjugacy classes of $\Gamma^+(T_5) $. 
\begin{table}
\centering
  \begin{tabular}{  | c |c | c| c|}
  \thickhline
   Order & Rotation angle & Covolume & Complex length  \\ \thickhline
2 & $\pi$ & $l_1$ & $2l_1+ \frac{i \pi }{2} $   \\ \hline
2 & $\pi$ & $l_4$  & $2l_4+ \frac{i \pi }{2} $ \\ \hline
4 & $\pi/2$ & $l_6/2$ & $2l_6 $   \\ 
2 & $\pi$ & $l_6/2$ & --- \\ \hline
3 & $2 \pi/3$ & $2(l_2+l_3+l_5)/3$ & $2 (l_2+l_3+l_5)  $   \\  
 \thickhline
\end{tabular}
\caption{Elliptic conjugacy classes of $\Gamma^+(T_5) $.}
\label{tab:T5-data}
\end{table}
\noindent 

We can extend $\Gamma^+(T_5)$ by the order-2 rotation about the perpendicular bisector of edges 3 and 6. This geodesic has length $2 l'$ for  $l' \approx 1.52857$. Denoting the extra rotation by $d$, the extended group has the presentation
\ba \label{eq:T5-extension}
\langle a, b, c, d \, | \, & a^{2}=b^{3} = c^{3} = d^2= (b c)^{2} = (ca)^{3} = (ab)^{4} =(dc)^2=1, \\
& d a d = bc, \, dbd=ac\rangle.
\ea
Table~\ref{tab:T5-Z2-data} contains information about the elliptic conjugacy classes of this group. The corresponding orbifold has volume $V \approx 0.08577$.
\begin{table}
\centering
  \begin{tabular}{  | c |c | c| c| c|}
  \thickhline
   Order & Rotation angle & Covolume & Complex length  \\ \thickhline
   2 & $\pi$ & $l'$ & $2l'$ \\ \hline
2 & $\pi$ & $l_1$ & $2l_1+ \frac{i \pi }{2} $  \\ \hline
4 & $\pi/2$ & $l_6/4$  & $l_6 + \frac{i \pi }{4} $   \\ 
2 & $\pi$ & $l_6/4$ & --- \\ \hline
3 & $2 \pi/3$ & $(l_2+l_3+l_5)/3$ & $l_2+l_3+l_5  $   \\  
 \thickhline
\end{tabular}
\caption{Elliptic conjugacy classes of the extension of $\Gamma^+(T_5)$  in~\eqref{eq:T5-extension}.}
\label{tab:T5-Z2-data}
\end{table}
\noindent

\subsection{$T_6$}

Table~\ref{tab:T6-data} contains information about the elliptic conjugacy classes of $\Gamma^+(T_6) $.
\begin{table}
\centering
  \begin{tabular}{  | c |c | c| c|}
  \thickhline
   Order & Rotation angle & Covolume  & Complex length \\ \thickhline
2 & $\pi$ & $l_1$  & $2l_1 $ \\ \hline
2 & $\pi$ & $l_4$ & $2l_4 $  \\ \hline
4 & $\pi/2$ & $l_3/2$ & $2l_3 $    \\ 
2 & $\pi$ & $l_3/2$ & ---\\ \hline
4 & $\pi/2$ & $l_6/2$  & $2l_6 $  \\ 
2 & $\pi$ & $l_6/2$ & --- \\ \hline
3 & $2 \pi/3$ & $2 l_2 /3$  & $2 l_2 $  \\  \hline
3 & $2 \pi/3$ & $2 l_5 /3$  & $2 l_5 $ \\  
 \thickhline
\end{tabular}
\caption{Elliptic conjugacy classes of $\Gamma^+(T_6)$.}
\label{tab:T6-data}
\end{table}

There are three order-2 rotational symmetries of $T_6$ that we can use to extend the group:
\begin{enumerate}
\item We can extend $\Gamma^+(T_6)$ by the order-2 rotation about the perpendicular bisector of edges 1 and 4. This geodesic has length $2 l_1'$ for  $l_1' \approx 0.63297$. Denoting the extra rotation by $d$, the extended group has the presentation
\ba \label{eq:T6-extension-a}
\langle a, b, c, d \, | \, & a^{2}=b^{3} = c^{4} = d^2= (b c)^{2} = (ca)^{3} = (ab)^{4} =(da)^2=1,  \\
& \, dbd=(ca)^{-1}, \, dcd=(ab)^{-1} \rangle.
\ea
Table~\ref{tab:T6-Z2-a-data} contains information about the elliptic conjugacy classes of this extension of $\Gamma^+(T_6) $. 
\item We can extend $\Gamma^+(T_6)$ by the order-2 rotation about the perpendicular bisector of edges 2 and 5. This geodesic has length $2 l_2'$ for  $l_2' \approx 0.88137$ and is topologically a circle. 
Denoting the extra rotation by $d$, the extended group has the presentation
\ba \label{eq:T6-extension-b}
\langle a, b, c, d \, | \, & a^{2}=b^{3} = c^{4} = d^2= (b c)^{2} = (ca)^{3} = (ab)^{4}=(db)^2 =1, \\
&  \, dad=bc,  \, dcd=ba \rangle.
\ea
Table~\ref{tab:T6-Z2-b-data} contains information about the elliptic conjugacy classes of this extension of $\Gamma^+(T_6) $.
\item We can extend $\Gamma^+(T_6)$ by the order-2 rotation about the perpendicular bisector of edges 3 and 6. This geodesic has length $2 l_3'$ for  $l_3' \approx 1.12838$. 
Denoting the extra rotation by $d$, the extended group has the presentation
\ba \label{eq:T6-extension-c}
\langle a, b, c, d \, | \, & a^{2}=b^{3} = c^{4} = d^2= (b c)^{2} = (ca)^{3} = (ab)^{4} =(dc)^2=1,  \\
&  \, dad=bc, \, dbd=ac \rangle.
\ea
Table~\ref{tab:T6-Z2-c-data} contains information about the elliptic conjugacy classes of this extension of $\Gamma^+(T_6) $. 
\end{enumerate}
The corresponding orbifolds all have volume $V \approx 0.2222$. We can also extend $\Gamma^+(T_6)$ by all three of the above rotations to get a larger group for which the corresponding orbifold has volume $V \approx 0.1111$. This orbifold has the smallest volume of any compact orientable arithmetic hyperbolic 3-orbifold associated with a quaternion algebra defined over an imaginary quadratic field~\cite{MaclachlanReid1989}. Table~\ref{tab:T6-Z4-data} contains information about the elliptic conjugacy classes of this larger extension of $\Gamma^+(T_6) $.
\begin{table}
\centering
  \begin{tabular}{  | c |c | c| c|}
  \thickhline
   Order & Rotation angle & Covolume  & Complex length \\ \thickhline
   2 & $\pi$ & $l_1'$  & $2l'_1 +0.417233 i$ \\ \hline
    2 & $\pi$ & $l_1'$  & $2l'_1 -0.417233 i$ \\ \hline
2 & $\pi$ & $l_1/2$  & $l_1+\frac{i \pi}{2} $ \\ \hline
2 & $\pi$ & $l_4/2$ & $l_4 +\frac{i \pi}{2} $  \\ \hline
4 & $\pi/2$ & $l_3/2$ & $2l_3 $    \\ 
2 & $\pi$ & $l_3/2$ & ---\\ \hline
3 & $2 \pi/3$ & $2 l_2 /3$  & $2 l_2 $  \\
 \thickhline
\end{tabular}
\caption{Elliptic conjugacy classes of the extension of $\Gamma^+(T_6)$ in~\eqref{eq:T6-extension-a}.}
\label{tab:T6-Z2-a-data}
\end{table}
\begin{table}
\centering
   \begin{tabular}{  | c |c | c| c|}
  \thickhline
   Order & Rotation angle & Covolume  & Complex length \\ \thickhline
      2 & $\pi$ & $2l_2'$  & $2l'_2 $ \\ \hline
2 & $\pi$ & $l_1$  & $2l_1 $ \\ \hline
4 & $\pi/2$ & $l_3/2$ & $2l_3 $    \\ 
2 & $\pi$ & $l_3/2$ & ---\\ \hline
3 & $2 \pi/3$ & $ l_2 /3$  & $ l_2 $  \\  \hline
3 & $2 \pi/3$ & $ l_5 /3$  & $ l_5 $ \\  
 \thickhline
\end{tabular}
\caption{Elliptic conjugacy classes of the extension of $\Gamma^+(T_6)$ in~\eqref{eq:T6-extension-b}. The hyperbolic geodesic for the first row is topologically a circle.}
\label{tab:T6-Z2-b-data}
\end{table}
\begin{table}
\centering
  \begin{tabular}{  | c |c | c| c|}
  \thickhline
   Order & Rotation angle & Covolume  & Complex length \\ \thickhline
   2 & $\pi$ & $l_3'$  & $2l'_3+0.594503i$ \\ \hline
    2 & $\pi$ & $l_3'$  & $2l'_3-0.594503 i$ \\ \hline
2 & $\pi$ & $l_1$  & $2l_1 $ \\ \hline
4 & $\pi/2$ & $l_3/4$ & $l_3 + \frac{i \pi}{4} $    \\ 
2 & $\pi$ & $l_3/4$ & ---\\ \hline
4 & $\pi/2$ & $l_6/4$  & $l_6+ \frac{i \pi}{4}  $  \\ 
2 & $\pi$ & $l_6/4$ & --- \\ \hline
3 & $2 \pi/3$ & $2 l_2 /3$  & $2 l_2 $  \\ 
 \thickhline
\end{tabular}
\caption{Elliptic conjugacy classes of the extension of $\Gamma^+(T_6)$ in~\eqref{eq:T6-extension-c}.}
\label{tab:T6-Z2-c-data}
\end{table}
\begin{table}
\centering
  \begin{tabular}{  | c |c | c| c|}
  \thickhline
   Order & Rotation angle & Covolume  & Complex length \\ \thickhline
      2 & $\pi$ & $l_1'/2$  & $l'_1+ 1.36218 i $ \\ \hline
    2 & $\pi$ & $l_1'/2$  & $l'_1- 1.36218 i $ \\ \hline
          2 & $\pi$ & $l_2'$  & $2l'_2 $ \\ \hline
   2 & $\pi$ & $l_3'/2$  & $l'_3+ 1.27354 i$ \\ \hline
    2 & $\pi$ & $l_3'/2$  & $l'_3- 1.27354 i $ \\ \hline
2 & $\pi$ & $l_1/2$  & $l_1+ \frac{i \pi}{2} $ \\ \hline
4 & $\pi/2$ & $l_3/4$ & $l_3 + \frac{i \pi}{4} $    \\ 
2 & $\pi$ & $l_3/4$ & ---\\ \hline
3 & $2 \pi/3$ & $ l_2 /3$  & $ l_2 $  \\ 
 \thickhline
\end{tabular}
\caption{Elliptic conjugacy classes of the extension of $\Gamma^+(T_6)$ by all three order-2 rotations around perpendicular bisectors of opposite edges of  $T_6$.}
\label{tab:T6-Z4-data}
\end{table}

\subsection{$T_7$}

Table~\ref{tab:T7-data} contains information about the elliptic conjugacy classes of $\Gamma^+(T_7) $.
\begin{table}
\centering
  \begin{tabular}{  | c |c | c| c| c|}
  \thickhline
   Order & Rotation angle & Covolume & Complex length  \\ \thickhline
2 & $\pi$ & $l_1$ & $2l_1+ \frac{i \pi}{2} $  \\ \hline
2 & $\pi$ & $l_4$  & $2l_4+ \frac{i \pi}{2} $  \\ \hline
3 & $2 \pi/3$ & $2(l_2+l_3+l_5)/3$  & $2 (l_2+l_3+l_5) $\\ \hline
5 & $2 \pi/5$ & $2l_6/5$ & $2l_6  $   \\ 
5 & $4 \pi/5$ & $2l_6/5$ & --- \\ 
 \thickhline
\end{tabular}
\caption{Elliptic conjugacy classes of $\Gamma^+(T_7) $.}
\label{tab:T7-data}
\end{table}

We can extend $\Gamma^+(T_7)$ by the order-2 rotation about the perpendicular bisector of edges 3 and 6. This geodesic has length $2 l'$ with $l' =1.061275$ and is topologically a circle.
Denoting the extra rotation by $d$, the extended group has the presentation
\ba \label{eq:T7-extension}
\langle a, b, c, d \, | \, & a^{2}=b^{3} = c^{3} = d^2= (b c)^{2} = (ca)^{3} = (ab)^{5} =(dc)^2=1, \\
& d a d = bc, \, dbd=ac \rangle.
\ea
Table~\ref{tab:T7-Z2-data} contains information about the elliptic conjugacy classes of this group. The corresponding orbifold has volume $V \approx 0.2053$.
\begin{table}
\centering
  \begin{tabular}{  | c |c | c| c| c|}
  \thickhline
   Order & Rotation angle & Covolume & Complex length  \\ \thickhline
   2 & $\pi$ & $2l'$ & $2l'  $  \\ \hline
2 & $\pi$ & $l_1$ & $2l_1+  \frac{i \pi}{2} $  \\ \hline
3 & $2 \pi/3$ & $(l_2+l_3+l_5)/3$  & $l_2+l_3+l_5$\\ \hline
5 & $2 \pi/5$ & $l_6/5$ & $l_6 $   \\ 
5 & $4 \pi/5$ & $l_6/5$ & --- \\ 
 \thickhline
\end{tabular}
\caption{Elliptic conjugacy classes of the extension of $\Gamma^+(T_7)$ in~\eqref{eq:T7-extension}. The hyperbolic geodesic for the first row is topologically a circle.}
\label{tab:T7-Z2-data}
\end{table}

\subsection{$T_8$}

Table~\ref{tab:T8-data} contains information about the elliptic conjugacy classes of $\Gamma^+(T_8)$. The elliptic conjugacy classes of $\Gamma(T_8)$ are given in~\cite{Aurich-Marklof}.
\begin{table}
\centering
  \begin{tabular}{  | c |c | c| c| c|}
  \thickhline
   Order & Rotation angle & Covolume  & Complex length  \\ \thickhline
2 & $\pi$ & $l_1$ & $2l_1 $    \\ \hline
2 & $\pi$ & $l_4$  & $2l_4 $  \\ \hline
4 & $\pi/2$ & $l_3/2$ & $2l_3 $   \\ 
2 & $\pi$ & $l_3/2$ & ---\\  \hline
3 & $2 \pi/3$ & $2l_2/3$ & $2 l_2 $    \\ \hline
3 & $2 \pi/3$ & $2l_5/3$  & $2 l_5 $  \\ \hline
5 & $2 \pi/5$ & $2l_6/5$ & $2l_6 $ \\ 
5 & $4 \pi/5$ & $2l_6/5$  & --- \\ 
 \thickhline
\end{tabular}
\caption{Elliptic conjugacy classes of $\Gamma^+(T_8)$.}
\label{tab:T8-data}
\end{table}

We can extend $\Gamma^+(T_8)$ by the order-2 rotation about the perpendicular bisector of edges 3 and 6. This geodesic has length $2 l'$ with $l' =2.76088$. Denoting the extra rotation by $d$, the extended group has the presentation
\ba \label{eq:T8-extension}
\langle a, b, c, d \, | \, & a^{2}=b^{3} = c^{4} = d^2= (b c)^{2} = (ca)^{3} = (ab)^{5} =(dc)^2=1,\\
& \, d a d = bc, \, dbd=ac \rangle.
\ea
Table~\ref{tab:T8-Z2-data} contains information about the elliptic conjugacy classes of this group. The corresponding orbifold has volume $V \approx 0.3587$.
\begin{table}
\centering
  \begin{tabular}{  | c |c | c| c|}
  \thickhline
   Order & Rotation angle & Covolume  & Complex length  \\ \thickhline
   2 & $\pi$ & $l'$ & $2l' $\\ \hline
2 & $\pi$ & $l_1$  & $2l_1 $   \\  \hline
4 & $\pi/2$ & $l_3/4$  & $l_3 + \frac{i \pi}{4} $   \\ 
2 & $\pi$ & $l_3/4$ & ---  \\  \hline
3 & $2 \pi/3$ & $2l_2/3$ & $2 l_2 $  \\ \hline
5 & $2 \pi/5$ & $l_6/5$  & $l_6 $  \\ 
5 & $4 \pi/5$ & $l_6/5$  & --- \\ 
 \thickhline
\end{tabular}
\caption{Elliptic conjugacy classes of the extension of $\Gamma^+(T_8)$ in~\eqref{eq:T8-extension}.}
\label{tab:T8-Z2-data}
\end{table}

\subsection{$T_9$}

Table~\ref{tab:T9-data} contains information about the elliptic conjugacy classes of $\Gamma^+(T_9)$.
\begin{table}
\centering
  \begin{tabular}{  | c |c | c| c|}
  \thickhline
   Order & Rotation angle & Covolume  & Complex length \\ \thickhline
2 & $\pi$ & $l_1$ & $2l_1 $   \\ \hline
2 & $\pi$ & $l_4$  & $2l_4$ \\ \hline
3 & $2 \pi/3$ & $2l_2/3$ & $2 l_2 $   \\ \hline
3 & $2 \pi/3$ & $2l_5/3$  & $2 l_5 $\\ \hline
5 & $2 \pi/5$ & $2l_3/5$ & $2l_3  $   \\ 
5 & $4 \pi/5$ & $2l_3/5$  & --- \\ \hline
5 & $2 \pi/5$ & $2l_6/5$ & $2l_6 $   \\ 
5 & $4 \pi/5$ & $2l_6/5$ & --- \\ 
 \thickhline
\end{tabular}
\caption{Elliptic conjugacy classes of $\Gamma^+(T_9)$.}
\label{tab:T9-data}
\end{table}

There are three order-2 rotational symmetries of $T_9$ that we can use to extend the group:
\begin{enumerate}
\item We can extend $\Gamma^+(T_9)$ by the order-2 rotation about the perpendicular bisector of edges 1 and 4. This geodesic has length $2 l_1'$ for  $l_1' \approx 0.767197$. 
Denoting the extra rotation by $d$, the extended group has the presentation
\ba \label{eq:T9-extension-a}
\langle a, b, c, d \, | \, & a^{2}=b^{3} = c^{5} = d^2= (b c)^{2} = (ca)^{3} = (ab)^{5} =(da)^2=1,\\
& \, dbd=(ca)^{-1}, \, dcd=(ab)^{-1} \rangle.
\ea
Table~\ref{tab:T9-Z2-a-data} contains information about the elliptic conjugacy classes of this extension of $\Gamma^+(T_9) $.
\item We can extend $\Gamma^+(T_9)$ by the order-2 rotation about the perpendicular bisector of edges 2 and 5. This geodesic has length $2 l_2'$ for  $l_2' \approx 1.06128$ and is topologically a circle. 
Denoting the extra rotation by $d$, the extended group has the presentation
\ba \label{eq:T9-extension-b}
\langle a, b, c, d \, | \, & a^{2}=b^{3} = c^{5} = d^2= (b c)^{2} = (ca)^{3} = (ab)^{5} =1, \\
&  \, dad=bc, \, dbd=b^{-1}, \, dcd=ba \rangle.
\ea
Table~\ref{tab:T9-Z2-b-data} contains information about the elliptic conjugacy classes of this extension of $\Gamma^+(T_9) $.
\item We can extend $\Gamma^+(T_9)$ by the order-2 rotation about the perpendicular bisector of edges 3 and 6. This geodesic has length $2 l_3'$ for  $l_3' \approx 1.61692$ and is topologically a circle. 
Denoting the extra rotation by $d$, the extended group has the presentation
\ba \label{eq:T9-extension-c}
\langle a, b, c, d \, | \, & a^{2}=b^{3} = c^{5} = d^2= (b c)^{2} = (ca)^{3} = (ab)^{5} =(dc)^2=1, \\
&  \, dad=bc, \, dbd=ac \rangle.
\ea
Table~\ref{tab:T9-Z2-c-data} contains information about the elliptic conjugacy classes of this extension of $\Gamma^+(T_9) $.
\end{enumerate}
The corresponding three orbifolds all have volume $V \approx 0.5021$. We can also extend $\Gamma^+(T_9)$ by all three of the above rotations to get a larger group for which the corresponding orbifold has volume $V \approx 0.2511$. Table~\ref{tab:T9-Z4-data} contains information about the elliptic conjugacy classes of this larger extension of $\Gamma^+(T_9) $.
\begin{table}
\centering
  \begin{tabular}{  | c |c | c| c|}
  \thickhline
   Order & Rotation angle & Covolume  & Complex length \\ \thickhline
   2 & $\pi$ & $l_1'$ & $2l_1'+ \frac{ i \pi}{5} $   \\ \hline
      2 & $\pi$ & $l_1'$ & $2l_1'-   \frac{ i \pi}{5}$   \\ \hline
2 & $\pi$ & $l_1/2$ & $l_1+ \frac{i \pi}{2} $   \\ \hline
2 & $\pi$ & $l_4/2$  & $l_4+ \frac{i \pi}{2}  $ \\ \hline
3 & $2 \pi/3$ & $2l_2/3$ & $2 l_2 $   \\ \hline
5 & $2 \pi/5$ & $2l_3/5$ & $2l_3 $   \\ 
5 & $4 \pi/5$ & $2l_3/5$  & --- \\ 
 \thickhline
\end{tabular}
\caption{Elliptic conjugacy classes of the extension of $\Gamma^+(T_9)$ in~\eqref{eq:T9-extension-a}.}
\label{tab:T9-Z2-a-data}
\end{table}
\begin{table}
\centering
  \begin{tabular}{  | c |c | c| c|}
  \thickhline
   Order & Rotation angle & Covolume  & Complex length \\ \thickhline
   2 & $\pi$ & $2l_2'$ & $2l_2' $   \\ \hline
2 & $\pi$ & $l_1$ & $2l_1 $   \\ \hline
3 & $2 \pi/3$ & $l_2/3$ & $l_2 $   \\ \hline
3 & $2 \pi/3$ & $l_5/3$  & $l_5$\\ \hline
5 & $2 \pi/5$ & $2l_3/5$ & $2l_3  $   \\ 
5 & $4 \pi/5$ & $2l_3/5$  & --- \\ 
 \thickhline
\end{tabular}
\caption{Elliptic conjugacy classes of the extension of $\Gamma^+(T_9)$ in~\eqref{eq:T9-extension-b}. The hyperbolic geodesic for the first row is topologically a circle.}
\label{tab:T9-Z2-b-data}
\end{table}
\begin{table}
\centering
  \begin{tabular}{  | c |c | c| c|}
  \thickhline
   Order & Rotation angle & Covolume  & Complex length \\ \thickhline
   2 & $\pi$ & $2 l_3'$ & $2l_3' $   \\ \hline
2 & $\pi$ & $l_1$ & $2l_1$   \\ \hline
3 & $2 \pi/3$ & $2l_2/3$ & $2 l_2$   \\ \hline
5 & $2 \pi/5$ & $l_3/5$ & $l_3  $   \\ 
5 & $4 \pi/5$ & $l_3/5$  & --- \\ \hline
5 & $2 \pi/5$ & $l_6/5$ & $l_6$   \\ 
5 & $4 \pi/5$ & $l_6/5$ & --- \\ 
 \thickhline
\end{tabular}
\caption{Elliptic conjugacy classes of the extension of $\Gamma^+(T_9)$ in~\eqref{eq:T9-extension-c}. The hyperbolic geodesic for the first row is topologically a circle.}
\label{tab:T9-Z2-c-data}
\end{table}
\begin{table}
\centering
  \begin{tabular}{  | c |c | c| c|}
  \thickhline
   Order & Rotation angle & Covolume  & Complex length \\ \thickhline
   2 & $\pi$ & $l_1'/2$ & $l_1'+ \frac{2 \pi i}{5} $   \\ \hline
      2 & $\pi$ & $l_1'/2$ & $l_1'- \frac{2 \pi i}{5}  $   \\ \hline
        2 & $\pi$ & $l_2'$ & $2l_2'$   \\ \hline
        2 & $\pi$ & $l_3'$ & $2l_3'$   \\ \hline
2 & $\pi$ & $l_1/2$ & $l_1+ \frac{i \pi}{2} $   \\ \hline
3 & $2 \pi/3$ & $l_2/3$ & $l_2 $   \\ \hline
5 & $2 \pi/5$ & $l_3/5$ & $l_3 $   \\ 
5 & $4 \pi/5$ & $l_3/5$  & --- \\ 
 \thickhline
\end{tabular}
\caption{Elliptic conjugacy classes of the extension of $\Gamma^+(T_9)$ by all three order-2 rotations around perpendicular bisectors of opposite edges of  $T_9$.}
\label{tab:T9-Z4-data}
\end{table}

\newpage
\bibliography{refs.bib}
\bibliographystyle{jhep}

\end{document}